\documentclass[reqno,english]{amsart}
\usepackage[utf8]{inputenc}
\usepackage{amsmath,amssymb,amsthm,mathrsfs,color,times,textcomp,yfonts,mathtools}
\usepackage[left=0.8in, right=0.8in, top=0.8in, bottom=0.8in]{geometry}

\allowdisplaybreaks[4]

\usepackage{subcaption}
\usepackage[normalem]{ulem}
\usepackage[export]{adjustbox}
\usepackage{esint}
\usepackage{xcolor}
\usepackage{array}
\usepackage[colorlinks=true]{hyperref}
\hypersetup{urlcolor=blue, citecolor=red, linkcolor=blue}

\hypersetup{
	colorlinks=true,
	linkcolor=red
}

\usepackage[square,numbers]{natbib}

\usepackage{float}
\usepackage{ulem}
\usepackage{syntonly}
\usepackage{mathtools}
\usepackage{bm}
\usepackage{amsfonts,amsmath,latexsym,verbatim,amscd,mathrsfs,color,array}
\usepackage[colorlinks=true]{hyperref}

\usepackage{amsmath,amssymb,amsthm,amsfonts,graphicx,color}
\usepackage{amssymb}
\usepackage{epstopdf}



\newcommand{\1}{\mathbf{1}}

\newcommand{\R}{ {\mathbb{R}}}

\newcommand{\TT}{{\mathcal T}  }
\newcommand{\LLL}{{\mathcal L}  }


\newcommand{\pp}{ {\partial} }

\newcommand{\B}{ \mathcal{B} }

\newcommand{\RR}{{{\mathbb R}}}

\newcommand{\cuad}{{\sqcap\kern-.68em\sqcup}}
\newcommand{\DD}{{\mathcal D}}

\newcommand{\be}{\begin{equation}}
	\newcommand{\ee}{\end{equation}}

\theoremstyle{plain}
\newtheorem{theorem}{Theorem}[section]
\newtheorem{lemma}[theorem]{Lemma}
\newtheorem{prop}[theorem]{Proposition}
\newtheorem{corollary}[theorem]{Corollary}
\newtheorem{remark}{Remark}[theorem]
\newcommand{\bremark}{\begin{remark} \em}
	\newcommand{\eremark}{\end{remark} }

\numberwithin{equation}{section}

\begin{document}
\title[Infinite time blow-up for critical heat equation]{On Fila-King Conjecture in Dimension Four}

\author[J. Wei]{Juncheng Wei}
\address{\noindent
Department of Mathematics,
University of British Columbia, Vancouver, B.C., V6T 1Z2, Canada}
\email{jcwei@math.ubc.ca}

\author[Q. Zhang]{Qidi Zhang}
\address{\noindent
Department of Mathematics,
University of British Columbia, Vancouver, B.C., V6T 1Z2, Canada}
\email{qidi@math.ubc.ca}

\author[Y. Zhou]{Yifu Zhou}
\address{\noindent
Department of Mathematics,
Johns Hopkins University, 3400 N. Charles Street, Baltimore, MD 21218, USA}
\email{yzhou173@jhu.edu}

\begin{abstract}
We consider the following Cauchy problem for the four-dimensional energy critical heat equation
\begin{equation*}
\begin{cases}
u_t=\Delta u+u^{3}  ~&\mbox{ in }~ \R^4 \times (0,\infty),\\
u(x,0)=u_0(x) ~&\mbox{ in }~ \R^4.
\end{cases}
\end{equation*}
 We construct a positive infinite time blow-up solution $u(x,t)$ with the blow-up rate $ \| u(\cdot,t)\|_{L^\infty(\R^4)} \sim \ln t$ as $t\to \infty$ and show the stability of the infinite time blow-up. This gives a rigorous proof of a conjecture by Fila and King   \cite[Conjecture 1.1]{filaking12}.
\end{abstract}
\maketitle

\tableofcontents


\bigskip

\section{Introduction and main results}

\medskip

Since the seminal work of Fujita \cite{Fujita66}, the following nolinear heat equation
\begin{equation}\label{eqn-Fujita}
	\begin{cases}
		u_t=\Delta u+|u|^{p-1}u~&\mbox{ in } ~\R^n\times (0,\infty),\\
		u(x,0)=u_0(x)~&\mbox{ in } ~\R^n,
	\end{cases}
\end{equation}
with $p>1$, $n\geq3$ has been extensively studied. The energy functional for \eqref{eqn-Fujita} is
$$
E(u)=\frac12\int_{\R^n} |\nabla u|^2-\frac1{p+1}\int_{\R^n} |u|^{p+1},
$$
and for classical solution $u(x,t)$ with sufficient spatial decay, one has
$$
\frac{d}{dt}E(u(\cdot,t))=-\int_{\R^n} |u_t|^2.
$$
Many literatures have been devoted to studying problem \eqref{eqn-Fujita} about the singularity formation, especially the blow-up rates, profiles and sets.
We refer the readers to the book of Quittner and Souplet \cite{Souplet07book} for comprehensive survey and also recent developments.

\medskip

For the finite time blow-up, it is said to be of
\medskip
\begin{itemize}
	\item {\em type I~} if
	$${\lim \sup}_{t\to T}(T-t)^{\frac{1}{p-1}}\|u(\cdot,t)\|_{\infty}<\infty;$$
	\item {\em type II~} if
	$${\lim \sup}_{t\to T}(T-t)^{\frac{1}{p-1}}\|u(\cdot,t)\|_{\infty}=\infty.$$
\end{itemize}
\medskip
Type I blow-up is more ``generic'', while type II blow-up is much more difficult to detect. In particular, two different types of blow-up phenomena in problem \eqref{eqn-Fujita} depend sensitively on the value of the exponent $p$. In this setting, the critical Sobolev exponent
\begin{equation*}
    p_s=\begin{cases}
        \frac{n+2}{n-2}~&\mbox{ for }~n\geq 3\\
        \infty ~&\mbox{ for }~n=1,2\\
    \end{cases}
\end{equation*}
is special in various ways. Giga, Matsui and Sasayama \cite{Giga04Indiana,Giga04-MathMethod} proved that for $p<p_s$, only type I blow-up can occur in the case that $\Omega$ is $\mathbb{R}^n$ or a convex domain. For the energy critical case $p= p_s$,
in the positive radial and monotonically decreasing class, Filippas, Herrero and Vel\'azquez \cite{FHV00} excluded the possibility of type II blow-up for $n\ge 3$, and Matano and Merle \cite[Theorem 1.7]{MatanoMerle04} removed the monotone assumption and obtained the same result.
Wang and Wei \cite{wang2021refined}
proved the same result to the non-radial positive class in higher dimensions $n\geq 7$.  For $p<p_s$, finite time type I blow-up solution was found and its stability was studied in \cite{Merle97Duke}. For the critical case $p=p_s$ in $\mathbb{R}^n$ with $n\geq 7$, classification results were proved near the ground state of the energy critical heat equation  in \cite{Collot17CMP}. On the other hand, sign-changing type II blow-up solutions to the energy critical heat equation in dimensions $n=3,4,5,6$ were first conjectured to exist by \cite{FHV00} and have been rigorously constructed recently in \cite{Schweyer12JFA,type25D,ni4D,harada2019higher,harada6D,type23D,li2022slow}. In the supercritical case, classification of type I and type II solutions in radially symmetric class have been studied in \cite{Merle04CPAM,Merle09JFA,Merle11JFA} and the references therein, and  the construction of Type II blow-up was first established in the radial case by Herrero and Vel\'azquez \cite{HV1993} and in the non-radial case (under some restrictions of the exponent $p$) by Collot \cite{Collot2017}.

\medskip

Concerning infinite time blow-up for $p=p_s$, Galaktionov and King \cite{King03JDE} investigated positive, radially symmetric, global unbounded solutions for problem \eqref{eqn-Fujita} in the case of unit ball with Dirichlet boundary condition in dimensions $n\geq 3$. See also \cite[Theorem 1.4]{Suzuki08Indiana} for the case that the domain is symmetric and convex. In the non-radial setting, positive infinite time blow-up solution for problem \eqref{eqn-Fujita} with Dirichlet boundary condition and $n\geq 5$ was constructed by Cortazar, del Pino and Musso in \cite{Green16JEMS}. The solution constructed in \cite{Green16JEMS} takes the profile of sharply scaled Aubin-Talenti bubbles
\begin{equation*}
	U_{\mu,\xi}(x)=\mu^{-\frac{n-2}{2}}U\left(\frac{x-\xi}{\mu}\right)= (n(n-2))^{\frac{n-2}{4}} \left(\frac{\mu}{\mu^2+|x-\xi|^2}\right)^{\frac{n-2}{2}},
\end{equation*}
which solve the Yamabe problem
$$\Delta U+ U^{\frac{n+2}{n-2}}=0~\mbox{ in }~\R^n.$$
Moreover, the blow-up location for the solution is determined by the Green's function of $-\Delta$ in $\Omega$, while for elliptic problems, the role of the Green's function in bubbling phenomena  has been known for a long time since the works \cite{BC88CPAM} and \cite{Bahri95CVPDE}. In \cite{del2018sign}, non-radial and sign-changing solution which blows up at infinite time has been constructed. Bubble towers at infinite time and backward time infinity have been constructed in \cite{tower7D} and \cite{sun2021bubble}, respectively.

In a very interesting paper \cite{filaking12}, Fila and King studied problem \eqref{eqn-Fujita} in the whole space $\RR^n$ with the critical exponent $p=p_s$ and gave insight on the infinite time blow-up in the case of a
radially symmetric, positive initial condition with an exact power decay rate. By  formal matched
asymptotic analysis, they demonstrated that the blow-up rate is  determined by the power decay in a precise
manner. Intriguingly enough, their analysis leads them to conjecture that infinite time blow-up should
only happen in low dimensions 3 and 4, see Conjecture 1.1 in \cite{filaking12}. Recently, this has been confirmed and rigorously proved by del Pino, Musso and the first author  in \cite{173D} for $n=3$, where the leading part of the scaling parameter is achieved by asymptotic analysis. For the case $n=4$, Fila and King conjectured that infinite time blow-up only exists when $\ell>2$ for radial solutions, where
$$
\lim_{|x|\to \infty}|x|^{\ell}u_0(|x|)=A
$$
for some $A>0$.

\medskip

In other contexts, for instance,  Liouville-type theorems for Fujita equation, in parallel with the seminal work of Gidas and Spruck \cite{GS81CPAM} in the elliptic setting, and long-time behaviors for the solutions to Fujita equation with supercritical exponent have been studied in \cite{Polacik03MathAnn,Fila06JDE,PQS07Indiana,Fila07ADE,Polacik14MathAnn,Quittner21Duke,PQ21DCDS} and the references therein.

\medskip

In this paper, we are concerned with the following Cauchy problem for the Fujita equation with critical exponent in dimension $n=4$
\begin{equation}\label{eqn}
	\begin{cases}
		u_t=\Delta u+u^{3}~&\mbox{ in }~ \R^4\times (t_0, \infty),\\
		u(x,t_0)=u_0(x)~&\mbox{ in }~ \R^4.
	\end{cases}
\end{equation}
The aim of this paper is to construct infinite time blow-up solution, confirming the conjecture by \cite[Conjecture 1.1]{filaking12}, and further investigate the stability of the infinite time blow-up.
Throughout this paper, $\eta$ is a smooth cut-off function which satisfies that  $\eta(s) =1$ for $s \le  1$ and $\eta(s) =0$ for $s\ge \frac 32$. Our main results are stated as follows.

\medskip

\begin{theorem}\label{thm1}
For $t_0$ sufficiently large, there exists initial value $u_0>0$ with exponential decay such that the positive solution $u(x,t)$ to \eqref{eqn} blows up at infinite time. More precisely, the solution takes the form of the sharply scaled bubble
	$$
	u(x,t) =  \eta\left(\frac{x-\xi}{\sqrt{t}}\right)\mu^{-1}(t)w\left(\frac{x-\xi(t)}{\mu(t)}\right) + O\big( (\ln t)^{-1} \min\{ t^{-1}, |x|^{-2}\} \big)
	$$
where $w(y) = 2^{\frac 3 2} \frac 1{1+|y|^2}$. The blow-up rate and location are given by
	$$\mu(t)= \frac{1}{\ln t} \Big(1+ O\big(\frac{\ln\ln t}{\ln t}\big)\Big),\quad \xi(t) = O(t^{-1}).$$
\end{theorem}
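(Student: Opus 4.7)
The plan is to use the inner--outer gluing parabolic construction developed in \cite{Green16JEMS, 173D}, adapted to the four-dimensional critical regime where the bubble $w(y) = 2^{3/2}(1+|y|^2)^{-1}$ decays only like $|y|^{-2}$. I start from the ansatz
$$
u(x,t) = U^*(x,t) + \varphi(x,t), \qquad U^*(x,t) = \mu^{-1}(t)\, w\!\left(\tfrac{x-\xi(t)}{\mu(t)}\right) \eta\!\left(\tfrac{x-\xi}{\sqrt{t}}\right) + \Phi_0(x,t),
$$
where $\Phi_0$ is a non-local correction chosen to kill the slowest-decaying part of the error $S(U^*) := -\partial_t U^* + \Delta U^* + (U^*)^3$. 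In dimension four, the Newtonian potential of $w^3$ grows like $|y|^{-2}\ln|y|$, so $\Phi_0$ must be constructed from the Duhamel formula for the heat equation applied to $3 \mu^{-2} w^2 Z_0$-type source terms and the dominant boundary layers of the cut-off. The parameters $\mu(t), \xi(t)$ are still free and will be fixed later by solvability conditions.

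Next I would decompose the perturbation $\varphi$ by the inner--outer splitting
$$
\varphi(x,t) = \mu^{-1}(t)\, \phi_{\rm in}\!\left(\tfrac{x-\xi}{\mu}, t\right) \eta_R\!\left(\tfrac{x-\xi}{\mu}\right) + \psi(x,t),
$$
with $\eta_R$ supported where $|y| \le 2R$ and $R = R(t) \to \infty$ slowly. Plugging in and distributing terms naturally produces a coupled system: an \emph{inner problem} for $\phi_{\rm in}$ of the form $\mu^2 \partial_t \phi_{\rm in} = L_w \phi_{\rm in} + H[\psi,\mu,\xi] + \mu^2 S(U^*)_{\rm in}$ set on a large expanding domain in $y$, where $L_w = \Delta_y + 3 w^2$; and an \emph{outer problem} $\partial_t \psi = \Delta \psi + V \psi + G[\phi_{\rm in},\mu,\xi]$ for $\psi$ in the original variables, with $V$ a small potential. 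I would solve each in weighted parabolic spaces: $\|\psi\|$ controlled by $(\ln t)^{-1}\min\{t^{-1}, |x|^{-2}\}$, and $\phi_{\rm in}$ controlled by the weight $\mu^{\nu}(1+|y|)^{-a}$ with suitable $\nu, a$. For the inner linear theory I rely on the standard orthogonality-based invertibility of $L_w$ against the kernel spanned by $Z_0 = y\cdot\nabla w + w$ and the translations $Z_j = \partial_{y_j} w$, imposing solvability via orthogonality conditions.

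The heart of the argument is the \emph{reduced equation} for $\mu(t)$, coming from the orthogonality $\int H[\psi,\mu,\xi]\, Z_0\, dy = 0$. The delicate point of dimension four: on the inner scale, the leading contribution takes the form of a non-local integral equation
$$
c_0\, \dot\mu(t)\, \ln\tfrac{1}{\mu(t)} + c_1\, \mu(t) \int_{t_0}^{t} \frac{\mu(s)}{t-s}\,ds + (\mbox{lower order}) = 0,
$$
whose leading-order solution is exactly $\mu(t) = (\ln t)^{-1}\bigl(1+O(\ln\ln t/\ln t)\bigr)$, matching the Fila--King prediction. The analogous $Z_j$-orthogonality condition yields an ODE for $\xi$ forcing $\xi(t) = O(t^{-1})$. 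I would then linearize around this leading profile and apply a contraction mapping / Schauder fixed-point argument to the full coupled system in the weighted spaces, closing the estimates by choosing the initial data $u_0$ appropriately (exponentially decaying, with the codimension-one adjustment absorbed into the free parameter in $\mu$).

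The main obstacle I expect is controlling the logarithmic growth inside the reduction for $\mu$: unlike the three-dimensional case treated in \cite{173D}, here the bubble is not integrable against $Z_0$ away from the core, so the inner--outer matching produces a logarithmic kernel whose inversion yields only the weak rate $\mu \sim 1/\ln t$. This weak rate forces the outer topology to be critically borderline --- one loses all polynomial room --- and careful choice of the weight $(\ln t)^{-1}\min\{t^{-1},|x|^{-2}\}$ together with several iterated improvements of $\Phi_0$ is required to make the nonlinear terms genuinely contractive. A second, related difficulty is ruling out the drift instability produced by the translation modes $Z_j$, which requires a mild shooting argument in $\xi$ on top of the fixed point, together with a codimension-one adjustment of $u_0$ that is then removed to obtain the stability claim.
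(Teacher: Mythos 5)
Your overall architecture (inner--outer gluing, elliptic and heat-kernel corrections, reduction to a non-local equation for $\mu$) is the same as the paper's, and you correctly identify that in $n=4$ the dilation kernel $y\cdot\nabla w + w$ has $|y|^{-2}$ decay and that the blow-up rate must come out as $\mu\sim 1/\ln t$. However, there are several substantive gaps.

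First, the reduced equation you write for $\mu$ is structurally wrong. You propose $c_0\,\dot\mu(t)\,\ln\tfrac1{\mu(t)} + c_1\,\mu(t)\int_{t_0}^t\frac{\mu(s)}{t-s}\,ds = 0$. The correct leading balance (coming from the heat-kernel convolution with the $Z_5$-mode source, i.e.\ the correction $\varphi_2$) is
\begin{equation*}
\mu(t)\,t^{-1} \;+\; \int_{t/2}^{t-\mu_0^2(t)}\frac{\mu_t(s)}{t-s}\,ds \;=\;0 ,
\end{equation*}
so the integral acts on $\mu_t$, not on $\mu$; after approximating $\int_{t/2}^{t-\mu_0^2}\frac{ds}{t-s}\sim\ln t$ one gets $\mu t^{-1} + \mu_t\ln t = 0$, not $\dot\mu\ln(1/\mu)=\dot\mu\,\ln\ln t$. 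Your two terms also have inconsistent scaling: with $\mu\sim 1/\ln t$ the first is $O\!\left(\tfrac{\ln\ln t}{t(\ln t)^2}\right)$ while the second is $O(1/\ln t)$, so they cannot balance. Beyond the form, the way you propose to solve it (``linearize around the leading profile and apply a contraction mapping'') understates the difficulty: this integro-differential operator is a threshold case not in the Abel class and is not amenable to Laplace or Riemann--Liouville inversion. The paper has to split the equation into a dominating part (solved by contraction) and a remainder $\mathcal{E}_\nu[\mu_1]$ involving $\int_{t-t^{1-\nu}}^{t-\mu_0^2}\frac{\mu_{1t}(s)-\mu_{1t}(t)}{t-s}ds$, iterate this splitting finitely many times to push the remainder below the threshold allowed by the outer topology, and then control the left-over remainder by proving a H\"older estimate for $\mu_{1t}$ inherited from the merely H\"older-in-time outer solution $\psi$. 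None of this is present in your plan.

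Second, you rely on the ``standard orthogonality-based invertibility of $L_w$'' for the inner linear theory. This does not suffice here: because of the slow logarithmic rate, the linearization of the nonlinearity produces terms of the schematic form $f_1(y,t)\phi + f_2(t)\,y\cdot\nabla\phi$ with $|f_i|\sim t^{-1}(\ln t)^{-2}$, which decay too slowly in time to be absorbed as a right-hand side in either the inner or the outer problem and would also destroy the H\"older regularity needed for the orthogonality equation on $\mu_1$ if moved into the outer equation. The paper explicitly remarks that the linear theory of \cite{Green16JEMS} is therefore not applicable, and a new inner linear theory (Proposition \ref{R0-linear} / Proposition \ref{linear-theory}) must be built that carries $f_1\phi + f_2 y\cdot\nabla\phi$ inside the linear operator and makes the orthogonality coefficient $c_{n+1}[h]$ insensitive to these terms. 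This is one of the paper's key technical points and is missing from your argument.

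Third, the codimension-one condition on the initial data is misattributed. It is not there to ``rule out drift instability from the translation modes $Z_j$'': translations are kernel elements and are handled by the orthogonality reduction for $\xi$, which has no exponential instability. The codimension comes from the positive eigenvalue of $L_w$, whose exponentially decaying eigenfunction (denoted $Z_0$ in the paper) produces exponential growth in the inner flow; the initial datum therefore must contain a tuned coefficient $e_0 Z_0$. This adjustment is not removed at the end --- it persists, with $e_0[g_0]$ re-tuned to the perturbation, and the stability statement is precisely that the construction survives perturbations after $e_0$ is re-chosen. Finally, a minor point: the elliptic and heat-kernel corrections play distinct roles in the paper --- $\varphi_1,\varphi_2$ solve parabolic equations to eliminate slow spatial tails, while $\Phi_0$ solves a linearized elliptic equation to improve \emph{time} decay after the orthogonality balancing; lumping them into one ``non-local $\Phi_0$ from Duhamel'' obscures why two separate corrections are needed.
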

More precisely, the positive initial value of the solution constructed above is
\begin{equation*}
	\begin{aligned}
	u(x,t_0)= \ &
		\mu^{-1}(t_0) w\big(\frac{x-\xi(t_0)  }{ \mu(t_0)  }\big) \eta\big(\frac{ x-\xi(t_0)   }{\sqrt {t_0} }\big)
		+
		2^{\frac 32} \mu(t_0) |x-\xi(t_0) |^{-2} \Big(e^{-\frac{|x-\xi(t_0)  |^2}{4t_0 }} -\eta\big(\frac{ x-\xi(t_0)  }{\sqrt {t_0} }\big)\Big)
		\\
		&
		+
		\bar{\mu}_0^{-1}(t_0) \Phi_0\big(\frac{ x-\xi(t_0)  }{\bar{\mu}_0 (t_0)} , t_0\big)\eta\big(\frac{4(x-\xi(t_0) ) }{\sqrt {t_0}}\big)
		+
		\eta\big(\frac{x-\xi(t_0) }{ \mu_0(t_0) R(t_0)}\big)
		e_0  \mu^{-1}(t_0) Z_{0}\big(\frac{x-\xi(t_0) }{ \mu(t_0) }\big),
	\end{aligned}
\end{equation*}
where $\mu_0$, $\bar{\mu}_{0}$ are the leading order of $\mu $, and $\bar{\mu}_{0}\sim \mu_0 = (\ln t)^{-1}$; $\Phi_{0}$ is a global correction function given in Section \ref{SolElliptic-sec};  $e_0$ is a constant and $Z_{0}$ is the eigenfunction with respect to the first eigenvalue for the linearized operator, which has exponential decay, see \eqref{def-Z0Z0}.

\medskip

We further investigate the stability of the blow-up solution constructed in Theorem \ref{thm1} and obtain the stability in the following sense.
\begin{theorem}\label{stability-th}
For any $g_0$, not necessarily radially symmetric, satisfying  $|g_0(x)|\le C_{g} t_{0}^{-\frac{ \min{ \{\ell,4} \} }{2}} \langle x\rangle^{-\ell}$, $\ell>3$, and for $t_0$ sufficiently large, there exists a solution $u[g_0](x,t)$ to \eqref{eqn} blowing up at infinite time
with the rate
	$$\mu[g_0](t)= \frac{1}{\ln t} \Big(1+ O\big(\frac{\ln\ln t}{\ln t}\big)\Big),\quad \xi[g_0](t) = O(t^{-1}).$$
The initial value is given by
\begin{equation*}
	\begin{aligned}
	u(x,t_0)= \ &
		\big( \mu[g_0] (t_0)\big)^{-1} w\big(\frac{x-\xi[g_0](t_0) }{ \mu[g_0](t_0) }\big) \eta\big(\frac{ x-\xi[g_0](t_0)  }{\sqrt {t_0} }\big)
		\\
		&
		+
		2^{\frac 32}  \mu[g_0](t_0) \big|x-\xi[g_0](t_0) \big|^{-2} \Big(e^{-\frac{|x-\xi[g_0](t_0) |^2}{4t_0 }} -\eta\big(\frac{ x-\xi[g_0](t_0) }{\sqrt {t_0} }\big)\Big)
		\\
		&
		+
		\big(\bar{\mu}_0(t_0)\big)^{-1} \Phi_0\big(\frac{ x-\xi[g_0](t_0) }{\bar{\mu}_0(t_0) } , t_0\big)\eta\big(\frac{4(x-\xi[g_0](t_0)) }{\sqrt {t_0}}\big)
		\\
		&
		+
		\eta\big(\frac{x-\xi[g_0](t_0)}{ \mu_0(t_0) R(t_0) }\big)
		e_0[g_0]  \big( \mu[g_0](t_0)\big)^{-1} Z_{0}\big(\frac{x-\xi[g_0](t_0) }{ \mu[g_0](t_0) }\big) + g_0
	\end{aligned}
\end{equation*}
where $\mu[g_0] \rightarrow \mu$, $\xi[g_0] \rightarrow \xi$, $e_0[g_0] \rightarrow e_0$ in some topology as $C_g \rightarrow 0$. In the radial setting, the same conclusion holds for $\ell >2$ with $\xi[g_0]\equiv 0$ and  $\bar{\mu}_0[g_0]\rightarrow \bar{\mu}_0$ as $C_g \rightarrow 0$ additionally.

\end{theorem}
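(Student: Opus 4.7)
The plan is to carry out the same inner--outer gluing scheme as in Theorem \ref{thm1}, now treating the initial perturbation $g_0$ as additional Cauchy data for the outer problem and letting the modulation parameters $(\mu,\xi,e_0)$ depend on $g_0$. Writing the solution as $u=U^*[\mu,\xi,e_0]+\phi$, with $U^*$ the approximate profile of Theorem \ref{thm1} (sharply scaled bubble, global correction $\Phi_0$, and projection along $Z_0$), I would split $\phi$ into an inner correction $\phi_{\rm in}$ supported on the bubble scale $\mu_0(t)R(t)$ and an outer correction $\phi_{\rm out}$. The positive eigenvalue of the linearization $\Delta+3w^2$ is stabilized, exactly as in Theorem \ref{thm1}, by the one-dimensional shooting parameter $e_0[g_0]$.

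\medskip

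First, I would absorb $g_0$ into the Cauchy data of the outer equation, so that $\phi_{\rm out}(\cdot,t_0)=g_0$. Standard heat-kernel estimates give
$$
\big|e^{(t-t_0)\Delta}g_0(x)\big|\ \lesssim\ C_g\, t_0^{-\frac{\min\{\ell,4\}}{2}}\bigl(1+(t-t_0)+|x|^2\bigr)^{-\ell/2},
$$
and for $\ell>3$ this fits inside the weighted $L^\infty$ topology already used to close $\phi_{\rm out}$ in Theorem \ref{thm1}; the factor $t_0^{-\min\{\ell,4\}/2}$ supplies the smallness needed to keep the perturbation negligible uniformly in $t\ge t_0$. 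The contribution of $g_0$ to the inner problem, via the coupling term $3(U^*)^2\phi_{\rm out}$ restricted to the bubble region, produces only summable-in-time modifications of the reduced ODEs for $\mu$ and $\xi$.

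\medskip

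Second, I would set up a fixed-point problem in the Banach space of tuples $(\phi_{\rm in},\phi_{\rm out},\mu-\mu^{(0)},\xi-\xi^{(0)},e_0-e_0^{(0)})$ centered at the solution of Theorem \ref{thm1}, equipped with the same weighted norms. The orthogonality conditions against the translation modes $Z_1,\dots,Z_4$ and the scaling mode $Z_5$ of the linearization yield ODEs for $\xi$ and $\mu$ now carrying forcings linear in $g_0$, while the unstable direction is matched by a one-dimensional contraction in $e_0$. In the non-radial case, the translation ODE is precisely what forces the threshold $\ell>3$; in the radial setting all translation modes vanish by symmetry and only the scaling equation remains, closing under the weaker bound $\ell>2$, with $\xi[g_0]\equiv 0$ and convergence of $\bar\mu_0[g_0]$ preserved by radial symmetry.

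\medskip

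The main obstacle will be verifying that every estimate in the proof of Theorem \ref{thm1} depends continuously and with controlled linear modulus on $g_0$, so that $C_g$ factors out and contraction holds uniformly as $C_g\to 0$. In particular the cubic nonlinearity $(U^*+\phi)^3-(U^*)^3$ couples $\phi_{\rm in}$, $\phi_{\rm out}$ and $e^{(t-t_0)\Delta}g_0$ nontrivially, and these cross terms must be controlled in the weighted norms; the gain $t_0^{-\min\{\ell,4\}/2}$ in the hypothesis is exactly what absorbs these nonlinear losses on $[t_0,\infty)$ for $t_0$ sufficiently large. Once uniform contraction is established, the existence of $u[g_0]$ and the convergence $(\mu[g_0],\xi[g_0],e_0[g_0])\to(\mu,\xi,e_0)$ as $C_g\to 0$ follow directly from continuous dependence of the fixed-point map on the parameter $g_0$.
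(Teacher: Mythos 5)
Your overall strategy matches the paper's: define $\psi_0=e^{(t-t_0)\Delta}g_0$, apply a heat-kernel decay estimate (Lemma~\ref{cauchy-est-lem}), split the outer unknown as $\psi=\bar\psi+\psi_0$ with $\bar\psi(\cdot,t_0)=0$, and rerun the same contraction in the same weighted spaces, since $|\psi_0|\lesssim t_0^{-\epsilon}w_o$ when $\ell$ is large enough. However, your explanation of where the threshold $\ell>3$ comes from is not accurate, and you miss the key new ingredient for the radial range $2<\ell\le 3$.

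The bound $\ell>3$ in the non-radial case does not arise from the translation ODE. In the paper it comes purely from demanding that $\psi_0$ fit inside the outer weighted norm: one needs $t^{-\ell/2}\lesssim t_0^{-\epsilon}\ln t\,(t(\ln t)^2)^{5\delta-\kappa}R^{-a}$ on $\{|x|\le t^{1/2}\}$, which requires $\ell>2(\kappa+a\gamma)$, and by \eqref{para-require} this is optimized to $\ell>3$. The translation equation closes the same way for all admissible $\ell$ once $\psi_0$ is placed in $B_o$; it does not impose an extra constraint. Indeed Remark~\ref{rmk10.0.1} explicitly states that $\ell>2$ ought to be attainable non-radially as well, via a mode expansion of \eqref{Phi0-eq} — the restriction to $\ell>3$ is a choice made to avoid technicalities, not a structural obstruction caused by the translation modes.

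For the radial case $2<\ell\le 3$, you say the bound relaxes because the translation modes vanish. That is necessary but not sufficient. The essential new step — omitted from your proposal — is that $\psi_0$ no longer fits inside $w_o$, so the paper inserts the term $3u_1^2\psi_0$ into the right-hand side of the elliptic correction equation \eqref{Phi0-eq} defining $\Phi_0$. Since $\psi_0$ is radial in $\bar x$, this modified equation can still be solved by the ODE/variation-of-parameters formula that defines $\Phi_0$. This is also precisely why $\bar\mu_0$ acquires a dependence on $g_0$: the orthogonality condition fixing $\bar\mu_0$ now involves $\psi_0$. Without this absorption step, the contraction for $\bar\psi$ would not close for $2<\ell\le 3$, so the claim that the radial case goes through "because the translation modes vanish" is incomplete. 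Your assertion that $\bar\mu_0[g_0]$ converges "preserved by radial symmetry" begs the question of why $\bar\mu_0$ depends on $g_0$ in the first place.
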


\begin{remark}
\noindent
\begin{itemize}
\item Indeed, the initial value of the infinite time blow-up solution in Theorem \ref{thm1} has exponential decay at space infinity. By Theorem \ref{stability-th}, we can add suitable perturbation for the initial value to achieve that  $$\lim\limits_{|x|\rightarrow\infty} |x|^{\ell} u(x,t_0) =A$$ for any $|A|$ small enough, recovering the assumption on the initial value in the conjecture by Fila and King \cite{filaking12}.
	
\item It is very possible to generalize the stability result for all $\ell >2$ in the non-radial setting, see Remark \ref{rmk10.0.1}.

\item We do not know if the solution we construct is threshold solution or not.

\end{itemize}
\end{remark}

\medskip

Our construction is based on the {\em inner--outer gluing method} developed recently in \cite{Green16JEMS,17HMF} for parabolic problems, and the gluing method has been a powerful tool to investigate the singularity formation for various nonlinear PDEs such as parabolic equations and systems, fluid equations, geometric flows and others. See \cite{173D,18Euler,17halfHMF} and the references therein. The parabolic gluing method is much more different from the asymptotic analysis given in \cite{filaking12}. Some essential new features and difficulties in this paper are listed below.

\medskip

One key feature and difficulty is the non-local dynamics for the scaling parameter $\mu(t)$. It turns out that the dynamics for $\mu(t)$ is governed by an integro-differential operator, which is a natural consequence of the fact that the linear generator of dilations of the Aubin-Talenti bubble is of slow decay in lower dimensions. This non-local phenomenon has also been observed in \cite{17HMF,173D,ni4D,gluingKS} for lower dimensional problems. In our case here,  neither the usual Laplace transform nor Riemann-Louville type method is applicable since the integro-differential equation is not in the class of Abel-type integral equations. The non-local operator here is the threshold/endpoint case in certain sense, and one needs to carry out much more delicate analysis to investigate its solvability.

\medskip

Our strategy is to decompose the non-local equation for $\mu(t)$ into two parts: the dominating term and the remainder term. The dominating term will be solved by contraction mapping theorem, while the remainder term will leave a much smaller error. To be more precise, the desired blow-up rate is determined at leading order. However, due to the way that we handle the non-local operator, the time decay is not fast enough for the remainder in the gluing procedure, and we will iterate this process finitely many times to make the remainder term have faster time decay than the one provided by the outer problem. This smaller remainder will be handled when solving the next order of $\mu(t)$.

\medskip

After getting the leading order of $\mu(t)$, we need to solve the corresponding linearized elliptic equation to improve the time decay of the error term, which is essential for finding suitable weighted topologies ensuring the implementation of the gluing procedure. When solving the next order $\mu_1(t)$, we still need to decompose the non-local equation into two parts. The main difference is that the involved outer problem in the equation of $\mu_1(t)$ only has H\"older continuity in $t$ variable. The derivative of $\mu_1(t)$ will inherit H\"older continuity from the outer problem, which will be used to control the remainder term.

\medskip

On the other hand, the rather slow logarithmic blow-up rate produces following difficulties.
 There are several slow decaying linear terms which involves the inner part cannot be controlled as the right hand side of the inner or outer problem. Instead, we regard these slow decaying terms as part of the linearization of the inner problem and develop a new linear theory. See Remark \ref{f1f2-remark}. The dealing of these terms is in a similar spirit as in \cite{gluingKS}, where the logarithmic blow-up speed also appears.


\medskip

Thanks to the generality for the gluing method, we are able to study the stability for the solution constructed in Theorem \ref{thm1} with both radial and non-radial perturbations, and non-radial infinite time blow-up solutions are easily found by suitable perturbation for the initial value.

\medskip

Before carrying out the construction, we list several commonly used notations throughout the paper as follows.

\noindent
\textbf{Notations:}
\begin{itemize}
\item We write $a\lesssim b$ ($a \gtrsim b$) if there exists a constant $C > 0$ such that $a \le  Cb$ ($a \ge  Cb$) where $C$ is independent of $t$, $t_0$. Set $a \sim b$ if $b \lesssim a \lesssim b$.

\item In general, the letter $C(a,b,\dots)$
stands for a positive constant depending on parameters $a, b\dots$ that
might change its actual value at each occurrence.

\item The symbol $f[g_1,g_2,\dots]$ means that the function $f$ depends on some functions $g_1$, $g_2$, \dots.

\item $f\approx g$ means that $|f-g|\rightarrow 0$ as $t\rightarrow \infty$.

\item The symbol $O(f(x))$ is used to denote a real-valued function that satisfies $|O(f(x))| \lesssim |f(x)|$ in a domain of $x$ that is either specified explicitly or
follows from the context.

\item For any fixed real number $x$, the symbol $x-$ denotes a number which is less than $x$ and can  be chosen close to $x$ arbitrarily.

\item Denote $\langle y \rangle = \sqrt{1+|y|^2}$ for any $y\in \RR^n$.

\item Denote $\1_{\{x\in \Omega\}}$ as the characteristic function with $\1_{\{x\in \Omega\}}=1$ if $x\in \Omega$ and $\1_{\{x\in \Omega\}}=0$ if $x\not\in \Omega$.
\end{itemize}

\medskip

\section{Approximate solution and improvement}
	
\subsection{First approximate solution}

We consider the energy critical heat equation in dimension $4$
\begin{equation}\label{nonlinear-heateq}
\begin{cases}
u_t=\Delta u+u^{3}~&\mbox{ in }~ \R^4 \times (0,\infty),\\
u(x,0)=u_0(x)~&\mbox{ in }~ \R^4 .
\end{cases}
\end{equation}

Since changing the initial time will not change the structure of the nonlinear heat equation, we assume the initial time is $t = t_0$ and $t_0$ is sufficiently large.

We use the steady state solution
$$
w(y) = 2^{\frac 3 2} \frac 1{1+|y|^2}
$$
 as the building block of construction. It is known that all the bounded kernels of the corresponding linearized operator $\Delta+3w^2$ are given by
\[
Z_{i}(y) = \pp_{y_{i}} w= -2^{\frac 52} \frac{y_i}{(1+|y|^2)^2}, \mbox{ \ for \ } i=1,\dots,4,  \quad
Z_5 (y) = w + y \cdot \nabla w = 2^{\frac 3 2}\frac{1-|y|^2 }{(1+|y|^2 )^2 } .
\]

We take the leading profile of the infinite time blow-up solution as
\begin{equation*}
	u_1(x,t) = \mu^{-1}(t) w\left(\frac {x-\xi(t)}{\mu(t)} \right) \eta\left(\frac{x-\xi(t)}{\sqrt t}\right)
\end{equation*}
where $\mu(t)$, $\xi(t)\in C^1(t_0,\infty)$. Throughout this paper, we make the following ansatz
\begin{equation}\label{ansatz-mu}
\frac{1}{C_\mu \ln t} \le |\mu| + t\ln t |\mu_t| \le \frac{C_\mu }{\ln t} ,
\end{equation}
\begin{equation*}
 \xi(t)\rightarrow 0 \mbox{ \ as \ } t\rightarrow\infty
\end{equation*}
where $C_\mu \ge 1$ is a large constant. Later we shall rigorously justify the above ansatz about the asymptotics for the scaling and translation parameters.

Denote the error function as
\[
S[g ] := -\pp_t g + \Delta g + g^3 .
\]
Then the error produced by the first approximate solution $u_1$ is given by
\begin{equation*}
\begin{aligned}
	S[u_1 ]
= \ &
\mu^{-2} \mu_t Z_5\left(\frac{x-\xi}{\mu}\right) \eta\left(\frac{x-\xi}{\sqrt t}\right)
+ E[\mu]
+
\mu^{-2} \xi_t \cdot \nabla w\left(\frac{x-\xi}{\mu}\right)
\eta\left(\frac{x-\xi}{\sqrt t}\right)\\
\ &
+ \mu^{-1} t^{-\frac 12} \xi_t  \cdot \nabla \eta\left(\frac{x-\xi}{\sqrt t} \right)
  w\left(\frac{x-\xi}{\mu}\right)
\end{aligned}
\end{equation*}
where
\begin{equation*}
\begin{aligned}
E[\mu] := \ &
2^{-1} \mu^{-1} t^{-1} w\left(\frac{x-\xi}{\mu}\right) \nabla \eta\left(\frac{x-\xi}{\sqrt t} \right)\cdot \frac{x-\xi}{ \sqrt t}
+ 2 \mu^{-2} t^{-\frac 12} \nabla w\left(\frac{x-\xi}{\mu}\right) \cdot \nabla \eta\left(\frac{x-\xi}{\sqrt t}\right)
\\
&
+ \mu^{-1} t^{-1} w\left(\frac{x-\xi}{\mu}\right) \Delta \eta\left(\frac{x-\xi}{\sqrt t}\right)
 +
\mu^{-3} w^3\left(\frac {x-\xi}{\mu} \right) \left[ \eta^3\left(\frac{x-\xi}{\sqrt t}\right)
-
\eta\left(\frac{x-\xi}{\sqrt t}\right)\right].
\end{aligned}
\end{equation*}
In next section, we shall add two global corrections to improve the slow decaying error.

\medskip

\subsection{Transferring slow decaying terms by heat equations}\label{sec-cut-slowdecay}
For some admissible function $f(x,t)$, denote
\begin{equation}\label{T-out-def}
	\mathcal{T}^{out}_{n}[f](x,t) :=
	\int_{t_{0}}^{t} \int_{\RR^n }
	(4\pi(t-s))^{-\frac n2}
	e^{-\frac{|x-z|^2}{4(t-s)}}
	f(z,s) d z d s.
\end{equation}
In the rest of the paper, we will use Lemma \ref{annular-forward} and Lemma \ref{far-forward} in the appendix to estimate
$	\mathcal{T}^{out}_{n}$  frequently and sometimes will not state repeatedly.

Set $y=\frac{x-\xi}{\mu}$. A term is said to be of slow decay if its spatial decay is equal to or slower than $\langle y \rangle^{-2}$. Otherwise, it is of fast decay. Fast decay is necessary for the gluing procedure.
For this reason we will transfer the slow decaying terms in $S[u_1]$ by heat equations. We now introduce the correction function $\varphi$ to improve the error. For
\begin{equation*}
S[u_1 + \varphi] =
 - \pp_t \varphi  + \Delta \varphi + S[u_1]
 + (u_1 + \varphi)^3 - u_1^3,
\end{equation*}
we set $\bar{x}= x-\xi$ and choose $\varphi(\bar{x},t) = \varphi_{1}(\bar{x},t) +\varphi_{2}(\bar{x},t)$ such that
\begin{equation*}
\pp_t \varphi_{1} =  \Delta_{\bar{x}} \varphi_{1}
+ E[\mu],\quad	\pp_{t} \varphi_{2} = \Delta_{\bar{x}} \varphi_{2} + \mu^{-2} \mu_t Z_5(\frac{\bar{x}}{\mu}) \eta(\frac{\bar{x}}{\sqrt t}) .
\end{equation*}
The properties of $\varphi_1$ and $\varphi_2$ are given in the following two lemmas.

\begin{lemma}\label{varphi1-lem}
	Assume that $\mu$ satisfies \eqref{ansatz-mu} and $\mu_1$ satisfies $|\mu_{1}|\le \frac{\mu}{2}$.
	Consider
	\begin{equation}\label{varphi1-eq}
		\pp_t \varphi_{1} =  \Delta_{\bar{x}} \varphi_{1}
		+ E[\mu] .
	\end{equation}
	There exists a solution $\varphi_{1} = \varphi_{1}[\mu]$ satisfying the following pointwise estimates
\begin{equation*}
	| \varphi_1[\mu] |
	\lesssim
(t\ln t)^{-1} \1_{\{ |\bar{x}| \le 2t^{\frac 12} \}}
	+
	t^{2} (\ln t)^{-1} |\bar{x}|^{-6}
	\1_{\{ |\bar{x}| > 2t^{\frac 12} \}},
\end{equation*}	
	\begin{equation*}
		|\nabla_{\bar{x}} \varphi_1|
		\lesssim
		t^{-\frac 32} (\ln t)^{-1} \1_{\{ |\bar{x}| \le 2t^{\frac 12} \}}
		+
		t^{\frac 32} (\ln t)^{-1} |\bar{x}|^{-6}
		\1_{\{ |\bar{x}| > 2t^{\frac 12} \}}
		.
	\end{equation*}	
	More precisely,
	\begin{align*}
		\varphi_1[\mu] = \ &
		\left[
		-2^{-\frac 12} \mu  t^{-1}  + O( \mu t^{-2} |\bar{x}|^2 )
		 + O \Big(
		t^{-2} \int_{t_0/2}^{t} (  s^{-1 } \mu^{3}(s) + 	s |\mu_t(s)|  ) ds
		\Big)
		\right] \1_{\{ |\bar{x}|  \le 2 t^{\frac 12} \}}
\\
& +
O\left( \mu |\bar{x}|^{-2} e^{-\frac{| \bar  x|^2}{16 t}}
+
|\bar{x}|^{-6}
 \int_{t_0/2}^{t} s^2 |\mu_t(s)|  ds
+
t^{-2} e^{-\frac{|\bar{x}|^2}{16 t} }\int_{t_0/2}^{t/2} (s^{-1 } \mu^{3}(s)  + 	s  |\mu_t(s)| ) ds
\right) \1_{\{ |\bar{x}| > 2t^{\frac 12}  \}}
		,
	\end{align*}
	\begin{align*}
		&
		\varphi_1[\mu+\mu_1] - \varphi_1[\mu]
		=
		\bigg[
		-2^{-\frac 12} \mu_1  t^{-1}   + O( |\mu_1| t^{-2} |\bar{x}|^2 )
		\\
		&
		+
		O \Big( t^{-2}  \mu^2  \sup\limits_{t_1\in [t/2,t] }|\mu_1(t_1)|
		+
		\sup\limits_{t_1\in [t/2,t] }|\mu_{1t}(t_1)|  +
		t^{-2} \int_{t_0/2}^{t/2}
		\big(
		s^{-1} |\mu_1(s)| \mu^{2}(s)  + s  |\mu_{1t}(s)| \big) ds \Big)
		\bigg]  \1_{\{ |\bar{x}|  \le 2t^{\frac 12} \}}
		\\
		&
		+
		O \bigg(  \sup\limits_{t_{1}\in [t/2,t] }	|\mu_1(t_1)| |\bar{x}|^{-2} e^{-\frac{| \bar  x|^2}{16 t}}
		+
		|\bar{x}|^{-6}
		\Big(
		t^3 \sup\limits_{t_{1}\in [t/2,t] }	|\mu_{1t}(t_1)| +  \int_{t_0/2}^{t/2} s^2 |\mu_{1t}(s)|  ds \Big)
		\\
		&
		+ 	t^{-2} e^{-\frac{|\bar{x}|^2}{16 t} }\int_{t_0/2}^{t/2}
		\Big(
		s^{-1} |\mu_1(s)| \mu^{2}(s) + s |\mu_{1t}(s)|  \Big) ds \bigg) \1_{\{ |\bar{x}|  > 2t^{\frac 12} \}}
		.
	\end{align*}

\end{lemma}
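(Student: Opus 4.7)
My plan is to represent $\varphi_1$ via Duhamel,
\begin{equation*}
\varphi_1[\mu](\bar x, t) = \int_{t_0}^{t}\!\!\int_{\R^4} G(\bar x - z, t-s)\, E[\mu](z, s)\,dz\,ds,
\end{equation*}
with $G(x,\tau) = (4\pi\tau)^{-2}e^{-|x|^2/(4\tau)}$ the $\R^4$ heat kernel, and to exploit the structure of $E[\mu]$: all four terms are supported in the annulus $\mathcal{A}_s = \{\sqrt s \le |z| \le \tfrac{3}{2}\sqrt s\}$ and are pointwise of size $O(\mu(s)\, s^{-2})$ there, via the asymptotics $w(z/\mu)\sim 2^{3/2}\mu^2|z|^{-2}$ and $\nabla w = O(\mu^3|z|^{-3})$ for $|z|\gg\mu$. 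A short integration by parts against $\eta$ and its derivatives then gives the explicit moment $\int_{\R^4} E[\mu](z,s)\,dz = -c_* \mu(s) + O(\mu(s)^3/s)$ for an explicit positive constant $c_*$; this is the seed of the $-2^{-1/2}\mu t^{-1}$ leading term.

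For the inner region $|\bar x|\le 2\sqrt t$, I split the $s$-integral at $s = t/2$. On $[t_0,t/2]$ the kernel is smooth on $\mathcal{A}_s$ and Taylor expansion of the Gaussian in $\bar x, z$ at the origin gives $G(\bar x-z,t-s)= (4\pi(t-s))^{-2}(1 + O((|\bar x|^2+|z|^2)/(t-s)))$; inserting the mass formula and using \eqref{ansatz-mu} to replace $\mu(s)$ by $\mu(t)$ at the cost of an $\int s|\mu_s|\,ds$ tail recovers the dominant $-2^{-1/2}\mu(t)t^{-1}$ (the $t^{-2}$ from $(4\pi(t-s))^{-2}$ being absorbed by integration against the mass), the spatial correction $O(\mu t^{-2}|\bar x|^2)$, and the remainder $O(t^{-2}\int_{t_0/2}^t(s^{-1}\mu^3+s|\mu_s|)\,ds)$. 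On $[t/2,t]$ the factor $e^{-|\bar x-z|^2/(4(t-s))}$ is uniformly exponentially small on $\mathcal{A}_s$ since $|\bar x-z|\gtrsim \sqrt t$ and $t-s\le t/2$; this tail is handled directly or by Lemma \ref{annular-forward} and absorbed into the same remainder.

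For the outer region $|\bar x|>2\sqrt t$ I note that on $\mathcal{A}_s$ one has $|\bar x - z| \ge \tfrac{1}{4}|\bar x|$, whence $G\lesssim (t-s)^{-2}e^{-|\bar x|^2/(16(t-s))}$. Combining the crude bound $|E[\mu]|\lesssim \mu(s)s^{-2}$ on a volume-$O(s^2)$ set with the change of variables $\tau = t-s$ and $\int_0^\infty \tau^{-2}e^{-|\bar x|^2/(16\tau)}d\tau\lesssim |\bar x|^{-2}$ yields the first outer term $\mu|\bar x|^{-2}e^{-|\bar x|^2/(16t)}$; the same $s=t/2$ split together with Lemma \ref{far-forward} extracts the $|\bar x|^{-6}\int s^2|\mu_s|\,ds$ and $t^{-2}e^{-|\bar x|^2/(16t)}\int(\ldots)\,ds$ refinements. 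The gradient estimate follows identically from $|\nabla_{\bar x}G|\lesssim (t-s)^{-1/2}\tilde G$ (where $\tilde G$ is a Gaussian of slightly smaller variance), costing one factor of $t^{-1/2}$ in the inner region and being absorbed into the Gaussian outside.

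Finally, for $\varphi_1[\mu+\mu_1] - \varphi_1[\mu]$ I linearize: $E[\mu+\mu_1]-E[\mu]$ is, to leading order, $E[\mu]$ with $\mu$ replaced by $\mu_1$, up to a quadratic remainder $O(\mu\mu_1^2 s^{-2})$ coming from the cubic $T_4 = \mu^{-3}w^3(\eta^3-\eta)$; repeating the convolution analysis gives the stated expansion. The suprema $\sup_{t_1\in[t/2,t]}|\mu_1(t_1)|$ and $\sup_{t_1\in[t/2,t]}|\mu_{1,t}(t_1)|$ appear precisely because $\mu_1$ is \emph{not} assumed to satisfy a bound of the type \eqref{ansatz-mu}, so its late-time behaviour must be tracked by supremum bounds rather than by integrals with built-in decay. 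The main technical obstacle is matching the leading constant $-2^{-1/2}$ together with its first-order spatial correction $O(\mu t^{-2}|\bar x|^2)$ while rigorously absorbing the late-time tail $s\in[t/2,t]$ into the stated remainders rather than letting it contaminate the dominant term.
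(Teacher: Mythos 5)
Your plan rests on a straight Duhamel representation $\varphi_1 = \TT_4^{out}[E[\mu]]$ and a Taylor expansion of the heat kernel to extract the leading constant. This diverges structurally from the paper, which first writes down an \emph{explicit self-similar solution} $\tilde\varphi_1[\mu] = 2^{\frac32}\mu|\bar x|^{-2}\big(e^{-|\bar x|^2/(4t)}-\eta(|\bar x|/\sqrt t)\big)$ by solving the ODE \eqref{A-eq} in closed form, and only then applies Duhamel ($\tilde\varphi_{1b}=\TT_4^{out}[-\mu_t\hat\varphi_1 + E-\tilde E]$) to the much smaller residual. The self-similar profile delivers $\tilde\varphi_1(0,t) = -2^{-\frac12}\mu t^{-1}$ \emph{exactly}, which is essential because this constant feeds directly into the orthogonality computation determining $\mu_0$ in Section~\ref{SolElliptic-sec}. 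Your moment calculation $\int E\,dz=-c_*\mu+O(\mu^3/s)$ is correct (it is the same integration by parts as $h(b)be^{b^2/4}=2^{\frac32}(b^{-1}e^{b^2/4}\eta'(b))'$), but it cannot be upgraded to the exact coefficient by Taylor expanding $G$ for two reasons, and this is a genuine gap.

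First, on $\mathcal{A}_s$ with $s$ of order $t/2$ one has $|z|^2/(t-s)\sim 1$, so the Taylor remainder $O\big((|\bar x|^2+|z|^2)/(t-s)\big)$ in your expansion of $G(\bar x-z,t-s)$ is $O(1)$, not small. Upon integration against $ds$ this remainder contributes at the \emph{same} order $\mu t^{-1}$ as the purported leading term: $\int_{t_0}^{t/2}(t-s)^{-2}\mu(s)\cdot\frac{s}{t-s}\,ds\sim \mu(t)t^{-1}$. Thus the expansion isolates neither the magnitude nor the precise value $-2^{-\frac12}$, and the precise statement of the lemma (not merely the crude bound $\lesssim(t\ln t)^{-1}$) cannot be recovered this way. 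Second, choosing zero initial data at $t_0$ introduces a free heat-flow contribution, i.e.\ an ``initial layer'' carrying the data $\hat\varphi_1(\cdot,t_0)$. Its size at time $t$ is of order $\mu\,t_0/t^2$, which for $t$ comparable to $t_0$ is of order $\mu/t_0\sim(t_0\ln t_0)^{-1}$; this exceeds the stated remainder $O(t^{-2}\int_{t_0/2}^t(s^{-1}\mu^3+s|\mu_t|)\,ds)\sim t^{-1}(\ln t)^{-2}$. Since the implicit constants in the lemma must be uniform in $t_0$, this term contaminates the expansion. The paper sidesteps both issues at once by taking $\varphi_1(\cdot,t_0)=\tilde\varphi_1(\cdot,t_0)$ as the implied initial datum, so the self-similar core is present from time $t_0$ on and only the small residual is fed through $\TT_4^{out}$. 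To repair your proof you would need to replace the Duhamel-from-$t_0$ representation by the self-similar ansatz (or equivalently, postulate a compatible initial datum), and abandon the Taylor-expansion route to the constant in favor of the explicit formula for $A(\zeta)$. The outer-region estimates, the gradient bound, and the linearization in $\mu_1$ are otherwise along the right lines, though you should note that in the paper the gradient of the leading part is obtained by differentiating the explicit $\hat\varphi_1$ rather than by bounding $\nabla_{\bar x}G$.
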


\begin{proof}
The support of $E$ is in $\{t^{\frac 12} \le |\bar{x}| \le 2 t^{\frac 12} \}$. In this region, by \eqref{ansatz-mu}, $\mu^{-1} |\bar x| \gg 1$, which implies
\begin{equation*}
w(\frac{|\bar x|}{\mu}) = 2^{\frac 32} \mu^{2} |\bar x|^{-2} + O(\mu^{4} |\bar x|^{-4}),\quad
\frac{|\bar x|}{\mu}  w'(\frac{|\bar x|}{\mu})
= -2^{\frac 52} \mu^{2} |\bar x|^{-2} +  O(\mu^{4} |\bar x|^{-4})
.
\end{equation*}
Then the leading term of $E$ denoted by $\tilde{E}$ is given by
	\[
	\tilde{E} =
	2^{\frac 32} \mu t^{-2}
	\left(
	2^{-1}
	\zeta^{-1} \eta'(\zeta )
	-
	\zeta^{-3} \eta'( \zeta )  +  \zeta^{-2} \eta''(\zeta)
	\right) ,
	\quad \zeta = \frac{|\bar x|}{\sqrt t}.
	\]
	Take $\tilde{\varphi}_{1}$ as the approximate solution to \eqref{varphi1-eq}. Set $\tilde{\varphi}_1 = \mu \hat{\varphi}_1$, $
	\tilde{E} = \mu \hat{E}$ and $\hat{\varphi}_{1}$ satisfies
	\[
	\pp_t \hat{\varphi}_1 = \Delta_{\bar x} \hat{\varphi}_1 + \hat{E} .
	\]
We take
	$
	\hat{\varphi}_1 = t^{-1} A\left(\frac{|\bar x|}{\sqrt t}\right)
	$ in the self-similar form. Then
	\begin{equation}\label{A-eq}
		A'' + \left(\frac 3{\zeta} + \frac{\zeta}{2} \right) A' + A
		+
		h(\zeta) = 0 ,
	\end{equation}
	where
	\begin{equation*}
		h(\zeta) = 2^{\frac{3}{2} } \zeta^{-2}
		\left(
		\eta''(\zeta) - \frac 1{\zeta} \eta'(\zeta) + \frac{\zeta}{2} \eta'(\zeta)
		\right) .
	\end{equation*}
	
	Observe that $\zeta^{-2}$, $\zeta^{-2} (1 - e^{-\frac{\zeta^2}{4}})$ are linearly independent kernels to the homogeneous part of \eqref{A-eq}. And \eqref{A-eq} has a particular solution
	\begin{equation*}
		A_{p}(\zeta)=-\zeta^{-2} \int_0^{\zeta} a e^{-\frac{a^2}{4}}\int_0^a h(b)b e^{\frac{b^2}{4}} d b d a
		=
		-\zeta^{-2} \int_0^{\zeta}  2^{\frac 32}  \eta'(a) d a
		=
		2^{\frac 32}  \zeta^{-2}   (1- \eta(\zeta) ) ,
	\end{equation*}
	where we have used $h(b) b e^{\frac{b^2}{4}} =
	2^{\frac 32} (b^{-1} e^{\frac{b^2}{4}} \eta'(b))' $.

	In order to find a solution with fast spatial decay, we take
	\begin{equation*}
		A(\zeta)=A_{p}(\zeta) - 2^{\frac 32} \zeta^{-2} (1 - e^{-\frac{\zeta^2}{4}}) = 2^{\frac 32} \zeta^{-2} ( e^{-\frac{\zeta^2}{4}} - \eta(\zeta))
	\end{equation*}
which implies that
	\begin{equation}\label{tilde-varphi1}
	\hat{\varphi}_{1}( \bar{x} ,t)= 2^{\frac 32 } |\bar{x}|^{-2}
\Big(
e^{-\frac{| \bar{x} |^2}{4t}} -\eta(\frac{| \bar{x} |}{\sqrt t})\Big) ,
\quad
		\tilde{\varphi}_1[\mu]
		= 2^{\frac 32 } \mu | \bar  x|^{-2}
	\Big(
		e^{-\frac{| \bar  x|^2}{4t}} -\eta(\frac{|\bar  x|}{\sqrt t})
\Big) .
	\end{equation}
It is straightforward to see $\tilde{\varphi}_1[\mu](0,t)=-2^{-\frac 12} \mu t^{-1}$,
\begin{equation*}
	\tilde{\varphi}_1[\mu]
=
\left(-2^{-\frac 12} \mu  t^{-1} + O( \mu t^{-2} |\bar{x}|^2 ) \right) \1_{\{ |\bar{x}|  \le 2 t^{\frac 12} \}}
+
O\left(\mu |\bar{x}|^{-2} e^{-\frac{| \bar  x|^2}{4t}} \right)
\1_{\{ |\bar{x}|  > 2 t^{\frac 12} \}}
 ,
\end{equation*}
\begin{equation*}
\tilde{\varphi}_1[\mu+\mu_1] - \tilde{\varphi}_1[\mu]
=
\left(-2^{-\frac 12} \mu_1  t^{-1} + O( |\mu_1| t^{-2} |\bar{x}|^2 ) \right) \1_{\{ |\bar{x}|  \le 2 t^{\frac 12} \}}
+
O\left( |\mu_1| |\bar{x}|^{-2} e^{-\frac{| \bar  x|^2}{4t}} \right)
\1_{\{ |\bar{x}|  >2  t^{\frac 12} \}},
\end{equation*}
and
\begin{equation*}
\begin{aligned}
	\pp_{|\bar{x}|}\hat{\varphi}_{1} =&~ -2^{\frac 52 }  |\bar{x}|^{-3}
	\Big(
	e^{-\frac{| \bar{x} |^2}{4t}} -\eta(\frac{| \bar{x} |}{\sqrt t})
	\Big)
	-
	2^{\frac 32 } |\bar{x}|^{-2}
	\Big(
	e^{-\frac{| \bar{x} |^2}{4t}} \frac{|\bar{x}|}{2t} +\eta'(\frac{| \bar{x} |}{\sqrt t} ) t^{-\frac 12}
	\Big) \\
=&~
O\Big( |\bar{x}| t^{-2} \1_{\{ |\bar{x}|\le t^{\frac 12} \} }
+
|\bar{x}|^{-1} t^{-1}
	e^{-\frac{| \bar{x} |^2}{4t}}
\1_{\{ |\bar{x}|> t^{\frac 12} \} }
\Big)
	,
	\end{aligned}
\end{equation*}
\begin{equation}\label{z1}
|\nabla_{\bar{x}} \tilde{\varphi}_1 [\mu] | = 	|\mu \nabla_{\bar{x}} \hat{\varphi}_1   |
	\lesssim
	|\bar{x} | |\mu| t^{-2}  \1_{\{ |\bar{x}| \le 2t^{\frac 12} \}}
	+
	| \bar{x} |^{-1}  |\mu|  t^{-1}
	e^{-\frac{| \bar{x} |^2}{4t}}
	\1_{\{ |\bar{x}| > 2t^{\frac 12} \}}
	.
\end{equation}
	
	Take $\varphi_{1} = \tilde{\varphi}_{1} + \tilde{\varphi}_{1b}$. Then $\tilde{\varphi}_{1b}$ satisfies
	\begin{equation*}
		\pp_{t} \tilde{\varphi}_{1b} = \Delta_{\bar{x}} \tilde{\varphi}_{1b} -\mu_t \hat{\varphi}_{1} + E - \tilde{E},
	\end{equation*}
	where $\tilde{\varphi}_{1b}$ is given by
	\begin{equation}\label{til-varphi1b}
		\tilde{\varphi}_{1b}[\mu](\bar x,t) = \TT_4^{out}[ -\mu_t \hat{\varphi}_{1} + E - \tilde{E} ] (\bar{x},t)
	\end{equation}
with
	\begin{equation}\label{ZE-1}
		\begin{aligned}
		&	E - \tilde{E} =
			\frac 12 \mu^{-1} t^{-1}
			\Big(
			w(\frac{|\bar{x}|}{\mu}) - 2^{\frac 32} \mu^2 |\bar{x}|^{-2}
			\Big) \eta'(\frac{|\bar{x}|}{\sqrt t} ) \frac{|\bar{x}|}{\sqrt t }
		 + 2 \mu^{-1} t^{-1}
		\Big( \frac{|\bar{x}|}{\mu}  w'(\frac{|\bar{x}|}{\mu})
			+ 2^{\frac 52} \mu^2 |\bar{x}|^{-2}
		\Big)
			\frac{1}{|\bar{x}| t^{-\frac 12}} \eta'(\frac{|\bar{x}|}{\sqrt t})
			\\
			& + \mu^{-1} t^{-1}
			\Big(
			w(\frac{|\bar{x}|}{\mu}) - 2^{\frac 32} \mu^2 |\bar{x}|^{-2}
			\Big)
			\Delta \eta(\frac{|\bar{x}|}{\sqrt t})  +
			\mu^{-3} w^3(\frac {|\bar{x}|}{\mu} ) \Big( \eta^3(\frac{|\bar{x}|}{\sqrt t})
			-\eta(\frac{|\bar{x}|}{\sqrt t}) \Big)
=
O( \mu^3 t^{-3}
\1_{\{ \sqrt t \le |\bar{x}| \le 2\sqrt t \} } )
			.
		\end{aligned}
	\end{equation}
Similarly, we evaluate
\begin{equation*}
(E - \tilde{E} )[\mu+\mu_1] - (E - \tilde{E} )[\mu ]
=
O \big(
 |\mu_1| \mu^2 t^{-3}
\1_{\{ \sqrt t \le |\bar{x}| \le 2\sqrt t \} }  \big) .
\end{equation*}
By Lemma \ref{annular-forward}, one has
\begin{equation*}
\TT_4^{out} \left[\mu^3 t^{-3}
\1_{\{ \sqrt t \le |\bar{x}| \le 2\sqrt t \} }\right]
\lesssim
t^{-2} e^{-\frac{|\bar{x}|^2}{16 t} }\int_{t_0/2}^{t/2} \mu^{3}(s) s^{-1} ds
+
\begin{cases}
\mu^3  t^{-2} &
\mbox{ \ if \ } |\bar{x}|\le t^{\frac 12}
\\
\mu^3 t^{-1} |\bar x|^{-2}  e^{-\frac{|\bar x|^2}{16 t}}
&
\mbox{ \ if \ }   |\bar{x}| >  t^{\frac 12}
\end{cases}
,
\end{equation*}
\begin{equation*}
\begin{aligned}
	\TT_4^{out} \left[|\mu_1| \mu^2 t^{-3}
	\1_{\{ \sqrt t \le |\bar{x}| \le 2\sqrt t \} }\right]
	\lesssim \ &
	t^{-2} e^{-\frac{|\bar{x}|^2}{16 t} }\int_{t_0/2}^{t/2} |\mu_1(s)| \mu^{2}(s) s^{-1} ds
	\\
	&
	+
	\begin{cases}
	\sup\limits_{t_1\in[t/2,t]} |\mu_1(t_1)|	\mu^2 t^{-2} &
		\mbox{ \ if \ } |\bar{x}|\le t^{\frac 12}
		\\
	\sup\limits_{t_1\in[t/2,t] } |\mu_1(t_1)|	\mu^2 t^{-1} |\bar x|^{-2}  e^{-\frac{|\bar x|^2}{16 t}}
		&
		\mbox{ \ if \ }   |\bar{x}| >  t^{\frac 12}
	\end{cases}
	.
\end{aligned}
\end{equation*}
Notice that
	\begin{equation}\label{ZE-2}
		| \mu_t \hat{\varphi}_{1} |
			\lesssim
			 |\mu_t| t^{-1} \1_{\{ |\bar x|\le t^{\frac 12} \} } +
		 |\mu_t| |\bar x|^{-2}
			e^{-\frac{|\bar x|^2}{4t}}
			\1_{ \{ |\bar x| >  t^{\frac 12} \} } .
	\end{equation}
Therefore, by Lemma \ref{annular-forward}, we obtain
\begin{equation*}
\TT_4^{out} \left[|\mu_t| t^{-1} \1_{\{ |\bar x|\le t^{\frac 12} \} }\right]
\lesssim
t^{-2} e^{-\frac{|\bar{x}|^2}{16 t} }\int_{t_0/2}^{t/2}
s |\mu_t(s)|  ds +
\begin{cases}
|\mu_t| &
\mbox{ \ if \ } |\bar{x}|\le t^{\frac 12}
\\
|\mu_t| t |x|^{-2} e^{-\frac{|\bar{x}|^2}{16 t}}
&
\mbox{ \ if \ } |\bar{x}| > t^{\frac 12}
\end{cases}
.
\end{equation*}
By Lemma \ref{far-forward}, we have
\begin{equation*}
	\begin{aligned}
		&
		\TT_4^{out} \left[ |\mu_t| |\bar x|^{-2}
		e^{-\frac{|\bar x|^2}{4t}}
		\1_{ \{ |\bar x| >  t^{\frac 12} \} } \right]
		\lesssim
		\TT_4^{out} \left[ |\mu_t| t^2 |\bar x|^{-6}
		\1_{ \{ |\bar x| >  t^{\frac 12} \} } \right]
		\\
		\lesssim \ &
		t^{-2} e^{-\frac{|\bar{x}|^2}{16 t} }\int_{t_0/2}^{t/2} s |\mu_t(s)|  ds +
		\begin{cases}
			|\mu_t|
			&
			\mbox{ \ if \ }  |\bar{x}|\le t^{\frac 12}
			\\
			|\bar{x}|^{-6}
(t^3 |\mu_t|
+
\int_{t_0/2}^{t/2} |\mu_t(s)| s^2 ds
)
			&
			\mbox{ \ if \ }  |\bar{x}| >  t^{\frac 12}
		\end{cases}
		,
	\end{aligned}
\end{equation*}
and thus
\begin{equation*}
| \TT_4^{out}[ \mu_t \hat{\varphi}_{1} ] |
	\lesssim
	t^{-2} e^{-\frac{|\bar{x}|^2}{16 t} }\int_{t_0/2}^{t/2}
	|\mu_t(s)| s ds +
	\begin{cases}
		|\mu_t| &
		\mbox{ \ if \ } |\bar{x}|\le t^{\frac 12}
		\\
	|\bar{x}|^{-6} \int_{t_0/2}^t |\mu_t(s)| s^2 ds
		&
		\mbox{ \ if \ } |\bar{x}| > t^{\frac 12}
	\end{cases}.
\end{equation*}
It then follows that
\begin{equation*}
\begin{aligned}
|\tilde{\varphi}_{1b}[\mu]|
	\lesssim \ &
	t^{-2} e^{-\frac{|\bar{x}|^2}{16 t} }\int_{t_0/2}^{t/2} ( s^{-1 } \mu^{3}(s)  + 	s  |\mu_t(s)| ) ds
	\\
	&
	+
	\begin{cases}
	\mu^3 t^{-2} + 	|\mu_t| &
		\mbox{ \ if \ } |\bar{x}|\le t^{\frac 12}
		\\
	\mu^3 t^{-1} |\bar x|^{-2}  e^{-\frac{|\bar x|^2}{16 t}}
		+
		|\bar{x}|^{-6}
 \int_{t_0/2}^{t}  s^2  |\mu_t(s)|ds
		&
		\mbox{ \ if \ }   |\bar{x}| >  t^{\frac 12}
	\end{cases}
	,
\end{aligned}
\end{equation*}
\begin{equation}\label{varphi1b-1}
\begin{aligned}
& \big|\tilde{\varphi}_{1b}[\mu +\mu_1] - \tilde{\varphi}_{1b}[\mu]\big|
	\lesssim
	t^{-2} e^{-\frac{|\bar{x}|^2}{16 t} }\int_{t_0/2}^{t/2}
	\big(
	s^{-1} |\mu_1(s)| \mu^{2}(s)  + s |\mu_{1t}(s)|  \big) ds
\\
&	+
	\begin{cases}
t^{-2} 	\mu^2 \sup\limits_{t_{1}\in [t/2,t ] }|\mu_1(t_1)| + \sup\limits_{t_{1}\in [t/2,t ] } |\mu_{1t}(t_1)| &
		\mbox{ \ if \ } |\bar{x}|\le t^{\frac 12}
		\\
	\sup\limits_{t_{1}\in [t/2,t ] }	|\mu_1(t_1)|	\mu^2 t^{-1} |\bar x|^{-2}  e^{-\frac{|\bar x|^2}{16 t}}
		+
		|\bar{x}|^{-6}
\Big(
t^3 \sup\limits_{t_{1}\in [t/2,t ] }	|\mu_{1t}(t_1)| +  \int_{t_0/2}^{t/2} s^2 |\mu_{1t}(s)|  ds \Big)
		&
		\mbox{ \ if \ }   |\bar{x}| >  t^{\frac 12}
	\end{cases}
.
\end{aligned}
\end{equation}
In particular, for $|\mu|\lesssim (\ln t)^{-1}$, $|\mu_{t}| \lesssim t^{-1} (\ln t)^{-2}$, one has
\begin{equation}\label{til1b-upp}
	|\tilde{\varphi}_{1b}[\mu]|
	\lesssim
t^{-1} (\ln t)^{-2}
\1_{\{ |\bar{x}|\le t^{\frac 12} \}}
+
t^{2} (\ln t)^{-2} |\bar{x}|^{-6}
\1_{\{ |\bar{x}| >  t^{\frac 12}  \}}
,
\end{equation}
\begin{equation*}
	|	E - \tilde{E} | \lesssim
 (t \ln t)^{-3}
	\1_{\{ \sqrt t \le |\bar{x}| \le 2\sqrt t \} }
,\quad
	| \mu_t \hat{\varphi}_{1} |
	\lesssim
 (t \ln t)^{-2} \1_{\{ |\bar x|\le t^{\frac 12} \} } +
	t^{-1} (\ln t)^{-2} |\bar x|^{-2}
	e^{-\frac{|\bar x|^2}{4t}}
	\1_{ \{ |\bar x| >  t^{\frac 12} \} } .
\end{equation*}
Then by scaling argument, we have
\begin{equation*}
	|\nabla_{\bar{x}} \tilde{\varphi}_{1b}[\mu]|
	\lesssim
t^{-\frac{3}{2}} (\ln t)^{-2}
\1_{\{ |\bar{x}|\le t^{\frac{1}{2}} \} }
+
t^{\frac 32} (\ln t)^{-2} |\bar{x}|^{-6}
\1_{\{ |\bar{x}| > t^{\frac{1}{2}} \} } .
\end{equation*}
Combining above estimates with \eqref{z1}, we have
\begin{equation*}
	|\nabla_{\bar{x}} \varphi_1|
	\lesssim
t^{-\frac 32} (\ln t)^{-1} \1_{\{ |\bar{x}| \le 2t^{\frac 12} \}}
	+
t^{\frac 32} (\ln t)^{-1} |\bar{x}|^{-6}
	\1_{\{ |\bar{x}| > 2t^{\frac 12} \}}
	.
\end{equation*}

\end{proof}

\begin{lemma}\label{varphi2-lem}
Assume that $\mu$ satisfies \eqref{ansatz-mu} and $\mu_1$ satisfies $|\mu_{1}|\le \frac{\mu}{2}$, $|\mu_{1t}| \le \frac{|\mu_t| }{2}$. Consider
\begin{equation*}
\pp_{t} \varphi_{2} = \Delta_{\bar{x}} \varphi_{2} + \mu^{-2} \mu_t Z_5(\frac{\bar{x}}{\mu}) \eta(\frac{\bar{x} }{\sqrt t}) ,
\end{equation*}
where $\varphi_{2}$ is given by $\varphi_{2} = \varphi_{2}[\mu]
=
\TT^{out}_4
\left[\mu^{-2} \mu_t Z_5(\frac{\bar{x}}{\mu}) \eta(\frac{\bar{x} }{\sqrt t}) \right]
$.
Then the following estimates hold
\begin{equation}\label{varphi2-s-up}
\begin{aligned}
	|\varphi_2[\mu] |
\lesssim \ &
t^{-2} e^{-\frac{|\bar{x}|^2}{16 t }}\int_{t_0/2}^{t/2} s |\mu_t(s)| ds
+
\begin{cases}
|\mu_t| (\ln (\mu^{-1} t^{\frac 12} ) + 1)
&
\mbox{ \ if \ } |\bar x|\le \mu
\\
|\mu_t|  (\ln(|\bar x|^{-1} t^{\frac 12}) + 1)
&
\mbox{ \ if \ }  \mu < |\bar x|\le t^{\frac 12}
\\
|\mu_t|  t  |\bar x|^{-2} e^{-\frac{| \bar x |^2}{16 t}}
&
\mbox{ \ if \ }   |\bar x| > t^{\frac 12}
\end{cases}
\\
\lesssim \ &
		\begin{cases}
			(t\ln t)^{-1}
			&
			\mbox{ \ if \ } |\bar x|\le \mu
			\\
			t^{-1} (\ln t)^{-2} (\ln(|\bar x|^{-1} t^{\frac 12}) + 1)
			&
			\mbox{ \ if \ }  \mu < |\bar x|\le t^{\frac 12}
			\\
			t^{-1} (\ln t)^{-2} e^{-\frac{| \bar x |^2}{16 t}}
			&
			\mbox{ \ if \ }   |\bar x| > t^{\frac 12}
		\end{cases} ,
\end{aligned}
\end{equation}
\begin{equation}\label{nab-varphi2-s-up}
	|\nabla_{\bar{x}}\varphi_2[\mu] |
	\lesssim
	\begin{cases}
		t^{-1}
		&
		\mbox{ \ if \ } |\bar x|\le \mu
		\\
		t^{-1} (\ln t)^{-2} (\ln(|\bar x|^{-1} t^{\frac 12}) + 1)
		|\bar{x}|^{-1}
		&
		\mbox{ \ if \ }  \mu < |\bar x|\le t^{\frac 12}
		\\
		t^{-\frac 32}	(\ln t)^{-2}
		 e^{-\frac{| \bar x |^2}{30 t}}
		&
		\mbox{ \ if \ }   |\bar x| > t^{\frac 12}
	\end{cases} ,
\end{equation}
\begin{equation}\label{varphi2-mu-mu1}
	\begin{aligned}
		\big|\varphi_2[\mu+\mu_1] - \varphi_2[\mu]\big|
		\lesssim \ &
		t^{-2} e^{-\frac{|\bar{x}|^2}{16 t }}\int_{t_0/2}^{t/2} s |\mu_t(s)| \Big(\frac{|\mu_1(s)|}{ \mu(s) } + \frac{|\mu_{1t}(s)|}{|\mu_t(s)|}  \Big) ds
		\\
		&
		+
		|\mu_t| \sup\limits_{t_1\in[t/2,t]} \Big(\frac{|\mu_1(t_1)|}{ \mu(t) } + \frac{|\mu_{1t}(t_1)|}{|\mu_t(t)|}  \Big)
		\begin{cases}
			\ln (\mu^{-1} t^{\frac 12} ) + 1
			&
			\mbox{ \ if \ } |\bar x|\le \mu
			\\
			\ln(|\bar x|^{-1} t^{\frac 12}) + 1
			&
			\mbox{ \ if \ }  \mu < |\bar x|\le t^{\frac 12}
			\\
			t  |\bar x|^{-2} e^{-\frac{| \bar x |^2}{16 t}}
			&
			\mbox{ \ if \ }   |\bar x| > t^{\frac 12}
		\end{cases}
		.
	\end{aligned}
\end{equation}
More precisely,
\begin{equation*}
	\begin{aligned}
		\varphi_2[\mu] = \ &
		\bigg[
		-2^{-\frac 12} \int_{t/2}^{t-\mu_{0}^2 } \frac{\mu_t(s)}{t-s} d s
		+ 	 O\Big( t^{-2}
		\int_{t_0/2}^{t}
		s |\mu_t(s)|
		d s
		+
		\min\{ \mu^{-1} |\mu_t | |\bar x| ,  \ln t |\mu_t | \}
		\Big)
		\bigg]
		\1_{\{ |\bar{x}| \le 2 t^{\frac 12} \}}
		\\
		&
		+
		O \Big( t^{-2} \int_{t_0/2}^{t} s |\mu_t(s)| ds
		\Big) e^{-\frac{|\bar{x}|^2}{16 t }}
		\1_{\{ |\bar{x}| > 2 t^{\frac 12} \}} ,
	\end{aligned}
\end{equation*}
\begin{equation*}
	\begin{aligned}
		&
		\varphi_2[\mu + \mu_1 ] - 	\varphi_2[\mu]
		\\
		= \ &
		\Bigg[
		-2^{-\frac 12} \int_{t/2}^{t-\mu_{0}^2 } \frac{\mu_{1t} (s)}{t-s} d s
		+ O\bigg( |\mu_t| \sup\limits_{t_{1}\in[t/2,t]}\Big(\frac{|\mu_1(t_1)|}{ \mu(t) } + \frac{|\mu_{1t}(t_1)|}{|\mu_t(t)|}  \Big)+  t^{-2}
		\int_{t_0/2}^{t/2}
		s |\mu_t(s)| \Big(\frac{|\mu_1(s)|}{ \mu(s) } + \frac{|\mu_{1t}(s) | }{|\mu_t(s) | }\Big)
		d s
		\\
		&
		+
		|\mu_{t} | \ln t \sup\limits_{t_1\in[t/2,t]} \Big(\frac{|\mu_1(t_1)|}{ \mu(t) } + \frac{|\mu_{1t}(t_1)|}{|\mu_t(t)|}\Big)^2
		+
		\mu^{-1} |\mu_t|  \sup\limits_{t_1\in[t/2,t]}\Big(\frac{|\mu_1(t_1)|}{\mu(t)} + \frac{|\mu_{1t}(t_1)|}{|\mu_t(t)|}\Big) |\bar x|
		\bigg) \Bigg] \1_{\{ |\bar{x}| \le 2 t^{\frac 12} \}}
		\\
		& +  	O\bigg( |\mu_t| \sup\limits_{t_1\in [t/2,t]}  \Big(\frac{|\mu_1(t_1)|}{ \mu(t) } + \frac{|\mu_{1t}(t_1)|}{|\mu_t(t)|}  \Big) +  t^{-2} \int_{t_0/2}^{t/2} s |\mu_t(s)| \Big(\frac{|\mu_1(s)|}{\mu(s)} + \frac{|\mu_{1t}(s)|}{|\mu_t(s)|}  \Big) ds\bigg) e^{-\frac{| \bar x |^2}{16 t}}
		\1_{\{ |\bar{x}| > 2 t^{\frac 12} \}}
		.
	\end{aligned}
\end{equation*}

\end{lemma}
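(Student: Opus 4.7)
My plan is to estimate the convolution $\varphi_2=\mathcal{T}_4^{out}[F]$ with source $F(z,s)=\mu^{-2}(s)\mu_t(s)Z_5(z/\mu(s))\eta(z/\sqrt s)$ by splitting both the time integration at $s=t/2$ and the spatial profile of $F$ according to $|z|$ relative to $\mu(s)$ and $\sqrt s$. For the old-time contribution $s\in[t_0,t/2]$, the heat kernel satisfies $(t-s)^{-2}\sim t^{-2}$ and Lemma \ref{far-forward} supplies the exponential decay factor $e^{-|\bar x|^2/(16t)}$; using $|Z_5(y)|\lesssim\langle y\rangle^{-2}$, the spatial $L^1$ norm of the source is bounded by
\begin{equation*}
\int_{|z|\le\sqrt s}\frac{|\mu_t(s)|\,dz}{\mu^2(s)+|z|^2}\lesssim s|\mu_t(s)|,
\end{equation*}
which produces the integral $t^{-2}\int_{t_0/2}^{t/2}s|\mu_t(s)|\,ds$ appearing in the claimed expansion.

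For the dominant contribution $s\in[t/2,t]$, I would exploit the self-similarity of $Z_5$ under parabolic scaling. Setting $\tilde\tau:=(t-s)/\mu^2(s)$ and changing variables $y=z/\mu(s)$, the heat convolution at $\bar x$ becomes
\begin{equation*}
[G_{t-s}\ast\mu^{-2}(s)Z_5(\cdot/\mu(s))](\bar x)=\mu^{-2}(s)(4\pi\tilde\tau)^{-2}\int_{\R^4} e^{-|y-\bar x/\mu(s)|^2/(4\tilde\tau)}Z_5(y)\,dy .
\end{equation*}
Combining the asymptotic $Z_5(y)\sim-2^{3/2}|y|^{-2}$ with the radial identity $\int_{\R^4}e^{-|y|^2/(4\tilde\tau)}|y|^{-2}\,dy=4\pi^2\tilde\tau$, a direct computation yields, for $\tilde\tau\gg 1$ and $|\bar x|\lesssim\sqrt{t-s}$, the leading value $-2^{-1/2}/(t-s)$ with error $O(\mu^2(s)/(t-s)^2)$. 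Multiplying by $\mu_t(s)$ and integrating over $s\in[t/2,t-\mu_0^2]$ then produces the sharp leading term $-2^{-1/2}\int_{t/2}^{t-\mu_0^2}\mu_t(s)/(t-s)\,ds$. The complementary range $s\in[t-\mu^2,t]$ (where $\tilde\tau\lesssim 1$) is handled by the trivial convolution bound $\mu^{-2}(s)|\mu_t(s)|$ integrated over an interval of length $\mu^2$, yielding a contribution of size $|\mu_t|$ absorbed in the $O$-term.

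To recover the $\bar x$-dependence I would split into the regimes $|\bar x|\le\mu$, $\mu<|\bar x|\le\sqrt t$, and $|\bar x|>\sqrt t$. In the first, the logarithmic factor $\ln(\mu^{-1}t^{1/2})+1$ emerges from $\int_{t/2}^{t-\mu^2}(t-s)^{-1}ds$; in the second, the main contribution comes from $s\in[t-|\bar x|^2,t]$, producing $\ln(|\bar x|^{-1}t^{1/2})+1$; the third is dispatched by Lemma \ref{far-forward} with the factor $t|\bar x|^{-2}e^{-|\bar x|^2/(16t)}$. The $\min$-structure in the $\bar x$-correction follows by comparing the interior gradient bound $|\nabla\varphi_2|\lesssim\mu^{-1}|\mu_t|$ on $\{|\bar x|\le\mu\}$ with the oscillation bound $\ln t\cdot|\mu_t|$ provided by the leading logarithmic integral. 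The gradient estimate \eqref{nab-varphi2-s-up} is obtained by differentiating under the integral sign and repeating the scaling argument with the extra factor $(t-s)^{-1/2}$ coming from $\nabla G_{t-s}$.

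For the Lipschitz-type estimate \eqref{varphi2-mu-mu1}, I would write the difference $F[\mu+\mu_1]-F[\mu]$ as an integral of the derivative along the path $\mu_\theta=\mu+\theta\mu_1$, $\theta\in[0,1]$, producing a source whose size is controlled by $\mu^{-1}|\mu_t|(|\mu_1|/\mu+|\mu_{1t}|/|\mu_t|)$ times a function with the same spatial profile as $F$, so that the previous estimates carry over verbatim. The main obstacle will be pinning down the precise coefficient $-2^{-1/2}$ and uniformly controlling the transition regime $\sqrt{t-s}\sim\mu(s)$, where neither the near-field approximation $Z_5\approx 2^{3/2}$ nor the far-field $Z_5\approx -2^{3/2}|y|^{-2}$ dominates cleanly; this calls for a uniform estimate of $(4\pi\tilde\tau)^{-2}\int e^{-|y|^2/(4\tilde\tau)}Z_5(y)\,dy$ valid for all $\tilde\tau>0$, together with a careful matching of the $s\in[t/2,t-\mu^2]$ and $s\in[t-\mu^2,t]$ contributions.
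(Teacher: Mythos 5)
Your proposal follows essentially the same strategy as the paper: split the time integral at $s=t/2$ and $s=t-\mu_0^2(t)$, use the crude $L^1$ bound on the source for $s\in[t_0,t/2]$ to get the $t^{-2}\int s|\mu_t|\,ds$ tail, extract the coefficient $-2^{-1/2}$ on $[t/2,t-\mu_0^2]$ via the radial identity for $\int e^{-|z|^2/(4(t-s))}|z|^{-2}\,dz$ in $\R^4$, and absorb the boundary layer $[t-\mu_0^2,t]$ into the $O(|\mu_t|)$ term. Your computation of the leading constant is correct (the paper's $I_*$ is the same integral in $z$-coordinates rather than scaled $y$-coordinates), and the way you propose to recover the $\bar x$-dependence and the $\min\{\mu^{-1}|\mu_t||\bar x|,\,\ln t\,|\mu_t|\}$ correction (FTC bound versus the rearrangement/oscillation bound) matches the paper's estimate of $I_2-I_{02}$.

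Two small points. First, your claimed error $O(\mu^2(s)/(t-s)^2)$ when replacing $Z_5$ by $-2^{3/2}|y|^{-2}$ is slightly optimistic: $Z_5(y)+2^{3/2}|y|^{-2}=2^{3/2}(1+3|y|^2)/(|y|^2(1+|y|^2)^2)$, and convolving with the heat kernel picks up an extra logarithmic factor $\ln\tilde\tau$ for $\tilde\tau\gg 1$; the $s$-integral still collapses to $O(|\mu_t|)$, so the conclusion is unchanged, but you should carry the log along. Second, the uniform-in-$\tilde\tau$ transition-regime difficulty you flag at the end is sidestepped in the paper by a different ordering of approximations: rather than approximating $Z_5$ for large $|y|$, the paper subtracts the far-field tail $-2^{3/2}\mu^2/|z|^2$ \emph{on all of $\R^4$} (this gives $I_*$, computable exactly for every $t-s>0$) and then bounds the remainder $Z_5(z/\mu)+2^{3/2}\mu^2/|z|^2$, which is $O\!\big(\mu^4/(|z|^2\langle z/\mu\rangle^2)\big)$ uniformly, as $I_{022}$, plus a cutoff correction $I_{021}$. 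This removes any need to match the $\tilde\tau\lesssim 1$ and $\tilde\tau\gg 1$ regimes by hand and is worth adopting in a clean write-up.
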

\begin{proof}
Since
\begin{equation}\label{ZA-3-2}
| \mu^{-2} \mu_t Z_5(\frac{\bar{x}}{\mu}) \eta(\frac{\bar{x} }{\sqrt t}) | \lesssim  \mu^{-2} \mu_t \1_{\{ |\bar{x}| \le \mu \}} +  \mu_t |\bar x|^{-2} \1_{\{  \mu < |\bar{x}| \le 2 t^{\frac 12} \}},
\end{equation}
 by Lemma \ref{annular-forward} and \eqref{ansatz-mu}, we conclude the validity of  \eqref{varphi2-s-up}.
By scaling argument, \eqref{nab-varphi2-s-up} follows.

For $\mu_1$ satisfying $|\mu_1| \le \frac{\mu}{2}$ and $|\mu_{1t}| \le \frac{|\mu_t| }{2}$, we have
\begin{equation}\label{Z5-mu-mu1}
\begin{aligned}
&
(\mu +\mu_1)^{-2} (\mu_t  +\mu_{1t}  )Z_5(\frac{\bar{x}}{ \mu +\mu_1 })  -
\mu^{-2} \mu_t Z_5(\frac{\bar{x}}{\mu})
\\
= \ &
\mu^{-2}   \mu_{1t}   Z_5(\frac{\bar{x}}{\mu  } )  -
\mu^{-3 } \mu_1 \mu_t
\Big( 2Z_5(\frac{\bar {x}}{\mu  }) +
\frac{\bar{x}}{\mu  } \cdot  \nabla Z_5 (\frac{\bar {x}}{\mu  } )  \Big)
 +
\big(  \mu^{-3}  \mu_1 \mu_t  + \mu^{-2}   \mu_{1t} \big)  \langle \frac{\bar{x}}{\mu} \rangle^{-2} O\Big(\frac{|\mu_1| }{\mu} + \frac{|\mu_{1t}|}{|\mu_t|}\Big)
\\
= \ &
O\bigg( \mu^{-2} |\mu_t| \Big(\frac{|\mu_1|}{\mu} + \frac{|\mu_{1t}|}{|\mu_t|}\Big) \langle \frac{\bar x}{\mu} \rangle^{-2} \bigg) .
\end{aligned}
\end{equation}
Then by Lemma \ref{annular-forward}, one gets \eqref{varphi2-mu-mu1}.

In order to extract the dominating part of $\varphi_2$ for the preparation of solving the orthogonal equation, we split $\varphi_{2}$ into several parts to estimate. Set $\mu_{0}(t) = (\ln t)^{-1}$ and consider
\begin{equation*}
\begin{aligned}
	\varphi_{2}=&~  \left(
	\int_{t_0}^{t/2}
	+
	\int_{t/2}^{t-\mu_{0}^2(t)}
	+
	\int_{t-\mu_{0}^2(t)}^{t}
	\right)
	\int_{\RR^4} (4\pi (t-s))^{-2} e^{-\frac{|x-z|^2}{4(t-s)}}
	\mu^{-2}(s) \mu_t(s) Z_5( \frac{|z|}{\mu(s) } ) \eta(\frac{|z|}{\sqrt s})   d z d s \\
:= &~
I_{1} + I_{2} + I_{3} .
\end{aligned}
\end{equation*}
For $I_{1}$, by rearrangement inequality, we have
\begin{equation*}
	\begin{aligned}
& |I_{1}| \lesssim
\int_{t_0/2}^{t/2}
\int_{\RR^4}  (t-s)^{-2} e^{-\frac{|x-z|^2}{4(t-s)}}
\mu^{-2}(s) |\mu_t(s)|
\langle \frac{|z|}{\mu(s) } \rangle^{-2} \eta(\frac{|z|}{\sqrt s})   d z d s
		\\
 \lesssim \ &
		t^{-2}
		\int_{t_0/2}^{t/2}
		\int_{\RR^4}  e^{-\frac{|z|^2}{4 t}}
		\mu^{-2}(s) |\mu_t(s)|
		\langle \frac{|z|}{\mu(s) } \rangle^{-2} \eta(\frac{|z|}{\sqrt s})   d z d s
\lesssim
t^{-2}
\int_{t_0/2}^{t/2}
\mu^{-2}(s) |\mu_t(s)|
\int_{0}^{2\sqrt s}
\langle \frac{ r }{\mu(s) } \rangle^{-2} r^3 d r d s
\\
\lesssim \ &
t^{-2}
\int_{t_0/2}^{t/2}
s |\mu_t(s)|
 d s
	\end{aligned}
\end{equation*}
since $
\int_{0}^{\mu(s)}
\langle \frac{ r }{\mu(s) } \rangle^{-2} r^3 d r
\sim  \mu^4(s),
\quad
\int_{\mu(s)}^{2\sqrt s}
\langle \frac{ r }{\mu(s) } \rangle^{-2} r^3 d r
\lesssim   s \mu^2(s) $.

Using \eqref{Z5-mu-mu1} and similar calculations above, one has
\begin{equation*}
\big|I_1[\mu+\mu_1] - I_1[\mu] \big|
\lesssim
t^{-2}
\int_{t_0/2}^{t/2}
s |\mu_t(s)| \Big(\frac{|\mu_1(s)|}{ \mu(s) } + \frac{|\mu_{1t}(s) | }{|\mu_t(s) | }\Big)
d s.
\end{equation*}

For $I_{3}$, we have
\begin{equation*}
\begin{aligned}
|I_{3}|
\lesssim \ &
\int_{t - \mu_{0}^2(t) }^{t}
\int_{\RR^4}  (t-s)^{-2} e^{-\frac{|z|^2}{4(t-s)}}
\mu^{-2}(s) |\mu_t(s)|
\langle \frac{|z|}{\mu(s) } \rangle^{-2} \eta(\frac{|z|}{\sqrt s})   d z d s
\\
\lesssim  \ &
\mu^{-2}  |\mu_t|
\int_{t -\mu_{0}^2(t) }^{t}
(t-s)^{-2}
\int_{0}^{2\sqrt t}   e^{-\frac{r^2}{4(t-s)}}
\langle \frac{r}{\mu(t) } \rangle ^{-2}  r^3 d r d s
\lesssim
|\mu_t|
\end{aligned}
\end{equation*}
since for $s\in (t-\mu_{0}^2(t),t)$,
\begin{equation*}
\begin{aligned}
	&
\int_{0}^{\mu(t)}   e^{-\frac{r^2}{4(t-s)}}
\langle \frac{r}{\mu(t) } \rangle ^{-2}  r^3 d r
\sim
\int_{0}^{\mu(t)}   e^{-\frac{r^2}{4(t-s)}}
  r^3 d r
\sim
(t-s)^2 \int_{0}^{\frac{\mu^2(t)}{4(t-s)}} e^{-z} z d z \sim (t-s)^2 ,
\\
	&
\int_{\mu(t)}^{2\sqrt t}   e^{-\frac{r^2}{4(t-s)}}
\langle \frac{r}{\mu(t) } \rangle ^{-2}  r^3 d r
\sim
(t-s) \mu^2(t)
\int_{\frac{\mu^2(t)}{4(t-s)}}^{\frac{t}{t-s}}
e^{-z} d z
\lesssim
(t-s) \mu^2(t) e^{-\frac{\mu^2(t)}{4(t-s)}}
\lesssim
(t-s)^2 .
\end{aligned}
\end{equation*}
Similarly, using \eqref{Z5-mu-mu1}, one has
\begin{equation*}
\big|I_3[\mu+\mu_1] - I_3[\mu ]  \big|
\lesssim
|\mu_t| \sup\limits_{t_{1}\in[t/2,t]}\Big(\frac{|\mu_1(t_1)|}{\mu(t)} + \frac{|\mu_{1t}(t_1)|}{|\mu_t(t)|}  \Big) .
\end{equation*}

For $I_{2}$, more delicate calculations are needed to single out the leading term. Set
\begin{equation*}
I_{2} = I_{02} +  (I_{2} - I_{02}),
\end{equation*}
where
\begin{equation*}
		I_{02} =
		\int_{t/2}^{t-\mu_{0}^2(t)}
		\int_{\RR^4} (4\pi (t-s))^{-2} e^{-\frac{|z|^2}{4(t-s)}}
		\mu^{-2}(s) \mu_t(s) Z_5( \frac{|z|}{\mu(s) } ) \eta(\frac{|z|}{\sqrt s})   d z d s
:=
I_{*} + I_{021} +  I_{022}
\end{equation*}
and
\begin{equation*}
\begin{aligned}
I_{*} = \ & -2^{\frac 32} \int_{ t/2}^{t-\mu_{0}^2(t)}
\int_{\RR^4} (4\pi (t-s))^{-2} e^{-\frac{|z|^2}{4(t-s)}}
\mu^{-2}(s) \mu_t(s)  \frac{\mu^2(s)}{|z|^2}  d z d s ,
\\
I_{021} = \ &
2^{\frac 32} \int_{t/ 2}^{t-\mu_{0}^2(t)}
\int_{\RR^4} (4\pi (t-s))^{-2} e^{-\frac{|z|^2}{4(t-s)}}
\mu^{-2}(s) \mu_t(s)  \frac{\mu^2(s)}{|z|^2} \Big(1- \eta(\frac{|z|}{\sqrt s}) \Big)  d z d s,
\\
I_{022} = \ &
\int_{t/2}^{t-\mu_{0}^2(t) }
\int_{\RR^4} (4\pi (t-s))^{-2} e^{-\frac{|z|^2}{4(t-s)}}
\mu^{-2}(s) \mu_t(s)
\Big(
 Z_5( \frac{|z|}{\mu(s) } )
 + 2^{\frac 32} \frac{\mu^2(s)}{|z|^2}
\Big) \eta(\frac{|z|}{\sqrt s})   d z d s .
\end{aligned}
\end{equation*}
For $I_*$, we evaluate
\begin{equation*}
\begin{aligned}
I_{*} =
-2^{\frac 32}|S^3| \int_{t/2}^{t-\mu_{0}^2(t) }
\mu_t(s)
\int_{0}^{\infty} (4\pi (t-s))^{-2} e^{-\frac{ r^2}{4(t-s)}}
   r  d r d s
=
-2^{-\frac 12} \int_{t/2}^{t-\mu_{0}^2(t) } \frac{\mu_t(s)}{t-s} d s .
\end{aligned}
\end{equation*}
In the same way, one has
\begin{equation*}
I_*[\mu+\mu_1] - I_*[\mu ]
=
-2^{-\frac 12} \int_{t/2}^{t-\mu_{0}^2(t) } \frac{\mu_{1t}(s)}{t-s} d s .
\end{equation*}
For $I_{021} $, we get
\begin{equation*}
|I_{021} |
\lesssim
|\mu_t | \int_{\frac t2}^{t-\mu_{0}^2(t) }
\int_{\frac{\sqrt t}{2}}^{\infty}  (t-s)^{-2} e^{-\frac{r^2}{4(t-s)}}
  r    d r d s
\lesssim
|\mu_t | ,
\quad
	\big|I_{021}[\mu+\mu_1] - I_{021}[\mu ] \big|
	\lesssim
\sup\limits_{t_{1}\in[t/2,t]}	|\mu_{1t}(t_1) | .
\end{equation*}
For $I_{022} $, we have
	\begin{align*}
&	|I_{022} | \lesssim
		\int_{t/2}^{t-\mu_0^2}
		\int_{\RR^4} (t-s)^{-2} e^{-\frac{|z|^2}{4(t-s)}}
		\mu^{-2}(s) |\mu_t(s) |
 (\frac{|z| }{\mu(s) } )^{-2}
		\langle \frac{|z|}{\mu(s) } \rangle^{-2} \eta(\frac{|z|}{\sqrt s})   d z d s
		\\
\lesssim \ &
|\mu_t|
\int_{t/2}^{t-\mu_0^2}
\int_{\RR^4}  (t-s)^{-2} e^{-\frac{|z|^2}{4(t-s)}}
\left| z \right|^{-2}
\langle \frac{|z|}{\mu(t) } \rangle^{-2} \eta(\frac{|z|}{2 \sqrt t})   d z d s
\\
\lesssim \ &
|\mu_t |
\int_{t/2}^{t-\mu_0^2}
(t-s)^{-2}
\int_{0}^{4\sqrt t}   e^{-\frac{ r^2}{4(t-s)}}
\langle \frac{ r }{\mu(t) } \rangle^{-2} r  d r d s
\lesssim
|\mu_t | \mu^{2}
\int_{t/2}^{t-\mu_0^2}
 (t-s)^{-2} \Big(1- \ln (\frac{\mu^2(t)}{4(t-s)})  \Big) d s
\lesssim
|\mu_t |
	\end{align*}
since for $\frac t2 \le s \le t-\mu^2(t)$,
\begin{equation*}
\int_{0}^{\mu(t)}   e^{-\frac{ r^2}{4(t-s)}}
\langle \frac{ r }{\mu(t) } \rangle^{-2} r  d r
\lesssim \mu^{2}(t) ,
\end{equation*}
\begin{equation*}
	\begin{aligned}
		&
		\int_{\mu(t)}^{4\sqrt t}   e^{-\frac{ r^2}{4(t-s)}}
		\langle \frac{ r }{\mu(t) } \rangle^{-2} r  d r
\sim
\mu^{2}(t)
\int_{\mu(t)}^{4\sqrt t}   e^{-\frac{ r^2}{4(t-s)}}
 r^{-1}  d r
\sim
\mu^{2} (t)
\int_{\frac{\mu^2(t) }{4(t-s)}}^{\frac{4t}{t-s}} e^{-z} z^{-1} d z
\lesssim
\mu^{2}(t)  \Big(1- \ln (\frac{\mu^2(t)}{4(t-s)})  \Big) .
	\end{aligned}
\end{equation*}

Next, we estimate $I_{022}[\mu+\mu_1] - I_{022}[\mu]$. By \eqref{Z5-mu-mu1}, we have
\begin{equation*}
	\begin{aligned}
		&
		(\mu+\mu_1)^{-2} (\mu_t +\mu_{1t} )
		\left(
		Z_5( \frac{|z|}{\mu +\mu_1 } )
		+ 2^{\frac 32} \frac{(\mu+\mu_1)^2}{|z|^2}
		\right) -
		\mu^{-2} \mu_t
		\left(
		Z_5( \frac{|z|}{\mu } )
		+ 2^{\frac 32} \frac{\mu^2 }{|z|^2}
		\right)
		\\
		= \ &
		\mu^{-2}   \mu_{1t}   Z_5(\frac{z}{\mu  } )  + 2^{\frac{3}{2}} \frac{\mu_{1t}}{|z|^2} -
		\mu^{-3 } \mu_1 \mu_t
		\Big( 2Z_5(\frac{z}{\mu  }) +
		\frac{z}{\mu  } \cdot  \nabla Z_5 (\frac{z}{\mu  } )  \Big)
	 +
		\big(  \mu^{-3}  \mu_1 \mu_t  + \mu^{-2}   \mu_{1t} \big)  \langle \frac{z}{\mu} \rangle^{-2} O\Big(\frac{|\mu_1|}{\mu} + \frac{|\mu_{1t}|}{|\mu_t|}\Big)
		\\
		= \ &
		O\Big(\mu^{-2} |\mu_{1t} | \frac{|z|^{-2}}{\mu^{-2}} \langle \frac{z}{\mu} \rangle^{-2} \Big) -
		\mu^{-3 } \mu_1 \mu_t
		\Big( 2Z_5(\frac{z}{\mu  }) +
		\frac{z}{\mu  } \cdot  \nabla Z_5 (\frac{z}{\mu  } )  \Big)
		+
		\big(  \mu^{-3}  \mu_1 \mu_t  + \mu^{-2}   \mu_{1t} \big)  \langle \frac{z}{\mu} \rangle^{-2} O\Big(\frac{|\mu_1|}{\mu} + \frac{|\mu_{1t}|}{|\mu_t|}\Big)
		\\
		= \ &
		O\bigg(\mu^{-2} |\mu_{t} | \Big(\frac{|\mu_1|}{\mu} + \frac{|\mu_{1t}|}{|\mu_t|}\Big)  \frac{|z|^{-2}}{\mu^{-2}} \langle \frac{ z }{\mu} \rangle^{-2} \bigg)
		+
		O\bigg(\mu^{-2} |\mu_{t} | \Big(\frac{|\mu_1|}{\mu} + \frac{|\mu_{1t}|}{|\mu_t|}\Big)^2   \langle \frac{ z }{\mu} \rangle^{-2} \bigg)
		.
	\end{aligned}
\end{equation*}
Similar to the estimates of $I_{022}$, we then have
\begin{equation*}
	\begin{aligned}
		&
		\int_{t/2}^{t-\mu_0^2}
		\int_{\RR^4} (4\pi (t-s))^{-2} e^{-\frac{|z|^2}{4(t-s)}}
		\mu^{-2}(s) |\mu_{t}(s) | \Big(\frac{|\mu_1(s)|}{\mu(s)} + \frac{|\mu_{1t}(s)|}{|\mu_t(s)|}\Big)  \frac{|z|^{-2}}{\mu^{-2}(s)} \langle \frac{ z }{\mu(s) } \rangle^{-2}  \eta(\frac{|z|}{\sqrt s})   d z d s
		\\
		\lesssim \ &
		|\mu_{t} | \sup\limits_{t_1\in[t/2,t]}\Big(\frac{|\mu_1(t_1)|}{\mu(t) } + \frac{|\mu_{1t} (t_1)|}{|\mu_t(t) |}\Big) ,
	\end{aligned}
\end{equation*}

\begin{equation*}
	\begin{aligned}
		&
		\int_{t/2}^{t-\mu_0^2}
		\int_{\RR^4} (4\pi (t-s))^{-2} e^{-\frac{|z|^2}{4(t-s)}}
		\mu^{-2}(s) |\mu_{t}(s) | \Big(\frac{|\mu_1(s)|}{\mu(s)} + \frac{|\mu_{1t}(s)|}{|\mu_t(s)|}\Big)^2   \langle \frac{ z }{\mu(s)} \rangle^{-2}  \eta(\frac{|z|}{\sqrt s})   d z d s
		\\
		\lesssim \ &
		\mu^{-2}  |\mu_{t}  | \Big(\frac{|\mu_1 |}{\mu } + \frac{|\mu_{1t} |}{|\mu_t |}\Big)^2
		\int_{t/2}^{t-\mu_0^2}
		(t-s)^{-2}
		\int_{0}^{2t^{\frac 12}}
		e^{-\frac{ r^2}{4(t-s)}}
		\langle \frac{ r }{\mu(t)} \rangle^{-2}  r^3  d r d s
		\\
		\lesssim \ &
		|\mu_{t} | \ln t
		\sup\limits_{t_1\in[t/2,t]} \Big(\frac{|\mu_1(t_1)|}{\mu(t)} + \frac{|\mu_{1t}(t_1)|}{|\mu_t(t)|}\Big)^2
	\end{aligned}
\end{equation*}
since
\begin{equation}\label{z5}
	\int_{0}^{ \mu(t) }
	e^{-\frac{ r^2}{4(t-s)}}
	\langle \frac{ r }{\mu(t)} \rangle^{-2}  r^3  d r
	\lesssim \mu^4 ,\quad
	\int_{\mu(t)}^{2t^{\frac 12}}
	e^{-\frac{ r^2}{4(t-s)}}
	\langle \frac{ r }{\mu(t)} \rangle^{-2}  r^3  d r
	\sim
	\mu^2
	\int_{\mu(t)}^{2t^{\frac 12}}
	e^{-\frac{ r^2}{4(t-s)}}
	r  d r
	\lesssim
	\mu^2 (t-s) .
\end{equation}
Therefore, one has
\begin{equation*}
	\big|I_{022}[\mu+\mu_1] - I_{022}[\mu]\big| \lesssim
	|\mu_{t} | \sup\limits_{t_1\in[t/2,t]}\Big(\frac{|\mu_1(t_1)|}{\mu(t) } + \frac{|\mu_{1t}(t_1) |}{|\mu_t(t) |}\Big)
	+
	|\mu_{t} | \ln t \sup\limits_{t_1\in[t/2,t]} \Big(\frac{|\mu_1(t_1)|}{\mu(t) } + \frac{|\mu_{1t}(t_1)|}{|\mu_t(t)|}\Big)^2  .
\end{equation*}

Let us now estimate $I_{2} -I_{02}$
\begin{equation*}
	\begin{aligned}
		&
		|I_{2} -I_{02} |
		=
		\bigg| \int_{t/2}^{t-\mu_{0}^2(t)}
		\int_{\RR^4}
		\int_{0}^{1} (4\pi (t-s))^{-2} e^{-\frac{|\theta x-z|^2}{4(t-s)}} \frac{\theta x -z}{2(t-s)} \cdot x
		\mu^{-2}(s) \mu_t(s) Z_5( \frac{|z|}{\mu(s) } ) \eta(\frac{|z|}{\sqrt s})   d \theta d z d s \bigg|
		\\
		\lesssim \ &
		\mu^{-2} |\mu_t| |x|
		\int_{t/2}^{t-\mu_{0}^2(t)}
		\int_{\RR^4}
		\int_{0}^{1}  (t-s)^{-\frac 52} e^{-\frac{|\theta x-z|^2}{8(t-s)}}
		\langle \frac{|z|}{\mu(s) } \rangle^{-2} \eta(\frac{|z|}{\sqrt s})   d \theta d z d s
		\\
		\lesssim \ &
		\mu^{-2}  |\mu_t| |x|
		\int_{t/2}^{t-\mu_{0}^2(t)}
		\int_{\RR^4}
		(t-s)^{-\frac 52} e^{-\frac{|z|^2}{8(t-s)}}
		\langle \frac{|z|}{\mu(s) } \rangle^{-2} \eta(\frac{|z|}{\sqrt s})   d z d s
		\\
		\lesssim \ &
		\mu^{-2}  |\mu_t| |x|
		\int_{t/2}^{t-\mu_{0}^2(t)}
		(t-s)^{-\frac 52}
		\int_{0}^{2
			\sqrt t}
		 e^{-\frac{ r^2}{8(t-s)}}
		\langle \frac{r }{\mu(t) } \rangle^{-2} r^{3} d r d s
		\lesssim
		\mu^{-1} |\mu_t| |\bar x|
	\end{aligned}
\end{equation*}
since $\int_{0}^{2
	\sqrt t}
e^{-\frac{ r^2}{8(t-s)}}
\langle \frac{r }{\mu(t) } \rangle^{-2} r^{3} d r \lesssim \mu^2(t-s)$ by similar estimate in \eqref{z5}.

Using rearrangement inequality, one has another upper bound for $|I_{2} -I_{02} |$,
\begin{equation*}
	\begin{aligned}
	&	|I_{2} -I_{02} | \lesssim
		\int_{t/2}^{t-\mu_{0}^2(t)}
		\int_{\RR^4} (t-s)^{-2} e^{-\frac{|z|^2}{4(t-s)}}
		\mu^{-2}(s) |\mu_t(s) |
		\langle \frac{|z|}{\mu(s) } \rangle^{-2} \eta(\frac{|z|}{\sqrt s})   d z d s
		\\
		\lesssim \ &
		\mu^{-2} |\mu_t |
		\int_{t/2}^{t-\mu_{0}^2(t)}
		(t-s)^{-2}
		\int_{0}^{2t^{\frac 12 } }  e^{-\frac{ r^2}{4(t-s)}}
		\langle \frac{ r }{\mu(t) } \rangle^{-2} r^3  d r d s
		\lesssim
		|\mu_t(t) |
		\int_{t/2}^{t-\mu_{0}^2(t)}
		(t-s)^{-1} d s \lesssim  \ln t |\mu_t |
		.
	\end{aligned}
\end{equation*}
Thus
\begin{equation*}
	|I_{2} -I_{02} | \lesssim
	\min\big\{ \mu^{-1} |\mu_t | |\bar x| ,  \ln t |\mu_t | \big\} .
\end{equation*}
Using \eqref{Z5-mu-mu1} and similar calculations, one has
\begin{equation*}
	\big| (I_{2} -I_{02})[\mu+\mu_1] - (I_{2} -I_{02})[\mu]\big| \lesssim
	\mu^{-1} |\mu_t|  \sup\limits_{t_1\in[t/2,t]}\Big(\frac{|\mu_1(t_1)|}{\mu(t)} + \frac{|\mu_{1t}(t_1)|}{|\mu_t(t)|}\Big) |\bar x|  .
\end{equation*}
Combining all the estimates above, we conclude the validity of Lemma \ref{varphi2-lem}.

\end{proof}

Recalling $\varphi[\mu] = \varphi_1[\mu] + \varphi_2[\mu]$ and combining Lemma \ref{varphi1-lem} and Lemma \ref{varphi2-lem}, one has
\begin{corollary}\label{varphi-coro}
	Assume that $\mu$ satisfies \eqref{ansatz-mu} and $\mu_1$ satisfies $|\mu_{1}|\le \frac{\mu}{2}$, $|\mu_{1t}| \le \frac{|\mu_t| }{2}$. We have
\begin{equation*}
	\begin{aligned}
		|\varphi[\mu] |
		\lesssim \ &
		(\mu  t^{-1}
		+ 	g[\mu]  ) \1_{\{ |\bar{x}|\le 2t^{\frac 12}  \}}
		+
		\begin{cases}
			|\mu_t| (\ln (\mu^{-1} t^{\frac 12} ) + 1)
			&
			\mbox{ \ if \ } |\bar x|\le \mu
			\\
			|\mu_t| (\ln(|\bar x|^{-1} t^{\frac 12}) + 1)
			&
			\mbox{ \ if \ }  \mu < |\bar x|\le t^{\frac 12}
			\\
			t |\mu_t| |\bar x|^{-2} e^{-\frac{| \bar x |^2}{16 t}}
			&
			\mbox{ \ if \ }   |\bar x| > t^{\frac 12}
		\end{cases}
		\\
		& +
		O\Big( \mu |\bar{x}|^{-2} e^{-\frac{| \bar  x|^2}{16 t}}
		+
		|\bar{x}|^{-6} \int_{t_0/2}^t s^2  |\mu_t(s)| ds
		+
		g[\mu] e^{-\frac{|\bar{x}|^2}{16 t} }
		\Big) \1_{\{ |\bar{x}| > 2t^{\frac 12}  \}}
		\\
		\lesssim \ &
		(t\ln t)^{-1} \1_{\{ |\bar{x}| \le 2t^{\frac 12} \}}
		+
		O\big( t^{2} (\ln t)^{-1}
		|\bar{x}|^{-6}
		\big) \1_{\{ |\bar{x}| > 2t^{\frac 12}  \}}
	\end{aligned}
\end{equation*}
where
\begin{equation*}
	g[\mu] = O \Big( 	
	t^{-2} \int_{t_0/2}^{t} ( s^{-1 } \mu^{3}(s)  + 	s |\mu_t(s)| )  ds
	\Big) .
\end{equation*}
\begin{equation*}
	|\nabla_{\bar{x}}\varphi[\mu] |
	\lesssim
	\begin{cases}
		t^{-1}
		&
		\mbox{ \ if \ } |\bar x|\le \mu
		\\
		t^{-1} (\ln t)^{-2} (\ln(|\bar x|^{-1} t^{\frac 12}) + 1)
		|\bar{x}|^{-1}
		+
		t^{-\frac 32} (\ln t)^{-1}
		&
		\mbox{ \ if \ }  \mu < |\bar x|\le t^{\frac 12}
		\\
		t^{\frac 32} (\ln t)^{-1} |\bar{x}|^{-6}
		&
		\mbox{ \ if \ }   |\bar x| > t^{\frac 12}
	\end{cases}
	.
\end{equation*}

\begin{equation*}
	\begin{aligned}
		&
		\big|\varphi [\mu+\mu_1] - \varphi [\mu]\big|
		\lesssim
		\left( O ( |\mu_1|  t^{-1} )
		+
		\tilde{g}[\mu,\mu_1] \right) \1_{\{ |\bar{x}|  \le 2t^{\frac 12} \}}
		\\
		&
		+
		\sup\limits_{t_1\in[t/2,t]} \Big(\frac{|\mu_1(t_1)|}{ \mu(t) } + \frac{|\mu_{1t}(t_1)|}{|\mu_t(t)|}  \Big)
		\begin{cases}
			|\mu_t| (\ln (\mu^{-1} t^{\frac 12} ) + 1)
			&
			\mbox{ \ if \ } |\bar x|\le \mu
			\\
			|\mu_t| (\ln(|\bar x|^{-1} t^{\frac 12}) + 1)
			&
			\mbox{ \ if \ }  \mu < |\bar x|\le t^{\frac 12}
			\\
			t |\mu_t| |\bar x|^{-2} e^{-\frac{| \bar x |^2}{16 t}}
			&
			\mbox{ \ if \ }   |\bar x| > t^{\frac 12}
		\end{cases}
		\\
		&
		+
		O \bigg(  \sup\limits_{t_1\in[t/2,t]}|\mu_1(t_1)| |\bar{x}|^{-2} e^{-\frac{| \bar  x|^2}{16 t}}
		+
		|\bar{x}|^{-6} \Big( t^3 \sup\limits_{t_{1}\in [t/2,t] }	|\mu_{1t}(t_1)| +  \int_{t_0/2}^t s^2 |\mu_{1t}(s)|  ds \Big)
		+ \tilde{g}[\mu,\mu_1] e^{-\frac{|\bar{x}|^2}{16 t} } \bigg) \1_{\{ |\bar{x}|  > 2t^{\frac 12} \}}
	\end{aligned}
\end{equation*}
where
\begin{equation*}
	\begin{aligned}
		&
		\tilde{g}[\mu,\mu_1] =
		O\Big(|\mu_{t} | \ln t
		\sup\limits_{t_{1}\in[t/2,t]} \Big(\frac{|\mu_1(t_1)|}{\mu(t)}
		+ \frac{|\mu_{1t}(t_1)|}{|\mu_t(t)|}\Big)^2\Big)
		\\
		&
		+ 	O \bigg(	  |\mu_t| \sup\limits_{t_{1}\in[t/2,t]}\Big(\frac{|\mu_1(t_1)|}{ \mu(t) } + \frac{|\mu_{1t}(t_1)|}{|\mu_t(t)|}  \Big)
		+	 t^{-2} \int_{t_0/2}^{t}
		\Big( s^{-1} |\mu_1(s)| \mu^{2}(s) + s |\mu_t(s)| \Big(\frac{|\mu_1(s)|}{ \mu(s) } + \frac{|\mu_{1t}(s)|}{|\mu_t(s)|}  \Big) \Big)  ds \bigg) .
	\end{aligned}
\end{equation*}
More precisely,
\begin{equation*}
	\begin{aligned}
		\varphi[\mu] = \ & \bigg[ -2^{-\frac 12} \Big( \mu  t^{-1}
		+ \int_{t/2}^{t-\mu_{0}^2 } \frac{\mu_t(s)}{t-s} d s
		\Big)
		+ O\big( \mu t^{-2} |\bar{x}|^2  +
		|\mu_t|
		\min\{\frac{ |\bar x|}{\mu}  ,\ln t \} \big)
		+ g[\mu]
		\bigg]
		\1_{\{ |\bar{x}|\le 2t^{\frac 12}  \}}
		\\
		& +
		O\Big( \mu  |\bar{x}|^{-2} e^{-\frac{| \bar  x|^2}{16 t}}
		+
		|\bar{x}|^{-6} \int_{t_0/2}^t s^2 |\mu_t(s)|  ds
		+
		g[\mu] e^{-\frac{|\bar{x}|^2}{16 t} }
		\Big)
		\1_{\{ |\bar{x}| > 2 t^{\frac 12} \}} ,
	\end{aligned}
\end{equation*}
\begin{equation*}
	\begin{aligned}
		&
		\varphi[\mu + \mu_1 ] - 	\varphi[\mu]
		=
		\bigg[
		-2^{-\frac 12} \Big( \mu_1  t^{-1}
		+
		\int_{t/2}^{t-\mu_{0}^2 } \frac{\mu_{1t} (s)}{t-s} d s
		\Big)
		\\
		&
		+ O\Big( |\mu_1| t^{-2} |\bar{x}|^2 +  |\mu_t| \sup\limits_{t_{1}\in[t/2,t]}\Big(\frac{|\mu_1(t_1)|}{ \mu(t)} + \frac{|\mu_{1t}(t_1)|}{|\mu_t(t)|}  \Big)  \frac{ |\bar x |}{\mu} \Big)
		+
		\tilde{g}[\mu,\mu_1]
		\bigg]
		\1_{\{ |\bar{x}| \le 2 t^{\frac 12} \}}
\\
		&
		+
		O \Big(   \sup\limits_{t_{1}\in [t/2,t] }	|\mu_1(t_1)|  |\bar{x}|^{-2} e^{-\frac{| \bar  x|^2}{16 t}}
		+
		|\bar{x}|^{-6}
		\Big(
		t^3 \sup\limits_{t_{1}\in [t/2,t] }	|\mu_{1t}(t_1)| +  \int_{t_0/2}^{t/2} s^2 |\mu_{1t}(s)|  ds \Big) +   e^{-\frac{| \bar x |^2}{16 t}} 	\tilde{g}[\mu,\mu_1]
		\Big)
		\1_{\{ |\bar{x}| > 2 t^{\frac 12} \}}
		.
	\end{aligned}
\end{equation*}

\end{corollary}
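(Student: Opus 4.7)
The plan is to derive Corollary \ref{varphi-coro} as a direct superposition of Lemma \ref{varphi1-lem} and Lemma \ref{varphi2-lem}, since by construction $\varphi[\mu] = \varphi_{1}[\mu] + \varphi_{2}[\mu]$ and the equations for $\varphi_1$ and $\varphi_2$ are linear in their right-hand sides. The first step is to write out the pointwise bounds from both lemmas in the same three spatial regions $\{|\bar{x}|\le \mu\}$, $\{\mu<|\bar{x}|\le t^{1/2}\}$, $\{t^{1/2}<|\bar{x}|\le 2t^{1/2}\}$ and $\{|\bar{x}|>2t^{1/2}\}$, then add them. The $\varphi_2$ contribution dominates in the two inner regions (giving the logarithmic factors $\ln(\mu^{-1}t^{1/2})$ and $\ln(|\bar{x}|^{-1}t^{1/2})$), while in the far region the decay $|\bar{x}|^{-6}$ coming from the $\int s^2|\mu_t(s)|\,ds$ term of $\varphi_1$ persists and is collected into the $g[\mu]$ bound stated in the corollary.

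Next, to get the precise (rather than merely pointwise) form of $\varphi[\mu]$, I would add the leading constants isolated in each lemma: from Lemma \ref{varphi1-lem} the constant value at $\bar{x}=0$ is $-2^{-1/2}\mu t^{-1}$, coming from $\tilde\varphi_1[\mu](0,t)$ computed via \eqref{tilde-varphi1}; from Lemma \ref{varphi2-lem} the leading piece is the non-local term $-2^{-1/2}\int_{t/2}^{t-\mu_0^2}\frac{\mu_t(s)}{t-s}\,ds$, extracted there as $I_*$. Summing these gives the bracketed expression $-2^{-1/2}(\mu t^{-1}+\int_{t/2}^{t-\mu_0^2}\mu_t(s)/(t-s)\,ds)$ in the corollary, and all remaining lower-order terms from the two lemmas are absorbed into $g[\mu]$. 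The bound $|\mu_t|\min\{|\bar{x}|/\mu,\ln t\}$ is the minimum of the two bounds for $I_2-I_{02}$ obtained in Lemma \ref{varphi2-lem}.

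For the difference $\varphi[\mu+\mu_1]-\varphi[\mu]$, the same bookkeeping is carried out using the corresponding difference estimates in the two lemmas: the leading difference $-2^{-1/2}(\mu_1 t^{-1}+\int_{t/2}^{t-\mu_0^2}\mu_{1t}(s)/(t-s)\,ds)$ comes by linearity from the leading parts of $\tilde\varphi_1$ and $I_*$, while the quadratic ``error-squared'' terms of the form $|\mu_t|\ln t\,\sup(|\mu_1|/\mu+|\mu_{1t}|/|\mu_t|)^2$ stem from $I_{022}$'s comparison (where \eqref{Z5-mu-mu1} is used) and are collected into $\tilde g[\mu,\mu_1]$. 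The gradient estimate on $\varphi$ follows by adding \eqref{z1} and the $\tilde\varphi_{1b}$ gradient bound from Lemma \ref{varphi1-lem} to the gradient estimate \eqref{nab-varphi2-s-up} for $\varphi_2$, again region by region.

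The only substantive point is to check that the simplified global upper bound $|\varphi[\mu]|\lesssim (t\ln t)^{-1}\mathbf{1}_{\{|\bar{x}|\le 2t^{1/2}\}}+t^{2}(\ln t)^{-1}|\bar{x}|^{-6}\mathbf{1}_{\{|\bar{x}|> 2t^{1/2}\}}$ is indeed implied by the more refined region-by-region sum; this reduces to verifying, under the ansatz \eqref{ansatz-mu}, that $|\mu_t|(\ln(|\bar{x}|^{-1}t^{1/2})+1)\lesssim (t\ln t)^{-1}$ for $\mu<|\bar{x}|\le t^{1/2}$ and that $g[\mu]\lesssim (t\ln t)^{-1}$, both of which are straightforward once one inserts $|\mu|\lesssim(\ln t)^{-1}$ and $|\mu_t|\lesssim t^{-1}(\ln t)^{-2}$ into the integrals defining $g[\mu]$. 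The main obstacle I anticipate is simply clerical: matching the various region thresholds between the two lemmas (where some use $|\bar{x}|\le t^{1/2}$ and others $|\bar{x}|\le 2t^{1/2}$) and ensuring the exponential factors $e^{-|\bar{x}|^2/(16t)}$ combine without changing the effective decay rate.
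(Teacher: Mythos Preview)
Your proposal is correct and matches the paper's approach exactly: the paper gives no separate proof of this corollary beyond the one-line remark ``Recalling $\varphi[\mu]=\varphi_1[\mu]+\varphi_2[\mu]$ and combining Lemma \ref{varphi1-lem} and Lemma \ref{varphi2-lem}, one has\dots'', so the corollary is intended precisely as the region-by-region superposition you describe. Your identification of the leading pieces ($-2^{-1/2}\mu t^{-1}$ from $\tilde\varphi_1$ and $-2^{-1/2}\int_{t/2}^{t-\mu_0^2}\mu_t(s)/(t-s)\,ds$ from $I_*$), the absorption of lower-order terms into $g[\mu]$ and $\tilde g[\mu,\mu_1]$, and the final simplification under the ansatz \eqref{ansatz-mu} are all exactly right.
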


\medskip

In order to extract the leading term, we will use the precise version of $\varphi[\mu]$ and $\varphi[\mu+\mu_1] - \varphi[\mu]$ when calculating the orthogonal equation. In other cases, we are inclined to adopt the rougher upper bound.

With introduction of the correction term $\varphi$, the new error is given by
\begin{equation*}
	\begin{aligned}
		S[u_1 + \varphi[\mu]  ]
		= \ &
		3 u_1^{2}  \varphi[\mu] + 3u_1 \varphi^2[\mu] + \varphi^3[\mu] +  \xi_t\cdot \nabla_{\bar{x}} \varphi[\mu](x-\xi,t)
		\\
		&
		+ \mu^{-2} \xi_t \cdot \nabla w(\frac{x-\xi}{\mu})
		\eta(\frac{x-\xi}{\sqrt t})
		+ \mu^{-1} t^{-\frac 12}
		\xi_t 	\cdot \nabla \eta(\frac{x-\xi}{\sqrt t} ) w(\frac{x-\xi}{\mu})  .
	\end{aligned}
\end{equation*}

\medskip

\subsection{Further improvement by solving an elliptic equation}\label{SolElliptic-sec}

In order to find suitable parameters to design the topology for solving inner-outer gluing system and the orthogonal equation, we will use the corresponding linearized elliptic equation to cut off the error term so that the time decay rate  will be improved.

Set the correction term as
\begin{equation*}
\bar{\mu}_0^{-1} \Phi_0(\frac{x-\xi}{\bar{\mu}_0},t)
\end{equation*}
where $\bar{\mu}_0$ is the leading order of $\mu$ to be determined later. Formally speaking,  $\Phi_0$ will be chosen to satisfy the following equation
\begin{equation}\label{Phi0-eq}
\begin{aligned}
	\Delta_y \Phi_0 + 3w^2(y) \Phi_0
	\approx \ &
-	\mu^3 \Big(3 u_1^{2}  \varphi[\mu]  (\bar{x},t) + 3 u_1 \varphi^2[\mu](\bar{x},t) \Big)
\\
	= \ &
-	3\mu\Big(w^2(y) \eta^2(\frac{  \mu y }{\sqrt  t}) \varphi[\mu]( \mu y,t) + \mu w(y) \eta(\frac{ \mu y }{\sqrt  t}) \varphi^2[\mu]( \mu y ,t)  \Big)
	.
\end{aligned}
\end{equation}
Set
\begin{equation*}
	\begin{aligned}
	\mathcal{M}[\mu] := \ &
		\int_{\RR^4 }
		\Big(w^2(y) \eta^2(\frac{\mu y}{\sqrt  t}) \varphi[\mu](\mu y,t) + \mu w(y) \eta(\frac{\mu y}{\sqrt  t}) \varphi^2[\mu](\mu y,t)  \Big)   Z_{5}(y) d y
		\\
		= \ &
		\mu^{-4}
		\int_{\RR^4 }
		\Big(w^2(\frac{\bar{x}}{\mu}) Z_{5}(\frac{\bar{x}}{\mu}) \eta^2(\frac{ \bar{x} }{\sqrt  t}) \varphi[\mu]( \bar{x},t) + \mu w(\frac{\bar{x}}{\mu}) Z_{5}(\frac{\bar{x}}{\mu}) \eta(\frac{ \bar{x} }{\sqrt  t}) \varphi^2[\mu](\bar{x},t)  \Big)    d \bar{x} .
	\end{aligned}
\end{equation*}
In order to find $\Phi_0$ with fast spatial decay, we aim to find $\bar{\mu}_0$ as the leading order of $\mu$ such that $\mathcal{M}[\mu] \approx 0$. In other words, above orthogonality condition is satisfied at leading order for careful choice of $\bar \mu_0$, which will be adjusted and corrected several times in order to further improve the time decay, and we shall see that
$$
\bar\mu_0\sim (\ln t)^{-1}.
$$

The iteration of finding proper $\bar\mu_0$ consists of three steps:
\begin{itemize}
\item the first step is to single out the leading part in above orthogonal equation, and this results in the blow-up rate predicted in \cite{filaking12},
\item the second step is to add next-order correction of the scaling parameter,
\item the last step is to iterate the second step finitely many times such that the new error has sufficiently fast time decay.
\end{itemize}

We now start the iteration.

\noindent {\bf Step 1. Finding the leading part $\mu_0$.}

Using the precise expression of $\varphi[\mu]$ in Corollary \ref{varphi-coro}, one has
\begin{equation*}
	\begin{aligned}
		&  \int_{\RR^4 }
		w^2(y)  Z_{5}(y) \eta^2(\frac{\mu y}{\sqrt  t}) \varphi(\mu y,t)    d y
		\\
		= \ &
		-2^{-\frac 12}
		\Big(
		\mu  t^{-1}
		+ \int_{t/2}^{t-\mu_{0}^2 } \frac{\mu_t(s)}{t-s} d s
		\Big)
		\int_{\RR^4 }
		w^2(y)  Z_{5}(y) \eta^2(\frac{\mu y}{\sqrt  t}) d y 	
		+
		O\big( \mu^3 t^{-2} \ln(\mu^{-1} t^{\frac 12}) \big)
		+ O(  |\mu_t| )
		+
		g[\mu] ,
	\end{aligned}
\end{equation*}
and
\begin{equation*}
	\begin{aligned}
		\mu \int_{\RR^4}  w(y) \eta(\frac{\mu y}{\sqrt t}) \varphi^2(\mu y, t) Z_5(y) dy
		= \ &
		\mu \int_{\RR^4}  w(y) \eta(\frac{\mu y}{\sqrt t}) Z_5(y)
		O\big(  	\mu^2  t^{-2} +
		g^2[\mu]
		+
		|\mu_t|^2 (\ln (\mu^{-1} t^{\frac 12} ) )^2 \big)
		dy
		\\
		= \ &
		\mu \ln(\mu^{-1} t^{\frac 12})
		O\big(  	\mu^2  t^{-2}
		 +
		g^2[\mu]
		+
		|\mu_t|^2 (\ln (\mu^{-1} t^{\frac 12} ) )^2 \big)  .
	\end{aligned}
\end{equation*}
Therefore, we obtain
\begin{equation*}
	\begin{aligned}
	\mathcal{M}[\mu]
		= \ &
		-2^{-\frac 12}
		\int_{\RR^4 }
		w^2(y)  Z_{5}(y) \eta^2(\frac{\mu y}{\sqrt  t}) d y
		\bigg(	\mu  t^{-1}
		+ \int_{t/2}^{t-\mu_{0}^2 } \frac{\mu_t(s)}{t-s} d s
		\\
		&
		+ O(  |\mu_t| )
		+
		g[\mu]
		+ \mu \ln(\mu^{-1} t^{\frac 12})
		O\big(  	\mu^2  t^{-2}
		+
		g^2[\mu]
		+
		|\mu_t|^2 (\ln (\mu^{-1} t^{\frac 12} ) )^2  \big) \bigg)
	\end{aligned}
\end{equation*}
where $\int_{\RR^4 }
w^2(y)  Z_{5}(y) \eta^2(\frac{\mu y}{\sqrt  t}) d y <0$ when $t$ is large.
Balancing the following two leading terms
\begin{equation*}
	\mu  t^{-1}
	+ \int_{t/2}^{t-\mu_{0}^2 } \frac{\mu_t(s)}{t-s} d s
	\sim
	\mu  t^{-1}
	+ \mu_t(t) \int_{t/2}^{t-\mu_{0}^2 } \frac{1}{t-s} d s \sim
	\mu  t^{-1}
	+ \mu_t \ln t =0,
\end{equation*}
one gets $\mu_0 = (\ln t)^{-1}$ as the leading order of $\mu$.
Notice that
\begin{equation*}
	\begin{aligned}
		&
		\mu_0  t^{-1}
		+ \int_{t/2}^{t-\mu_{0}^2 } \frac{\mu_{0t}(s)}{t-s} d s
		=
		(t \ln t)^{-1}
		-
		t^{-1}
		\int_{1/2}^{1-t^{-1} (\ln t)^{-2}}
		\frac{(\ln t + \ln z)^{-2} }{z(1-z)} dz
		\\
		= \ &
		(t \ln t)^{-1}
		-
		t^{-1} (\ln t)^{-2} (1+ O( (\ln t) ^{-1}))
		\int_{1/2}^{1-t^{-1} (\ln t)^{-2}}
		\frac{1}{z(1-z)} dz
		\\
		= \ &
		(t \ln t)^{-1}
		-
		t^{-1} (\ln t)^{-2} (1+ O( (\ln t) ^{-1}))
		\ln(t(\ln t)^2 -1)
		=
		O (t^{-1} (\ln t)^{-2}  \ln\ln t),
	\end{aligned}
\end{equation*}
\begin{equation*}
	O(  |\mu_{0t}| )
	+
	g[\mu_0]
	+ \mu_0 \ln(\mu_0^{-1} t^{\frac 12})
	O\big(  	\mu_0^2  t^{-2}
	+
	|\mu_{0t}|^2 (\ln (\mu_0^{-1} t^{\frac 12} ) )^2 +
	g^2[\mu_0] \big)  = O(t^{-1} (\ln t)^{-2}),
\end{equation*}
 and thus
\begin{equation*}
\mathcal{M}[\mu_0] = O (t^{-1} (\ln t)^{-2}  \ln\ln t) .
\end{equation*}

\noindent {\bf Step 2. Finding the corrected term $\mu_{01}$.}

In order to improve the time decay of the error, we introduce the next order term $\mu_{01}$ and make the ansatz $|\mu_{01}|\ll  \mu_0 $, $|\mu_{01t}|\ll |\mu_{0t}|$.

Then
by Corollary \ref{varphi-coro}, we estimate
\begin{align*}
&
	\mathcal{M}[\mu_0+\mu_{01}]
	\\
= \ &
\int_{\RR^4 }
\bigg( (\mu_0 +\mu_{01} )^{-4} w^2(\frac{\bar{x}}{ \mu_0 +\mu_{01} }) Z_{5}(\frac{\bar{x}}{ \mu_0 +\mu_{01} }) \eta^2(\frac{ \bar{x} }{\sqrt  t}) \varphi[ \mu_0 +\mu_{01} ]( \bar{x},t)
\\
&
+
( \mu_0 +\mu_{01} )^{-3} w(\frac{\bar{x}}{ \mu_0 +\mu_{01} }) Z_{5}(\frac{\bar{x}}{ \mu_0 +\mu_{01} }) \eta(\frac{ \bar{x} }{\sqrt  t}) \varphi^2[ \mu_0 +\mu_{01} ](\bar{x},t)  \bigg)    d \bar{x}
\\
= \ &
\int_{\RR^4 }
\Bigg\{ \Big[ (\mu_0 +\mu_{01} )^{-4} w^2(\frac{\bar{x}}{ \mu_0 +\mu_{01} }) Z_{5}(\frac{\bar{x}}{ \mu_0 +\mu_{01} })
-
\mu_0^{-4} w^2(\frac{\bar{x}}{ \mu_0  }) Z_{5}(\frac{\bar{x}}{ \mu_0 })
\Big]
 \eta^2(\frac{ \bar{x} }{\sqrt  t}) \varphi[ \mu_0 +\mu_{01} ]( \bar{x},t)
\\
&
+
\mu_0^{-4} w^2(\frac{\bar{x}}{ \mu_0 }) Z_{5}(\frac{\bar{x}}{ \mu_0 }) \eta^2(\frac{ \bar{x} }{\sqrt  t}) ( \varphi[ \mu_0 +\mu_{01} ] - \varphi[ \mu_0  ] )( \bar{x},t)
+
\mu_0^{-4} w^2(\frac{\bar{x}}{ \mu_0 }) Z_{5}(\frac{\bar{x}}{ \mu_0 }) \eta^2(\frac{ \bar{x} }{\sqrt  t})  \varphi[ \mu_0  ]( \bar{x},t)
\\
&
+
\Big[ ( \mu_0 +\mu_{01} )^{-3} w(\frac{\bar{x}}{ \mu_0 +\mu_{01} }) Z_{5}(\frac{\bar{x}}{ \mu_0 +\mu_{01} })
-
 \mu_0^{-3} w(\frac{\bar{x}}{ \mu_0  }) Z_{5}(\frac{\bar{x}}{ \mu_0  })
\Big]
 \eta(\frac{ \bar{x} }{\sqrt  t}) \varphi^2[ \mu_0 +\mu_{01} ](\bar{x},t)
 \\
& +
 \mu_0^{-3} w(\frac{\bar{x}}{ \mu_0  }) Z_{5}(\frac{\bar{x}}{ \mu_0  })
\eta(\frac{ \bar{x} }{\sqrt  t}) ( \varphi^2[ \mu_0 +\mu_{01} ] - \varphi^2[ \mu_0  ] ) (\bar{x},t)
+
\mu_0^{-3} w(\frac{\bar{x}}{ \mu_0  }) Z_{5}(\frac{\bar{x}}{ \mu_0  })
\eta(\frac{ \bar{x} }{\sqrt  t}) \varphi^2[ \mu_0  ]  (\bar{x},t)
\Bigg\}    d \bar{x}
\\
=  \ &
\int_{\RR^4 }
\Bigg\{
O\Big(\frac{|\mu_{01}|}{\mu_0}
\mu_0^{-4} \langle \frac{\bar{x}}{ \mu_0 }\rangle^{-6} \Big)
\eta^2(\frac{ \bar{x} }{\sqrt  t})
\bigg[ -2^{-\frac 12} \Big( (\mu_0 +\mu_{01} )  t^{-1}
+ \int_{t/2}^{t-\mu_{0}^2 } \frac{\mu_{0t}(s) + \mu_{01t}(s) }{t-s} d s
\Big)
\\
&
+ O\Big( (\mu_0 +\mu_{01} )  t^{-2} |\bar{x}|^2  +
| \mu_{0t} +\mu_{01t}  | \frac{ |\bar x|}{\mu_0 + \mu_{01}}  \Big)
 + g[\mu_0 + \mu_{01}]  \bigg]
\\
&
+
\mu_0^{-4} w^2(\frac{\bar{x}}{ \mu_0 }) Z_{5}(\frac{\bar{x}}{ \mu_0 }) \eta^2(\frac{ \bar{x} }{\sqrt  t}) \bigg[
-2^{-\frac 12} \Big( \mu_{01}  t^{-1}
+
\int_{t/2}^{t-\mu_{0}^2 } \frac{\mu_{01t} (s)}{t-s} d s
\Big)
\\
&
+ O\Big( |\mu_{01}| t^{-2} |\bar{x}|^2 +  |\mu_{0t}| \sup\limits_{t_1\in[t/2,t]}\Big(\frac{|\mu_{01}(t_1)|}{\mu_0}
 + \frac{|\mu_{01t}(t_1)|}{|\mu_{0t}|}  \Big)  \frac{ |\bar x |}{\mu_0} \Big)   +
 \tilde{g}[\mu_0,\mu_{01}]
\bigg]\\
& +
\frac{|\mu_{01}|}{\mu_0}
\mu_0^{-3} \langle \frac{\bar{x}}{ \mu_0  } \rangle^{-4}
\eta(\frac{ \bar{x} }{\sqrt  t})
O((  t \ln t)^{-2} )
+
\mu_0^{-3} w(\frac{\bar{x}}{ \mu_0  }) Z_{5}(\frac{\bar{x}}{ \mu_0  })
\eta(\frac{ \bar{x} }{\sqrt  t})
O((t\ln t)^{-2} )
\Bigg\}    d \bar{x}
+
\mathcal{M}[\mu_0]
\\
=  \ &
\int_{\RR^4 }
\Bigg\{
O\Big(\frac{|\mu_{01}|}{\mu_0}
 \langle \frac{\bar{x}}{ \mu_0 }\rangle^{-6} \Big)
\eta^2(\frac{ \bar{x} }{\sqrt  t})
\bigg[ -2^{-\frac 12} \Big( \mu_{01} t^{-1}
+ \int_{t/2}^{t-\mu_{0}^2 } \frac{ \mu_{01t}(s) }{t-s} d s \Big) +
O (t^{-1} (\ln t)^{-2}  \ln\ln t)
\\
&
+ O\Big( \mu_0^3  t^{-2} \frac{|\bar{x}|^2}{\mu_0^2}  +
| \mu_{0t} | \frac{ |\bar x|}{\mu_0}  \Big)
  \bigg]
+
 w^2(\frac{\bar{x}}{ \mu_0 }) Z_{5}(\frac{\bar{x}}{ \mu_0 }) \eta^2(\frac{ \bar{x} }{\sqrt  t}) \bigg[
-2^{-\frac 12} \Big( \mu_{01}  t^{-1}
+
\int_{t/2}^{t-\mu_{0}^2 } \frac{\mu_{01t} (s)}{t-s} d s
\Big)
\\
&
+ O\Big( \mu_0^2 |\mu_{01}| t^{-2} \frac{|\bar{x}|^2}{\mu_0^2} +  |\mu_{0t}| \sup\limits_{t_1\in [t/2,t] }\Big(\frac{|\mu_{01}(t_1)|}{\mu_0}
+ \frac{|\mu_{01t}(t_1)|}{|\mu_{0t}|}  \Big)  \frac{ |\bar x |}{\mu_0} \Big)
 + \tilde{g}[\mu_0,\mu_{01}]
\bigg]
\\
&
+
\frac{|\mu_{01}|}{\mu_0^2}
 \langle \frac{\bar{x}}{ \mu_0  } \rangle^{-4}
\eta(\frac{ \bar{x} }{\sqrt  t})
O((  t \ln t)^{-2} )
+
\mu_0 \langle \frac{\bar{x}}{ \mu_0  } \rangle^{-4}
\eta(\frac{ \bar{x} }{\sqrt  t})
O((  t \ln t)^{-2} )
\Bigg\}    d (\frac{\bar{x}  }{\mu_0} )
+ \mathcal{M}[\mu_0]
\\
= \ &
 O( 	\frac{|\mu_{01}|}{\mu_0} )
		 \Big( \mu_{01} t^{-1}
		+ \int_{t/2}^{t-\mu_{0}^2 } \frac{ \mu_{01t}(s) }{t-s} d s
		\Big)
				-2^{-\frac 12}  \int_{\RR^4 }  w^2(y) Z_{5}(y) \eta^2(\frac{ \mu_0 y }{\sqrt  t}) dy
	\Big( \mu_{01}  t^{-1}
		+
		\int_{t/2}^{t-\mu_{0}^2 } \frac{\mu_{01t} (s)}{t-s} d s
		\Big)
		\\
		& +
		O (t^{-1} (\ln t)^{-2}  \ln\ln t) \frac{|\mu_{01}|}{\mu_0}
		+ O\Big( \mu_0^2 |\mu_{01}| t^{-2} \ln t +  |\mu_{0t}| \sup\limits_{t_1\in[t/2,t]} \Big(\frac{|\mu_{01}(t_1)|}{\mu_0}
		+ \frac{|\mu_{01t}(t_1) |}{|\mu_{0t}|}  \Big)  \Big)
		\\
		&
		 + \tilde{g}[\mu_0,\mu_{01}]
		 +
		 \frac{|\mu_{01}|}{\mu_0^2}
		 \ln t
		 O((  t \ln t)^{-2} )
		+
		\mu_0 \ln t
		O((t\ln t)^{-2} )
		+ \mathcal{M}[\mu_0]\\
=  \ &
\Big( O(\frac{|\mu_{01}|}{\mu_0} )
-2^{-\frac 12}  \int_{\RR^4 }  w^2(y) Z_{5}(y) \eta^2(\frac{ \mu_0 y }{\sqrt  t}) dy  \Big)
\Big( \mu_{01}  t^{-1}
+
\int_{t/2}^{t-\mu_{0}^2 } \frac{\mu_{01t} (s)}{t-s} d s
\Big)
\\
&
+ O\big( (t\ln t)^{-1} \sup\limits_{t_1\in[t/2,t]}  |\mu_{01}(t_1)|  + \sup\limits_{t_1\in[t/2,t]}  |\mu_{01t}(t_1)|  \big)
\\
&
+
O (t^{-1} (\ln t)^{-1}  \ln\ln t) |\mu_{01}|
+ \tilde{g}[\mu_0,\mu_{01}]
+
O((t\ln t)^{-2} )
+ \mathcal{M}[\mu_0]
\\
=  \ &
\Big( O( \frac{|\mu_{01}|}{\mu_0} )
-2^{-\frac 12}  \int_{\RR^4 }  w^2(y) Z_{5}(y) \eta^2(\frac{ \mu_0 y }{\sqrt  t}) dy  \Big)
\bigg[
\mu_{01}  t^{-1} (1+ O((\ln t)^{-\frac 12} ) )
+ \int_{t/2}^{t-t^{1-\nu_1} } \frac{\mu_{01t}(s)}{t-s} d s
\\
&
+ \int_{ t-t^{1-\nu_1}  }^{t-\mu_{0}^2(t) } \frac{\mu_{01t}(t)}{t-s} d s
+
\mathcal{E}_{\nu_1}[\mu_{01}]
+ O\big( (t\ln t)^{-1} \sup\limits_{t_1\in[t/2,t]}  |\mu_{01}(t_1)|  + \sup\limits_{t_1\in[t/2,t]}  |\mu_{01t}(t_1)|  \big)
\\
&
+ \tilde{g}[\mu_{01},\mu_0]
+
O((t\ln t)^{-2} )
+ \mathcal{M}[\mu_0]
\bigg]
\\
=  \ &
\Big(  O( \frac{|\mu_{01}|}{\mu_0} )
-2^{-\frac 12}  \int_{\RR^4} w^2(y) Z_{5}(y) \eta^2(\frac{ \mu_0 y }{\sqrt  t})
dy
\Big) \bigg[ \mu_{01}  t^{-1} \big(1+ O((\ln t)^{-\frac 12})\big)
+ \int_{t/2}^{t-t^{1-\nu_1} } \frac{\mu_{01t}(s)}{t-s} d s
\\
&
+
\mu_{01t}
((1-\nu_1)\ln t + 2\ln\ln t )
+
O\big( (t\ln t)^{-1} \sup\limits_{t_1\in[t/2,t]}  |\mu_{01}(t_1)|  + \sup\limits_{t_1\in[t/2,t]}  |\mu_{01t}(t_1)|  \big)
\\
&
+
\mathcal{E}_{\nu_1}[\mu_{01}]
+ \tilde{g}[\mu_{01},\mu_0]
+ O((t\ln t)^{-2} )
+ \mathcal{M}[\mu_0]
\bigg],
\end{align*}
where
\begin{equation*}
	\mathcal{E}_{\nu_1}[\mu_{01}] = \int_{ t-t^{1-\nu_1}  }^{t-\mu_{0}^2(t) } \frac{\mu_{01t}(s) - \mu_{01t}(t)}{t-s} d s.
\end{equation*}

Since it is too difficult to solve the nonlocal equation about $\mu_{01}$ thoroughly, we put $\mathcal{E}_{\nu_1}[\mu_{01}] $ aside as the new error term and consider the following equation
\begin{equation}\label{mu01-eq}
\mu_{01t} + \beta_{\nu_1}(t)
    \mu_{01}
= f_{\nu_1}[\mu_{01}],
\end{equation}
where
\begin{equation*}
	\beta_{\nu_1}(t) = t^{-1} (1+ O((\ln t)^{-\frac 12})) [ (1-\nu_1)\ln t + 2\ln\ln t  ] ^{-1} ,
\end{equation*}
\begin{equation*}
\begin{aligned}
f_{\nu_1}[\mu_{01}] = \ &
\chi(t)
\big[(1-\nu_1)\ln t + 2\ln\ln t  \big]^{-1}
\bigg(- \int_{t/2}^{t-t^{1-\nu_1} } \frac{\mu_{01t}(s)}{t-s} d s
- \tilde{g}[\mu_{01},\mu_0]
\\
&
+
O\big( (t\ln t)^{-1} \sup\limits_{t_1\in[t/2,t]}  |\mu_{01}(t_1)|  + \sup\limits_{t_1\in[t/2,t]}  |\mu_{01t}(t_1)|  \big)
+ O((t\ln t)^{-2} )
- \mathcal{M}[\mu_0] \bigg),
\end{aligned}
\end{equation*}
$\chi(t)$ is a smooth cut-off function such that $\chi(t)= 0$ for $t< \frac{3}{4} t_0$ and $\chi(t) = 1$ for $t\ge t_0$. Since $\mu_{01}(t)$ will be defined in $(\frac{t_0}{4},\infty)$, the introduction of $\chi(t)$ is used to avoid the occurrence of $\mu_{01}(t)$ for $t$ beyond $(\frac{t_0}{4},\infty)$ in the terms like
$\int_{t/2}^{t-t^{1-\nu_1} } \frac{\mu_{01t}(s)}{t-s} d s$. After all, the original orthogonal equation is only required to hold in $(t_0,\infty)$. For technical reasons, we extend  the domain of $\mu_{01}$ to $(\frac{t_0}{4},\infty)$.

It then suffices to consider the following fixed point problem:
\begin{equation}\label{mu01-fixed point}
	\begin{aligned}
		A_{\nu_1}[\mu_{01}](t) = \ &
		 -\int_{t}^{\infty} \pp_{t}A_{\nu_1}[\mu_{01}](s) ds
		=
		-e^{-\int^t \beta_{\nu_1}(u) du}
		\int_{t}^\infty
		e^{\int^s \beta_{\nu_1}(u) du} f_{\nu_1}[\mu_{01}](s) ds ,
		\\
		\pp_{t} A_{\nu_1}[\mu_{01}](t) = \ &
		\beta_{\nu_1}(t) e^{-\int^t \beta_{\nu_1}(u) du}
		\int_{t}^\infty
		e^{\int^s \beta_{\nu_1}(u) du} f_{\nu_1}[\mu_{01}](s) ds
		+
		f_{\nu_1}[\mu_{01}](t),
	\end{aligned}
\end{equation}
where $\nu_1\in(0,\frac 12)$ will be determined later.

Since
\begin{equation*}
	O((t\ln t)^{-2} ) + |\mathcal{M}[ \mu_0 ]  |
	\le
	C_0  t^{-1} (\ln t)^{-2} \ln \ln t
\end{equation*}
where $C_{0}\ge 1$ is a large constant independent of $t_0$, we have
\begin{equation*}
	\big|	[(1-\nu_1)\ln t + 2\ln\ln t  ]^{-1} ( O((t\ln t)^{-2} ) + |\mathcal{M}[ \mu_0 ]  |  ) \big|
	\le
	(1-\nu_1)^{-1} C_0  t^{-1} (\ln t)^{- 3} \ln \ln t  .
\end{equation*}

By L'H\^opital's rule,
\begin{equation*}
	\begin{aligned}
		&
		\lim\limits_{t\rightarrow \infty}
		\frac{
			-\int_{t}^{ \infty }
			e^{\int^s \beta_{\nu_1}(u) du}
			s^{-1}
			(\ln s)^{- 3} \ln \ln s
			ds}
		{e^{\int^t \beta_{\nu_1}(u) du}  (\ln t)^{- 2 } \ln \ln t}
		\\
		= \ &
		\lim\limits_{t\rightarrow \infty}
		\frac{
			e^{\int^t \beta_{\nu_1}(u) du}
			t^{-1}
			(\ln t)^{- 3} \ln \ln t }
		{\beta_{\nu_1}(t) e^{\int^t \beta_{\nu_1}(u) du}  (\ln t)^{- 2 } \ln \ln t +
			e^{\int^t \beta_{\nu_1}(u) du}  (-2)t^{-1}(\ln t)^{- 3 } \ln \ln t +
			e^{\int^t \beta_{\nu_1}(u) du}  t^{-1}(\ln t)^{- 3 } }
		\\
		= \ &
		[(1-\nu_1)^{-1} -2]^{-1} .
	\end{aligned}
\end{equation*}
Notice $\nu_1<\frac {1}{2}$ implies $e^{\int^t \beta_{\nu_1}(u) du} \ll (\ln t)^{2}$ so that $
\int_{t}^{ \infty }
e^{\int^s \beta_{\nu_1}(u) du}
s^{-1}
(\ln s)^{- 3} \ln \ln s
ds$ is well defined.
Thus we have
\begin{equation*}
	\frac{
		-\int_{t}^{\infty }
		e^{\int^s \beta_{\nu_1}(u) du}
		s^{-1}
		(\ln s)^{- 3} \ln \ln s
		ds}
	{e^{\int^t \beta_{\nu_1}(u) du}  (\ln t)^{- 2 } \ln \ln t}
	=
	[(1-\nu_1)^{-1} -2]^{-1} + o(1)
\end{equation*}
where $o(1) \rightarrow 0$ as $t_0 \rightarrow \infty$. Then
\begin{equation*}
	\begin{aligned}
		&
		\bigg|e^{-\int^t \beta_{\nu_1}(u) du}
		\int_{\infty}^{t}
		e^{\int^s \beta_{\nu_1}(u) du}
		[ (1-\nu_1) \ln s + 2\ln\ln s
		 ]^{-1}  s^{-1} (\ln s)^{-2} \ln \ln s    ds\bigg|
		\\
		\le \ &
		(1-\nu_1)^{-1}
		\bigg|e^{-\int^t \beta_{\nu_1}(u) du}
		\int_{\infty}^{t}
		e^{\int^s \beta_{\nu_1}(u) du}
		s^{-1} (\ln s)^{-3} \ln \ln s ds\bigg|
		=
		|( 2 \nu_1 -1)^{-1} + o(1)  |
		(\ln t)^{-2} \ln \ln t ,
	\end{aligned}
\end{equation*}
\begin{equation*}
	\begin{aligned}
		&
		\bigg|\beta_{\nu_1}(t) e^{-\int^t \beta_{\nu_1}(u) du}
		\int_{ \infty }^{t}
		e^{\int^s \beta_{\nu_1}(u) du}  [ (1-\nu_1) \ln s + 2\ln\ln s
	 ]^{-1} s^{-1} (\ln s)^{-2} \ln \ln s ds \bigg|
		\\
		&
		+
		\Big|[ (1-\nu_1) \ln t + 2\ln\ln t
		  ]^{-1} t^{-1} (\ln t)^{-2} \ln \ln t  \Big|
		\\
		\le \ &
		t^{-1} (1-\nu_1)^{-1} (\ln t)^{-1}  |(2\nu_1 -1)^{-1} + o(1)  |
		(\ln t)^{-2} \ln \ln t
		+
		(1-\nu_1)^{-1} t^{-1} (\ln t)^{-3}\ln \ln t
		\\
		= \ &
		(1-\nu_1)^{-1} (1+ |(2\nu_1 -1)^{-1} + o(1)  | ) t^{-1} (\ln t)^{-3 } \ln \ln t .
	\end{aligned}
\end{equation*}
From the  estimates above, for $\mu_{01} \in C^{1}(t_{0}/4,\infty)$ and $\mu_{01}(t)\rightarrow 0$ as $t\rightarrow \infty$, we set the norm as
\begin{equation*}
	\| \mu_{01}\|_{01} =
	\sup\limits_{t\ge t_{0}/ 4}
	t(\ln t)^{3} (\ln \ln t)^{-1}   |\mu_{01t}(t)|
\end{equation*}
and will solve the fixed point problem \eqref{mu01-fixed point} in the space
\begin{equation*}
	B_{01}  =
	\left\{ g(t) \in C^{1}(t_{0}/4,\infty),\quad g(t)\rightarrow 0 \mbox{ \ as \ } t\rightarrow \infty \ : \
	\| g \|_{01} \le 2 C_{0} C(\nu_1)  \right\}
\end{equation*}
where $C(\nu_1) = (1-\nu_1)^{-1} (1+ |(2\nu_1 -1)^{-1} + o(1)  | ) $. We take $\nu_1<\frac 12$ and $t_0$ large enough to guarantee  $C(\nu_1)<\infty$. Let us estimate other terms for  $\pp_{t}A_{\mu_1}[\mu_{01}]$ in \eqref{mu01-fixed point}.

For any $\bar{\mu}_{01}\in B_{01}$,
\begin{equation}\label{typical}
	\begin{aligned}
		&
		\chi(t)\Big|\int_{t/2}^{t-t^{1-\nu_1} } \frac{\bar{\mu}_{01t}(s)}{t-s} d s \Big|
		\le
		\| \bar{\mu}_{01}\|_{01}
		\int_{t/2}^{t-t^{1-\nu_1} } \frac{ s^{-1} (\ln s)^{-3 } \ln \ln s }{t-s} d s
		\\
		= \ &
		\| \bar{\mu}_{01}\|_{01}
		t^{-1} \int_{1/2}^{1-t^{-\nu_1}}
		\frac{(\ln t +\ln z)^{-3} \ln (\ln t +\ln z) }{z(1-z)} dz
		\\
		\le \ &
		\| \bar{\mu}_{01}\|_{01}
		(1+O((\ln t)^{-\frac 12}) ) t^{-1}  (\ln t )^{-3}\ln \ln t
		\int_{1/2}^{1-t^{-\nu_1}}
		\frac{1 }{z(1-z)} dz
		\\
		= \ &
		\| \bar{\mu}_{01}\|_{01}
		(1+O((\ln t)^{-\frac 12}) ) t^{-1}
		(\ln t )^{-3}\ln \ln t
		\ln(t^{\nu_1} -1)
		\\
		\le \ &
		\| \bar{\mu}_{01}\|_{01}
		\nu_1 (1+O((\ln t)^{-\frac 12}) )
		t^{-1}  (\ln t )^{-2}\ln \ln t
		\\
		\le \ &
		2 C_{0} C(\nu_1)
		\nu_1 (1+O((\ln t)^{-\frac 12}) )
		t^{-1}  (\ln t )^{-2}\ln \ln t
	\end{aligned}
\end{equation}
which implies
\begin{equation*}
	\bigg| \big[(1-\nu_1)\ln t + 2\ln\ln t + O(1)\big]^{-1} \chi(t) \int_{t/2}^{t-t^{1-\nu_1} } \frac{\bar{\mu}_{01t}(s)}{t-s} d s  \bigg|
	\le 2 C_{0} C(\nu_1)
	\nu_1 (1-\nu_1)^{-1}  (1+O((\ln t)^{-\frac 12}) )
	t^{-1}  (\ln t )^{-3}\ln \ln t .
\end{equation*}

We take $\nu_1\in (0,\frac{1}{4})$ to make
$ \nu_1 (1-\nu_1)^{-1} (1+ |(2 \nu_1 -1)^{-1}  | ) <1  $.

Since $\bar{\mu}_{01}\in B_{01}$, one has
$|\bar{\mu}_{01}(t)| = |\int_{t}^{\infty} \bar{\mu}_{01t}(s) ds| \lesssim \|\bar{\mu}_{01}\|_{01} (\ln t)^{-2} \ln\ln t $.
Then
\begin{equation*}
\begin{aligned}
\chi(t)	\tilde{g}[\bar{\mu}_{01},\mu_0] = \ &
	O \Big(		 t^{-2}
	\int_{t_0/2}^{t }
	( s^{-1}  \|\bar{\mu}_{01}\|_{01} (\ln s)^{-4} \ln\ln s
	+ (\ln s)^{-2} \|\bar{\mu}_{01}\|_{01} (\ln s)^{-1} \ln\ln s
	)
	d s
	\\
	&
	+
(t\ln t)^{-1} \|\bar{\mu}_{01} \|_{01}^2
((\ln t)^{-1} \ln\ln t)^2
	\Big)
\\
= \ &
(\ln t_0)^{-\frac 12}
O \big(		  (\|\bar{\mu}_{01}\|_{01} + \|\bar{\mu}_{01} \|_{01}^2 ) t^{-1} (\ln t)^{-2} \ln\ln t
\big)  ,
\end{aligned}
\end{equation*}
\begin{equation*}
\chi(t)O\big( (t\ln t)^{-1} \sup\limits_{t_1\in[t/2,t]}  |\mu_{01}(t_1)|  + \sup\limits_{t_1\in[t/2,t]}  |\mu_{01t}(t_1)|  \big)
\lesssim t^{-1} (\ln t)^{-3} \ln\ln t
\lesssim
(\ln t_0)^{-1}
t^{-1} (\ln t)^{-2} \ln\ln t.
\end{equation*}
Then for any fixed $\nu_1\in (0,\frac{1}{4})$ and $t_0$ large enough, one sees that $A_{\nu_1}[\bar{\mu}_{01}] \in B_{01}$.

The contraction property can be derived similarly. Indeed, for any $\mu_{01a}, \mu_{01b} \in B_{01} $, similar to \eqref{typical}, we have
\begin{equation*}
	\begin{aligned}
		&
		\chi(t)\bigg|\int_{t/2}^{t-t^{1-\nu_1} } \frac{\pp_t\bar{\mu}_{01a}(s)}{t-s} d s - \int_{t/2}^{t-t^{1-\nu_1} } \frac{\pp_t\bar{\mu}_{01b}(s)}{t-s} d s  \bigg|
		\le
		\| \bar{\mu}_{01a} - \bar{\mu}_{01b} \|_{01}
		\int_{t/2}^{t-t^{1-\nu_1} } \frac{ s^{-1} (\ln s)^{-3 } \ln \ln s }{t-s} d s
		\\
		\le \ &
		\| \bar{\mu}_{01a} - \bar{\mu}_{01b} \|_{01}
		\nu_1 (1+O((\ln t)^{-\frac 12}) )
		t^{-1}  (\ln t )^{-2}\ln \ln t ,
	\end{aligned}
\end{equation*}
\begin{equation*}
		\chi(t) |	\tilde{g}[ \mu_{01a},\mu_0] -\tilde{g}[ \mu_{01b},\mu_0] |
		=
		(\ln t_0)^{-\frac 12}
		O (		  C_{0} C(\nu_1)  t^{-1} (\ln t)^{-2} \ln\ln t
		) \| \bar{\mu}_{01a} - \bar{\mu}_{01b} \|_{01}  ,
\end{equation*}
\begin{equation}\label{mu1-mu2-est}
\begin{aligned}
&
\chi(t) \Big|	O( (t\ln t)^{-1} \sup\limits_{t_1\in[t/2,t]}  |\mu_{01a}(t_1)|  + \sup\limits_{t_1\in[t/2,t]}  |\pp_{t}\mu_{01a}(t_1)|  )
\\
&\quad
	-
O( (t\ln t)^{-1} \sup\limits_{t_1\in[t/2,t]}  |\mu_{01b}(t_1)|  + \sup\limits_{t_1\in[t/2,t]}  |\pp_{t}\mu_{01b}(t_1)|  ) \Big|
\\
\lesssim \ &
\chi(t) \big|	O( (t\ln t)^{-1} \sup\limits_{t_1\in[t/2,t]}  |\mu_{01a}(t_1) - \mu_{01b}(t_1)|  + \sup\limits_{t_1\in[t/2,t]}  |\pp_{t}\mu_{01a}(t_1) - \pp_{t}\mu_{01b}(t_1) |  )\big|
\\
\lesssim \ &
\| \bar{\mu}_{01a} - \bar{\mu}_{01b} \|_{01}
t^{-1} (\ln t)^{-3} \ln\ln t
\lesssim
	(\ln t_0)^{-1} \| \bar{\mu}_{01a} - \bar{\mu}_{01b} \|_{01}
	t^{-1} (\ln t)^{-2} \ln\ln t
\end{aligned}
\end{equation}
by the estimate of $\varphi[\mu_0 +\mu_{01a}] - \varphi[\mu_0 +\mu_{01b}]$ in Corollary \ref{varphi-coro}.

Due to the choice of $\nu_1$ and $t_0$ above, the contraction property is achieved. By contraction mapping theorem, there exists a unique solution $\mu_{01}\in B_{01}$ for \eqref{mu01-fixed point}.

From now on, $\nu_1$ will be regarded as a general constant unless otherwise stated. For notational simplicity, $\pp_{t}$  is denoted by ``$~'~$''.
Once we have solved $\mu_{01}$, the regularity of $\mu_{01}$ can be improved by the equation of $\mu_{01}$ and $\mu_{01}''$ decays to $0$ as $t\rightarrow\infty$. For the purpose of finding a better decay estimate of $\mu_{01}''$, we take derivative on both sides of \eqref{mu01-eq}. Then
\begin{equation*}
	\mu_{01}''
	+
	(\beta_{\nu_1}(t) )' \mu_{01}
	+ 	\beta_{\nu_1}(t) \mu_{01}'
	=
	(f_{\nu_1}[\mu_{01}] )',
\end{equation*}
where we can evaluate
\begin{equation*}
	|(\beta_{\nu_1}(t) )' \mu_{01}
	+ 	\beta_{\nu_1}(t) \mu_{01}' | \lesssim   t^{-2} (\ln t)^{-3} \ln\ln t,
\end{equation*}
\begin{equation*}
	\begin{aligned}
		&
	\big( \chi(t)	[(1-\nu_1)\ln t + 2\ln\ln t  ]^{-1} 	 \big)'
		\bigg(- \int_{t/2}^{t-t^{1-\nu_1} } \frac{\mu_{01t}(s)}{t-s} d s
		- \tilde{g}[\mu_{01},\mu_0]
		\\
		&
		+
		O( (t\ln t)^{-1} \sup\limits_{t_1\in[t/2,t]}  |\mu_{01}(t_1)|  + \sup\limits_{t_1\in[t/2,t]}  |\mu_{01t}(t_1)|  )
		+ O((t\ln t)^{-2} )
		- \mathcal{M}[\mu_0] \bigg)
		\lesssim
		t^{-2} (\ln t)^{-3} \ln\ln t  ,
	\end{aligned}
\end{equation*}
\begin{equation*}
\chi(t)
	\big[(1-\nu_1)\ln t + 2\ln\ln t  \big]^{-1}
	\big(
	- \tilde{g}[\mu_{01},\mu_0]
	+ O((t\ln t)^{-2} )
	- \mathcal{M}[\mu_0] \big)'
	 \lesssim
	t^{-2} (\ln t)^{-3} \ln\ln t ,
\end{equation*}
where we used similar calculation in \eqref{pptu-eq} for $(\tilde{g}[\mu_{01},\mu_0])'$.
\begin{equation*}
\begin{aligned}
&
	\chi(t) \big[(1-\nu_1)\ln t + 2\ln\ln t \big]^{-1}
	\Big(- \int_{t/2}^{t-t^{1-\nu_1} } \frac{\mu_{01t}(s)}{t-s} d s \Big)'
\\
= \ &
\chi(t) \big[(1-\nu_1)\ln t + 2\ln\ln t \big]^{-1}
\Big[ -
		\frac{\mu_{01}'(t-t^{1-\nu_1})}{t^{1-\nu_1}} (1-(1-\nu_1)t^{-\nu_1})
		 +
		  \frac{\mu_{01}'(\frac t2)}{t}
		+
		\int_{t/2}^{t-t^{1-\nu_1} } \frac{\mu_{01}'(s)}{(t-s)^2} d s )
		\Big]
\\
= \ &
\chi(t) \big[(1-\nu_1)\ln t + 2\ln\ln t \big]^{-1}
\Big[
(1-\nu_1)
		\frac{\mu_{01}'(t-t^{1-\nu_1})}{t}
		-
		 \frac{\mu_{01}'(\frac t2)}{t}
		-
		\int_{t/2}^{t-t^{1-\nu_1} } \frac{\mu_{01}''(s)}{t-s} d s
\Big]
\\
= \ &
-\chi(t) \big[(1-\nu_1)\ln t + 2\ln\ln t \big]^{-1}
		\int_{t/2}^{t-t^{1-\nu_1} } \frac{\mu_{01}''(s)}{t-s} d s
		+O( t^{-2} (\ln t)^{-4} \ln \ln t  ) .
\end{aligned}
\end{equation*}
Revisiting the process of proving Corollary \ref{varphi-coro}, we have
\begin{equation}\label{sup-cal}
\begin{aligned}
 & \chi(t) \big[(1-\nu_1)\ln t + 2\ln\ln t \big]^{-1} \big(O( (t\ln t)^{-1} \sup\limits_{t_1\in[t/2,t]}  |\mu_{01}(t_1)|  + \sup\limits_{t_1\in[t/2,t]}  |\mu_{01t}(t_1)|  ) \big)'
 \\
 = \ &
  \chi(t)  \big[(1-\nu_1)\ln t + 2\ln\ln t \big]^{-1}
 O\big(\sup\limits_{t_1\in[t/2,t]}  |\mu_{01}''(t_1)| \big)
 +
 O(t^{-2} (\ln t)^{-4} \ln\ln t) .
\end{aligned}
\end{equation}
Using the estimates above, we have
\begin{equation*}
	\mu_{01}''=  \chi(t)	\big[(1-\nu_1)\ln t + 2\ln\ln t\big]^{-1}
	\Big( -
	\int_{t/2}^{t-t^{1-\nu_1} } \frac{\mu_{01}''(s)}{t-s} d s
	+  O(\sup\limits_{t_1\in[t/2,t]}  |\mu_{01}''(t_1)| )\Big)
	+ \tilde{f}(t)
\end{equation*}
where $|\tilde{f}(t)| \le C_1 t^{-2} (\ln t)^{-3} \ln \ln t$.
For this reason, we solve $\mu_{01}''$ in the following space
\begin{equation*}
	B_{2,2} = \{ g\in C(t_0/4,\infty), g(t) \rightarrow 0 \mbox{ \ as \ } t\rightarrow \infty \ : \ \| g\|_{2,2} \le 2C_1 \} ,
\end{equation*}
where for $g\in C(t_0/4,\infty)$, we define
\begin{equation*}
	\| g\|_{a,b} := \sup\limits_{t\ge t_0/4}  t^a (\ln t)^b |g(t)| .
\end{equation*}
For any $g_1, g_2 \in B_{2,2}$, similar to \eqref{typical}, we have
\begin{equation*}
	\begin{aligned}
		&\chi(t) \big[(1-\nu_1)\ln t + 2\ln\ln t \big]^{-1}\bigg|
		\int_{t/2}^{t-t^{1-\nu_1} } \frac{ g_1(s)}{t-s} d s
		-
		\int_{t/2}^{t-t^{1-\nu_1} } \frac{ g_2(s)}{t-s} d s \bigg|
		\\
		\le \ &
		(1-\nu_1)^{-1} (\ln t)^{-1}
		\| g_1-g_2\|_{2,2}
		\int_{t/2}^{t-t^{1-\nu_1}} \frac{s^{-2} (\ln s)^{-2} }{t-s} ds
		\\
		\le \ &
		\nu_1 (1-\nu_1)^{-1}
		\| g_1-g_2\|_{2,2}
		t^{-2} (\ln t )^{-2} (1+(\ln t)^{-1} )
		( 1 + (\nu_1 \ln t )^{-1} )
		.
	\end{aligned}
\end{equation*}
Similar to \eqref{sup-cal}, one has
\begin{equation*}
\begin{aligned}
&
\chi(t)	\big[(1-\nu_1)\ln t + 2\ln\ln t\big]^{-1}
\Big|  O(\sup\limits_{t_1\in[t/2,t]}  | g_1(t_1)|  ) - O(\sup\limits_{t_1\in[t/2,t]}  | g_2(t_1)|  ) \Big|
\\
\lesssim \ &
\chi(t)	\big[(1-\nu_1)\ln t + 2\ln\ln t\big]^{-1}
  O\big(\sup\limits_{t_1\in[t/2,t]}  | g_1(t_1) - g_2(t_1) |     \big)
\lesssim
t^{-2} ( \ln t)^{-3} \| g_1 -g_2\|_{2,2} .
\end{aligned}
\end{equation*}
For any $g\in B_{2,2}$, we have
\begin{equation*}
	\begin{aligned}
		&
		\Big| \chi(t)	[(1-\nu_1)\ln t + 2\ln\ln t ]^{-1}
		\int_{t/2}^{t-t^{1-\nu_1} } \frac{g(s)}{t-s} d s  \Big|
		\le
		(1-\nu_1)^{-1} (\ln t)^{-1} \| g\|_{2,2}
		\int_{t/2}^{t-t^{1-\nu_1} } \frac{  s^{-2} (\ln s)^{-2} }{t-s} d s
\\
\le \ &
\nu_1 (1-\nu_1)^{-1}
\| g \|_{2,2}
t^{-2} (\ln t )^{-2} (1+(\ln t)^{-1} )
( 1 + (\nu_1 \ln t )^{-1} ) .
	\end{aligned}
\end{equation*}
Since $\nu_1\in(0,\frac 14)$, when $t_0$ is large enough, the contraction property follows and then $\mu_{01}''\in B_{2,2}$. Thus the improved error is given by
\begin{equation*}
	\mathcal{M}[ \mu_{0}+\mu_{01}]
	= \mathcal{E}_{\nu_1}[\mu_{01}] = \int_{ t-t^{1-\nu_1}  }^{t-\mu_{0}^2(t) } \frac{\mu_{01}'(s) - \mu_{01}'(t)}{t-s} d s = O(t^{-1-\nu_1} (\ln t)^{-2} ) .
\end{equation*}

\noindent {\bf Step 3. Further improvement by iteration.}

Repeating Step 2 finitely many times, we can find $\mu_{0i}$, $i=1,\dots,k_0$ such that
\begin{equation}\label{bar-mu0-est}
	M\Big[\mu_0 + \sum\limits_{i=1}^{k_0} \mu_{0i}\Big] = O(t^{-2}) .
\end{equation}
Denote $$\bar{\mu}_0 = \mu_0 + \sum\limits_{i=1}^{k_0} \mu_{0i}.$$ From the construction above, we see that $\bar{\mu}_0 \sim \mu_0=(\ln t)^{-1}$, $\bar{\mu}_{0t} \sim \mu_{0t}$.

\medskip

Since $\bar{\mu}_0$ is determined, we are now able to describe $\Phi_0$ rigorously. Set $\bar{y} = \frac{\bar{x}}{\bar{\mu}_0}$ and consider
\begin{equation*}
	\Delta_{\bar{y}} \Phi_0 + 3w^2(\bar{y}) \Phi_0 = \tilde{H}(|\bar{y}| ,t),
\end{equation*}
where
\begin{equation*}
	\tilde{H}(|\bar{y}| ,t)   =
	- 3 \bar{\mu}_0 \left(
	w^2(\bar{y})
	\eta^2(\frac{\bar{\mu}_0 \bar{y} }{\sqrt  t})  \varphi[\bar{\mu}_0](\bar{\mu}_0 \bar{y},t)
	+
	\bar{\mu}_0 w(\bar{y}) \eta(\frac{\bar{\mu}_0 \bar{y}}{\sqrt  t}) \varphi^2[\bar{\mu}_0](\bar{\mu}_0 \bar{y},t) \right)
+ 3\bar{\mu}_0 \mathcal{M}[\bar{\mu}_0]
	\frac{\eta(\bar{y}) Z_{5}(\bar{y}) }{\int_{B_{2} } \eta(z) Z_{5}^2(z) d z}.
\end{equation*}
Then $\Phi_0(\bar{y},t)$ is given by
\[
\Phi_0(\bar{y},t) =  \tilde{Z}_{5}(\bar{y}) \int_0^{|\bar{y}|} \tilde{H} (s,t)Z_{ 5 }(s)s^{3} d s
-
 Z_{ 5 }(\bar{y}) \int_{0}^{| \bar{y} |} \tilde{H} (s,t) \tilde{Z}_{ 5 }(s)s^{3} d s,
\]
where $\tilde{Z}_{5}(r)$ is the other linearly independent kernel of the homogeneous equation, which satisfies that the Wronskian $W[Z_{ 5 },\tilde{Z}_{ 5 }]=r^{-3}$, so $\tilde{Z}_{ 5 }(r) \sim r^{-2}$ if $r\rightarrow 0$ and $\tilde{Z}_{5}(r) \sim  1$ if $r\rightarrow \infty$.

By the definition of $\mathcal{M}[\bar{\mu}_0]$, it is easy to have
\begin{equation}\label{tilde-H-orth}
\int_{\RR^4} \tilde{H}(z, t)Z_5(z) dz =0.
\end{equation}

By Corollary \ref{varphi-coro}, $|\tilde{H}| \lesssim  t^{-1} (\ln t)^{-2} \langle \bar{y} \rangle^{-4}$.
Due to the special choice of $\bar{\mu}_0$, one can get better time decay for $\tilde{H}$. Indeed, we have
\begin{equation*}
\begin{aligned}
&
	w^2(\bar{y})
\eta^2(\frac{\bar{\mu}_0 \bar{y} }{\sqrt  t})  \varphi[\bar{\mu}_0](\bar{\mu}_0 \bar{y},t)
\\
= \ &
	w^2(\bar{y})
\eta^2(\frac{\bar{\mu}_0 \bar{y} }{\sqrt  t})
\bigg[
-2^{-\frac 12} \Big( \bar{\mu}_{0}  t^{-1}
+ \int_{t/2}^{t-\mu_{0}^2 } \frac{\bar{\mu}_{0t}(s)}{t-s} d s
\Big)
+ O( \bar{\mu}_{0}^3 t^{-2} |\bar{y}|^2  +
| \bar{\mu}_{0t} | |\bar y| )
+ O ( t^{-1} (\ln t)^{-2} )
\bigg]
\\
= \ &
\Big[
O(  t^{-1} (\ln t)^{-2} |\bar y| )
+ O ( t^{-1} (\ln t)^{-2} \ln \ln t)
\Big] \langle \bar{y} \rangle^{-4}
\1_{\{ |\bar{y}| \le 2\bar{\mu}_0^{-1}
	t^{\frac 12} \}}
\\
= \ &  O ( t^{-1} (\ln t)^{-2} \ln \ln t)
\langle \bar{y} \rangle^{-3}
\1_{\{ |\bar{y}| \le 2\bar{\mu}_0^{-1}
	t^{\frac 12} \}} ,
\end{aligned}
\end{equation*}
\begin{equation*}
	\bar{\mu}_0 w(\bar{y}) \eta(\frac{\bar{\mu}_0 \bar{y}}{\sqrt  t}) \varphi^2[\bar{\mu}_0](\bar{\mu}_0 \bar{y},t)
=  O(t^{-2} (\ln t)^{-3} ) \langle \bar{y} \rangle^{-2} \1_{\{ |\bar{y}| \le 2\bar{\mu}_0^{-1}
	t^{\frac 12} \}}
=
O ( t^{-1} (\ln t)^{-2} \ln \ln t)
\langle \bar{y} \rangle^{-3}
\1_{\{ |\bar{y}| \le 2\bar{\mu}_0^{-1}
	t^{\frac 12} \}},
\end{equation*}
which implies $|\tilde{H}| \lesssim  t^{-1} (\ln t)^{-3} \ln \ln t
\langle \bar{y} \rangle^{-3} $. As a result, one has
\begin{equation}\label{tilde-H-upp}
|\tilde{H}| \lesssim
\min\big\{
t^{-1} (\ln t)^{-2} \langle \bar{y} \rangle^{-4},
t^{-1} (\ln t)^{-3} \ln \ln t
\langle \bar{y} \rangle^{-3}
\big\} .
\end{equation}
Claim:
\begin{equation}\label{Phi0-est}
\begin{aligned}
|\Phi_0(\bar{y},t) | \lesssim \ &
\min\big\{
t^{-1} (\ln t)^{-2} \langle \bar{y} \rangle^{-2}
\ln (2+ |\bar{y}|)
,
t^{-1} (\ln t)^{-3} \ln \ln t
\langle \bar{y} \rangle^{-1} \big\}
,
\\
|\nabla_{\bar{y}}\Phi_0(\bar{y},t) | \lesssim \ &
\min\big\{
t^{-1} (\ln t)^{-2}
\langle \bar{y} \rangle^{-3}
\ln (2+ |\bar{y}|),
t^{-1} (\ln t)^{-3} \ln \ln t
\langle \bar{y} \rangle^{-2} \big\}
,
\\
&|\pp_t \Phi_0(\bar{y},t) | \lesssim  t^{-2} (\ln t)^{-1}  \langle \bar{y} \rangle^{-2}
\ln (2+ |\bar{y}|) .
\end{aligned}
\end{equation}
Indeed, the estimate about $\Phi_0(\bar{y},t)  $ is derived from \eqref{tilde-H-orth} \eqref{tilde-H-upp}. The upper bound of $\nabla_{\bar{y}}\Phi_0(\bar{y},t) $ follows by scaling argument. In order to estimate $\pp_t \Phi_0(\bar{y},t) $, we need to take a closer look at $\pp_{t}\tilde{H}(z, t)$. By the definition of $\tilde{H}$, it is straightforward to have
\begin{equation*}
	\int_{\RR^4} \pp_{t}\tilde{H}(z, t)Z_5(z) dz =0.
\end{equation*}
\begin{equation*}
\begin{aligned}
	&
	\pp_{t} \tilde{H}(|\bar{y}| ,t)
	=
	- 3 \bar{\mu}_{0t}\bigg(
	w^2(\bar{y})
	\eta^2(\frac{\bar{\mu}_0 \bar{y} }{\sqrt  t})  \varphi[\bar{\mu}_0](
	\bar{\mu}_0 \bar{y},t)
	+
	\bar{\mu}_0 w(\bar{y}) \eta(\frac{\bar{\mu}_0 \bar{y}}{\sqrt  t}) \varphi^2[\bar{\mu}_0](\bar{\mu}_0 \bar{y},t) -\mathcal{M}[\bar{\mu}_0]
	\frac{\eta(\bar{y}) Z_{5}(\bar{y}) }{\int_{B_{2} } \eta Z_{5}^2 d z} \bigg)
\\
&
- 3 \bar{\mu}_0
\Bigg[
w^2(\bar{y})
2\eta(\frac{\bar{\mu}_0 \bar{y} }{\sqrt  t})
\nabla \eta (\frac{\bar{\mu}_0 \bar{y} }{\sqrt  t}) \cdot \frac{\bar{\mu}_0 \bar{y} }{\sqrt  t}
(\frac{\bar{\mu}_0 }{\sqrt  t})^{-1} \pp_{t} (\frac{\bar{\mu}_0 }{\sqrt  t})
 \varphi[\bar{\mu}_0](\bar{\mu}_0 \bar{y},t)
 \\
 & +
 w^2(\bar{y})
 \eta^2(\frac{\bar{\mu}_0 \bar{y} }{\sqrt  t})
 \big( \nabla_{\bar{x}} \varphi[\bar{\mu}_0](
 \bar{\mu}_0 \bar{y},t)  \cdot \bar{\mu}_{0t} \bar{y} + \pp_{t} \varphi[\bar{\mu}_0](
 \bar{\mu}_0 \bar{y},t)
 \big)
 \\
 &
+
\bar{\mu}_{0t} w(\bar{y}) \eta(\frac{\bar{\mu}_0 \bar{y}}{\sqrt  t}) \varphi^2[\bar{\mu}_0](\bar{\mu}_0 \bar{y},t)
+
\bar{\mu}_0 w(\bar{y})
\nabla \eta (\frac{\bar{\mu}_0 \bar{y} }{\sqrt  t}) \cdot \frac{\bar{\mu}_0 \bar{y} }{\sqrt  t}
(\frac{\bar{\mu}_0 }{\sqrt  t})^{-1} \pp_{t} (\frac{\bar{\mu}_0 }{\sqrt  t})
 \varphi^2[\bar{\mu}_0](\bar{\mu}_0 \bar{y},t)
\\
&
+
\bar{\mu}_0 w(\bar{y}) \eta(\frac{\bar{\mu}_0 \bar{y}}{\sqrt  t}) 2\varphi[\bar{\mu}_0](\bar{\mu}_0 \bar{y},t)
\big( \nabla_{\bar{x}} \varphi[\bar{\mu}_0](
\bar{\mu}_0 \bar{y},t)  \cdot \bar{\mu}_{0t} \bar{y} + \pp_{t} \varphi[\bar{\mu}_0](
\bar{\mu}_0 \bar{y},t)
\big)
- \pp_{t} (\mathcal{M}[\bar{\mu}_0] )
\frac{\eta(\bar{y}) Z_{5}(\bar{y}) }{\int_{B_{2} } \eta Z_{5}^2 d z} \Bigg]
.
\end{aligned}
\end{equation*}
Using \eqref{varphi-bar-mu0} in Appendix, one has
\begin{equation*}
	\begin{aligned}
		&
		\Big| \nabla_{\bar{x}} \varphi[\bar{\mu}_0](
		\bar{\mu}_0 \bar{y},t)  \cdot \bar{\mu}_{0t} \bar{y} + \pp_{t} \varphi[\bar{\mu}_0](
		\bar{\mu}_0 \bar{y},t)
		\Big|  \1_{\{ \bar{\mu}_0 |\bar{y}| \le 2t^{\frac 12}\}}
		\\
		\lesssim \ &
		\Big|
		\Big[ (t\ln t)^{-1} \1_{\{ |\bar{x}| \le \mu_0 \}} + (t^{-1} (\ln t)^{-2} |\bar{x}|^{-1}
		+
		t^{-\frac 32} (\ln t)^{-1})\1_{\{ |\bar{x}| > \mu_0 \}}  \Big]   \bar{\mu}_{0}^{-1}|\bar{\mu}_{0t}| |\bar{x}| +
		t^{-2}
		\Big|  \1_{\{  |\bar{x}| \le 2t^{\frac 12}\}}
		\lesssim  t^{-2} .
	\end{aligned}
\end{equation*}
Thus
\begin{equation*}
\begin{aligned}
&
|	\pp_{t} \tilde{H}(|\bar{y}| ,t) |
\lesssim
 t^{-2} (\ln t)^{-3} \langle \bar{y} \rangle^{-4}
+
(\ln t)^{-1}
\Big[ t^{-2} (\ln t)^{-1} \langle \bar{y} \rangle^{-4}  + t^{-2}  \langle \bar{y} \rangle^{-4} + t^{-3} (\ln t)^{-4} \langle \bar{y} \rangle^{-2}
\\
 &
+
 t^{-3} (\ln t)^{-3} \langle \bar{y} \rangle^{-2}
 +
 t^{-3} (\ln t)^{-2} \langle \bar{y} \rangle^{-2}  + t^{-3}
 \1_{ \{ |\bar{y}| \le 3 \} }
 \Big] \1_{\{ \bar{\mu}_0 |\bar{y}| \le 2t^{\frac 12}\}}
 \\
 \lesssim \ &
 t^{-2} (\ln t)^{-3} \langle \bar{y} \rangle^{-4}
 +
 (\ln t)^{-1}
 \Big(  t^{-2}  \langle \bar{y} \rangle^{-4}
 +
 t^{-3} (\ln t)^{-2} \langle \bar{y} \rangle^{-2}
 \Big) \1_{\{ \bar{\mu}_0 |\bar{y}| \le 2t^{\frac 12}\}}
 \lesssim
t^{-2} (\ln t)^{-1} \langle \bar{y}  \rangle^{-4} .
\end{aligned}
\end{equation*}
Therefore, we have the estimate about $\pp_t \Phi_0(\bar{y},t) $ in \eqref{Phi0-est}.

In order to avoid the influence in the remote region $|\bar{x}|\gtrsim t^{\frac 12}$, we add cut-off function and set $\bar{\mu}_0^{-1} \Phi_0(\frac{x-\xi}{\bar{\mu}_0} , t)\eta(\frac{4(x-\xi) }{\sqrt t})$ as the correction term. It is easy to check
\begin{equation*}
	\pp_{t} \Big( \bar{\mu}_0^{-1} \Phi_0(\frac{x-\xi}{\bar{\mu}_0} , t) \Big)
	=
	-\bar{\mu}_0^{-2} \bar{\mu}_{0t}
	\Phi_0(\frac{x-\xi}{\bar{\mu}_0} , t)
	- \bar{\mu}_0^{-2} \nabla_{\bar{y}} \Phi_0(\frac{x-\xi}{\bar{\mu}_0} , t) \cdot \big(\bar{\mu}_{0t} \frac{x-\xi}{\bar{\mu}_0} + \xi_{t} \big)
	+
	\bar{\mu}_0^{-1} \pp_{t} \Phi_0(\frac{x-\xi}{\bar{\mu}_0} , t) .
\end{equation*}

Set $\mu=\bar{\mu}_0 +\mu_1$ where $|\mu_1| \le \frac{\bar{\mu}_0}{2}$, $|\mu_{1t}| \le \frac{ | \bar{\mu}_{0t} |}{2}$. Let us estimate the new error
	\begin{align}\label{S-1}
		&
		S\Big[u_1 + \varphi[\mu] +  \bar{\mu}_0^{-1} \Phi_0(\frac{x-\xi}{\bar{\mu}_0} , t) \eta(\frac{4(x-\xi) }{\sqrt t})
		\Big]
		\\\notag
= \ & - \pp_t \Big(\bar{\mu}_0^{-1} \Phi_0(\frac{x-\xi}{\bar{\mu}_0} , t) \eta(\frac{ 4(x-\xi) }{\sqrt t}) \Big) +
\Delta_x \Big(\bar{\mu}_0^{-1} \Phi_0(\frac{x-\xi}{\bar{\mu}_0} , t) \eta(\frac{ 4(x-\xi) }{\sqrt t})\Big) + S[u_1 +\varphi[\mu] ]
\\\notag
&
+ \Big(u_1 + \varphi[\mu] + \bar{\mu}_0^{-1} \Phi_0(\frac{x-\xi}{\bar{\mu}_0} , t) \eta(\frac{ 4 (x-\xi) }{\sqrt t}) \Big)^3 - (u_1+\varphi[\mu] )^3
\\\notag
= \ &  \bar{\mu}_0^{-1} \Phi_0(\frac{x-\xi}{\bar{\mu}_0} , t) \nabla \eta(\frac{4(x-\xi)}{\sqrt t}) \cdot
\big(4 t^{-\frac 12} \xi_t + 2 t^{-\frac{3}{2}}(x-\xi)\big)
\\\notag
&
-  \eta(\frac{4(x-\xi) }{\sqrt t})
\Big(-\bar{\mu}_0^{-2} \bar{\mu}_{0t}
\Phi_0(\frac{x-\xi}{\bar{\mu}_0} , t)
- \bar{\mu}_0^{-2} \nabla_{\bar{y}} \Phi_0(\frac{x-\xi}{\bar{\mu}_0} , t) \cdot (\bar{\mu}_{0t} \frac{x-\xi}{\bar{\mu}_0} + \xi_{t} )
+
\bar{\mu}_0^{-1} \pp_{t} \Phi_0(\frac{x-\xi}{\bar{\mu}_0} , t)\Big)
\\\notag
&
 +
\bar{\mu}_0^{-3}
\Delta_{\bar{y}} \Phi_0(\frac{x-\xi}{\bar{\mu}_0} , t) \eta(\frac{4(x-\xi)}{\sqrt t})
+
8 t^{-\frac 12} \bar{\mu}_0^{-2} \nabla_{\bar{y}}  \Phi_0(\frac{x-\xi}{\bar{\mu}_0} , t)\cdot \nabla\eta (\frac{4(x-\xi)}{ \sqrt t})
\\\notag
&
 + 16 t^{-1} \bar{\mu}_0^{-1} \Phi_0(\frac{x-\xi}{\bar{\mu}_0} , t) \Delta\eta (\frac{4(x-\xi) }{\sqrt t})
 + S[u_1 +\varphi[\mu] ]  + 3 \bar{\mu}_0^{-3} w^2(\frac{x-\xi}{\bar{\mu}_0 })  \Phi_0(\frac{x-\xi}{\bar{\mu}_0} , t) \eta(\frac{4(x-\xi) }{\sqrt t})
\\\notag
&
+ \Big(u_1 + \varphi[\mu] + \bar{\mu}_0^{-1} \Phi_0(\frac{x-\xi}{\bar{\mu}_0} , t) \eta(\frac{4(x-\xi)}{ \sqrt t}) \Big)^3 - (u_1+\varphi[\mu] )^3
-
3 \bar{\mu}_0^{-3} w^2(\frac{x-\xi}{\bar{\mu}_0 })  \Phi_0(\frac{x-\xi}{\bar{\mu}_0} , t) \eta(\frac{4(x-\xi)}{\sqrt t})
\\\notag
= \ &  \bar{\mu}_0^{-1} \Phi_0(\frac{\bar{x}}{\bar{\mu}_0} , t) \nabla \eta (\frac{4\bar{x}}{ \sqrt t}) \cdot
\big( 4 t^{-\frac 12} \xi_t +  2  t^{-\frac{3}{2}} \bar{x} \big)
\\\notag
&
+
8t^{-\frac 12} \bar{\mu}_0^{-2} \nabla_{\bar{y}}  \Phi_0(\frac{\bar{x}}{\bar{\mu}_0} , t)\cdot \nabla\eta (\frac{4\bar{x}}{\sqrt t})
+
16 t^{-1} \bar{\mu}_0^{-1} \Phi_0(\frac{\bar{x}}{\bar{\mu}_0} , t) \Delta\eta (\frac{4\bar{x}}{ \sqrt t})
\\\notag
&
-  \eta(\frac{4\bar{x}}{ \sqrt t})
\Big[-\bar{\mu}_0^{-2} \bar{\mu}_{0t}
\Phi_0(\frac{\bar{x}}{\bar{\mu}_0} , t)
- \bar{\mu}_0^{-2} \nabla_{\bar{y}} \Phi_0(\frac{\bar{x}}{\bar{\mu}_0} , t) \cdot (\bar{\mu}_{0t} \frac{\bar{x}}{\bar{\mu}_0} + \xi_{t} )
+
\bar{\mu}_0^{-1} \pp_{t} \Phi_0(\frac{\bar{x}}{\bar{\mu}_0} , t)\Big]
\\\notag
&
+
3\Big(\eta^2(\frac{\bar{x}}{\sqrt t}) - \eta(\frac{4\bar{x}}{\sqrt t}) \Big)
\mu^{-2} w^2(\frac{\bar{x}}{\mu})  \varphi[\mu]( \bar{x},t)
+ 3 \Big(\eta(\frac{\bar{x}}{\sqrt t})
- \eta(\frac{4\bar{x}}{\sqrt t})
\Big)
  \mu^{-1} w(\frac{\bar{x}}{\mu})  \varphi^2[\mu]( \bar{x},t)
\\\notag
	&
+
3 \eta(\frac{4\bar{x}}{\sqrt t})
\Big(
\mu^{-2} w^2(\frac{\bar{x}}{\mu})  \varphi[\mu]( \bar{x},t)   -    \bar{\mu}_0^{-2} w^2( \frac{\bar{x}}{\bar{\mu}_0 }) \varphi[\bar{\mu}_0]( \bar{x},t)
\Big)
\\\notag
&
+ 3\eta(\frac{4\bar{x}}{\sqrt t}) \Big(  \mu^{-1} w(\frac{\bar{x}}{\mu})  \varphi^2[\mu]( \bar{x},t)
-
\bar{\mu}_0^{-1}  w(  \frac{\bar{x}}{\bar{\mu}_0 } )  \varphi^2[\bar{\mu}_0] ( \bar{x},t)
\Big)
\\\notag
 &
 +  \xi_t\cdot \nabla_{\bar{x}} \varphi[\mu](\bar{x},t)
+ \mu^{-2} \xi_t \cdot \nabla w (\frac{\bar{x}}{\mu})
\eta(\frac{\bar{x}}{\sqrt t})
+ \mu^{-1} t^{-\frac 12}  w(\frac{\bar{x}}{\mu})
\xi_t \cdot
\nabla \eta (\frac{\bar{x}}{\sqrt t} )
\\\notag
& + \varphi^3[\mu]
+ 3\bar{\mu}_0^{-2} \mathcal{M}[\bar{\mu}_0]
\big(\int_{B_{2} } \eta(z) Z_{5}^2(z) d z\big)^{-1}
\eta(\frac{\bar{x}}{\bar{\mu}_0}) Z_{5}(\frac{\bar{x}}{\bar{\mu}_0})
\\\notag
&
+ \Big(u_1 + \varphi[\mu] + \bar{\mu}_0^{-1} \Phi_0(\frac{\bar{x}}{\bar{\mu}_0} , t) \eta(\frac{4\bar{x}}{\sqrt t}) \Big)^3 - (u_1+\varphi[\mu] )^3
-
3 \bar{\mu}_0^{-3} w^2(\frac{\bar{x}}{\bar{\mu}_0 })  \Phi_0(\frac{\bar{x}}{\bar{\mu}_0} , t) \eta(\frac{4\bar{x}}{\sqrt t}) .
	\end{align}
Claim:
\begin{equation}\label{S2-est}
	\begin{aligned}
		&
		\bigg|	S\Big[u_1 + \varphi[\mu] +  \bar{\mu}_0^{-1} \Phi_0(\frac{x-\xi}{\bar{\mu}_0} , t) \eta(\frac{4(x-\xi) }{\sqrt t})
		\Big] \bigg|
		\lesssim
		t^{-2}   \langle \bar{y} \rangle^{-2}
		\ln (2+ |\bar{y}|) 	\1_{\{ |x|\le 9t^{\frac 12} \}}
		\\
		&
		+
		\bigg[ t^{-1}(\ln t)^{2} |\mu_1|
		    +  (\ln t)^2
		    \Big(|\tilde{g}[\bar{\mu}_0,\mu_1] | +
		    |\bar{\mu}_{0t}| \ln t
		    \sup\limits_{t_1\in[t/2,t]} \Big(\frac{|\mu_1(t_1)|}{\bar{\mu}_{0}(t) } + \frac{|\mu_{1t}(t_1)|}{|\bar{\mu}_{0t}(t)|}  \Big)
		    \Big)
		\bigg] \langle y\rangle^{-4}  \1_{\{ |x|\le 9t^{\frac 12} \}}
		\\
		& +   |\xi_t|
		(\ln t)^{2} \langle y \rangle^{-3} \1_{\{ |x| \le 2t^{\frac 12}\}}  +
		\Big[
		|\xi_t| t^{\frac 32} (\ln t)^{-1} |x|^{-6}
		+ ( t^{2} (\ln t)^{-1}
		| x |^{-6}
		)^3
		\Big] \1_{\{ | x | > 2t^{\frac 12}  \}}
		+ t^{-2} (\ln t)^2 \eta(\bar{y}) .
	\end{aligned}
\end{equation}
We need to estimate term by term. Indeed, by \eqref{Phi0-est}, one has
\begin{equation*}
	\begin{aligned}
		&
		\Big| \bar{\mu}_0^{-1} \Phi_0(\frac{\bar{x}}{\bar{\mu}_0} , t)  \nabla \eta (\frac{4\bar{x}}{ \sqrt t}) \cdot
		( 4 t^{-\frac 12} \xi_t +  2  t^{-\frac{3}{2}} \bar{x} ) \Big|
		\lesssim
	 (t \ln t)^{-1} \langle \bar{y} \rangle^{-2}
		\ln (2+ |\bar{y}|)
		( t^{- \frac 12} |\xi_t| + t^{-1})
		\1_{\{ 9^{-1} t^{\frac 12} \le |x| \le 9 t^{\frac 12}\}}
		\\
			\sim \ &
	\left[t^{- \frac 52} (\ln t)^{-2} |\xi_t| + t^{-3 } (\ln t)^{-2}\right]
		\1_{\{ 9^{-1} t^{\frac 12} \le |x| \le 9 t^{\frac 12}\}}
		 .
	\end{aligned}
\end{equation*}
Also, we have
\begin{equation*}
	\begin{aligned}
		&
		\Big|
8t^{-\frac 12} \bar{\mu}_0^{-2} \nabla_{\bar{y}}  \Phi_0(\frac{\bar{x}}{\bar{\mu}_0} , t)\cdot  \nabla\eta (\frac{4\bar{x}}{\sqrt t})
+
16 t^{-1} \bar{\mu}_0^{-1} \Phi_0(\frac{\bar{x}}{\bar{\mu}_0} , t) \Delta\eta (\frac{4\bar{x}}{ \sqrt t})
		\Big|
		\\
		\lesssim \ &
		\Big|
		t^{-\frac{3}{2}}
		\langle \bar{y} \rangle^{-3}
		\ln (2+ |\bar{y}|)
		+
		t^{-2} (\ln t)^{-1} \langle \bar{y} \rangle^{-2}
		\ln (2+ |\bar{y}|) \Big|
		\1_{\{ 9^{-1} t^{\frac 12} \le | x | \le 9 t^{\frac 12} \}}
		\sim
		t^{-3} (\ln t)^{-2}
			\1_{\{ 9^{-1} t^{\frac 12} \le | x | \le 9 t^{\frac 12} \}} ,
	\end{aligned}
\end{equation*}
\begin{equation*}
	\begin{aligned}
		&
		\Big|\bar{\mu}_0^{-2}  \bar{\mu}_{0t}
		\Phi_0(\frac{\bar{x}}{\bar{\mu}_0} , t)
		+
		\bar{\mu}_0^{-2} \nabla_{\bar{y}} \Phi_0(\frac{\bar{x}}{\bar{\mu}_0} , t) \cdot (  \bar{\mu}_{0t} \frac{\bar{x}}{\bar{\mu}_0} +  \xi_t )
		-
		\bar{\mu}_0^{-1} \pp_{t} \Phi_0(\frac{\bar{x}}{\bar{\mu}_0} , t) \Big| \eta(\frac{4\bar{x}}{ \sqrt t})
		\\
		\lesssim \ &
	\Big[
		(t\ln t)^{-2} \langle \bar{y} \rangle^{-2}
		\ln (2+ |\bar{y}|)
		+
		t^{-1} |  \xi_{t} |
		\langle \bar{y} \rangle^{-3}
		\ln (2+ |\bar{y}|)
		+
		t^{-2}   \langle \bar{y} \rangle^{-2}
		\ln (2+ |\bar{y}|) \Big]
\eta(\frac{4\bar{x}}{ \sqrt t})
\\
\sim \ &
\Big( t^{-1} |  \xi_{t} |
\langle \bar{y} \rangle^{-3}
\ln (2+ |\bar{y}|)
+
t^{-2}   \langle \bar{y} \rangle^{-2}
\ln (2+ |\bar{y}|) \Big)
\eta(\frac{4\bar{x}}{ \sqrt t}) .
	\end{aligned}
\end{equation*}

By Corollary \ref{varphi-coro}, we have the following estimates
\begin{equation*}
\begin{aligned}
&
\bigg|
3\big(\eta^2(\frac{\bar{x}}{\sqrt t}) - \eta(\frac{4\bar{x}}{\sqrt t}) \big)
\mu^{-2} w^2(\frac{\bar{x}}{\mu})  \varphi[\mu]( \bar{x},t)
+ 3 \big(\eta(\frac{\bar{x}}{\sqrt t})
- \eta(\frac{4\bar{x}}{\sqrt t})
\big)
\mu^{-1} w(\frac{\bar{x}}{\mu})  \varphi^2[\mu]( \bar{x},t)   \bigg|
\\
\lesssim \ &
\Big[(\ln t)^2
\langle y\rangle^{-4} (t\ln t)^{-1}
+ \ln t \langle y\rangle^{-2} (t\ln t)^{-2} \Big] \1_{\{ 9^{-1} t^{\frac 12} \le |\bar{x}| \le 9 t^{\frac 12} \}}
\sim
(t \ln t)^{-3} \1_{\{ 9^{-1} t^{\frac 12} \le |\bar{x}| \le 9 t^{\frac 12} \}} ,
\end{aligned}
\end{equation*}
\begin{equation*}
\begin{aligned}
&
\bigg| \eta(\frac{4\bar{x}}{ \sqrt t})
(
\mu^{-2} w^2(\frac{\bar{x}}{\mu})  \varphi[\mu]( \bar{x},t)   -    \bar{\mu}_0^{-2} w^2( \frac{\bar{x}}{\bar{\mu}_0 }) \varphi[\bar{\mu}_0]( \bar{x},t)
) \bigg|
\\
= \ &
\bigg| \eta(\frac{4\bar{x}}{ \sqrt t})
\Big[
\big(\mu^{-2} w^2(\frac{\bar{x}}{\mu}) -
\bar{\mu}_0^{-2} w^2( \frac{\bar{x}}{\bar{\mu}_0 }) \big) \varphi[\mu]   +    \bar{\mu}_0^{-2} w^2( \frac{\bar{x}}{\bar{\mu}_0 }) ( \varphi[\mu] - \varphi[\bar{\mu}_0] )
\Big] \bigg|
\\
\lesssim \ &
\eta(\frac{4\bar{x}}{ \sqrt t})
\bigg[
 t^{-1}(\ln t)^{2} |\mu_1|
\langle y\rangle^{-4}     +    (\ln t)^2      \langle y\rangle^{-4}
\Big(|\tilde{g}[\bar{\mu}_0,\mu_1] | +
(t \ln t)^{-1}
\sup\limits_{t_1\in[t/2,t]} \Big(\frac{|\mu_1(t_1)|}{\bar{\mu}_{0}(t) } + \frac{|\mu_{1t}(t_1)|}{|\bar{\mu}_{0t}(t)|}  \Big)
 \Big) \bigg],
\end{aligned}
\end{equation*}
 \begin{align*}
 &
\eta(\frac{4\bar{x}}{ \sqrt t})  \Big| \big(  \mu^{-1} w(\frac{\bar{x}}{\mu})  \varphi^2[\mu]( \bar{x},t)
 -
 \bar{\mu}_0^{-1}  w(  \frac{\bar{x}}{\bar{\mu}_0 } )  \varphi^2[\bar{\mu}_0] ( \bar{x},t)
 \big)  \Big|
 \\
= \ &
\eta(\frac{4\bar{x}}{ \sqrt t})
\Big|    \big( \mu^{-1} w(\frac{\bar{x}}{\mu})
-
\bar{\mu}_0^{-1}  w(  \frac{\bar{x}}{\bar{\mu}_0 } )
\big) \varphi^2[\mu]
+
\bar{\mu}_0^{-1}  w(  \frac{\bar{x}}{\bar{\mu}_0 } )
( \varphi[\mu]  - \varphi[\bar{\mu}_0]
) ( \varphi[\mu]  + \varphi[\bar{\mu}_0]
)
 \Big|
\\
\lesssim \ &
\eta(\frac{4\bar{x}}{ \sqrt t})
\bigg[
 \bar{\mu}_0^{-2} |\mu_1| \langle y \rangle^{-2} (t\ln t)^{-2}
+
\bar{\mu}_0^{-1}  \langle y \rangle^{-2}
(t\ln t)^{-1} \Big(
|\mu_1|t^{-1} + |\tilde{g}[\bar{\mu}_0,\mu_1]|
\\
&
+ |\mu_{t}| \ln t
\sup\limits_{t_1\in[t/2,t]} \Big(\frac{|\mu_1(t_1)|}{ \mu(t) } + \frac{|\mu_{1t}(t_1)|}{|\mu_t(t)|}  \Big)\Big)
\bigg]
\\
\sim \ &
\eta(\frac{4\bar{x}}{ \sqrt t})
\bigg[
 t^{-2} |\mu_1| \langle y \rangle^{-2}
+
t^{-1}    \langle y \rangle^{-2}
\Big(|\tilde{g}[\bar{\mu}_0,\mu_1] | + (t \ln t)^{-1}
\sup\limits_{t_1\in[t/2,t]} \Big(\frac{|\mu_1(t_1)|}{ \mu(t) } + \frac{|\mu_{1t}(t_1)|}{|\mu_t(t)|}  \Big) \Big)
\bigg]
\\
\lesssim \ &
\eta(\frac{4\bar{x}}{ \sqrt t})
\bigg[
t^{-1}(\ln t)^{2} |\mu_1|
\langle y\rangle^{-4}     +    (\ln t)^2      \langle y\rangle^{-4} \Big(|\tilde{g}[\bar{\mu}_0,\mu_1] |
+ (t \ln t)^{-1}
\sup\limits_{t_1\in[t/2,t]} \Big(\frac{|\mu_1(t_1)|}{ \mu(t) } + \frac{|\mu_{1t}(t_1)|}{|\mu_t(t)|}  \Big) \Big)
\bigg].
 \end{align*}

Using Corollary \ref{varphi-coro}, one has
\begin{equation*}
\begin{aligned}
&
\Big| \xi_t\cdot \nabla_{\bar{x}} \varphi[\mu](\bar{x},t)
+ \mu^{-2} \xi_t \cdot \nabla w(\frac{\bar{x}}{\mu})
\eta(\frac{\bar{x}}{\sqrt t})
+ \mu^{-1} t^{-\frac 12}  w(\frac{\bar{x}}{\mu})
\xi_t \cdot
\nabla \eta(\frac{\bar{x}}{\sqrt t} )
\Big|\\
\lesssim \ &
|\xi_t| \Big(t^{-1} \langle y \rangle^{-1} \1_{\{ |\bar{x}| \le 2t^{\frac 12}\}} +
t^{\frac 32} (\ln t)^{-1} |\bar{x}|^{-6}
\1_{\{ |\bar{x}| > 2t^{\frac 12}\}}
+
(\ln t)^{2} \langle y \rangle^{-3} \1_{\{ |\bar{x}| \le 2t^{\frac 12}\}}
 \Big)
 \\
\sim \ &
|\xi_t| \Big(
(\ln t)^{2} \langle y \rangle^{-3} \1_{\{ |x| \le 2t^{\frac 12}\}}  +
t^{\frac 32} (\ln t)^{-1} |x|^{-6}
\1_{\{ |x| > 2t^{\frac 12}\}}
\Big) .
\end{aligned}
\end{equation*}

\begin{equation*}
|\varphi^3[\mu] | \lesssim
(t\ln t)^{-3} \1_{\{ |\bar{x}| \le 2t^{\frac 12} \}}
+
( t^{2} (\ln t)^{-1}
|\bar{x}|^{-6}
)^3 \1_{\{ |\bar{x}| > 2t^{\frac 12}  \}}  ,\
\Big|\bar{\mu}_0^{-2} \mathcal{M}[\bar{\mu}_0]
\frac{\eta(\bar{y}) Z_{5}(\bar{y}) }{\int_{B_{2} } \eta(z) Z_{5}^2(z) d z} \Big| \lesssim
  t^{-2} (\ln t)^2 \eta(\bar{y}) .
\end{equation*}

\begin{equation*}
\begin{aligned}
&
\bigg| \Big(u_1 + \varphi[\mu] + \bar{\mu}_0^{-1} \Phi_0(\frac{\bar{x}}{\bar{\mu}_0} , t) \eta(\frac{4\bar{x}}{\sqrt t}) \Big)^3 - (u_1+\varphi[\mu] )^3
-
3 \bar{\mu}_0^{-3} w^2(\frac{\bar{x}}{\bar{\mu}_0 })  \Phi_0(\frac{\bar{x}}{\bar{\mu}_0} , t) \eta(\frac{4\bar{x}}{\sqrt t})  \bigg|
\\
= \ &
3\bigg|  \Big( \mu^{-1} w(\frac{\bar{x}}{ \mu }) -
\bar{\mu}_0^{-1} w(\frac{\bar{x}}{\bar{\mu}_0 })   + \varphi[\mu] +
\theta \bar{\mu}_0^{-1} \Phi_0(\frac{\bar{x}}{\bar{\mu}_0} , t) \eta(\frac{4\bar{x}}{\sqrt t}) \Big)
\\
& \times
\Big(u_1 + \varphi[\mu] +
\theta \bar{\mu}_0^{-1} \Phi_0(\frac{\bar{x}}{\bar{\mu}_0} , t) \eta(\frac{4\bar{x}}{\sqrt t}) +
\bar{\mu}_0^{-1} w(\frac{\bar{x}}{\bar{\mu}_0 }) \Big)
\bar{\mu}_0^{-1} \Phi_0(\frac{\bar{x}}{\bar{\mu}_0} , t)
\eta(\frac{4\bar{x}}{\sqrt t}) \bigg|
\\
\lesssim \ &
 \Big(  |\mu_1| \mu_0^{-2} \langle y \rangle^{-2}  + (t\ln t)^{-1} +
t^{-1} (\ln t)^{-1} \langle \bar{y} \rangle^{-2}
\ln (2+ |\bar{y}|)
\Big)
\\
& \times
\big((t\ln t)^{-1}  +
\ln t \langle y\rangle^{-2}  \big)
 (t \ln t)^{-1} \langle \bar{y} \rangle^{-2}
\ln (2+ |\bar{y}|)
\eta(\frac{4\bar{x}}{\sqrt t})
\\
\lesssim \ &
\big(  |\mu_1| (\ln t)^2 \langle y \rangle^{-2}  + (t\ln t)^{-1}
\big)
t^{-1}  \langle y\rangle^{-4}
\ln (2+ |y|)
\eta(\frac{4\bar{x}}{\sqrt t})
\\
= \ &
|\mu_1|
t^{-1} (\ln t)^2    \langle y\rangle^{-6}
\ln (2+ |y|)
\eta(\frac{4\bar{x}}{\sqrt t})
+
t^{-2}  (\ln t)^{-1}   \langle y\rangle^{-4}
\ln (2+ |y|)
\eta(\frac{4\bar{x}}{\sqrt t})
.
\end{aligned}
\end{equation*}
We have completed the proof of claim \eqref{S2-est}.

\medskip

\section{Gluing system and solving the outer problem}

In this section, we formulate the inner--outer gluing system such that an infinite time blow-up solution
to \eqref{nonlinear-heateq} with desired asymptotics can be found. We look for solution of the form
$$
u(x,t)=u_1 + \varphi[\mu] +  \bar{\mu}_0^{-1} \Phi_0(\frac{x-\xi}{\bar{\mu}_0} , t) \eta(\frac{4(x-\xi) }{\sqrt t}) + \psi(x,t) + \eta_R  \mu^{-1} \phi(\frac{x-\xi}{\mu},t)
$$
with
\[
\eta_R (x,t)= \eta(\frac{x-\xi}{\mu_{0} R(t)}),
\quad
R(t) = t^{\gamma},
\quad
 0<\gamma<\frac 12,
\]
where $\psi$, $\phi$ are perturbations in the outer region and inner region, respectively. In order for the following to hold
\begin{align*}
& 0 =
S\Big[ u_1 + \varphi[\mu] +  \bar{\mu}_0^{-1} \Phi_0(\frac{x-\xi}{\bar{\mu}_0} , t) \eta(\frac{4(x-\xi) }{\sqrt t}) + \psi + \eta_R  \mu^{-1} \phi(\frac{x-\xi}{\mu},t)  \Big]
\\
= \ &
- \pp_t \psi  - \pp_t \eta_R \mu^{-1}\phi(\frac {x-\xi}{\mu},t)
+ \eta_R \mu^{-2}\mu_{t} \big( \phi (\frac {x-\xi}{\mu},t) +\frac{x-\xi}{\mu}\cdot\nabla_y \phi(\frac {x-\xi}{\mu},t) \big)
\\
&
+
\eta_R \mu^{-2} \xi_{t} \cdot \nabla_y \phi(\frac{x-\xi}{\mu},t)
-
\eta_R \mu^{-1} \pp_t \phi(\frac {x-\xi}{\mu},t)
\\
&
+
\Delta_x \psi + \Delta_x \eta_R \mu^{-1}\phi(\frac {x-\xi}{\mu},t)
+ 2 \nabla_x \eta_R \cdot \mu^{-2} \nabla_y \phi(\frac {x-\xi}{\mu},t)
+ \eta_R \mu^{-3} \Delta_y \phi(\frac {x-\xi}{\mu},t)
\\
& + 3 \big(\mu^{-1} w(\frac{x-\xi}{\mu})\big)^2 \big( \psi + \eta_R  \mu^{-1} \phi(\frac{x-\xi}{\mu},t)  \big)
+
S\big[u_1 + \varphi[\mu] +  \bar{\mu}_0^{-1} \Phi_0(\frac{x-\xi}{\bar{\mu}_0} , t)
\eta(\frac{4(x-\xi) }{\sqrt t}) \big]
\\
&
+
\Big(u_1 + \varphi[\mu] +  \bar{\mu}_0^{-1} \Phi_0(\frac{x-\xi}{\bar{\mu}_0} , t) \eta(\frac{4(x-\xi) }{\sqrt t}) + \psi + \eta_R  \mu^{-1} \phi(\frac{x-\xi}{\mu},t) \Big)^3
\\
&
 -
\Big(u_1 + \varphi[\mu] +  \bar{\mu}_0^{-1} \Phi_0(\frac{x-\xi}{\bar{\mu}_0} , t) \eta(\frac{4(x-\xi) }{\sqrt t})  \Big)^3
- 3 \big(\mu^{-1} w(\frac{x-\xi}{\mu})\big)^2 \big( \psi + \eta_R  \mu^{-1} \phi(\frac{x-\xi}{\mu},t)  \big) ,
\end{align*}
it suffices to solving the following inner-outer gluing system for $(\psi,\phi)$.

The outer problem for $\psi$:
\begin{equation*}
\pp_t \psi   =
 \Delta_x \psi + \mathcal{G} [\psi,\phi,\mu_1,\xi]
 \mbox{ \ \ in \ } \RR^4 \times (t_0,\infty),
\end{equation*}
where
\begin{equation}\label{G}
\begin{aligned}
& \quad \mathcal{G} [\psi,\phi,\mu_1,\xi] :=
 3 \mu^{-2} w^2(\frac{x-\xi}{\mu})  \psi (1-\eta_R)
+
\eta_R \mu^{-2} \xi_{t} \cdot \nabla_y \phi(\frac{x-\xi}{\mu},t)
\\
&
+ \Delta_x \eta_R \mu^{-1}\phi(\frac {x-\xi}{\mu},t)
+ 2 \nabla_x \eta_R \cdot \mu^{-2} \nabla_y \phi(\frac {x-\xi}{\mu},t)
- \pp_t \eta_R \mu^{-1}\phi(\frac {x-\xi}{\mu},t)
\\
& + (1-\eta_R) S\big[u_1 + \varphi[\mu] +  \bar{\mu}_0^{-1} \Phi_0(\frac{x-\xi}{\bar{\mu}_0} , t)
\eta(\frac{4(x-\xi) }{\sqrt t}) \big]
\\
&
+
\Big(u_1 + \varphi[\mu] +  \bar{\mu}_0^{-1} \Phi_0(\frac{x-\xi}{\bar{\mu}_0} , t) \eta(\frac{4(x-\xi) }{\sqrt t}) + \psi + \eta_R  \mu^{-1} \phi(\frac{x-\xi}{\mu},t) \Big)^3
\\
&
-
\Big(u_1 + \varphi[\mu] +  \bar{\mu}_0^{-1} \Phi_0(\frac{x-\xi}{\bar{\mu}_0} , t) \eta(\frac{4(x-\xi) }{\sqrt t}) \Big)^3
- 3 \big(\mu^{-1} w(\frac{x-\xi}{\mu})\big)^2 \big( \psi + \eta_R  \mu^{-1} \phi(\frac{x-\xi}{\mu},t)  \big)
\\
& -
\bigg[ 3 \big(u_1 + \varphi[\mu] -\mu^{-1} w(\frac{x-\xi}{\mu}) \big)
\big(u_1 + \varphi[\mu] +\mu^{-1} w(\frac{x-\xi}{\mu}) \big)
\\
& +
6 (u_1 + \varphi[\mu] )   \bar{\mu}_0^{-1} \Phi_0(\frac{x-\xi}{\bar{\mu}_0} , t) \eta(\frac{4(x-\xi)}{\sqrt t})
\bigg] \eta_R  \mu^{-1} \phi(\frac{x-\xi}{\mu},t)
.
\end{aligned}
\end{equation}

The inner problem for $\phi$:
\begin{equation}\label{phi-eq}
	\begin{aligned}
		&
	 \mu^{2} \pp_t \phi(y,t)
		=    \Delta_y \phi(y,t)
		+ 3 w^2(y)   \phi(y,t)
		+ f_1(y,t) \phi (y,t) + f_2(t) y\cdot\nabla_y \phi(y,t)
		+
		\mathcal{H}[\psi,\mu_1,\xi](y,t)
		\mbox{ \ \  in \ } \DD_{4R}
	\end{aligned}
\end{equation}
where $y= \frac{x-\xi}{\mu}$, $\mathcal{D}_{4R} = \{
(y,t) \ : \
y\in B_{4R(t)}, \ t \in (t_0,\infty)
\}$, and
\begin{equation*}
\begin{aligned}
f_{1}(y,t) = \mu \mu_{t}  + \mu^{2} \Big[ 3 \big(u_1 + \varphi[\mu] -\mu^{-1} w(y) \big)
\big(u_1 + \varphi[\mu] +\mu^{-1} w( y ) \big)
+
6 (u_1 + \varphi[\mu] )   \bar{\mu}_0^{-1} \Phi_0(\frac{ \mu y }{\bar{\mu}_0} , t)
\Big] , \
f_{2}(t) =  \mu \mu_{t},
\end{aligned}
\end{equation*}
\begin{equation}\label{H-fun}
\mathcal{H}[\psi,\mu_1,\xi](y,t) := \mu^3 \Big( 3 \mu^{-2} w^2(y)  \psi(\mu y +\xi,t)   +
 S\big[u_1 + \varphi[\mu] +  \bar{\mu}_0^{-1} \Phi_0(\frac{\mu y }{\bar{\mu}_0} , t)
\eta(\frac{4\mu y  }{\sqrt t}) \big] \Big) .
\end{equation}
By \eqref{S2-est}, one has
\begin{equation}\label{H-upp}
	\begin{aligned}
		&\big|\mathcal{H}[\psi,\mu_1,\xi](y,t)\big| \\
		\lesssim \ &
		(\ln t)^{-1} \langle y\rangle^{-4}
		| \psi(\mu y +\xi,t)  |
		+
		t^{-2} (\ln t)^{-3}  \langle \bar{y} \rangle^{-2}
		\ln (2+ |\bar{y}|) 	
		\\
		&
		+
		(t\ln t)^{-1} |\mu_1|
		\langle y\rangle^{-4}     +    (\ln t)^{-1}     \Big(|\tilde{g}[\bar{\mu}_0,\mu_1] | +
		|\bar{\mu}_{0t}| \ln t
		\sup\limits_{t_1\in[t/2,t]} \Big(\frac{|\mu_1(t_1)|}{\bar{\mu}_{0}(t) } + \frac{|\mu_{1t}(t_1)|}{|\bar{\mu}_{0t}(t)|}  \Big) \Big) \langle y\rangle^{-4}
		\\
		& +   |\xi_t|
		(\ln t)^{-1} \langle y \rangle^{-3}
		+ t^{-2} (\ln t)^{-1} \eta(\bar{y})
		\\
		\lesssim \ &
		(\ln t)^{-1} \langle y\rangle^{-4}
		| \psi(\mu y +\xi,t)  |
		+
		t^{a_{1}\gamma-2} (\ln t)^{-2}  \langle y \rangle^{-2-a_{1}}
		\\
		&
		+
		(t\ln t)^{-1} |\mu_1|
		\langle y\rangle^{-4}     +    (\ln t)^{-1}     \Big(|\tilde{g}[\bar{\mu}_0,\mu_1] | +
		|\bar{\mu}_{0t}| \ln t
		\sup\limits_{t_1\in[t/2,t]} \Big(\frac{|\mu_1(t_1)|}{\bar{\mu}_{0}(t) } + \frac{|\mu_{1t}(t_1)|}{|\bar{\mu}_{0t}(t)|}  \Big) \Big) \langle y\rangle^{-4}
		\\
		& +   |\xi_t|
		(\ln t)^{-1} \langle y \rangle^{-3}
	\end{aligned}
\end{equation}
where we have used $|y|\le 4R= 4t^{\gamma}$, and for later purpose, we require that
\begin{equation}\label{inner-para}
a_{1}\gamma -2 <5\delta -\kappa-a\gamma,\
0<a_1\le 1.
\end{equation}
Here above constants are those which measure the weighted topology for the inner problem (see \eqref{norm-phi}).
Notice in $\DD_{4R}$, we have
\begin{equation*}
\begin{aligned}
|f_{1}(y,t) | + |f_{2}(t) |
\lesssim \ &
t^{-1} (\ln t)^{-3} +
	\Big[ (t\ln t)^{-1}
\ln t \langle y\rangle^{-2}
+
\ln t \langle y\rangle^{-2}
t^{-1} (\ln t)^{-1} \langle \bar{y} \rangle^{-2}
\ln (2+ |\bar{y}|)
\Big] (\ln t)^{-2}
\\
\sim \ &
t^{-1} (\ln t)^{-3} +
t^{-1}
(\ln t)^{-2} \langle y\rangle^{-2} .
\end{aligned}
\end{equation*}

\begin{remark}\label{f1f2-remark}
Due to the time decay rate of $f_1(y,t)$, $f_2(t)$, we are forced to put $f_1(y,t) \phi (y,t) + f_2(t) y\cdot\nabla_y \phi(y,t)   $ in the linear part of the inner problem. We can not put this term in the right hand side of the outer problem since this will influence the H\"older continuity of $\psi$ about $t$ variable. Besides, we can not use the inner linear theory in \cite{Green16JEMS} since $f_1(y,t) \phi (y,t) + f_2(t) y\cdot\nabla_y \phi(y,t)   $ will influence the H\"older about $\mu_{1t}$ through the orthogonal equation, which will result in failure to choose suitable topology for solving the inner--outer gluing system. Instead, we rebuild a new inner linear theory in Section \ref{sec-inner} to avoid including $f_1(y,t) \phi (y,t) + f_2(t) y\cdot\nabla_y \phi(y,t)   $  in the orthogonal equation about $\mu_1$.
\end{remark}

We decompose the inner problem \eqref{phi-eq} into two parts. Set $\phi = \phi_{1} + \phi_{2}$, then it suffices to consider
\begin{equation}\label{phi1-eq}
	\begin{aligned}
	& \mu^{2} \pp_t \phi_{1}(y,t)
		=  \Delta_y \phi_{1}(y,t)
		+ 3 w^2(y)   \phi_{1}(y,t)
		+ f_1(y,t) \phi_{1} (y,t) + f_2(t) y\cdot\nabla_y \phi_{1}(y,t)
		+
		\mathcal{H}[\psi,\mu_1,\xi](y,t)
\\
&
\qquad	+
\big(\int_{B_{2} } \eta(z) Z_{5}^2(z) dz\big)^{-1}  \Big(-2^{-\frac 12} 3 \int_{B_{2R_{0}}}   w^2(z) Z_5(z)  dz + O((t\ln t)^{-1} ) \Big) \mu \mathcal{E}_{\nu}[\mu_{1}]
\eta(y) Z_{5}(y)
\quad
		\mbox{ \ in \ } \DD_{4R},
	\end{aligned}
\end{equation}
\begin{equation}\label{phi2-eq}
	\begin{aligned}
	& \mu^{2} \pp_t \phi_{2}(y,t)
		=   \Delta_y \phi_{2}(y,t)
		+ 3 w^2(y)   \phi_{2}(y,t)
		+ f_1(y,t) \phi_{2} (y,t) + f_2(t) y\cdot\nabla_y \phi_{2}(y,t)
		\\
		& \qquad
		-\big(\int_{B_{2} } \eta(z) Z_{5}^2(z) dz\big)^{-1}  \Big(-2^{-\frac 12} 3 \int_{B_{2R_{0}}}   w^2(z) Z_5(z)  dz + O((t\ln t)^{-1} ) \Big) \mu \mathcal{E}_{\nu}[\mu_{1}]
		\eta(y) Z_{5}(y)	\quad
		\mbox{ \ in \ } \DD_{4R},
	\end{aligned}
\end{equation}
where  $R_{0}(\tau) = \tau^{\delta}$ with  $\delta>0$ very small and
\begin{equation*}
	\mathcal{E}_{\nu}[\mu_{1}] = \int_{ t-t^{1-\nu}  }^{t-\mu_{0}^2(t) } \frac{\mu_{1t}(s) - \mu_{1t}(t)}{t-s} d s.
\end{equation*}
Set
\[
\tau(t) = \int_{t_0}^t  \mu^{-2}(s) d s + t_{0} (\ln t_0)^2,\quad \tau_0 = t_{0} (\ln t_0)^2 .
\]
Then $\tau(t) \sim t(\ln t)^2 $ for all $t\ge t_{0}$. In $\tau$ variable,  $\mathcal{D}_{4R} = \{
(y,\tau) \ : \
y\in B_{4R(t(\tau))}, \ t \in (\tau_0,\infty)
\}$.
It is easy to rewrite \eqref{phi1-eq} and \eqref{phi2-eq} in the form as in Proposition \ref{R0-linear} and Lemma \ref{mode0-nonorth}, respectively.

The reason for decomposing the inner problem into above two parts is that the orthogonal equation involving $\mu_1$ is too difficult to solve. More detailed explanations will be given in Section \ref{mu1-xi-subsec}.

\medskip

Before stating the solvability of the outer problem, let us first fix the inner solution $\phi$ to the inner problem, the next order of scaling parameter $\mu_1$  and translating parameter $\xi$ in the spaces with the following norms
\begin{equation}\label{norm-phi}
	\| \phi \|_{i,\kappa -5\delta,a} :=
\sup\limits_{(y,\tau)\in \DD_{4R}}
\tau^{\kappa-5 \delta }
\langle y\rangle^{a}
\big( \langle y\rangle |\nabla \phi(y,t(\tau) )| + |\phi(y,t(\tau) )| \big)
\end{equation}
where  $\kappa$, $a$ are some positive constants to be determined later.

For $ \mu_1(t)\in C^{1}(\frac{t_0}{4},\infty)$, $\mu_{1}(t) \rightarrow 0$ as $t\rightarrow\infty$, denote
\begin{equation}\label{norm-mu}
\| \mu_{1} \|_{*1}:= \sup\limits_{t\ge t_0/4}
\big[ \ln t	(t (\ln t)^{2})^{5\delta-\kappa }
R(t)^{- a}\big]^{-1} |\mu_{1t}|.
\end{equation}
For $\xi(t) = (\xi_1(t),\dots,\xi_4(t)) \in C^1(t_0,\infty)$, $\xi(t)\rightarrow 0$ as $t\rightarrow\infty$, denote
\begin{equation}\label{norm-xi}
\| \xi \|_{*2}:= \max\limits_{1\le j\le 4}\sup\limits_{t\ge t_0}
\big[(\ln t)^2 (t (\ln t)^{2})^{5\delta-\kappa }
R(t)^{- a}\big]^{-1} |\xi_{jt}| .
\end{equation}

The outer problem is solved in the following Proposition.
\begin{prop}\label{psi-prop}
	Consider
	\begin{equation}\label{o-eq}
		\pp_t \psi(x,t)
		=
		\Delta \psi(x,t) + \mathcal{G} [\psi,\phi,\mu_1,\xi]
		\mbox{ \ in \ } \RR^4 \times (t_{0},\infty),\quad \psi(x,t_0)=0 \mbox{ \ in \ } \RR^4
	\end{equation}
	where $\mathcal{G} [\psi,\phi,\mu_1,\xi] $ is given in \eqref{G}. Assume $\phi$, $\mu_1$, $\xi$ satisfy $ \| \phi\|_{i,\kappa-5\delta,a}, \| \mu_{1} \|_{*1}, \| \xi \|_{*2} < \Lambda_1$ where $\Lambda_1>1$ is a constant and the parameters satisfy
	\begin{equation}\label{outer-para}
		5\delta -\kappa -a\gamma >-2,\  5\delta-\kappa < -1 , \ 0<a<2,
		\  0<\gamma<\frac 12 ,
	\end{equation}
	then
	there exists a solution $\psi=\psi[\phi,\mu_1,\xi]$ with the following estimates:
\begin{equation*}
\begin{aligned}
|\psi(x,t)| \le \ &  C(\Lambda_1)
\ln t (t (\ln t)^{2})^{5\delta-\kappa }
R^{- a}  \left(\1_{\{|x|\le t^{\frac 12} \}} + t |x|^{-2} \1_{\{|x| > t^{\frac 12} \}}\right),
\\
|\nabla \psi(x,t)|
\le \ &
 C(\Lambda_1)
\ln t (t (\ln t)^{2})^{5\delta-\kappa }
R^{- a},
\end{aligned}
\end{equation*}
\begin{equation*}
	\begin{aligned}
		&
		\sup\limits_{s_1, s_2\in ( t -\frac{ \lambda^2(t)}{4} ,t)}
		\frac{|\psi(x , s_1) -\psi(x ,  s_2) |}{| s_1 -  s_2|^{\alpha}}
		\le
		C(\Lambda_1,\alpha)
		\bigg\{
		\lambda^{-2\alpha}(t) \ln t (t (\ln t)^{2})^{5\delta-\kappa }
		R^{- a}
		\\
		& \qquad
		+
		\lambda^{2-2\alpha}(t)
		\left[ (\mu_{0} R)^{-2}
		\ln t (t (\ln t)^{2})^{5\delta-\kappa }
		R^{-a}
		+
		(\ln t)^3 (t (\ln t)^{2})^{10\delta-2\kappa }
		\right] \bigg\}
	\end{aligned}
\end{equation*}
	where $0<\lambda(t) \le t^{\frac 12}$.
\end{prop}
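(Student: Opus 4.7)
My plan is to recast \eqref{o-eq} via Duhamel's formula as the fixed-point problem $\psi = \TT^{out}_4[\GG[\psi,\phi,\mu_1,\xi]]$ and run a contraction argument in a weighted $L^\infty$ space whose decay profile matches the claimed upper bound. First I would break $\GG$ into five structural pieces: (i) the linear-in-$\psi$ term $3\mu^{-2}w^2(\frac{x-\xi}{\mu})\psi(1-\eta_R)$, whose coefficient is supported in $\{|x-\xi|\gtrsim \mu_0 R\}$ and behaves like $\mu^2|x-\xi|^{-4}$ there, so it is a small perturbation; (ii) the inner-coupling terms involving $\phi$ (i.e.\ $\eta_R\mu^{-2}\xi_t\cdot\nabla\phi$, $\Delta_x\eta_R\,\mu^{-1}\phi$, $2\nabla\eta_R\cdot\mu^{-2}\nabla\phi$, $-\pp_t\eta_R\,\mu^{-1}\phi$), all supported either in the ball $\{|x-\xi|\le 2\mu_0 R\}$ or in the annulus $\{\mu_0 R\le |x-\xi|\le 2\mu_0 R\}$, and controlled by $\|\phi\|_{i,\kappa-5\delta,a}$; (iii) the cut-off error $(1-\eta_R)S[u_1+\varphi+\bar\mu_0^{-1}\Phi_0\eta(4\cdot/\sqrt t)]$ for which the bound \eqref{S2-est} is available; (iv) the genuinely cubic remainder $(u_1+\varphi+\bar\mu_0^{-1}\Phi_0\eta+\psi+\eta_R\mu^{-1}\phi)^3-(u_1+\varphi+\bar\mu_0^{-1}\Phi_0\eta)^3-3(\mu^{-1}w)^2(\psi+\eta_R\mu^{-1}\phi)$, which is quadratic in $(\psi,\eta_R\mu^{-1}\phi)$; and (v) the cross bracket that subtracts $[3(u_1+\varphi-\mu^{-1}w)(u_1+\varphi+\mu^{-1}w)+6(u_1+\varphi)\bar\mu_0^{-1}\Phi_0\eta]\,\eta_R\mu^{-1}\phi$. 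Using Corollary \ref{varphi-coro}, the pointwise bounds on $\Phi_0$ from \eqref{Phi0-est}, and the weighted size of $\phi$, each piece admits a pointwise bound of the form $\tau^{5\delta-\kappa}R^{-a}$ times a spatial profile that is $\1_{\{|x|\le t^{1/2}\}}$ plus a Gaussian tail (with an additional $\mu_0^{-2}$ factor when differentiating $\eta_R$, which is still absorbed thanks to $R=t^{\gamma}$ and the first condition in \eqref{outer-para}).

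Once the pointwise sizes of each summand of $\GG$ are in hand, I would apply Lemmas \ref{annular-forward} and \ref{far-forward} from the appendix term-by-term to $\TT^{out}_4[\cdot]$; the parameter inequalities in \eqref{outer-para} are precisely what is required so that the time integrals produce the factor $\ln t\,(t(\ln t)^2)^{5\delta-\kappa}R^{-a}$ without blowing up. Concretely, $5\delta-\kappa<-1$ guarantees the integrability in $s$ of $(s(\ln s)^2)^{5\delta-\kappa}$ against $(t-s)^{-1}$-type kernels up to the logarithmic loss recorded in the prefactor $\ln t$, while $5\delta-\kappa-a\gamma>-2$ ensures the outer-region contribution of the inner-coupling terms (which carry the extra $R^{-a}$ weight) is not worse than the ambient estimate. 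The gradient bound on $\psi$ is then obtained by the standard parabolic scaling of the representation formula: write $\nabla\psi(x,t)=\int_{t_0}^t\!\int_{\R^4}\nabla_x K(x-z,t-s)\,\GG\,dz\,ds$, split the time integral at $t-\lambda^2(t)$ with $\lambda$ chosen at the natural scale of $\GG$ near $(x,t)$, and use $|\nabla K|\lesssim(t-s)^{-1/2}K$ on one side and interior $C^1$ regularity on the other.

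For the H\"older bound, I would localize to the parabolic cylinder $(t-\lambda^2/4,t)$ and split $\psi=\psi_{\rm near}+\psi_{\rm far}$ where $\psi_{\rm near}$ collects the contributions from $s\in(t-\lambda^2/4,t)$. For $\psi_{\rm far}$, Schauder/interior parabolic estimates give $|\pp_t\psi_{\rm far}|$ bounded by $\lambda^{-2}$ times the existing sup bound on $\psi_{\rm far}$, producing the first term $\lambda^{-2\alpha}\ln t(t(\ln t)^2)^{5\delta-\kappa}R^{-a}$ in the stated estimate. For $\psi_{\rm near}$, I would instead use the equation $\pp_t\psi_{\rm near}=\Delta\psi_{\rm near}+\GG$ directly, where $\Delta\psi_{\rm near}$ incurs a factor of $(\mu_0 R)^{-2}$ from the strongest second-derivative scale (the $\Delta_x\eta_R\,\mu^{-1}\phi$ term), and $\GG$ itself contributes the cubic size $(\ln t)^3(t(\ln t)^2)^{10\delta-2\kappa}$ from the nonlinearity; interpolating these against $\lambda^{2-2\alpha}$ yields the remaining terms. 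Finally, a contraction argument in the weighted space closes the fixed point: the linear-in-$\psi$ piece (i) and the quadratic remainder (iv) each pick up a factor $(\ln t_0)^{-1}$ (from $\mu^2\ln t\lesssim 1$ on the cap of support) or a smallness from $\|\psi\|$, so taking $t_0$ sufficiently large makes the map a contraction. The main technical obstacle, in my view, is bookkeeping the H\"older piece simultaneously with the $L^\infty$ bound, because the $(\mu_0R)^{-2}$ factor produced by $\Delta_x\eta_R\,\mu^{-1}\phi$ is precisely at the borderline of what the conditions \eqref{outer-para} tolerate, and the scale $\lambda(t)$ must be chosen carefully so that both the $\lambda^{-2\alpha}$ and $\lambda^{2-2\alpha}$ contributions close without forcing any of the parameter inequalities to be tight.
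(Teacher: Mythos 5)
Your proposal follows essentially the same architecture as the paper's proof: Duhamel fixed-point $\psi=\TT^{out}_4[\GG]$ in the weighted space with weight $w_o(x,t)=\ln t\,(t(\ln t)^2)^{5\delta-\kappa}R^{-a}(\1_{\{|x|\le t^{1/2}\}}+t|x|^{-2}\1_{\{|x|>t^{1/2}\}})$, term-by-term estimates of $\GG$ using Corollary \ref{varphi-coro}, \eqref{Phi0-est}, \eqref{S2-est}, and the heat-kernel Lemmas \ref{annular-forward}/\ref{far-forward}, then a contraction. Your five-way split of $\GG$ matches what the paper actually does, and the observations about where the smallness comes from (the support restriction on the linear-in-$\psi$ piece, the quadratic smallness, the inequalities \eqref{outer-para}) are all aligned with the paper.

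Where you diverge is in the mechanism of the H\"older estimate. The paper takes the cleaner route: for fixed $(x_1,t_1)$ it rescales to a unit parabolic cylinder via $\tilde\psi(z,s)=\psi(x_1+\lambda(t_1)z,\,t_1+\lambda^2(t_1)s)$, observes $\pp_s\tilde\psi=\Delta_z\tilde\psi+\tilde\GG$ with $\tilde\GG=\lambda^2\GG$, applies interior $C^{2\alpha,\alpha}$ Schauder on $B(0,\tfrac1{16})\times(-\tfrac14,0)$, and then unscales; this delivers both the $\lambda^{-2\alpha}\|\psi\|_{L^\infty}$ and $\lambda^{2-2\alpha}\|\GG\|_{L^\infty}$ contributions in one step. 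Your near/far split in time accomplishes the same thing but is more bookkeeping. Two minor corrections: (a) the factor $(\mu_0R)^{-2}$ in the H\"older bound is \emph{not} supplied by "$\Delta\psi_{\rm near}$"; it comes from the pointwise $L^\infty$ bound on $\GG$ itself, since the terms $\Delta_x\eta_R\,\mu^{-1}\phi$, $2\nabla\eta_R\cdot\mu^{-2}\nabla\phi$, $\pp_t\eta_R\,\mu^{-1}\phi$ carry $(\mu_0R)^{-2}$ relative to $w_o$; the Schauder estimate then multiplies this by $\lambda^2$. (b) In the Proposition, $\lambda(t)$ is a free parameter and the estimate holds for every $0<\lambda(t)\le t^{1/2}$; the "careful choice of scale" you worry about is deferred to the point where the estimate is consumed (e.g.\ in the H\"older estimate for $\mu_{1t}$ the paper takes $\lambda^2(t)=t^{1/2}$), so there is no tension to resolve here.
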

The proof is postponed to Section \ref{sec-outer-proof}.

\medskip

\section{Orthogonal equations for $\mu_1$, $\xi$}
\subsection{Solving $\mu_1$ and $\xi$}\label{mu1-xi-subsec}
In order to utilize Proposition \ref{R0-linear} with $R_0=\tau^{\delta}\sim (t(\ln t)^2)^{\delta}$ where $\delta>0$ is small,
one needs to adjust $\mu_1$, $\xi$ such that $c_i[\mathcal{H}]= 0$, $i=1,\dots,5$ in Proposition \ref{R0-linear} with $\mathcal{H}$  given in \eqref{H-fun}.

However, for $i=5$, it is too difficult to solve $c_5[\mathcal{H}]= 0$ thoroughly. We are only able to make
$c_5[\mathcal{H}]  \approx 0$ and  leave smaller remainder to be solved by the non-orthogonal linear theory of the inner problem.

In this section, we only care about the estimate in $|y|\le 4R$ since this is served for the inner problem. Set
\begin{equation*}
\begin{aligned}
	\mathcal{M}_{i}[\psi,\mu_1,\xi]  = \ & \int_{B_{2R_{0}}} \mathcal{H}[\psi,\mu_1,\xi](y,t) Z_i(y) dy,\quad  i =1,\dots,5 ,
	\\
\mathcal{H}_{5}(|y|,t) = \ &
\int_{S^{3}} \mathcal{H}[\psi,\mu_1,\xi](|y|\theta,t) \Upsilon_{0}(\theta) d\theta,
\
\mathcal{H}_{i} (|y|,t) = \int_{S^{3}} \mathcal{H}[\psi,\mu_1,\xi](|y|\theta,t) \Upsilon_{i}(\theta) d\theta,
\quad i=1,\dots,4
\end{aligned}
\end{equation*}
where $\Upsilon_{i}$ are spherical harmonic functions, which are given in Section \ref{sec-inner}.

Using \eqref{S-1}, for $i=5$, since $Z_{5}$ is radially symmetric, one has
	\begin{align*}
		&\quad  \mathcal{M}_{5}[\psi,\mu_1,\xi] =
		\int_{B_{2R_0}} 	3 \mu w^2(y)  \psi(\mu y +\xi,t) Z_5(y) dy
		\\
		&
		+ \int_{B_{ 2R_0 }}
		\mu^3 \Big(
		\bar{\mu}_0^{-2}  \bar{\mu}_{0t}
		\Phi_0(\frac{ \mu y }{\bar{\mu}_0} , t)
		+
		\bar{\mu}_0^{-2} \nabla_{\bar{y}} \Phi_0(\frac{ \mu y }{\bar{\mu}_0} , t) \cdot   \bar{\mu}_{0t} \frac{ \mu y }{\bar{\mu}_0}
		-
		\bar{\mu}_0^{-1} \pp_{t} \Phi_0(\frac{ \mu y }{\bar{\mu}_0} , t)
		\Big)  Z_5(y) dy
		\\
		&
		+ \int_{B_{ 2R_0 }}  3\mu^3 \Big(  \mu^{-2} w^2(y)  \varphi[\mu]( \mu y,t)   -    \bar{\mu}_0^{-2} w^2( \frac{ \mu y }{\bar{\mu}_0 }) \varphi[\bar{\mu}_0]( \mu y ,t)
		\Big)  Z_5(y) dy
		\\
		&
		+ \int_{B_{ 2R_0 }}  3 \mu^3 \Big(  \mu^{-1} w(y)  \varphi^2[\mu]( \mu y,t)
		-
		\bar{\mu}_0^{-1}  w(  \frac{ \mu y }{\bar{\mu}_0 } )  \varphi^2[\bar{\mu}_0] ( \mu y,t)
		\Big) Z_5(y) dy
		\\
		&
		+  \int_{B_{ 2R_0 }}  \mu^3 \varphi^3[\mu]( \mu y,t)  Z_{5}(y) dy
		+  \int_{B_{4R}}  3\mu^3 \bar{\mu}_0^{-2} \mathcal{M}[\bar{\mu}_0]
		\frac{\eta( \bar{\mu}_0^{-1} \mu y ) Z_{5}( \bar{\mu}_0^{-1} \mu y ) }{\int_{B_{2} } \eta(z) Z_{5}^2(z) d z}
		Z_5(y) dy
		\\
		&
		+ \int_{B_{ 2R_0 }}   \mu^3 \bigg[
		 \big(u_1 + \varphi[\mu] + \bar{\mu}_0^{-1} \Phi_0(\frac{ \mu y }{\bar{\mu}_0} , t) \eta(\frac{4 \mu y }{\sqrt t}) \big)^3 - (u_1+\varphi[\mu] )^3
		-
		3 \bar{\mu}_0^{-3} w^2(\frac{ \mu y }{\bar{\mu}_0 })  \Phi_0(\frac{ \mu y }{\bar{\mu}_0} , t) \eta(\frac{4 \mu y }{\sqrt t}) \bigg]  Z_5(y) dy ,
	\end{align*}
and
\begin{equation*}
	\begin{aligned}
		& \quad
		\mathcal{H}_{5}(|y|,t)
		=
		\int_{S^{3}}	3 \mu  w^2(|y|\theta)  \psi(\mu |y|\theta +\xi,t) \Upsilon_{0}(\theta) d\theta
		\\
		&
		+
		\int_{S^{3}}
		\mu^3 \Big(
		\bar{\mu}_0^{-2} \bar{\mu}_{0t}
		\Phi_0(\frac{ \mu |y|\theta }{\bar{\mu}_0} , t)
		+
		\bar{\mu}_0^{-2} \nabla_{\bar{y}} \Phi_0(\frac{ \mu |y|\theta }{\bar{\mu}_0} , t) \cdot  \bar{\mu}_{0t} \frac{ \mu |y|\theta }{\bar{\mu}_0}
		-
		\bar{\mu}_0^{-1} \pp_{t} \Phi_0(\frac{ \mu |y|\theta }{\bar{\mu}_0} , t)
	\Big)  \Upsilon_{0}(\theta) d\theta
		\\
		& +
		\int_{S^{3}}
		3 \mu^3
		\Big(
		\mu^{-2} w^2(|y|\theta)  \varphi[\mu]( \mu |y|\theta ,t)   -    \bar{\mu}_0^{-2} w^2( \frac{ \mu |y|\theta }{\bar{\mu}_0 }) \varphi[\bar{\mu}_0]( \mu |y|\theta,t)
		\Big) \Upsilon_{0}(\theta) d\theta
		\\
		&
		+
		\int_{S^{3}} 3 \mu^3 \Big(  \mu^{-1} w(|y|\theta)  \varphi^2[\mu]( \mu |y|\theta,t)
		-
		\bar{\mu}_0^{-1}  w(  \frac{\mu |y|\theta}{\bar{\mu}_0 } )  \varphi^2[\bar{\mu}_0] ( \mu |y|\theta,t)
		\Big) \Upsilon_{0}(\theta) d\theta
		\\
		&
		+
		\int_{S^{3}}  \mu^3\varphi^3[\mu]( \mu |y|\theta,t)  \Upsilon_{0}(\theta) d\theta
		+
		 \int_{S^{3}} 3 \mu^3
		\bar{\mu}_0^{-2} \mathcal{M}[\bar{\mu}_0]
		\frac{\eta(  \bar{\mu}_0^{-1} \mu |y|\theta ) Z_{5}(   \bar{\mu}_0^{-1} \mu |y|\theta ) }{\int_{B_{2} } \eta(z) Z_{5}^2(z) d z}
		\Upsilon_{0}(\theta) d\theta
		\\
		&
		+ \int_{S^{3}} \mu^3 \bigg[\big(u_1 + \varphi[\mu] + \bar{\mu}_0^{-1} \Phi_0(\frac{\mu |y|\theta}{\bar{\mu}_0} , t) \eta(\frac{4 \mu |y|\theta}{\sqrt t}) \big)^3 - (u_1+\varphi[\mu] )^3
		\\
		&
		-
		3 \bar{\mu}_0^{-3} w^2(\frac{ \mu |y|\theta }{\bar{\mu}_0 })  \Phi_0(\frac{ \mu |y|\theta }{\bar{\mu}_0} , t) \eta(\frac{4 \mu |y|\theta }{\sqrt t})  \bigg] \Upsilon_{0}(\theta) d\theta
		.
	\end{aligned}
\end{equation*}
For $i=1,\dots,4$, we have
	\begin{align*}
		\mathcal{M}_{i}[\psi,\mu_1,\xi]
		= \ &
		\int_{B_{2R_0}}  3 \mu  w^2(y)  \psi(\mu y +\xi,t) Z_{i}(y) dy
		+
		\xi_{it} \int_{B_{2R_0}}  \mu^3 		\bar{\mu}_0^{-2}  \pp_{\bar{y}_i} \Phi_0(\frac{ \mu y }{\bar{\mu}_0} , t)    Z_{i}(y) dy
		\\
		&
		+  \xi_{it}  \int_{B_{2R_0}}
		\Big( \mu^3   \pp_{\bar{x}_i} \varphi(\mu y,t) Z_{i}(y)
		+ \mu    Z_{i}^2(y)   \Big)  dy
		\\
		= \ &
		\int_{B_{2R_0}}  3 \mu  w^2(y)  \psi(\mu y +\xi,t) Z_{i}(y) dy
		+ \mu  \xi_{it}\Big( \int_{B_{2R_0}}   Z_{i}^2(y)     dy + O(t^{-\frac 12}) \Big)
	\end{align*}
by Corollary \ref{varphi-coro} and \eqref{Phi0-est}. Also,
\begin{equation*}
	\begin{aligned} \mathcal{H}_{i}(|y|,t)
		= \ &
		\int_{S^{3}}	3 \mu  w^2(|y|\theta)  \psi(\mu |y|\theta +\xi,t) \Upsilon_{i}(\theta) d\theta
		+
		\xi_{it}
		\int_{S^{3}}
		\mu^3
		\bar{\mu}_0^{-2} \pp_{\bar{y}_i} \Phi_0(\frac{ \mu |y|\theta }{\bar{\mu}_0} , t)
		\Upsilon_{i}(\theta) d\theta
		\\
		&
		+
		\xi_{it}
		\int_{S^{3}} \mu^3 \Big(  \pp_{\bar{x}_i} \varphi(\mu |y|\theta,t)
		+
		\mu^{-2}  \pp_{z_i} w( |y|\theta )
		\eta(\frac{ \mu |y|\theta}{\sqrt t})
		\Big)
		\Upsilon_{i}(\theta) d\theta .
	\end{aligned}
\end{equation*}
Using similar calculations as in \eqref{S2-est}, one has
\begin{equation*}
	\begin{aligned}
		&
		|	\mathcal{H}_{5}(|y|,t) |
		\lesssim
		(\ln t)^{-1} \langle y\rangle^{-3} \sup\limits_{z\in B_{4 R(t)}}
	\langle z \rangle^{-1}
	|\psi(\mu z +\xi,t) |
		+
		(\ln t)^{-3}
		\bigg\{t^{-2}   \langle \bar{y} \rangle^{-2}
		\ln (2+ |\bar{y}|) 	
		\\
		&
		+
		\Big[ t^{-1}(\ln t)^{2} |\mu_1|
		+    (\ln t)^2
		\Big(|\tilde{g}[\bar{\mu}_0,\mu_1] | +
		|\bar{\mu}_{0t}| \ln t
		\sup\limits_{t_1\in[t/2,t]} \Big(\frac{|\mu_1(t_1)|}{\bar{\mu}_{0}(t) } + \frac{|\mu_{1t}(t_1)|}{|\bar{\mu}_{0t}(t)|}  \Big) \Big)  \Big] \langle y\rangle^{-4}
		+ t^{-2} (\ln t)^2 \1_{\{ |y|\le 4 \}}
		\bigg\}
\\
		\lesssim \ &
(\ln t)^{-1}
\langle y\rangle^{-3} \sup\limits_{z\in B_{4 R(t)}}
	\langle z \rangle^{-1}
	|\psi(\mu z +\xi,t) |
+
t^{a_1\gamma-2}  (\ln t)^{-2} \langle y \rangle^{-2-a_1}
\\
&
+
\Big[ (t \ln t)^{-1} |\mu_1|
+    (\ln t)^{-1}
\Big(|\tilde{g}[\bar{\mu}_0,\mu_1] | +
|\bar{\mu}_{0t}| \ln t
\sup\limits_{t_1\in[t/2,t]} \Big(\frac{|\mu_1(t_1)|}{\bar{\mu}_{0}(t) } + \frac{|\mu_{1t}(t_1)|}{|\bar{\mu}_{0t}(t)|}  \Big) \Big) \Big] \langle y\rangle^{-4}
	\end{aligned}
\end{equation*}
where $a_1>0$ is chosen such that $
  a_1\gamma-2 <5\delta -\kappa -a\gamma $. It then follows that
\begin{equation*}
	| \mathcal{H}_{i}(|y|,t)  |\lesssim
	(\ln t)^{-1} \langle y\rangle^{-3} \sup\limits_{z\in B_{4 R(t)}}
	\langle z \rangle^{-1}
	|\psi(\mu z +\xi,t) |
	+
	|\xi_{it}|
	(\ln t)^{2} \langle y \rangle^{-3} ,\quad i=1,\dots 4.
\end{equation*}

By Proposition \ref{R0-linear}, the orthogonal equation  $c_{i}[\mathcal{H}][\tau] =0$ $(i=1,\dots,4)$ is equivalent to solving
\begin{equation*}
	\begin{aligned}
		&
		\mathcal{M}_i[\psi,\mu_1,\xi] + (t (\ln t)^2)^{-\delta\epsilon_0} O\big(
		\sup\limits_{y\in B_{4R(t)}} \langle y \rangle^{ 3 } |\mathcal{H}_{i}(y,t) | \big)
=
	\int_{B_{2R_0}}  3 \mu  w^2(y)  \psi(\mu y +\xi,t) Z_{i}(y) dy
	\\
&
+ \mu  \xi_{it}\big( \int_{B_{2R_0}}   Z_{i}^2(y)     dy + O(t^{-\frac 12}) \big)
 + (t (\ln t)^2)^{-\delta\epsilon_0} O\Big(
 	(\ln t)^{-1}  \sup\limits_{z\in B_{4 R(t)}}
 \langle z \rangle^{-1}|\psi(\mu z +\xi,t) |
 +
 |\xi_{it}|
 (\ln t)^{2}
  \Big)
= 0
		\end{aligned}
\end{equation*}
where $\epsilon_0>0$ is given in Proposition \ref{R0-linear}. One can write above equation as
\begin{equation}\label{xi-eq}
		\xi_{it}
		= \Pi_{i}[\mu_1,\xi]
\end{equation}
where
\begin{equation*}
\begin{aligned}
\Pi_{i}[\mu_1,\xi] = \ &
	\big( \int_{B_{2R_0}}   Z_{i}^2(y)     dy + O(t (\ln t)^2)^{-\frac{\delta\epsilon_0}{2}}\big)^{-1}
\bigg[- 	\int_{B_{2R_0}}  3   w^2(y)  \psi(\mu y +\xi,t) Z_{i}(y) dy
\\
&
- (t (\ln t)^2)^{-\delta\epsilon_0} O(
\sup\limits_{z\in B_{4 R(t)}}
\langle z \rangle^{-1} |\psi(\mu z +\xi,t) |
) \bigg] .
\end{aligned}
\end{equation*}

Let us estimate $\mathcal{M}_{5}$ term by term. By \eqref{Phi0-est}, one has
\begin{equation*}
	\int_{B_{2R_0}}
	\mu^3 \Big[
	\bar{\mu}_0^{-2}  \bar{\mu}_{0t}
	\Phi_0(\frac{ \mu y }{\bar{\mu}_0} , t)
	+
	\bar{\mu}_0^{-2} \nabla_{\bar{y}} \Phi_0(\frac{ \mu y }{\bar{\mu}_0} , t) \cdot \bar{\mu}_{0t} \frac{ \mu y }{\bar{\mu}_0}
	-
	\bar{\mu}_0^{-1} \pp_{t} \Phi_0(\frac{ \mu y }{\bar{\mu}_0} , t)
	\Big]  Z_5(y) dy =  O( t^{-2} (\ln t)^{-1} ).
\end{equation*}
By Corollary \ref{varphi-coro} and the special choice of $\bar{\mu}_0$, we have for  $|\bar{x}|\le 2t^{\frac 12}$
\begin{equation*}
	\begin{aligned}
		\varphi[ \bar{\mu}_0 ] = \ & -2^{-\frac 12} \Big( \bar{\mu}_0  t^{-1}
		+ \int_{t/2}^{t-\mu_{0}^2 } \frac{\bar{\mu}_{0t}(s)}{t-s} d s
		\Big)
		+ O\Big(\bar{\mu}_0  t^{-2} |\bar{x}|^2  +
		|\bar{\mu}_{0t}| \frac{ |\bar x|}{\bar{\mu}_0 }  \Big)    + O (t^{-1} (\ln t)^{-2} )
		\\
		= \ &
		O (t^{-1} (\ln t)^{-2}  \ln\ln t)  + O\Big(\bar{\mu}_0  t^{-2} |\bar{x}|^2  +
		|\bar{\mu}_{0t}| \frac{ |\bar x|}{\bar{\mu}_0 }  \Big)  .
	\end{aligned}
\end{equation*}
Notice that
\begin{equation*}
	\begin{aligned}
		\mu^{-2} w^2(y)
		-
		\bar{\mu}_0^{-2}
		w^2( \frac{ \mu y }{\bar{\mu}_0 })
		= \ &
		-2 \mu_1 (\theta \mu + (1-\theta) \bar{\mu}_0 )^{-3}
		(w^2(y_\theta) + w(y_\theta)\nabla w(y_\theta) \cdot y_\theta)\Big|_{y_{\theta }  = \frac{x-\xi}{\theta \mu + (1-\theta)\bar{\mu}_0 } }
		\\
		= \ &
		-2 \mu_1 \bar{\mu}_0^{-3}
		(w^2(y ) + w(y )\nabla w(y ) \cdot y )
		+ O(\mu_1^2 \bar{\mu}_0^{-4}  \langle y \rangle^{-4}  )
		.
	\end{aligned}
\end{equation*}
Then by Corollary \ref{varphi-coro}, it follows that
	\begin{align*}
		&
	\int_{B_{ 2R_0 }}  3\mu^3 \Big(  \mu^{-2} w^2(y)  \varphi[\mu]( \mu y,t)   -    \bar{\mu}_0^{-2} w^2( \frac{ \mu y }{\bar{\mu}_0 }) \varphi[\bar{\mu}_0]( \mu y ,t)
	\Big)  Z_5(y) dy
		\\
		= \ &
		3 \mu^3 \int_{B_{2R_{0}}}  \Big[ \mu^{-2} w^2(y)  (\varphi[\mu] - \varphi[\bar{\mu}_0 ] )
		+
		\big( \mu^{-2} w^2(y)
		-   \bar{\mu}_0^{-2} w^2( \frac{ \mu y }{\bar{\mu}_0 }) \big) \varphi[\bar{\mu}_0]
		\Big]   Z_5(y) dy
		\\
		= \ &
		3 \mu^3 \int_{B_{2R_{0}}}  \Bigg\{ \mu^{-2} w^2(y) Z_5(y)   \bigg[
		-2^{-\frac 12} \Big( \mu_1  t^{-1}
		+
		\int_{t/2}^{t-\mu_{0}^2 } \frac{\mu_{1t} (s)}{t-s} d s
		\Big)	
			\\
		&
		+ O\Big( |\mu_1| \bar{\mu}_0^2 t^{-2} \frac{|\bar{x}|^2 }{\bar{\mu}_0^2}
		+  |\bar{\mu}_{0t}| \sup\limits_{t_1\in [t/2,t]}\Big(\frac{|\mu_1(t_1)|}{\bar{\mu}_0 } + \frac{|\mu_{1t}(t_1)|}{|\bar{\mu}_{0t}| }  \Big)  \frac{ |\bar x |}{\bar{\mu}_{0}} \Big)
	 + \tilde{g}[\bar{\mu}_0,\mu_1]
		\bigg]
			\\
		&
		+
		\Big[ -2 \mu_1 \bar{\mu}_0^{-3}
		(w^2(y ) + w(y )\nabla w(y ) \cdot y )
		+ O(\mu_1^2 \bar{\mu}_0^{-4}  \langle y \rangle^{-4}  )  \Big]  Z_5(y)
		\\
		& \times \Big(
		O (t^{-1} (\ln t)^{-2}  \ln\ln t)  + O(\bar{\mu}_0  t^{-2} |\bar{x}|^2  +
		|\bar{\mu}_{0t}| \frac{ |\bar x|}{\bar{\mu}_0 }  )
		\Big)
		\Bigg\}   dy
		\\
		= \ &
		\mu \bigg[-2^{-\frac 12} 3 \int_{B_{2R_{0}}}   w^2(y) Z_5(y)  dy \Big( \mu_1  t^{-1}
		+
		\int_{t/2}^{t-\mu_{0}^2 } \frac{\mu_{1t} (s)}{t-s} d s
		\Big)
		\\
		&
		+O\Big(    (t\ln t)^{-1} \sup\limits_{t_1\in [t/2,t]}|\mu_1(t_1)| + \sup\limits_{t_1\in [t/2,t]}|\mu_{1t}(t_1)|    \Big)   + \tilde{g}[\bar{\mu}_0,\mu_1]
		+
		O (|\mu_1 | t^{-1} (\ln t)^{-1}  \ln\ln t) \bigg] .
	\end{align*}

By Corollary \ref{varphi-coro}, we have
	\begin{align*}
		&
		\int_{B_{2R_{0}}}  3 \mu^3 \Big(  \mu^{-1} w(y)  \varphi^2[\mu]( \mu y ,t)
		-
		\bar{\mu}_0^{-1}  w(  \frac{ \mu y }{\bar{\mu}_0 } )  \varphi^2[\bar{\mu}_0] ( \mu y ,t)
		\Big) Z_5(y) dy
		\\
		= \ &
		3 \mu^3 \int_{B_{2R_{0}}}   \bigg[  \mu^{-1} w(y)  (\varphi[\mu]  -
		\varphi[\bar{\mu}_0]  ) (\varphi[\mu]  +
		\varphi[\bar{\mu}_0]  )
		+
		\big( \mu^{-1} w(y) -
		\bar{\mu}_0^{-1}  w(  \frac{\mu y }{\bar{\mu}_0 } ) \big)  \varphi^2[\bar{\mu}_0]
		\bigg] Z_5(y) dy
		\\
		= \ &
		3 \mu^3 \int_{B_{2R_{0}}}   \Bigg\{  \mu^{-1} w(y)  Z_5(y)  \bigg[
		-2^{-\frac 12} \Big( \mu_1  t^{-1}
		+
		\int_{t/2}^{t-\mu_{0}^2 } \frac{\mu_{1t} (s)}{t-s} d s
		\Big)
		\\
		&
		+ O\Big( |\mu_1| \bar{\mu}_0^2 t^{-2} \frac{|\bar{x}|^2}{ \bar{\mu}_0^2 } +  |\bar{\mu}_{0t}| \sup\limits_{t_1\in[t/2,t]}\Big(\frac{|\mu_1(t_1)|}{\bar{\mu}_0}
		+ \frac{|\mu_{1t}(t_1)|}{|\bar{\mu}_{0t}|}  \Big)  \frac{ |\bar x |}{\bar{\mu}_0 } \Big)
		+
		\tilde{g}[\bar{\mu}_0,\mu_1]\bigg]
		(t\ln t)^{-1}
			\\ &
		+
		(
		- \mu_1 \bar{\mu}_0^{-2}
		(w(y)
		+
		y \cdot \nabla w(y)
		) +
		O(\mu_1^2 \bar{\mu}_0^{-3}) \langle y \rangle^{-2}
		)   Z_5(y) (t \ln t)^{-2}
		\Bigg\} dy
		\\
		= \ &
		\mu^3     \Bigg\{  \bigg[
	\Big( \mu_1  t^{-1}
		+
		\int_{t/2}^{t-\mu_{0}^2 } \frac{\mu_{1t} (s)}{t-s} d s
		\Big) O(\ln t )
		+ O\Big(
		|\mu_1| \bar{\mu}_0^2 t^{-2} R_0^2
		+  |\bar{\mu}_{0t}| \sup\limits_{t_1\in[t/2,t]}\Big(\frac{|\mu_1(t_1)|}{\bar{\mu}_0}
		+ \frac{|\mu_{1t}(t_1)|}{|\bar{\mu}_{0t}|}  \Big)  R_0 \Big)
		\\ &
		+ O(\ln t)
		\tilde{g}[\bar{\mu}_0,\mu_1]\bigg]
		O(t^{-1} )
		+
		\mu_1 \bar{\mu}_0^{-2} O(t^{-2} (\ln t)^{-1} )
		\Bigg\}
		\\
		= \ &
		\mu
		\Bigg[
		\Big( \mu_1  t^{-1}
		+
		\int_{t/2}^{t-\mu_{0}^2 } \frac{\mu_{1t} (s)}{t-s} d s
		\Big) O((t\ln t)^{-1} )
		\\
		&
		+ O\Big(
	t^{-\frac 32} \sup\limits_{t_1\in[t/2,t]} |\mu_1(t_1)|
		+  t^{-\frac 12} \sup\limits_{t_1\in[t/2,t]}|\mu_{1t} (t_1)|   \Big)
		+
	O(  (t\ln t)^{-1} ) \tilde{g}[\bar{\mu}_0,\mu_1]
		\Bigg]
	\end{align*}
since $\delta>0$ is very small and
$ \mu^{-1} w(y) -
		\bar{\mu}_0^{-1}  w(  \frac{ \mu y }{\bar{\mu}_0 } )
		=
		- \mu_1 \bar{\mu}_0^{-2}
		(w( y )
		+
		y
		\cdot
		\nabla w(y)
		) +
		O(\mu_1^2 \bar{\mu}_0^{-3}) \langle y \rangle^{-2} $. Similarly, the following estimates hold
\begin{equation*}
	\int_{B_{ 2R_0 }}  \mu^3 \varphi^3[\mu]( \mu y,t)  Z_{5}(y) dy
	= O(\mu^3 (t\ln t)^{-3} R_0^2 ) = O(t^{-\frac 52})
\end{equation*}
when $\delta$ is small enough.
\begin{equation*}
	\bigg|  \int_{B_{2R_{0}}}  \mu^3\bar{\mu}_0^{-2} \mathcal{M}[\bar{\mu}_0]
	\frac{\eta(\bar{y}) Z_{5}(\bar{y}) }{\int_{B_{2} } \eta(z) Z_{5}^2(z) d z}
	Z_5(y) dy\bigg| \lesssim O( t^{-2}(\ln t)^{-1} ) ,
\end{equation*}
\begin{equation*}
	\begin{aligned}
		&
		\int_{B_{ 2R_0 }}   \mu^3 \bigg[
		\big(u_1 + \varphi[\mu] + \bar{\mu}_0^{-1} \Phi_0(\frac{ \mu y }{\bar{\mu}_0} , t) \eta(\frac{4 \mu y }{\sqrt t}) \big)^3 - (u_1+\varphi[\mu] )^3
		-
		3 \bar{\mu}_0^{-3} w^2(\frac{ \mu y }{\bar{\mu}_0 })  \Phi_0(\frac{ \mu y }{\bar{\mu}_0} , t) \eta(\frac{4 \mu y }{\sqrt t}) \bigg]  Z_5(y) dy
		\\
		= \ &
\int_{B_{ 2R_0 }}   \mu^3 \bigg[
\big(u_1 + \varphi[\mu] + \bar{\mu}_0^{-1} \Phi_0(\frac{ \mu y }{\bar{\mu}_0} , t)  \big)^3 - (u_1+\varphi[\mu] )^3
-
3 \mu^{-2} w^2(y) \bar{\mu}_0^{-1} \Phi_0(\frac{ \mu y }{\bar{\mu}_0} , t)
\\
&
+
3 \big( \mu^{-2} w^2(y) - \bar{\mu}_0^{-2} w^2(\frac{ \mu y }{\bar{\mu}_0 })
\big)
\bar{\mu}_0^{-1} \Phi_0(\frac{ \mu y }{\bar{\mu}_0} , t)  \bigg]  Z_5(y) dy
=  \mu
O( |\mu_1|  (t \ln t)^{-1} \ln \ln t)
+ O(t^{-2} (\ln t)^{-4})
	\end{aligned}
\end{equation*}
since by \eqref{Phi0-est},
	\begin{align*}
		&
		\int_{B_{2R_{0}}}  3 \mu^3 \Big[ (\mu^{-1} w(y))^2
		-(\bar{\mu}_0^{-1} w(\frac{ \mu y }{\bar{\mu}_0 }))^2 \Big] \bar{\mu}_0^{-1} \Phi_0(\frac{ \mu y }{\bar{\mu}_0} , t)
		Z_5(y) dy
		\\
		= \ &
		3 \mu^3 \bar{\mu}_0^{-1} \int_{B_{2R_{0}}}  \Big[
		-2 \mu_1 \bar{\mu}_0^{-3}
		(w^2(y ) + w(y )\nabla w(y ) \cdot y )
		+ O(\mu_1^2 \bar{\mu}_0^{-4}  \langle y \rangle^{-4}  )
		\Big]
		O(t^{-1} (\ln t)^{-3} \ln \ln t
		\langle y \rangle^{-1} )
		Z_5(y) dy
		\\
		= \ & \mu
		O( |\mu_1|  (t \ln t)^{-1} \ln \ln t) ,
	\end{align*}
	\begin{align*}
		&
		\Bigg|
		\int_{B_{2R_{0}}}   \mu^3 \bigg[
		\big(u_1 + \varphi[\mu] + \bar{\mu}_0^{-1} \Phi_0(\frac{ \mu y }{\bar{\mu}_0} , t)  \big)^3 - (u_1+\varphi[\mu] )^3
		-
		3 \mu^{-2} w^2(y) \bar{\mu}_0^{-1} \Phi_0(\frac{ \mu y }{\bar{\mu}_0} , t) \bigg]  Z_5(y) dy \Bigg|
		\\
		= \ & \Bigg|
		\mu^3  \int_{B_{2R_{0}}}   \Bigg[  3(u_1 +\varphi[\mu]) \big(\bar{\mu}_0^{-1} \Phi_0(\frac{ \mu y }{\bar{\mu}_0} , t) \big)^2
		+
		\big(\bar{\mu}_0^{-1} \Phi_0(\frac{\mu y }{\bar{\mu}_0} , t) \big)^3
		\\
		&
		+  3 (u_1 +\varphi[\mu]
		-  \mu^{-1} w(y) ) (u_1 +\varphi[\mu]
		+ \mu^{-1} w(y) )
		\bar{\mu}_0^{-1} \Phi_0(\frac{ \mu y }{\bar{\mu}_0} , t)
		\Bigg]  Z_5(y) dy \Bigg|
		\\
		\lesssim \ &
		\mu (\ln t)^{-2} \int_{B_{2R_{0}}}   \Bigg[  ( \ln t \langle y \rangle^{-2} + (t\ln t)^{-1} )
		( (t \ln t)^{-1} \langle \bar{y} \rangle^{-2}
		\ln (2+ |\bar{y}|))^2
		+
		(  (t \ln t)^{-1} \langle \bar{y} \rangle^{-2}
		\ln (2+ |\bar{y}|) )^3
		\\
		&
		+  \big( \ln t \langle y\rangle^{-2} \1_{\{ |\bar{x}|\ge t^{\frac 12} \} }+ (t\ln t)^{-1}
		\big) (  \ln t \langle y\rangle^{-2} + (t\ln t)^{-1} )
		(t \ln t)^{-1} \langle \bar{y} \rangle^{-2}
		\ln (2+ |\bar{y}|)
		\Bigg]  \langle y\rangle^{-2} dy
		\\
		\lesssim \ &
		\mu (\ln t)^{-2} \int_{B_{2R_{0}}}   \Bigg[   \ln t \langle y \rangle^{-2}
		( (t \ln t)^{-1} \langle \bar{y} \rangle^{-2}
		\ln (2+ |\bar{y}|))^2
		+
		(  (t \ln t)^{-1} \langle \bar{y} \rangle^{-2}
		\ln (2+ |\bar{y}|) )^3
		\\
		&
		+   (t\ln t)^{-1}  \ln t \langle y\rangle^{-2}
		(t \ln t)^{-1} \langle \bar{y} \rangle^{-2}
		\ln (2+ |\bar{y}|)
		\Bigg]  \langle y\rangle^{-2} dy
		\lesssim  t^{-2} (\ln t)^{-4}.
	\end{align*}
Finally, we get
	\begin{align*}
		&\quad  \mathcal{M}_{5}[\psi,\mu_1,\xi] =
		\int_{B_{2R_0}} 	3 \mu w^2(y)  \psi(\mu y +\xi,t) Z_5(y) dy
		+	O( t^{-2} (\ln t)^{-1} )
		\\
		&
		+
	\mu \Bigg[-2^{-\frac 12} 3 \int_{B_{2R_{0}}}   w^2(y) Z_5(y)  dy \bigg( \mu_1  t^{-1}
	+
	\int_{t/2}^{t-\mu_{0}^2 } \frac{\mu_{1t} (s)}{t-s} d s
	\bigg)
	\\
	&
	+O\big(    (t\ln t)^{-1} \sup\limits_{t_1\in [t/2,t]}|\mu_1(t_1)| + \sup\limits_{t_1\in [t/2,t]}|\mu_{1t}(t_1)|    \big)   + \tilde{g}[\bar{\mu}_0,\mu_1]
	+
	O (|\mu_1 | t^{-1} (\ln t)^{-1}  \ln\ln t) \Bigg]
		\\
		&
		+
		\mu
	\Bigg[
	\bigg( \mu_1  t^{-1}
	+
	\int_{t/2}^{t-\mu_{0}^2 } \frac{\mu_{1t} (s)}{t-s} d s
	\bigg) O((t\ln t)^{-1} )
	\\
	&
	+ O\big(
	t^{-\frac 32} \sup\limits_{t_1\in[t/2,t]} |\mu_1(t_1)|
	+  t^{-\frac 12} \sup\limits_{t_1\in[t/2,t]}|\mu_{1t} (t_1)|   \big)
	+
	O(  (t\ln t)^{-1} ) \tilde{g}[\bar{\mu}_0,\mu_1]
	\Bigg]
		\\
		&
		+  \mu
		O( |\mu_1|  (t \ln t)^{-1} \ln \ln t)
\\
= \ &
\mu \Bigg\{
\int_{B_{2R_0}} 	3  w^2(y)  \psi(\mu y +\xi,t) Z_5(y) dy
+	O( t^{-2} )
\\
&
+ O\big(    (t\ln t)^{-1} \sup\limits_{t_1\in [t/2,t]}|\mu_1(t_1)| + \sup\limits_{t_1\in [t/2,t]}|\mu_{1t}(t_1)|    \big)   + O(\tilde{g}[\bar{\mu}_0,\mu_1] )
+
O (|\mu_1 |  (t \ln t)^{-1}  \ln\ln t)
\\
&
+ \bigg(-2^{-\frac 12} 3 \int_{B_{2R_{0}}}   w^2(y) Z_5(y)  dy + O((t\ln t)^{-1} ) \bigg) \bigg( \mu_1  t^{-1}
+
\int_{t/2}^{t-\mu_{0}^2 } \frac{\mu_{1t} (s)}{t-s} d s
\bigg)
\Bigg\}
\\
= \ &
\mu \big(-2^{-\frac 12} 3 \int_{B_{2R_{0}}}   w^2(y) Z_5(y)  dy + O((t\ln t)^{-1} ) \big)
\\
& \times
\Bigg\{
\big(-2^{-\frac 12} 3 \int_{B_{2R_{0}}}   w^2(y) Z_5(y)  dy + O((t\ln t)^{-1} ) \big)^{-1} \int_{B_{2R_0}} 	3  w^2(y)  \psi(\mu y +\xi,t) Z_5(y) dy
\\
&
+	O( t^{-2} )
 + O\big(    (t\ln t)^{-1} \sup\limits_{t_1\in [t/2,t]}|\mu_1(t_1)| + \sup\limits_{t_1\in [t/2,t]}|\mu_{1t}(t_1)|    \big)   + O(\tilde{g}[\bar{\mu}_0,\mu_1] )
\\
&
+   \mu_1  t^{-1} \big(1+ O (( \ln t)^{-1}  \ln\ln t ) \big)
+
\int_{t/2}^{t-t^{1-\nu} } \frac{\mu_{1t}(s)}{t-s} d s
+
\mu_{1t}
\big((1-\nu)\ln t + 2\ln\ln t  \big)
+
\mathcal{E}_{\nu}[\mu_{1}]
\Bigg\}
	\end{align*}
where
\begin{equation*}
	\mathcal{E}_{\nu}[\mu_{1}] = \int_{ t-t^{1-\nu}  }^{t-\mu_{0}^2(t) } \frac{\mu_{1t}(s) - \mu_{1t}(t)}{t-s} d s.
\end{equation*}

By Proposition \ref{R0-linear}, $c_{5}[\mathcal{H}] =0$ is equivalent to
 \begin{align*}
 &
 	\mathcal{M}_5[\psi,\mu_1,\xi] + (t (\ln t)^2)^{-\delta\epsilon_0} O\big(
 	\sup\limits_{y\in B_{4R(t)}} \langle y \rangle^{2+a_{1}} |\mathcal{H}_{5}(y,t) | \big)
\\
= \ &
\mu \big(-2^{-\frac 12} 3 \int_{B_{2R_{0}}}   w^2(y) Z_5(y)  dy + O((t\ln t)^{-1} ) \big)
\\
& \times
\Bigg\{
\big(-2^{-\frac 12} 3 \int_{B_{2R_{0}}}   w^2(y) Z_5(y)  dy + O((t\ln t)^{-1} ) \big)^{-1} \int_{B_{2R_0}} 	3  w^2(y)  \psi(\mu y +\xi,t) Z_5(y) dy
\\
&
+	O( t^{-2} )
+ O(    (t\ln t)^{-1} \sup\limits_{t_1\in [t/2,t]}|\mu_1(t_1)| + \sup\limits_{t_1\in [t/2,t]}|\mu_{1t}(t_1)|    )   + O(\tilde{g}[\bar{\mu}_0,\mu_1] )
\\
&
+   \mu_1  t^{-1} (1+ O (( \ln t)^{-1}  \ln\ln t ) )
+
\int_{t/2}^{t-t^{1-\nu} } \frac{\mu_{1t}(s)}{t-s} d s
+
\mu_{1t}
((1-\nu)\ln t + 2\ln\ln t  )
+
\mathcal{E}_{\nu}[\mu_{1}]
\Bigg\}
\\
& + (t (\ln t)^2)^{-\delta\epsilon_0} O\Bigg(
(\ln t)^{-1}  \sup\limits_{y\in B_{4R(t)}} \langle y \rangle^{-1} |\psi(\mu y +\xi,t)|
+
t^{a_1\gamma-2}  (\ln t)^{-2}
\\
&
+
(t \ln t)^{-1} |\mu_1|
+    (\ln t)^{-1}
\left(|\tilde{g}[\bar{\mu}_0,\mu_1] | +
|\bar{\mu}_{0t}| \ln t
\sup\limits_{t_1\in[t/2,t]} \bigg(\frac{|\mu_1(t_1)|}{\bar{\mu}_{0}(t) } + \frac{|\mu_{1t}(t_1)|}{|\bar{\mu}_{0t}(t)|}  \bigg) \right)
\Bigg)
 \\
 = \ &
 \mu \big(-2^{-\frac 12} 3 \int_{B_{2R_{0}}}   w^2(y) Z_5(y)  dy + O((t\ln t)^{-1} ) \big)
 \\
 & \times
 \Bigg\{
 \big(-2^{-\frac 12} 3 \int_{B_{2R_{0}}}   w^2(y) Z_5(y)  dy + O((t\ln t)^{-1} ) \big)^{-1} \int_{B_{2R_0}} 	3  w^2(y)  \psi(\mu y +\xi,t) Z_5(y) dy
 \\
 &
 +	O( t^{-2} )
 + O(    (t\ln t)^{-1} \sup\limits_{t_1\in [t/2,t]}|\mu_1(t_1)| + \sup\limits_{t_1\in [t/2,t]}|\mu_{1t}(t_1)|    )   + O(\tilde{g}[\bar{\mu}_0,\mu_1] )
 \\
 &
 +   \mu_1  t^{-1} (1+ O (( \ln t)^{-1}  \ln\ln t ) )
 +
 \int_{t/2}^{t-t^{1-\nu} } \frac{\mu_{1t}(s)}{t-s} d s
 +
 \mu_{1t}
 ((1-\nu)\ln t + 2\ln\ln t  )
 +
 \mathcal{E}_{\nu}[\mu_{1}]
\\
& + (t (\ln t)^2)^{-\delta\epsilon_0} O(
  \sup\limits_{y\in B_{4R(t)}} \langle y \rangle^{-1} |\psi(\mu y +\xi,t)|
+
t^{a_{1}\gamma-2}  (\ln t)^{-1}
)
 \Bigg\}
 	= 0
 \end{align*}
 where we have used similar calculations as in \eqref{H-upp}, and
 \begin{equation*}
 \begin{aligned}
 	&
 	\sup\limits_{y\in B_{4R(t)}} \langle y \rangle^{2+a_1} |\mathcal{H}_{5}(y,t) |  \lesssim
 	(\ln t)^{-1}  \sup\limits_{y\in B_{4R(t)}} \langle y \rangle^{-1} |\psi(\mu y +\xi,t)|
 	+
 	t^{a_1\gamma-2}  (\ln t)^{-2}
 	\\
 	&
 	+
 	(t \ln t)^{-1} |\mu_1|
 	+    (\ln t)^{-1}
 	\left(|\tilde{g}[\bar{\mu}_0,\mu_1] | +
 	|\bar{\mu}_{0t}| \ln t
 	\sup\limits_{t_1\in[t/2,t]} \bigg(\frac{|\mu_1(t_1)|}{\bar{\mu}_{0}(t) } + \frac{|\mu_{1t}(t_1)|}{|\bar{\mu}_{0t}(t)|}  \bigg) \right) .
\end{aligned}
 \end{equation*}

Similar to the methodology in Section \ref{SolElliptic-sec}, we leave $\mathcal{E}_{\nu}[\mu_{1}]$ as the remainder term and consider the following equation about $\mu_1$.
\begin{equation*}
\begin{aligned}
&
\big(-2^{-\frac 12} 3 \int_{B_{2R_{0}}}   w^2(y) Z_5(y)  dy + O((t\ln t)^{-1} ) \big)^{-1} \int_{B_{2R_0}} 	3  w^2(y)  \psi(\mu y +\xi,t) Z_5(y) dy
\\
& +
O\big(    (t\ln t)^{-1} \sup\limits_{t_1\in [t/2,t]}|\mu_1(t_1)| + \sup\limits_{t_1\in [t/2,t]}|\mu_{1t}(t_1)|    \big)
+ O(\tilde{g}[\bar{\mu}_0,\mu_1] )
\\
&
+   \mu_1  t^{-1} (1+ O (( \ln t)^{-1}  \ln\ln t ) )
+
\int_{t/2}^{t-t^{1-\nu} } \frac{\mu_{1t}(s)}{t-s} d s
+
\mu_{1t}
((1-\nu)\ln t + 2\ln\ln t  )
\\
& + (t (\ln t)^2)^{-\delta\epsilon_0} O(
\sup\limits_{y\in B_{4R(t)}} \langle y \rangle^{-1} |\psi(\mu y +\xi,t)|
) +
 (t (\ln t)^2)^{-\delta\epsilon_0} O(
t^{a_{1}\gamma-2}  (\ln t)^{-1}
)
= 0
\end{aligned}
\end{equation*}
when $a_1\gamma > \delta\epsilon_0$.
That is,
\begin{equation}\label{mu1-eq}
\mu_{1t}  + \beta_{v}(t) \mu_1
=  \Pi_{5}[\mu_1,\xi]
\end{equation}
where
\begin{equation*}
\beta_{\nu}(t) = t^{-1} \left[(1-\nu)\ln t + 2\ln\ln t   \right]^{-1}  (1+ O (( \ln t)^{-1}  \ln\ln t ) ),
\end{equation*}
\begin{equation}\label{Pi5}
\begin{aligned}
& \Pi_{5}[\mu_1,\xi] =
\chi(t)
((1-\nu)\ln t + 2\ln\ln t   )^{-1}
\Bigg[
-
\int_{t/2}^{t-t^{1-\nu} } \frac{\mu_{1t}(s)}{t-s} d s
-
O(    (t\ln t)^{-1} \sup\limits_{t_1\in [t/2,t]}|\mu_1(t_1)| + \sup\limits_{t_1\in [t/2,t]}|\mu_{1t}(t_1)|    )
\\
&
- O(\tilde{g}[\bar{\mu}_0,\mu_1] )
+
\big(2^{-\frac 12} 3 \int_{B_{2R_{0}}}   w^2(y) Z_5(y)  dy + O((t\ln t)^{-1} ) \big)^{-1} \int_{B_{2R_0}} 	3  w^2(y)  \psi(\mu y +\xi,t) Z_5(y) dy
\\
&
- (t (\ln t)^2)^{-\delta\epsilon_0} O(
\sup\limits_{y\in B_{4R(t)}} \langle y \rangle^{-1} |\psi(\mu y +\xi,t)|
)
-
(t (\ln t)^2)^{-\delta\epsilon_0} O(
t^{a_{1}\gamma-2}  (\ln t)^{-1}
) \Bigg]
.
\end{aligned}
\end{equation}

Similar to \eqref{mu01-fixed point}, in order to solve \eqref{mu1-eq} and \eqref{xi-eq}, it is sufficient to consider the following fixed point problem:
\begin{equation}\label{mu1-xi-fixed point}
\begin{aligned}
	\mathcal{S}_5[\mu_1,\xi](t) = \ & \Pi_{5}[\mu_1,\xi](t) +
	\beta_{\nu}(t) e^{ - \int^{t} \beta_{\nu}(u) du} \int_{t}^{\infty}
	e^{\int^s \beta_{\nu}(u) du}  \Pi_{5}[\mu_1,\xi](s) ds ,
\\
\mathcal{S}_{i}[\mu_1,\xi](t) = \ & \Pi_{i}[\mu_1,\xi],\quad i=1,\dots 4  .
\end{aligned}
\end{equation}
Notice that $|\psi| \lesssim 	\ln t (t (\ln t)^{2})^{5\delta-\kappa }
R^{- a} $ and
recall the norms \eqref{norm-mu}, \eqref{norm-xi} for $\mu_1$, $\xi$. We will solve \eqref{mu1-xi-fixed point} in the following spaces
\begin{equation}\label{muxi-topo}
	B_{\mu_1} = \{ \mu_1 \in C^{1}(t_0/4,\infty) \ : \ \| \mu_{1} \|_{*1} \le 2 \},
	\quad  B_{\xi} =  \{ \xi \in C^1(t_0,\infty) \ : \ \| \xi \|_{*2} \le 2 \}.
\end{equation}

For any $\mu_{1a}, \mu_{1b}\in B_{\mu_1} $ and $\xi_{a},\xi_{b} \in B_{\xi}$, similar to \eqref{typical}, one has
\begin{equation*}
\begin{aligned}
&
\chi(t) \bigg|\int_{t/2}^{t-t^{1-\nu} } \frac{\mu_{1at}(s) - \mu_{1bt}(s)}{t-s} d s \bigg|
\le
\| \mu_{1a} - \mu_{1b} \|_{*1}
\int_{t/2}^{t-t^{1-\nu} } \frac{ \ln s (s (\ln s)^{2})^{5\delta-\kappa }
	R^{- a}(s) }{t-s} d s
\\
= \ &
\| \mu_{1a} - \mu_{1b} \|_{*1}  (1+ O((\ln t)^{-1} ) )
\nu (\ln t)^2  (t(\ln t)^2)^{ 5\delta-\kappa } R^{-a} .
\end{aligned}
\end{equation*}
By gradient estimate in Proposition \ref{psi-prop}, we have
\begin{equation*}
\begin{aligned}
&
\bigg|\int_{B_{2R_0}}  w^2(y)  (\psi(\mu_{1a} y +\xi_{a},t) -\psi(\mu_{1b} y +\xi_{b},t) ) Z_5(y) dy \bigg|
\le
C	\ln t (t (\ln t)^{2})^{5\delta-\kappa }
R^{- a}
(|\mu_{1a} -\mu_{1b}| + |\xi_{a} -\xi_{b}|)
\\
\le \ &
C t \ln t	[\ln t (t (\ln t)^{2})^{5\delta-\kappa }
R^{- a} ]^2
(\|\mu_{1a} -\mu_{1b}\|_{*1} + \|\xi_{a} -\xi_{b}\|_{*2}) .
\end{aligned}
\end{equation*}
The estimate for $\int_{B_{2R_0}}  3   w^2(y)  \psi(\mu y +\xi,t) Z_{i}(y) dy$ is the same.
\begin{equation*}
\begin{aligned}
& (t (\ln t)^2)^{-\delta\epsilon_0} \bigg| O(
\sup\limits_{y\in B_{4R(t)}} \langle y \rangle^{-1} |\psi(\mu_{1a} y +\xi_{a},t)|
) - O(
\sup\limits_{y\in B_{4R(t)}} \langle y \rangle^{-1} |\psi(\mu_{1b} y +\xi_{b},t)|
) \bigg|
\\
\le \ &
(t (\ln t)^2)^{-\delta\epsilon_0} \bigg| O(
\sup\limits_{y\in B_{4R(t)}} \langle y \rangle^{-1} |\psi(\mu_{1a} y +\xi_{a},t) - \psi(\mu_{1b} y +\xi_{b},t)|
) \bigg|
\\
\le \ &
C (t (\ln t)^2)^{-\delta\epsilon_0}
t \ln t	[\ln t (t (\ln t)^{2})^{5\delta-\kappa }
R^{- a} ]^2
(\|\mu_{1a} -\mu_{1b}\|_{*1} + \|\xi_{a} -\xi_{b}\|_{*2})
\end{aligned}
\end{equation*}
since $O(
\sup\limits_{y\in B_{4R(t)}} \langle y \rangle^{-1}|\psi(\mu y +\xi,t)|
) $ depends on $\psi$ linearly.

From the same calculations as in \eqref{tilde-g-mu1-est}, one has
\begin{equation*}
\chi(t)|\tilde{g}[\bar{\mu}_0, \mu_{1a}] - \tilde{g}[\bar{\mu}_0, \mu_{1b}] | \le C \ln t (t(\ln t)^2)^{5\delta-\kappa} R^{-a} \|\mu_{1a} -\mu_{1b}\|_{*1}
\end{equation*}
when $5\delta-\kappa -a\gamma>-2$. Similar to \eqref{mu1-mu2-est}, one has
\begin{equation*}
\begin{aligned}
&
\chi(t) \Big|O(    (t\ln t)^{-1} \sup\limits_{t_1\in [t/2,t]}|\mu_{1a}(t_1)| + \sup\limits_{t_1\in [t/2,t]}|\mu_{1at}(t_1)|    )
- O(    (t\ln t)^{-1} \sup\limits_{t_1\in [t/2,t]}|\mu_{1b}(t_1)| + \sup\limits_{t_1\in [t/2,t]}|\mu_{1bt}(t_1)|    )\Big|
\\
\lesssim \ &
\chi(t)
O\big(    (t\ln t)^{-1} \sup\limits_{t_1\in [t/2,t]}|\mu_{1a}(t_1) - \mu_{1b}(t_1)| + \sup\limits_{t_1\in [t/2,t]}|\mu_{1at}(t_1) - \mu_{1bt}(t_1)|    \big)
\\
\lesssim \ &
\ln t (t(\ln t)^2)^{5\delta-\kappa} R^{-a} \|\mu_{1a} -\mu_{1b}\|_{*1}  .
\end{aligned}
\end{equation*}

In conclusion, under the following restrictions
\begin{equation}\label{ortho-para}
a_1>0, \
a_1\gamma-2 <5\delta -\kappa -a\gamma, \
a_1\gamma > \delta\epsilon_0, \
5\delta-\kappa -a\gamma>-2, \
0<\nu<\frac 12,
\end{equation}
for $t_0$ is sufficiently large, $(\mathcal{S}_{5},\mathcal{S}_{i})$ is a contraction mapping in $B_{\mu_{1}}\times B_{\xi}$.

Similarly, for $(\mu_1,\xi)\in B_{\mu_{1}}\times B_{\xi}$, we have
\begin{equation*}
		\chi(t) \Big|\int_{t/2}^{t-t^{1-\nu} } \frac{\mu_{1t}(s) }{t-s} d s \Big|
		\le
	\| \mu_{1} \|_{*1} (1+ O((\ln t)^{-1} ) )
v (\ln t)^2  (t(\ln t)^2)^{5\delta-\kappa} R^{-a} ,
\end{equation*}
\begin{equation*}
	|\tilde{g}[\bar{\mu}_0, \mu_{1}]  | \le C \ln t (t(\ln t)^2)^{5\delta-\kappa} R^{-a} \|\mu_{1}\|_{*1}   ,
\end{equation*}
\begin{equation*}
\chi(t)\Big|O(    (t\ln t)^{-1} \sup\limits_{t_1\in [t/2,t]}|\mu_1(t_1)| + \sup\limits_{t_1\in [t/2,t]}|\mu_{1t}(t_1)|    )\Big|
\lesssim \ln t (t(\ln t)^2)^{5\delta-\kappa} R^{-a} \|\mu_{1}\|_{*1}  .
\end{equation*}
Then
\begin{equation*}
(\mathcal{S}_{5},\mathcal{S}_{i}) :
B_{\mu_{1}}\times B_{\xi}
\rightarrow B_{\mu_{1}}\times B_{\xi}.
\end{equation*}
Consequently, by the contraction mapping theorem, we  find a unique solution $(\mu_1,\xi)$ in $B_{\mu_{1}}\times B_{\xi}$.

\medskip

\subsection{H\"older continuity of $\mu_{1t}$ and estimate for $\mu \mathcal{E}_{\nu}[\mu_{1}]$}
In order to estimate the left error
\begin{equation*}
	\mathcal{E}_{\nu}[\mu_{1}] = \int_{ t-t^{1-\nu}  }^{t-\mu_{0}^2(t) } \frac{\mu_{1t}(s) - \mu_{1t}(t)}{t-s} d s,
\end{equation*}
we need H\"older estimate of $\mu_{1t}$, which satisfies
\begin{equation*}
	\mu_{1t} =  \Pi_{5}[\mu_1,\xi](t) +
	\beta_{\nu}(t) e^{ - \int^{t} \beta_{\nu}(u) du} \int_{t}^{\infty}
	e^{\int^s \beta_{\nu}(u) du}  \Pi_{5}[\mu_1,\xi](s) ds .
\end{equation*}

Assume $\frac{3t}{4} \le t_2 < t_1 \le t$, $\frac 89 < A < 1$. $A$ will be chosen to be close to $1$ later depending on $\nu$ and independent of $t_0$. We revisit \eqref{Pi5} term by term.

Notice that $\psi$ only has H\"older continuity in $t$ variable, which  restricts the regularity for $\mu_{1t}$. Using Proposition \ref{psi-prop} with $\lambda^2(t) =t^{\frac 12}$, one has
	\begin{align*}
		&
		|\psi(\mu(t_1) y +\xi(t_1),t_1) - \psi(\mu(t_2) y +\xi(t_2) ,t_2) |
		\\
		\le \ &
		|\psi(\mu(t_1) y +\xi(t_1),t_1) - \psi(\mu(t_2) y + \xi(t_2) ,t_1) |
		+
		|\psi(\mu(t_2) y + \xi(t_2) ,t_1) - \psi(\mu(t_2) y + \xi(t_2) ,t_2) |
		\\
		\lesssim \ &
		\big[ \ln t
		(t (\ln t)^{2})^{5\delta-\kappa }
		R^{- a}(t) |y|  +
(\ln t )^2
(t (\ln t)^{2})^{5\delta-\kappa }
R^{- a}(t)
		\big]\ln t (t (\ln t)^{2})^{5\delta-\kappa }
		R^{- a}(t) |t_1-t_2|
		\\
	&
+
C(\alpha) \Bigg\{
\lambda^{-2\alpha}(t) \ln t (t (\ln t)^{2})^{5\delta-\kappa }
R^{- a}(t)
\\
&
+
\lambda^{2-2\alpha}(t)
[ (\mu_{0} R)^{-2}(t)
\ln t (t (\ln t)^{2})^{5\delta-\kappa }
R^{-a}(t)
+
(\ln t)^3 (t (\ln t)^{2})^{10\delta-2\kappa }
] \Bigg\} |t_1 -t_2|^{\alpha}
	\end{align*}
which implies
\begin{equation*}
	\begin{aligned}
		&
		( \ln t_1)^{-1}
		\Big|\int_{B_{2R_{0}}} 	  w^2(y) Z_5(y)  (\psi(\mu(t_1) y +\xi(t_1), t_1 )
		-
		\psi(\mu(t_2) y +\xi(t_2), t_2 )
		) dy \Big|
		\\
\lesssim \ &
[\ln t (t (\ln t)^{2})^{5\delta-\kappa }
R^{- a}(t)]^2 |t_1-t_2|
+
C(\alpha)
\Bigg\{
\lambda^{-2\alpha}(t)  (t (\ln t)^{2})^{5\delta-\kappa }
R^{- a}(t)
\\
&
+
\lambda^{2-2\alpha}(t)
[ (\mu_{0} R)^{-2}(t)
 (t (\ln t)^{2})^{5\delta-\kappa }
R^{-a}(t)
+
(\ln t)^2 (t (\ln t)^{2})^{10\delta-2\kappa }
] \Bigg\} |t_1 -t_2|^{\alpha} .
	\end{aligned}
\end{equation*}
Similarly, $(t (\ln t)^2)^{-\delta\epsilon_0} O(
\sup\limits_{y\in B_{4R(t)}} \langle y \rangle^{-1} |\psi(\mu y +\xi,t)|
) $ provides the same H\"older estimate as above.

Reviewing the analysis details in solving \eqref{mu1-xi-fixed point}, one has
\begin{equation*}
	|\Pi_{5}[\mu_1,\xi] |\lesssim
	\ln t (t(\ln t)^2)^{5\delta-\kappa} R^{-a}  \chi(t)
	.
\end{equation*}
Then
\begin{equation*}
	\begin{aligned}
		&
		\Big|(\beta_{\nu}(t) e^{ - \int^{t} \beta_{\nu}(u) du} \int_{t}^{\infty}
		e^{\int^s \beta_{\nu}(u) du}  \Pi_{5}[\mu_1,\xi](s) ds)' \Big|
		=
		\Bigg|\beta_{\nu}'(t) e^{ - \int^{t} \beta_{\nu}(u) du} \int_{t}^{\infty}
		e^{\int^s \beta_{\nu}(u) du}  \Pi_{5}[\mu_1,\xi](s) ds
		\\
		&
		-
		\beta_{\nu}^2(t) e^{ - \int^{t} \beta_{\nu}(u) du} \int_{t}^{\infty}
		e^{\int^s \beta_{\nu}(u) du}  \Pi_{5}[\mu_1,\xi](s) ds
		-
		\beta_{\nu}(t)
		\Pi_{5}[\mu_1,\xi]
		\Bigg|
		\\
		\lesssim \ &
	 t^{-2} (\ln t)^{-1} e^{ - \int^{t} \beta_{\nu}(u) du} \int_{t}^{\infty}
		e^{\int^s \beta_{\nu}(u) du}  	\ln s (s(\ln s)^2)^{5\delta-\kappa} R^{-a}(s) ds
		\\
		&
		+
		(t\ln t)^{-2} e^{ - \int^{t} \beta_{\nu}(u) du} \int_{t}^{\infty}
		e^{\int^s \beta_{\nu}(u) du}  \ln s (s(\ln s)^2)^{5\delta-\kappa} R^{-a}(s) ds
		+ (t\ln t)^{-1} \ln t (t(\ln t)^2)^{5\delta-\kappa} R^{-a}
		\\
		\lesssim \ &
		(t\ln t)^{-1} \ln t (t(\ln t)^2)^{5\delta-\kappa} R^{-a}
	\end{aligned}
\end{equation*}
where in the last inequality, we have used $e^{ - \int^{t} \beta_{\nu}(u) du} \int_{t}^{\infty}
e^{\int^s \beta_{\nu}(u) du}  	\ln s (s(\ln s)^2)^{5\delta-\kappa} R^{-a}(s) ds  \lesssim t 	\ln t (t(\ln t)^2)^{5\delta-\kappa} R^{-a}$ when $t_{0}$ is sufficiently large. Also
\begin{equation*}
	\left| [((1-\nu)\ln t + 2\ln\ln t  )^{-1}  \chi(t) ]'
	\big[ ((1-\nu)\ln t + 2\ln\ln t ) \Pi_{5}[\mu_1,\xi] \big]
	\right| \lesssim
	t^{-1}	\ln t (t(\ln t)^2)^{5\delta-\kappa} R^{-a}  1_{\{t\ge \frac{3t_0}{4} \}} ,
\end{equation*}
\begin{equation*}
	\begin{aligned}
		&
		\Big|((1-\nu)\ln t + 2\ln\ln t   )^{-1} [
		-
		O(    (t\ln t)^{-1} \sup\limits_{\tau_1\in [t/2,t]}|\mu_1(\tau_1)|    )
		- O(\tilde{g}[\bar{\mu}_0,\mu_1] )
		-
		(t (\ln t)^2)^{-\delta\epsilon_0} O(
		t^{a_{1}\gamma-2}  (\ln t)^{-1}
		)  ]' \Big|
		\\
		\lesssim \ &
		(\ln t)^{-1}
		(t^{-1} \ln t (t(\ln t)^2)^{5\delta-\kappa} R^{-a} + t^{-1} 	(t (\ln t)^2)^{-\delta\epsilon_0} O(
		t^{a_{1}\gamma-2}  (\ln t)^{-1}
		))
		\lesssim
		(t \ln t)^{-1}
		\ln t (t(\ln t)^2)^{5\delta-\kappa} R^{-a} .
	\end{aligned}
\end{equation*}

In order to get the  estimate
\begin{equation*}
\begin{aligned}
&
		( (1-\nu)\ln t + 2\ln\ln t   )^{-1} \Big|
		O(     \sup\limits_{\tau_1\in [t_1/2,t_1]}|\mu_{1t}(\tau_1)|    ) - O(     \sup\limits_{\tau_1\in [t_2/2,t_2] }|\mu_{1t}(\tau_1)|    )  \Big|
		\\
\lesssim \ &
(\ln t)^{-1}
( C(A)  t^{-1} \ln t (t(\ln t)^2)^{5\delta -\kappa} R^{-a} |t_1-t_2| + [ \mu_{1t} ]_{C^{\alpha}(\frac{3At}{4},t ) } |t_1-t_2|^{\alpha} ),
\end{aligned}
\end{equation*}
rigorously speaking, we need to estimate all the terms that appeared in the proof of Lemma \ref{varphi1-lem} and Lemma \ref{varphi2-lem} except  the leading term. For simplicity, we take $\tilde{\varphi}_{1b}[\mu+\mu_1] - \tilde{\varphi}_{1b}[\mu ]$ as an example to illustrate the key idea. We decompose $\tilde{\varphi}_{1b}[\mu+\mu_1] - \tilde{\varphi}_{1b}[\mu ]$ into two parts to estimate.
\begin{equation*}
\begin{aligned}
&
	( \tilde{\varphi}_{1b}[\mu +\mu_{1}] - \tilde{\varphi}_{1b}[\mu  ] )(\bar x,t) = \TT_4^{out}[ -\mu_{1t} \hat{\varphi}_{1} + (E - \tilde{E})[\mu+\mu_1] - (E - \tilde{E})[\mu ] ] (\bar{x},t)
\\
= \ & \left(\int_{t_0}^{A t} + \int_{At}^{t}\right)
\int_{\RR^4 }
(4\pi(t-s))^{- 2}
e^{-\frac{|\bar{x}-z|^2}{4(t-s)}}
(  -\mu_{1t} \hat{\varphi}_{1} + (E - \tilde{E})[\mu+\mu_1] - (E - \tilde{E})[\mu ] )(z,s) dzds .
\end{aligned}
\end{equation*}
Here $\bar{x}$ is regarded to be independent of $t$. Then
\begin{align*}
&
\left| \pp_{t} ( \int_{t_0}^{At}
\int_{\RR^4 }
(4\pi(t-s))^{- 2}
e^{-\frac{|\bar{x}-z|^2}{4(t-s)}}
(  -\mu_{1t} \hat{\varphi}_{1} + (E - \tilde{E})[\mu+\mu_1] - (E - \tilde{E})[\mu ] )(z,s) dzds ) \right|
\\
= \ &
\Bigg| A
\int_{\RR^4 }
(4\pi(t-At ))^{- 2}
e^{-\frac{|\bar{x}-z|^2}{4(t-At)}}
(  -\mu_{1t} \hat{\varphi}_{1} + (E - \tilde{E})[\mu+\mu_1] - (E - \tilde{E})[\mu ] )(z,At) dz
\\
& +
\int_{t_0}^{At}
\int_{\RR^4 }
\pp_{t} (
(4\pi(t-s))^{- 2}
e^{-\frac{|\bar{x}-z|^2}{4(t-s)}} )
(  -\mu_{1t} \hat{\varphi}_{1} + (E - \tilde{E})[\mu+\mu_1] - (E - \tilde{E})[\mu ] )(z,s) dzds
\Bigg|
\\
\lesssim \ &
C(A)
t^{-2}
\int_{\RR^4 }
e^{-\frac{|\bar{x}-z|^2}{4(t-At)}}
\Bigg(   |\mu_{1t}(At)| t^{-1} \1_{\{ | z |\le t^{\frac 12} \} } +
| \mu_{1t} (At)| | z |^{-2}
e^{-\frac{| z |^2}{4t}}
\1_{ \{ | z | >  t^{\frac 12} \} }
\\
&
+  |\mu_1(At)| \mu^2(At) t^{-3}
\1_{\{ 2^{-1}\sqrt t \le |z| \le 4 \sqrt t \} }
 \Bigg)  dz
\\
& +
\int_{t_0}^{At}
\int_{\RR^4 }
(t-s)^{- 3}
e^{-\frac{|\bar{x}-z|^2}{8(t-s)}}
\left|  -\mu_{1t} \hat{\varphi}_{1} + (E - \tilde{E})[\mu+\mu_1] - (E - \tilde{E})[\mu ] )(z,s) \right| dzds
\\
\lesssim \ &
C(A)
t^{-2}
\int_{\RR^4 }
e^{-\frac{|z|^2}{4(t-At)}}
\Bigg(   |\mu_{1t}(At)| t^{-1} \1_{\{ | z |\le t^{\frac 12} \} } +
| \mu_{1t} (At)| | z |^{-2}
e^{-\frac{| z |^2}{4t}}
\1_{ \{ | z | >  t^{\frac 12} \} }
\\
&
+  |\mu_1(At)| \mu^2(At) t^{-3}
\1_{\{ |z| \le 4 \sqrt t \} }
\Bigg)  dz
\\
& +
C(A)
t^{-1}
\int_{t_0}^{At}
\int_{\RR^4 }
(t-s)^{- 2}
e^{-\frac{|\bar{x}-z|^2}{8(t-s)}}
\bigg|
(  -\mu_{1t} \hat{\varphi}_{1} + (E - \tilde{E})[\mu+\mu_1] - (E - \tilde{E})[\mu ] )(z,s) \bigg| dzds
\\
\lesssim \ &
  C(A) \Bigg[ t^{-1}  |\mu_{1t}(At)|
+  t^{-3} |\mu_1(At)| \mu^2(At)
\\
& +
t^{-1}
\int_{t_0}^{At}
\int_{\RR^4 }
(t-s)^{- 2}
e^{-\frac{|\bar{x}-z|^2}{8(t-s)}}
\bigg| (  -\mu_{1t} \hat{\varphi}_{1} + (E - \tilde{E})[\mu+\mu_1] - (E - \tilde{E})[\mu ] )(z,s) \bigg| dz ds
\Bigg]
\\
\lesssim \ &
C(A) t^{-1}
\ln t (t (\ln t)^2)^{5\delta-\kappa} R^{-a}
\end{align*}
where the last inequality follows from the same calculations as in \eqref{varphi1b-1}.

For the other part, we have
\begin{equation*}
\begin{aligned}
&
 \int_{At}^{t}
\int_{\RR^4 }
(4\pi(t-s))^{- 2}
e^{-\frac{|\bar{x}-z|^2}{4(t-s)}}
\bigg(  -\mu_{1t} \hat{\varphi}_{1} + (E - \tilde{E})[\mu+\mu_1] - (E - \tilde{E})[\mu ] \bigg)(z,s) dzds
\\
= \ &
\int_{A}^{1}
\int_{\RR^4 }
t
(t-ta)^{- 2}
e^{-\frac{|\bar{x}-z|^2}{4(t-ta)}}
\bigg(  -\mu_{1t} \hat{\varphi}_{1} + (E - \tilde{E})[\mu+\mu_1] - (E - \tilde{E})[\mu ] \bigg)(z,ta) dzda
.
\end{aligned}
\end{equation*}

The terms independent of $\mu_{1t}$ are $C^{1}$ in time variable $t$.
We only need to focus on the terms including $\mu_{1t}$. By similar calculations in \eqref{varphi1b-1}, we have
\begin{equation*}
\begin{aligned}
	&
\left| \int_{A}^{1}
\int_{\RR^4 }
\pp_{t} (t
(t-ta)^{- 2}
e^{-\frac{|\bar{x}-z|^2}{4(t-ta)}} )
(\mu_{1t} \hat{\varphi}_{1} ) (z,ta) dz da \right|
\lesssim
\int_{A}^{1}
\int_{\RR^4 }
(t-ta)^{- 2}
e^{-\frac{|\bar{x}-z|^2}{8(t-ta)}} )
|\mu_{1t} \hat{\varphi}_{1} (z,ta) |  dzda
\\
\lesssim \ &
t^{-1} \ln t (t (\ln t)^2)^{5\delta-\kappa} R^{-a},
\end{aligned}
\end{equation*}
\begin{equation*}
\begin{aligned}
	&
\left| \int_{A}^{1}
\int_{\RR^4 }
t
(t-ta)^{- 2}
e^{-\frac{|\bar{x}-z|^2}{4(t-ta)}}
 ( \mu_{1t}(t_1a) -  \mu_{1t}(t_2 a) )  \hat{\varphi}_{1}(z,ta)   dzda \right|
 \\
 \lesssim \ &
 \int_{A}^{1}
 \int_{\RR^4 }
 t
 (t-ta)^{- 2}
 e^{-\frac{|\bar{x}-z|^2}{4(t-ta)}}
[ \mu_{1t}]_{C^{\alpha} (\frac{3At}{4},t )  }
 |t_1 -  t_2 |^{\alpha} a^{\alpha} | \hat{\varphi}_{1}(z,ta)  |  dzda
 \lesssim
 [ \mu_{1t}]_{C^{\alpha} (\frac{3At}{4},t )  }
 |t_1 -  t_2 |^{\alpha} .
\end{aligned}
\end{equation*}

Next, for $\int_{t/2}^{t-t^{1-\nu} } \frac{\mu_{1t}(s)}{t-s} d s = (\int_{t/2}^{ A t } + \int_{ A t }^{t-t^{1-\nu} } ) \frac{\mu_{1t}(s)}{t-s} d s$, we have
\begin{equation*}
	\left|( \int_{t/2}^{ A t }   \frac{\mu_{1t}(s)}{t-s} d s) ' \right|
	=
	\left| \frac{A \mu_{1t}( A t)}{t- A t}  - \frac{\mu_{1t}(\frac{t}{2})}{t} -  \int_{t/2}^{ A t }   \frac{\mu_{1t}(s)}{(t-s)^2} d s  \right|
	\lesssim C(A)
	t^{-1}  \ln t	(t (\ln t)^{2})^{5\delta-\kappa }
	R^{- a} ,
\end{equation*}
and
	\begin{align*}
		&
		\left|\int_{A t_1  }^{t_1-t_1^{1-\nu} } \frac{\mu_{1t}(s)}{t_1-s} d s - \int_{A t_2 }^{t_2-t_2^{1-\nu} } \frac{\mu_{1t}(s)}{t_2-s} d s\right|
		=
		\left|\int_{ A }^{1-t_1^{-\nu}} \frac{\mu_{1t}(t_1z)}{1-z} dz - \int_{A }^{1-t_2^{-\nu}} \frac{\mu_{1t}(t_2z)}{1-z} dz\right|
		\\
		= \ &
		\left|\int_{ A }^{1-t_1^{-\nu}} \frac{\mu_{1t}(t_1z) - \mu_{1t}(t_2z)}{1-z} dz + \int_{1-t_2^{-\nu}}^{1-t_1^{-\nu}} \frac{\mu_{1t}(t_2z)}{1-z} dz\right|
		\\
		\le \ &
		|t_1-t_2|^{\alpha} [\mu_{1t}]_{C^{\alpha}
			( \frac{3At}{4},t)}\int_{ A }^{1-t_1^{-\nu}} \frac{ z^\alpha }{1-z} dz
		+
		C
		\int_{1-t_2^{-\nu}}^{1-t_1^{-\nu}}
		\frac{ (t_2z)^{5\delta-\kappa-a\gamma} (\ln (t_2 z))^{ 1+2(5\delta -\kappa)} }{1-z} dz
		\\
		= \ &
		|t_1-t_2|^{\alpha} [\mu_{1t}]_{ C^{\alpha}( \frac{3At}{4},t) }
		\nu\ln t_1 (1+ O(|\ln (1-A)|(\ln t_1)^{-1}) )
		\\
		&
		+
		C (1+ O((\ln t_2)^{-1}) )  t_2^{5\delta-\kappa-a\gamma} (\ln t_2 )^{ 1+2(5\delta -\kappa)}
		\int_{1-t_2^{-\nu}}^{1-t_1^{-\nu}}
		\frac{ z^{5\delta-\kappa-a\gamma} }{1-z} dz
		\\
		\le \ &
		|t_1-t_2|^{\alpha} [\mu_{1t}]_{  C^{\alpha}( \frac{3At}{4},t)  }
		\nu\ln t_1 (1+ O( |\ln(1-A)|(\ln t_1)^{-1}) )
		\\
		&
		+
		C (1+ O((\ln t_2)^{-1}) )  t_2^{5\delta-\kappa-a\gamma} (\ln t_2 )^{ 1+2(5\delta -\kappa)}
		(1+O(t^{-1})) \nu |\ln t_1 -\ln t_2|
		\\
		\le \ &
		|t_1-t_2|^{\alpha} [\mu_{1t}]_{  C^{\alpha}( \frac{3At}{4},t)  }
		\nu\ln t_1 (1+ O( |\ln(1-A)|(\ln t_1)^{-1}) )
		\\
		&
		+
		C (1+ O((\ln t_2)^{-1}) )  t_2^{5\delta-\kappa-a\gamma} (\ln t_2 )^{ 1+2(5\delta -\kappa)}
		(1+O(t^{-1})) \nu t_2^{-1}|t_1 - t_2|
		.
	\end{align*}

Combining the estimates above, one gets that
\begin{equation*}
	\begin{aligned}
		&
		|\mu_{1t}(t_1) - \mu_{1t}(t_2)|
		\\ \le \ &
		C(A) [(t\ln t)^{-1} \ln t (t(\ln t)^2)^{5\delta-\kappa} R^{-a}(t) +
		t^{-1}	\ln t (t(\ln t)^2)^{5\delta-\kappa} R^{-a}(t)  1_{\{t \ge \frac{3t_0}{4} \}}
		]|t_1 -t_2|
		\\
		&
		+
		C(\alpha) \Big\{
		\lambda^{-2\alpha}(t)  (t (\ln t)^{2})^{5\delta-\kappa }
		R^{- a}(t)
		\\
		&
		+
		\lambda^{2-2\alpha}(t)
		[ (\mu_{0} R)^{-2}(t)
		(t (\ln t)^{2})^{5\delta-\kappa }
		R^{-a}(t)
		+
		(\ln t)^2 (t (\ln t)^{2})^{10\delta-2\kappa }
		] \Big\} |t_1 -t_2|^{\alpha}
\\
& +
|t_1-t_2|^{\alpha} [\mu_{1t}]_{  C^{\alpha}(\frac{3At}{4},t)  }
[\nu (1-\nu)^{-1} (1+ O( |\ln(1-A)|(\ln t)^{-1}) + O((\ln t)^{-1} ) )]
	\end{aligned}
\end{equation*}
where we have used $\frac{3t}{4}\le t_2 < t_1 \le t$. Thus one has
\begin{equation*}
 [\mu_{1t}]_{  C^{\alpha}(\frac{3t}{4},t)  } \le
C(A,\alpha) \rho(t) +
[\mu_{1t}]_{  C^{\alpha}( \frac{3At}{4},t)  }
[
\nu (1-\nu)^{-1}  (1+ O( |\ln(1-A)|(\ln t_0)^{-1})
+ O((\ln t)^{-1} ) ]
)\1_{\{ t\ge \frac{3t_0}{4} \}}
\end{equation*}
where
\begin{equation*}
\rho(t) =
 t^{-\alpha} \ln t (t (\ln t)^{2})^{5\delta-\kappa }
R^{- a}
+
t^{1-\alpha}
[ (\mu_{0} R)^{-2}
(t (\ln t)^{2})^{5\delta-\kappa }
R^{-a}
+
(\ln t)^2 (t (\ln t)^{2})^{10\delta-2\kappa }
] .
\end{equation*}
Thus
\begin{equation*}
\begin{aligned}
&
\sup\limits_{ \frac{t_0}{2} \le t \le T }\rho^{-1}(t)	[\mu_{1t}]_{  C^{\alpha}(\frac{3t}{4},t)  }
\\
\le \ &
	C(A,\alpha)  +
	[
\nu (1-\nu)^{-1}  (1+ O( |\ln(1-A)|(\ln t_0)^{-1}) )
+ O((\ln t_0)^{-1})
]
	\sup\limits_{ \frac{t_0}{2} \le t \le T } \rho^{-1}(t)
	[\mu_{1t}]_{  C^{\alpha}( \frac{3At}{4} ,t)  }
	\1_{\{ t \ge \frac{3t_0}{4} \}}
\\
= \ &
C(A,\alpha)  +
[
\nu (1-\nu)^{-1}  (1+ O( |\ln(1-A)|(\ln t_0)^{-1}) )
+ O( (\ln t_0)^{-1} )
]
\sup\limits_{ \frac{3t_0}{4} \le t\le T } \rho^{-1}(t)
[\mu_{1t}]_{  C^{\alpha} ( \frac{3At}{4} ,t)  }.
\end{aligned}
\end{equation*}
Notice
\begin{align*}
	&
\rho^{-1}(t) [\mu_{1t}]_{  C^{\alpha} ( \frac{3At}{4} ,t)  }
= \rho^{-1}(t) \sup\limits_{s_1,s_2\in ( \frac{3At}{4} ,t)  } \frac{|\mu_{1t}(s_1) - \mu_{1t}(s_2)|}{|s_1 -s_2|^{\alpha}}
\\
= \ &
\max\Bigg\{
\rho^{-1}(t) \sup\limits_{s_1,s_2\in ( \frac{3t}{4} ,t)  } \frac{|\mu_{1t}(s_1) - \mu_{1t}(s_2)|}{|s_1 -s_2|^{\alpha}}
 ,
\rho^{-1}(t) \sup\limits_{s_1,s_2\in ( \frac{3At}{4} , At )  } \frac{|\mu_{1t}(s_1) - \mu_{1t}(s_2)|}{|s_1 -s_2|^{\alpha}},
\\
&
\rho^{-1}(t) \sup\limits_{s_2\in ( \frac{3At}{4} , \frac{3t}{4}  ), s_1\in (At,t)  } \frac{|\mu_{1t}(s_1) - \mu_{1t}(s_2)|}{|s_1 -s_2|^{\alpha}}
 \Bigg\}
\\
\le \ &
\max\Bigg\{
\rho^{-1}(t) \sup\limits_{s_1,s_2\in ( \frac{3t}{4} ,t)  } \frac{|\mu_{1t}(s_1) - \mu_{1t}(s_2)|}{|s_1 -s_2|^{\alpha}}
,
\rho^{-1}(t) \rho(At) \rho^{-1}(At) \sup\limits_{s_1,s_2\in ( \frac{3At}{4} , At )  } \frac{|\mu_{1t}(s_1) - \mu_{1t}(s_2)|}{|s_1 -s_2|^{\alpha}}
\Bigg\} +C,
\end{align*}
then one has
\begin{equation*}
\sup\limits_{ \frac{3t_0}{4} \le t \le T } \rho^{-1}(t)
[\mu_{1t}]_{  C^{\alpha} ( \frac{3At}{4} ,t)  } \le A^{C(\alpha,\delta,\kappa,a)} (1+ O( (\ln t_0)^{-1} ) )  \sup\limits_{ \frac{t_0}{2} \le t \le T }\rho^{-1}(t)	[\mu_{1t}]_{  C^{\alpha}(\frac{3t}{4},t)  }  + C(A,\alpha) .
\end{equation*}
Thus, when $\nu<\frac 12$, taking $A$ close to $1$ sufficiently, which depends on $\nu$, and then making $t_0$ large enough, one has $\sup\limits_{ \frac{t_0}{2} \le t \le T }\rho^{-1}(t)	[\mu_{1t}]_{  C^{\alpha}(\frac{3t}{4},t)  }
\le
C(\nu,\alpha)$. Making $T\rightarrow\infty$, one finally gets
\begin{equation}
\sup\limits_{t \ge \frac{t_0}{2} }\rho^{-1}(t)	[\mu_{1t}]_{  C^{\alpha}(\frac{3t}{4},t)  }
\le
C(\nu,\alpha) .
\end{equation}

Finally, we estimate $\mu \mathcal{E}_{\nu}[\mu_{1}]$ as follows
\begin{equation}\label{muE-est}
	\begin{aligned}
&	|	\mu \mathcal{E}_{\nu}[\mu_{1}] | \lesssim
C(\nu,\alpha) \{
(\ln t)^{-1}
t^{ -\nu\alpha}
 \ln t (t (\ln t)^{2})^{5\delta-\kappa }
R^{- a}
\\
&
+ (\ln t)^{-1}
t^{1-\nu\alpha}
 (\mu_{0} R)^{-2}
(t (\ln t)^{2})^{5\delta-\kappa }
R^{-a}
+ (\ln t)^{-1}
t^{1-\nu\alpha}
(\ln t)^2 (t (\ln t)^{2})^{10\delta-2\kappa } \} .
	\end{aligned}
\end{equation}
Although $C_{\nu,\alpha}$ goes to $\infty$ as $\nu\rightarrow \frac 12$ and $\alpha \rightarrow 1$, the smallness is given by $t_{0}^{-\epsilon}$ where $\epsilon>0$ when solving \eqref{phi2-eq}. Once $\nu$ and $\alpha$ are fixed, we take $t_0$ large enough.

\medskip

\section{Solving the inner problem}
Recalling \eqref{norm-phi}, for any fixed $\phi\in B_{i}$ with
\begin{equation}\label{phi-topo}
	B_i = \{ \phi  \ : \ \| \phi \|_{i,\kappa - 5\delta,a} \le 2 C_i  \}
\end{equation}
where $C_i >1$ is a constant,
we have found $\psi[\phi]\in B_{o}$, $\mu_{1}[\phi]\in B_{\mu_1}$ and $\xi[\phi]\in B_{\xi}$. We abbreviate  $\mathcal{H}[\phi] = \mathcal{H}[\psi[\phi],\mu_1[\phi],\xi[\phi]] $.
By \eqref{H-upp}, \eqref{inner-para} and \eqref{muxi-topo}, we obtain $|\mathcal{H}[\phi](y,t(\tau)) |  \lesssim \tau^{5\delta -\kappa} (\ln \tau)^4 R^{-a}(t(\tau))  \langle y \rangle^{-2-a_1} $.
The orthogonal equations of $\mu_1$ and $\xi$ have been solved in Section \ref{mu1-xi-subsec}, then by Proposition \ref{R0-linear}, one finds a solution for \eqref{phi1-eq} satisfying
\begin{equation*}
\langle y\rangle|\nabla \phi_1(y,\tau)| + 	|\phi_1(y,\tau)| \lesssim
	\tau^{5\delta -\kappa} (\ln \tau)^4 R^{-a}(t(\tau))  R_0^{5} \langle y \rangle^{-a_1}
	\lesssim
	\tau_0^{-\epsilon} 	\tau^{5\delta -\kappa}  \langle y \rangle^{-a}
\end{equation*}
with $\epsilon>0$ sufficiently small provided
\begin{equation}\label{inn-para-1}
\gamma\min\{a,a_1\}>5\delta .
\end{equation}

Combining \eqref{muE-est} and Lemma \ref{mode0-nonorth}, one can find a solution for \eqref{phi2-eq} with the estimate
\begin{equation*}
\langle y\rangle |\nabla\phi_2(y,\tau)|+|\phi_2(y,\tau)|\lesssim \tau_{0}^{-\epsilon} \tau^{5\delta -\kappa}  \langle y \rangle^{-a}
\end{equation*}
if
\begin{equation}\label{inn-para2}
		-\nu \alpha + (2-a)\gamma<0,
\
		1-\nu\alpha -a\gamma <0,
\
		1-\nu\alpha +5\delta -\kappa+2\gamma<0, \ 0<a<2.
\end{equation}

Combining \eqref{inner-para}, \eqref{outer-para}, \eqref{ortho-para}, \eqref{inn-para-1}, \eqref{inn-para2} and the assumption about parameters in Proposition \ref{R0-linear} and Lemma \ref{mode0-nonorth}, one needs to choose parameters such that all the inequalities below hold
\begin{equation}\label{para-eq}
\begin{aligned}
&	5\delta -\kappa -a\gamma >-2,\  5\delta-\kappa < -1 , \ 0<a<2,
	\  0<\gamma<\frac 12 ,\ 0<\alpha<1, \ 0<\nu<\frac 12,
\\
& a_1\gamma -2 <5\delta -\kappa-a\gamma , \  0<a_1\le 1,
\ \gamma\min\{a,a_1\}>5\delta, \ 6\delta<1,
\\
& -\nu \alpha + (2-a)\gamma<0,
\
1-\nu\alpha -a\gamma <0,
\
1-\nu\alpha +5\delta -\kappa+2\gamma<0.
\end{aligned}
\end{equation}
There exists solution given by
\begin{equation}\label{para-require}
\begin{aligned}
&
1<\kappa \le \frac{5}{4}, \
\frac{2-2\alpha \nu}{-1+\kappa+\nu} <a<2, \
\frac{1-\alpha\nu}{a} <\gamma <\frac{-1+\kappa +\nu}{2}, \
\\
&
\frac{2-\kappa}{2} <\alpha\nu <\frac{1}{2}, \ 0<\alpha<1, \ 0<\nu<\frac 12, \ 6\delta<1,
\\
&
0<5\delta<\kappa-1, \
 a_1\gamma < 5\delta + 2 -\kappa-a\gamma, \ 5\delta <\gamma\min\{a,a_1\}, \   0<a_1\le 1 .
\end{aligned}
\end{equation}
Indeed, one may take for example
 $
\kappa= \frac{9}{8}, \nu =\frac{49}{100}, \alpha=\frac{375}{392},
 a=\frac{36}{19}, \gamma = \frac{9}{32},  5\delta=\frac{1}{64}, a_1 =\frac{1}{9} $.

Thanks to \eqref{para-eq}, the desired $\phi_1$, $\phi_2$ can really be found and then $\phi_1 +\phi_2 \in B_{i}$ when $\tau_0$ is large enough. The compactness is a consequence of parabolic estimates, so we can find a solution for the inner problem \eqref{phi-eq}. Making more efforts to calculate the Lipschitz continuity of $\mathcal{H}[\phi]$  about $\phi$, one can prove the existence for the inner problem \eqref{phi-eq} by the contraction mapping theorem.

Collecting the estimates in Proposition \ref{psi-prop}, Corollary \ref{varphi-coro}, \eqref{Phi0-est} and \eqref{phi-topo}, one gets
\begin{equation*}
	\begin{aligned}
	&
\left| \varphi[\mu] +  \bar{\mu}_0^{-1} \Phi_0(\frac{x-\xi}{\bar{\mu}_0} , t) \eta(\frac{4(x-\xi) }{\sqrt t}) + \psi + \eta_R  \mu^{-1} \phi(\frac{x-\xi}{\mu},t) \right|
\\
\lesssim \ &
		(t\ln t)^{-1} \1_{\{ |\bar{x}| \le 2t^{\frac 12} \}}
		+
		O( t^{2} (\ln t)^{-1}
		|\bar{x}|^{-6}
		) \1_{\{ |\bar{x}| > 2t^{\frac 12}  \}}
		+
		( \ln t )	t^{-1} (\ln t)^{-2} \langle \bar{y} \rangle^{-2}
		\ln (2+ |\bar{y}|) \1_{ \{ |\bar{x}| \le t^{\frac 12} \} }
		\\
		& +
		\ln t (t (\ln t)^{2})^{5\delta-\kappa }
		R^{- a}  \left(\1_{\{|x|\le t^{\frac 12} \}} + t |x|^{-2} \1_{\{|x| > t^{\frac 12} \}}\right)
		+ (t(\ln t)^2)^{-\kappa +5\delta} \langle y\rangle^{-a} \1_{ \{|y|\le 4R \} }
		\\
		\lesssim \ &
		(t\ln t)^{-1} \1_{\{ |\bar{x}| \le 2t^{\frac 12} \}}
		+
		O(  (\ln t)^{-1}
		|\bar{x}|^{-2}
		) \1_{\{ |\bar{x}| > 2t^{\frac 12}  \}}
		\lesssim
(\ln t)^{-1} \min\{ t^{-1}, |x|^{-2}\} .
	\end{aligned}
\end{equation*}

\medskip

\textbf{Positivity of the solution $u$.}
We will demonstrate that the initial value $u(x,t_0)$ that we take in the construction is positive. For simplicity, we abuse the symbols $\mu=\mu(t_0), \bar{\mu}_0 =\bar{\mu}_0(t_0)$ in the remainder of this section. Indeed, recalling \eqref{tilde-varphi1}, \eqref{Phi0-est} and \eqref{phi-topo}, we have
\begin{equation*}
	\begin{aligned}
&	u(x,t_0) =
		\mu^{-1}w\left(\frac{\bar{x} }{\mu}\right) \eta\left(\frac{\bar{x} }{\sqrt {t_0} }\right)
		+
		\tilde{\varphi}_{1}(\bar{x},t_0)
		+
		\bar{\mu}_0^{-1} \Phi_0\left(\frac{\bar{x}}{\bar{\mu}_0} , t_0\right)\eta\left(\frac{4\bar{x}}{\sqrt {t_0}}\right)
		+
		\eta_{R} \mu^{-1} \phi\left(\frac{\bar{x}}{\mu},t_0\right)
		\\
		= \ &
		2^{\frac 32} \mu |\bar{x}|^{-2}
		\Bigg\{
		\left[
		e^{-\frac{| \bar{x} |^2}{4t_0 }}
		- \left( 1+ \left(\frac{| \bar{x} |}{\mu}\right)^2 \right)^{-1}
		+
		2^{-\frac{3}{2}} \mu^{-1} |\bar{x}|^2
		\bar{\mu}_0^{-1} \Phi_0\left(\frac{\bar{x}}{\bar{\mu}_0} , t_0 \right)\eta\left(\frac{4\bar{x}}{\sqrt {t_0} }\right)
		\right]
		\eta\left(\frac{| \bar{x} |}{\sqrt {t_0} }\right)\\
		\ &+
		e^{-\frac{| \bar{x} |^2}{4t_0 }} \left(1- \eta\left(\frac{| \bar{x} |}{\sqrt {t_0}}\right) \right)
		\Bigg\}
		+
		\eta_{R} \mu^{-1} \phi\left(\frac{\bar{x}}{\mu},t_0\right)
		\\
		\ge \ &
		2^{\frac 32} \mu |\bar{x}|^{-2}
		\left[
		1 -\frac{| \bar{x} |^2}{4t_0 }
		- \left( 1+ \left(\frac{| \bar{x} |}{\mu}\right)^2 \right)^{-1}
		-
		C |\bar{x}|^2 	(t_0 \ln t_0)^{-1} \ln \ln t_0
		\eta\left(\frac{4\bar{x}}{\sqrt {t_0} }\right)
		\right] \eta\left(\frac{| \bar{x} |}{\sqrt {t_0}}\right)
		+
		\eta_{R} \mu^{-1} \phi\left(\frac{\bar{x}}{\mu},t_0\right)
		\\
		= \ & 2^{\frac 32} \mu |\bar{x}|^{-2}
		| \bar{x} |^2 \left[ (\mu^2 + | \bar{x} |^2)^{-1} -(4t_0)^{-1}  -
		C	(t_0 \ln t_0)^{-1} \ln \ln t_0
		\eta\left(\frac{4\bar{x}}{\sqrt {t_0}}\right) \right] \eta\left(\frac{| \bar{x} |}{\sqrt {t_0}}\right)
		+
		\eta_{R} \mu^{-1} \phi\left(\frac{\bar{x}}{\mu},t_0\right)
		\\
		\ge \ &
		2^{-\frac 12 } \mu^{-1} ( 1 + | y |^2)^{-1}  \eta\left(\frac{| \bar{x} |}{\sqrt {t_0} }\right)
		-
		C_1 \mu^{-1} (t_0(\ln t_0)^2)^{ 5\delta -\kappa  }
		\langle y\rangle^{-a}  \eta_{R} >0
	\end{aligned}
\end{equation*}
where we have used  $\eta(s) =0$ for $s\ge \frac 32$ to make $ [ \frac{5}{8} (\mu^2 + | \bar{x} |^2)^{-1} -(4t_0)^{-1}  -
		C	(t_0 \ln t_0)^{-1} \ln \ln t_0
		\eta(\frac{4\bar{x}}{\sqrt {t_0}}) ] \eta(\frac{|\bar{x}|}{\sqrt{t_{0}}} )\ge 0$ when $t_{0}$ is large, and $5\delta-\kappa + \gamma(2-a)<0$ is used in the last inequality. Therefore, the solution $u(x,t)$ is positive by maximum principle.

\section{Stability of blow-up: proof of Theorem \ref{stability-th}}\label{stability-sec}
In this section, we will  analyze the stability of the blow-up solution constructed in Theorem \ref{thm1}.
\begin{proof}[Proof of Theorem \ref{stability-th}]
 Consider any perturbation $g_0(x)$ satisfying $|g_0(x)|\lesssim t_{0}^{-\frac{\min{ \{\ell,4 \} } }{2}} \langle x\rangle^{-\ell}$, $\ell>2$. Set
\begin{equation*}
	\psi_{0}(x,t) =
	(4\pi (t-t_0))^{-2} \int_{\RR^4}
	e^{-\frac{|x-z|^2}{4(t-t_0)} }
	g_0(z) d z
\end{equation*}
which satisfies
$\pp_{t} \psi_{0} = \Delta \psi_{0} $ in $ \RR^4\times (t_0,\infty)$, $\psi(x,t_0)=g_0(x)$ in $\RR^4$.
Without loss of generality, we only consider the case $2<\ell<4$. By Lemma \ref{cauchy-est-lem}, one has
\begin{equation*}
|\psi_{0}(x,t)|
\lesssim
t_0^{-\frac{\ell}{2}} \left(
	\langle t-t_0 \rangle^{-\frac{\ell}{2}}	
\1_{\{ |x|\le \langle t-t_0 \rangle^{\frac 12} \}}
+
| x |^{-\ell}
\1_{\{ |x| > \langle t-t_0 \rangle^{\frac 12} \}} \right)
\lesssim
t^{-\frac{\ell}{2}}	
\1_{\{ |x|\le t^{\frac 12} \}}
+
| x |^{-\ell}
\1_{\{ |x| > t^{\frac 12} \}}
.
\end{equation*}

We modify the proof of Proposition \ref{psi-prop} slightly  in order to match the perturbation $g_0$. Indeed, we split $\psi= \bar{\psi} + \psi_0$ and consider
	\begin{equation*}
	\pp_t \bar{\psi}(x,t)
	=
	\Delta \bar{\psi}(x,t) + \mathcal{G} [\bar{\psi} +\psi_0,\phi,\mu_1,\xi]
	\mbox{ \ in \ } \RR^4 \times (t_{0},\infty),
	\quad
\bar{\psi}(x,t_0) = 0 \mbox{ \ in \ } \RR^4 .
\end{equation*}
When $\ell >2(\kappa+a\gamma)$, by \eqref{para-require}, one has $|\psi_0|\lesssim t_0^{-\epsilon} w_o$, and thus $\bar{\psi}$ can be solved in $B_o$ by the same method in Proposition \ref{psi-prop}. Repeating the rest procedures in the construction of Theorem \ref{thm1}, $(\mu_1,\xi,\phi, e_0)= (\mu_1[g_0],\xi[g_0],\phi[g_0],e[g_0])$ can be solved in the same topology that we have used before, and the leading order of blowup rate $\bar{\mu}_0\sim (\ln t)^{-1}$ remains the same.  The perturbed initial value is then given by
\begin{equation*}
\begin{aligned}
&
\Bigg[(\bar{\mu}_0 + \mu_{1}[g_0])^{-1} w\bigg(\frac{x-\xi[g_0] }{ \bar{\mu}_0 + \mu_{1}[g_0] }\bigg) \eta\bigg(\frac{ x-\xi[g_0]  }{\sqrt {t} }\bigg)
+
2^{\frac 32} (\bar{\mu}_0 + \mu_{1}[g_0]) |x-\xi[g_0] |^{-2} \bigg(e^{-\frac{|x-\xi[g_0] |^2}{4t }} -\eta(\frac{ x-\xi[g_0] }{\sqrt {t} })\bigg)
\\
&
+
\bar{\mu}_0^{-1} \Phi_0\bigg(\frac{ x-\xi[g_0] }{\bar{\mu}_0} , t\bigg)\eta\bigg(\frac{4(x-\xi[g_0]) }{\sqrt {t}}\bigg)
+
\eta\bigg(\frac{x-\xi[g_0]}{ \mu_0 R}\bigg)
e_0[g_0]  (\bar{\mu}_0 + \mu_{1}[g_0])^{-1} Z_{0}\bigg(\frac{x-\xi[g_0]}{ \bar{\mu}_0 + \mu_{1}[g_0] }\bigg) \Bigg] \Bigg|_{t=t_0} + g_0.
\end{aligned}
\end{equation*}

From \eqref{para-require}, $\kappa>1$ and $a \gamma>1-\alpha\nu$. So all $\ell>3$ is permitted for $\kappa$ and $\alpha \nu $ close to $1$ and $\frac 12$, respectively.

In the radial setting, the translation parameter $\xi\equiv 0$ automatically in \eqref{eqn}. Then for $2<\ell\le 3$, we put $3 u_1^2 \psi_0$ into the right hand side in the equation \eqref{Phi0-eq}. Since $|\psi_{0}(x,t)|  \lesssim t^{-\frac{\ell}{2}}$, $\ell>2$, the extra term involving $3 u_1^2 \psi_0$ will not influence the leading order $\mu_0$ and will be absorbed into $\Phi_0$. But recalling the construction of $\bar{\mu}_0$ in Section \ref{SolElliptic-sec}, $\bar{\mu}_0$  depends on $g_0$, namely, $\bar{\mu}_0 = \bar{\mu}_0[g_0] $.

We omit the tedious calculations about the Lipschitz continuity with respect to $g_0$ for $\psi,\phi,\mu_1,\xi$ here.
\end{proof}

\begin{remark}\label{rmk10.0.1}
\noindent
\begin{itemize}
	\item In general nonradial case and $\ell>2$, since $\psi_0$ is not radial about $\bar{x}=x-\xi$, the previous ODE solution about \eqref{Phi0-eq} is not allowed. Instead, we can expand \eqref{Phi0-eq} by modes similar to the manipulation in section \ref{sec-inner} and solve the leading order of $\mu$ and $\xi$. Since this involves more technicalities, given the length of this paper, we refrain from considering
such a generality here.
	
\item The borderline $\ell>2$ is also provided in \cite{filaking12}.
	
\item The stability result can be expected for $|g_0(x)|\lesssim t_{0}^{-1}(\ln t_0)^{-b_1} \langle x\rangle^{-2} (\ln (|x|+2))^{-b_2}$ for some $b_1,b_2>0$. The proof can be in fact achieved by similar computations as in the proof of Theorem \ref{stability-th}.
\end{itemize}
\end{remark}

\medskip

\section{ Linear theory for the inner problem }\label{sec-inner}
In this section, we develop a linear theory for the associated inner problem. Since the construction is independent of the spatial dimension $n$, we assume $n\ge 3$ in this section unless specifically stated otherwise. Set
\[
\mathcal{D}_{R} = \{
(y,\tau) \ | \
y\in B_{R(\tau)}, \ \tau \in (\tau_0,\infty)
\}, \quad
\pp \mathcal{D}_{R} = \{
(y,\tau) \ | \
y\in \pp B_{R(\tau)}, \ \tau \in (\tau_0,\infty)
\} .
\]
We consider the associated linear problem
\begin{equation}\label{A1}
		\pp_{\tau} \phi = \Delta \phi + pU^{p-1} \phi + f_1(y,\tau) \phi +  f_2(y,\tau) y \cdot \nabla \phi  +h(y,\tau)
		\mbox{ \ in \ } \DD_{R}
\end{equation}
where
\begin{equation*}
p=\frac{n+2}{n-2},\quad
U(y)
= (n(n-2))^{\frac{n-2}{4} }   \left( 1+|y|^2 \right)^{-\frac{n-2}{2} }
.
\end{equation*}
Throughout this section, we always assume that $f_1, f_2$ satisfy
 \begin{equation}\label{f-assump}
 f_i(y,\tau) = f_i(|y|,\tau)  \mbox{ \ are radial in space}, ~i=1,2, \ \
|f_1|, |f_2|, |y||\nabla f_2| \le C_{f} \tau^{-d}, \ d>0, \  C_{f}\ge 0.
 \end{equation}

It is easier to make mode expansion by spherical harmonic functions when $f_1$ and $f_2$ are radial. And it is very possible to generalize the linear theory without the assumption that $f_1$ and $f_2$ are radially symmetric.

Recall that the linearized operator $\Delta + pU^{p-1}$
has only one positive eigenvalue $\gamma_0>0$ such that
\begin{equation}\label{def-Z0Z0}
\Delta Z_0 + pU^{p-1} Z_0=\gamma_0 Z_0,
\end{equation}
where the corresponding eigenfunction $Z_0 \in L^{\infty}(\R^n)$ is radially symmetric with the asymptotic behavior
\begin{equation*}
	Z_0(y)\sim |y|^{-\frac{n-1}{2}}e^{-\sqrt{\gamma_0}|y|}~\mbox{ as }~|y|\to \infty.
\end{equation*}

The bounded kernels of $\Delta + pU^{p-1}$ are given by
\begin{equation*}
Z_{i}(y) = \pp_{y_i} U(y), \ i=1,2,\dots,n,\quad
Z_{n+1}(y)
 =
  y\cdot \nabla U(y) +\frac{n-2}{2} U(y).
\end{equation*}

Define the weighted $L^{\infty}$ norm
\begin{equation*}
	\|h\|_{v, a} := \sup_{(y,\tau)\in \mathcal D_{ R}} v^{-1}(\tau) \langle y \rangle ^{a} |h(y,\tau)|
\end{equation*}
where $a\ge 0$ is a constant. Throughout this section, we assume  $R(\tau)$, $v(\tau) \in C^{1}(\tau_0,\infty)$ with the form
\begin{equation*}
\begin{aligned}
&
v(\tau) =
a_0 \tau^{a_1}(\ln \tau)^{a_2} (\ln\ln \tau)^{a_3} \cdots, \quad
R(\tau) = b_0 \tau^{b_1}(\ln \tau)^{b_2} (\ln\ln \tau)^{b_3} \cdots,\quad
 v(\tau)>0, \quad
1\ll R(\tau) \ll \tau^{\frac 12} ,
\\
&
v'(\tau)= O( \tau^{-1} v(\tau) ),\quad R'(\tau)= O(\tau^{-1}R(\tau))
\end{aligned}
\end{equation*}
where $a_0,b_0>0$, $a_{i}, b_{i}\in \RR$, $i=1,2,\dots$. For brevity, we write $v=v(\tau),~R=R(\tau)$.

We impose a linear constraint on the initial value $\phi(y,\tau_0)$ to handle the instability caused by $Z_0$. Consider the associated Cauchy problem
\begin{equation}\label{A2}
	\begin{cases}
	\pp_{\tau}	\phi=\Delta \phi + pU^{p-1}(y)\phi + f_1(y,\tau)\phi + f_2(y,\tau) y \cdot \nabla \phi + h, & \mbox{ in } \mathcal D_{R}, \\
		\phi(y,\tau_0)=e_0 Z_0(y), & \mbox{ in } B_{R(\tau_0)},
	\end{cases}
\end{equation}
where $\tau_0$ is sufficiently large.
Formally speaking, when $R \ll \tau^{\frac{d}{2} -\epsilon } $ for some $\epsilon>0$, we can expect that $f_1\phi + f_2 y \cdot \nabla \phi$  is a small perturbation since $|f_i|\ll \tau^{-2\epsilon} R^{-2} \lesssim \tau^{-2\epsilon}  \langle y\rangle^{-2}$ in $\mathcal D_{R}$.

The construction of solution to \eqref{A2} is achieved by decomposing the equation into different spherical harmonic modes. Consider an orthonormal basis $\{\Upsilon_i\}_{i=0}^{\infty}$ made up of spherical harmonic functions in $L^2(S^{n-1})$,
 namely eigenvalues of the problem
\[
\Delta_{S^{n-1} } \Upsilon_j + \iota_j \Upsilon_j = 0
\mbox{ \ in \ }  S^{n-1}.
\]
where $0 = \iota_0 < \iota_1 = \iota_2 =\dots = \iota_n = n-1 < \iota_{n+1} \le \dots $ and $\int_{ S^{n-1}} \Upsilon_{i}(\theta) \Upsilon_{j}(\theta) d\theta =\delta_{ij}$.
More precisely, $\Upsilon_0(y)=a_0,~\Upsilon_i(y)=a_1y_i,~i=1,\cdots,n$ for two constants $a_0$, $a_1$ and the eigenvalue $\iota_{l} = l(n-2+l)$ has
multiplicity
\begin{equation*}
\begin{pmatrix}
n+l-1
\\
l
\end{pmatrix}
-
\begin{pmatrix}
	n+l-3
	\\
	l-2
\end{pmatrix}
\mbox{ \ for \ } l\ge 2 .
\end{equation*}

For $h(\cdot,\tau) \in L^2( B_{R(\tau)} )$, we decompose $h$ into the form
\[
h(y,\tau) = \sum\limits_{j=0}^{\infty} h_j(r,\tau) \Upsilon_j(y/r), \ r= |y|, \ h_j(r,\tau ) = \int_{ S^{n-1} } h(r\theta, \tau ) \Upsilon_j(\theta)d \theta .
\]
Write $h=h^0 + h^1 + h^{\perp} $ with
\[
h^0 = h_0(r, \tau) \Upsilon_0, \
h^1 = \sum\limits_{j=1}^{n} h_j(r,\tau) \Upsilon_j, \
h^{\perp} = \sum\limits_{j= n+1}^{\infty} h_j(r,\tau) \Upsilon_j.
\]

Also, we decompose $\phi=\phi^0+\phi^1+\phi^{\perp}$ in a similar form. Then looking for a solution to problem \eqref{A2} is equivalent to finding the pairs $(\phi^0,h^0),~(\phi^1,h^1),~(\phi^{\perp},h^{\perp})$ in each mode.

The key linear theory for the inner problem is stated as follows.
\begin{prop}\label{R0-linear}
	Consider
	\begin{equation*}
		\begin{cases}
			\pp_{\tau} \phi=\Delta \phi + pU^{p-1}\phi + f_1 \phi + f_2 y\cdot\nabla \phi + h(y,\tau) +
			\sum\limits_{i=1}^{n+1}
			c_i(\tau) \eta(y) Z_{i}(y)
			& \mbox{ in } \mathcal D_{R}
			\\
			\phi(y,\tau_0)=e_0 Z_0(y) & \mbox{ in } B_{R(\tau_0)}
		\end{cases}
	\end{equation*}
where $n\ge 4$,  $\|h\|_{v,2+a}<\infty$, $0< a < 2$. Suppose that $R^2\ll \tau^{d-}$, $R_0=C\tau^{\delta}\gg 1$, $\delta\ge 0$ and $R_0^{n+2} \ll \tau^{ \min{ \{1,d \} } - }$, then for $\tau_0$ sufficiently large, there exists  $(\phi,e_0,c_i)$ solving above equation, and $(\phi,e_0,c_i)=(\TT_{3i}[h],\TT_{3e}[h],c_i[h] )$ defines a linear mapping of $h$ with the estimates
	\begin{equation*}
		\langle y\rangle |\nabla \phi| +	|\phi|
		\lesssim  R_0^{n+1} v
		\langle y \rangle^{-a} \|h\|_{v,2+a} ,
	\quad
		|e_0| \lesssim
		v(\tau_0) R_0^{2-a}(\tau_0) \|h \|_{v,2+a }  ,
	\end{equation*}
	\begin{equation*}
		\begin{aligned}
			c_{i}[h](\tau)
			= \ &
			-\big(\int_{B_{2} } \eta(y) Z_{i}^2(y) dy\big)^{-1} \left(\int_{B_{2R_0} }
			h(y,\tau)  Z_{i}(y) dy
			+
			R_0^{-\epsilon_{0}}  O( v \|h_i \|_{v,2+a} )  \right)
			, \quad i=1,\dots,n,
			\\
			c_{n+1}[h](\tau)
			= \ &
			-\big(\int_{B_{2} } \eta(y) Z_{n+1}^2(y) dy\big)^{-1} \left(\int_{B_{2R_0} }
			h(y,\tau)  Z_{n+1}(y) dy
			+
			R_0^{-\epsilon_{0}}  O( v\|h_0 \|_{v,2+a} )  \right),
		\end{aligned}
	\end{equation*}
	where  $0<\epsilon_0< \frac{\min\{ a,1 \}}{2}$ is a small constant,
	\begin{equation*}
		h(y,\tau) = \sum\limits_{j=0}^{\infty} \Upsilon_j(\frac{y}{|y|}) h_j(|y|,\tau) ,\quad
		h_j(|y|,\tau) = \int_{ S^{n-1}} h(|y| \theta,\tau) \Upsilon_j(\theta) d\theta ,
	\end{equation*}
$O( v \|h_i \|_{v,2+a} )$ linearly depends on $h_{i}$ for $i=0,1,\dots,n$.
\end{prop}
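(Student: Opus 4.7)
The plan is to decompose the Cauchy problem into spherical-harmonic modes and solve each mode independently, exploiting that the radial symmetry of $f_1, f_2$ imposed in \eqref{f-assump} keeps each mode invariant. Write $\phi = \phi^0 + \phi^1 + \phi^\perp$ and $h = h^0 + h^1 + h^\perp$, corresponding to the projections onto $\mathrm{span}\{\Upsilon_0\}$, $\mathrm{span}\{\Upsilon_1,\ldots,\Upsilon_n\}$, and $\mathrm{span}\{\Upsilon_j:j\ge n+1\}$, respectively. The bounded kernels of $\Delta + pU^{p-1}$ split cleanly across these modes: $Z_{n+1}$ lies in mode $0$ (it is radial), $Z_1,\ldots,Z_n$ span mode $1$, and no bounded kernel appears in the perp class. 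Accordingly, $c_{n+1}$ is used to kill the resonance against $Z_{n+1}$ in mode $0$, the $c_1,\ldots,c_n$ kill the resonances against $Z_i$ in mode $1$, and the initial-data coefficient $e_0$ is reserved for the unstable eigenfunction $Z_0\in\mathrm{mode }0$.

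For the perp-mode and mode $1$ I would first establish a linear theory for the \emph{unperturbed} problem (dropping $f_1\phi + f_2 y\cdot\nabla\phi$) via a blow-up/compactness argument in weighted $L^\infty$, in the spirit of \cite{Green16JEMS}, producing a bounded solution operator whose output obeys $|\phi|+\langle y\rangle|\nabla\phi|\lesssim R_0^{n+1} v\langle y\rangle^{-a}$. The factor $R_0^{n+1}$ records the $L^1$-loss created by integrating the slowest-decaying kernel $Z_{n+1}\sim |y|^{-(n-2)}$ only on $B_{2R_0}$. For mode $1$, the natural orthogonality is $\int_{B_{2R_0}}\!\big(h + c_i\eta Z_i\big)Z_i\,dy = 0$, which yields the closed-form expression for $c_i$ after accounting for the $R_0^{-\epsilon_0}$ tail error from the truncation; a parallel argument in mode $0$ determines $c_{n+1}$ via orthogonality against $Z_{n+1}$.

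Mode $0$ carries the additional subtlety of the unstable direction. Projecting the radial equation against $Z_0$ (subtracting first the $c_{n+1}\eta Z_{n+1}$ term, which is essentially orthogonal to $Z_0$ up to an $R_0^{-\infty}$ error by the exponential decay of $Z_0$) produces an ODE of the form
\begin{equation*}
\tfrac{d}{d\tau}\langle\phi^0,Z_0\rangle = \gamma_0\langle\phi^0,Z_0\rangle + \langle h^0,Z_0\rangle + \langle f_1\phi^0 + f_2 y\cdot\nabla\phi^0,Z_0\rangle .
\end{equation*}
Demanding a bounded (indeed decaying) trajectory as $\tau\to\infty$ singles out the unique initial coefficient
\begin{equation*}
e_0 \;=\; -\langle Z_0,Z_0\rangle^{-1}\int_{\tau_0}^\infty e^{-\gamma_0(\tau-\tau_0)}\bigl(\langle h^0,Z_0\rangle + \text{lower order}\bigr)d\tau ,
\end{equation*}
whose modulus is controlled by $v(\tau_0) R_0^{2-a}(\tau_0)\|h\|_{v,2+a}$ thanks to the exponential tail of $Z_0$ and the hypothesis $R_0^{n+2}\ll\tau^{\min\{1,d\}-}$, which ensures the integral converges against $v$.

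The final step glues the perturbation $f_1\phi + f_2 y\cdot\nabla\phi$ back in by a contraction-mapping argument on the weighted space $\{\phi:\|\phi\|_{v,a}<\infty\}$. Since $|f_1|+|y||\nabla f_2|\lesssim C_f\tau^{-d}$ and one works inside $\mathcal{D}_R$ with $R^2\ll\tau^{d-}$, the perturbation produces a forcing of size at most $C_f\tau^{-d}R^2\,v\langle y\rangle^{-2-a}\ll v\langle y\rangle^{-2-a}$, which the unperturbed inverse contracts once $\tau_0$ is large enough, preserving the claimed bound. The main obstacle will be the mode-$0$ analysis: one has to \emph{simultaneously} solve for $c_{n+1}$, fix $e_0$ to annihilate the unstable direction, and keep the $Z_0$-projection controlled throughout the iteration, all while carefully tracking the $R_0$-dependent cutoff errors so that the final $c_i$ and $e_0$ take the exact closed-form expressions in the statement. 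Once mode $0$ is handled, modes $1$ and $\perp$ follow by essentially the same scheme without the unstable-direction bookkeeping.
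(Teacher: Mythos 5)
Your proposal captures the modal decomposition and the use of $e_0$ to control the unstable $Z_0$-direction, both of which are indeed present in the paper, but it misses the central structural idea: a \emph{two-scale re-gluing} at the intermediate radius $R_0 \ll R$. The paper does not prove Proposition~\ref{R0-linear} directly by mode-by-mode analysis on the full domain $\mathcal D_R$; it first proves Proposition~\ref{linear-theory} (which \emph{requires} $h$ orthogonal to $Z_1,\dots,Z_{n+1}$ on $B_R$) by modes, and then \emph{reduces} the non-orthogonal Proposition~\ref{R0-linear} to it by writing $\phi = \eta_{R_0}\phi_i + \phi_o$ and solving a second inner--outer gluing system~\eqref{phio-eq}--\eqref{phii-eq}. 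This inner cutoff scale $R_0 = C\tau^\delta$ is much smaller than the outer domain size $R(\tau)$, and it is what makes the orthogonality conditions in the statement live on $B_{2R_0}$ rather than on $B_R$.

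This is not a cosmetic difference. Several things your argument needs come exactly from that re-gluing and do not appear otherwise. First, the correction term $R_0^{-\epsilon_0}O(v\|h_i\|_{v,2+a})$ in the formula for $c_i[h]$ is the contribution of $\int_{B_{2R_0}} pU^{p-1}\phi_o Z_i$, where $\phi_o$ is the outer piece generated by the truncation errors $\Delta\eta_{R_0}\phi_i + 2\nabla\eta_{R_0}\cdot\nabla\phi_i + \cdots$; in a pure modal decomposition there is no $\phi_o$ and no natural source for this correction, so you would be asserting a formula you cannot derive. Second, the factor $R_0^{n+1}$ in the estimate $|\phi|\lesssim R_0^{n+1}v\langle y\rangle^{-a}$ comes from the bound $w_i$ produced by Proposition~\ref{linear-theory} applied on $\mathcal D_{2R_0}$ (terms like $\lambda_{R_0}^{-1}R_0^{2-a}\langle y\rangle^{-n}$ and $\theta_{R_0 a}^1 R_0^n\langle y\rangle^{-1-n}$) combined with the outer bound $|\phi_o|\lesssim R_0 v\langle y\rangle^{-a}$ from~\eqref{phio-upp}; your attribution of it to an ``$L^1$-loss from $Z_{n+1}\sim|y|^{-(n-2)}$'' is a heuristic that would give a different power and, more importantly, has no place to live in a single-scale modal argument. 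Third, your mode-$0$ treatment elides the actual construction of $e_0$: the paper does not just project against $Z_0$ and integrate an ODE, but first splits $\phi^0 = \phi_*[h^0] + \tilde\phi$ with $\phi_*$ solving the truncated problem (cutoff $\chi_M$), then decomposes $\tilde\phi=\tilde\phi_1+e(\tau)Z_0$ with $\tilde\phi_1\perp Z_0$, and determines $e(\tau)$ by an integro-differential relation that includes boundary flux $\int_{\partial B_R}Z_0\partial_n\tilde\phi_1$ and $f_1,f_2$ couplings; your displayed formula for $e_0$ is only a leading-order caricature of that. Finally, your appeal to a ``blow-up/compactness'' inversion à la \cite{Green16JEMS} for the unperturbed problem is different from the paper's constructive barrier-plus-energy approach (Lemma~\ref{chiM-eq-lem}, Lemma~\ref{eigenvalue-problem}); that substitution is in principle acceptable, but it still would not repair the missing $R_0$-structure.

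In short: the missing idea is the auxiliary re-gluing at scale $R_0$. Without it you cannot localize the orthogonality to $B_{2R_0}$, produce the $R_0^{-\epsilon_0}$ corrections, or obtain the stated $R_0^{n+1}$ loss, so the proposal as written does not yield the proposition in the form stated.
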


The proof of Proposition \ref{R0-linear} is achieved by the following Proposition and by another gluing procedure (re-gluing).

\medskip

\begin{prop}\label{linear-theory}
	Consider
	\begin{equation*}
		\begin{cases}
			\pp_{\tau} \phi =
			\Delta \phi + pU^{p-1} \phi
			+
			f_1 \phi +  f_2 y \cdot \nabla \phi  + h(y,\tau)
			&
			\mbox{ \ in \ }
			\mathcal{D}_R
			\\
			\phi(y,\tau_0) = e_0 Z_{0}(y)
			&
			\mbox{ \ in \ }
			B_{R }
		\end{cases}
	\end{equation*}
	where
	$\| h\|_{v, 2+a} < \infty$,
	$a>0$ and $h$ satisfies the orthogonal condition
	\begin{equation*}
		\int_{ B_{R(\tau)} } h(y,\tau) Z_i(y) d y = 0 \mbox{ \ for all \ } \tau >\tau_0
		,\quad i=1,\dots,n+1 .
	\end{equation*}
	Assume $R^n \theta_{Ra}^{1}  \ll \tau^{ \min{ \{1,d \}  } }$.
	Then for $\tau_0$ sufficiently large, there exists a solution $(\phi,e_0) = (\TT_{2i}[h],\TT_{2e}[h] )$ which is a linear mapping of $h$ with the  estimates
	\begin{equation*}
		\begin{aligned}
		\langle y\rangle |\nabla \phi| +	|\phi|
			\lesssim \ &
			v \min\{ \tau^{\frac 12}, \lambda_R^{- \frac 12 } \}  \lambda_R^{-\frac 12}  \theta_{R\hat{a}_{0}}^0
			\left(  \langle y\rangle^{-n} + C_{f}	\tau^{-d}  \min\{ \tau^{\frac 12}, \lambda_R^{- \frac 12 } \}  \lambda_R^{-\frac 12}
			\ln R \langle y \rangle^{2-n}  \right)
			\|h^0 \|_{v,2+a}
			\\
			& +
			v \left( \Theta_{R \hat{a}_{0} }^0(|y| )  \langle y\rangle^{-2}  +
			C_{f}\tau^{-d}
			\min\{ \tau^{\frac 12}, \lambda_R^{- \frac 12 } \}  \lambda_R^{-\frac 12} \theta_{R\hat{a}_{0}}^0
			\ln R \right)
			\|h^0 \|_{v,2+a}
			\\
			&
			+
			v \theta_{R \hat{a}_{1} }^1  R^n  \left(	
			\langle y\rangle^{-1-n}
			+
		C_{f}	\tau^{-d} R^{n}
			\langle y \rangle^{1-n}
			\right)
			\| h^1 \|_{v,2+a}
			\\
			& +
			v
			\left(
\Theta_{R,2+a}^0(|y|) + R \langle y\rangle^{2-n}
	\right)
			\|h^\perp \|_{v,2+a} ,
		\end{aligned}
	\end{equation*}
	\begin{equation*}
		|e_0|\lesssim
		v(\tau_0) \theta_{R(\tau_0) \hat{a}_{0}}^0  \left(
		1 +
	C_{f}	\tau_0^{-d}
		\min\{ \tau_0^{\frac 12}, \lambda_{R(\tau_0)}^{- \frac 12 } \}  \lambda_{R(\tau_0)}^{-\frac 12}  \ln R(\tau_0) \right) \| h^0\|_{v,2+a}
	\end{equation*}
	where
\begin{equation}\label{ahat-def}
\hat{a}_{0} =
		\begin{cases}
		a & \mbox{ \ if \ } a\ne n-2
			\\
		(n-2)- & \mbox{ \ if \ } a = n-2
		\end{cases},
\quad
\hat{a}_{1} =
		\begin{cases}
			a
			&
			\mbox{ \ if \ }  a\ne n-1
			\\
			(n-1)-
			&
			\mbox{ \ if \ } a=n-1
		\end{cases} ,
\end{equation}
	\begin{equation}\label{theta-def}
		\theta_{Ra}^0  =
		\begin{cases}
			R^{2-a}  & \mbox{ \ if \ }  a<2
			\\
			\ln R
			& \mbox{ \ if \ } a=2
			\\
			1 & \mbox{ \ if \ } a>2
		\end{cases},
		\quad
		\theta_{Ra}^1   =
		\begin{cases}
			R^{1-a}  & \mbox{ \ if \ } a < 1
			\\
			\ln R   &\mbox{ \ if \ }  a = 1
			\\
			1 &\mbox{ \ if \ }  a>1
		\end{cases}
,
	\end{equation}
\begin{equation}\label{lambdaR}
\lambda_{R} =
	\begin{cases}
		R^{-2}
		& \mbox{ \ if \ } n=3
		\\
		(R^2 \ln R)^{-1}
		& \mbox{ \ if \ } n=4
		\\
		R^{2-n}
		& \mbox{ \ if \ } n\ge 5
	\end{cases}
	,
	\quad
		\Theta_{R a}^0(r ) =
		\begin{cases}
			R^{2-a} &\mbox{ \ if \ } a<2
			\\
			\ln R
			&\mbox{ \ if \ } a=2
			\\
			\langle r\rangle^{2-\min\{a,n-\}}
			&  \mbox{ \ if \ } a>2
		\end{cases}
		.
\end{equation}
\end{prop}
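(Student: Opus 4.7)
\textbf{Proof plan for Proposition \ref{linear-theory}.} The argument proceeds in three stages: (i) mode decomposition to decouple the equation along spherical harmonics; (ii) a baseline linear theory in each mode for the unperturbed equation ($C_f=0$); (iii) a contraction argument that absorbs the perturbation $f_1\phi+f_2y\cdot\nabla\phi$.

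Because $f_1,f_2$ are radial by \eqref{f-assump}, the multiplication and radial-gradient operators commute with the spherical-harmonic projections, so writing $\phi=\phi^0+\phi^1+\phi^\perp$ and $h=h^0+h^1+h^\perp$ splits the problem into three decoupled subproblems. The bounded kernels $Z_1,\dots,Z_n$ live in mode $1$ and the dilation kernel $Z_{n+1}$ is radial; hence the hypothesis $h\perp Z_i$ ($i=1,\dots,n+1$) translates into $h^1\perp Z_i$ for $i=1,\dots,n$, $h^0\perp Z_{n+1}$, and is vacuous on $h^\perp$.

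For each mode, I first construct a baseline solution operator $\mathcal{L}_0$ in the case $C_f=0$. In the perpendicular modes, $\Delta+pU^{p-1}$ has neither bounded kernel nor positive eigenvalue, so a standard Duhamel/energy argument against a barrier of the form $\langle y\rangle^{-a}$ together with the outer-scale Green's-function tail $|y|^{2-n}$ produces the weight $\Theta^0_{R,2+a}(|y|)+R\langle y\rangle^{2-n}$. In mode $1$, the orthogonality $h^1\perp Z_i$ renders the radial operator invertible on the relevant subspace; the boundary correction at $|y|=R$, needed to match the vanishing data on $\partial\mathcal{D}_R$, yields the prefactor $R^n\theta^1_{R\hat a_1}$ through the standard Green's-function estimate for $\Delta + pU^{p-1}$ in the class $\phi\perp Z_i$. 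In mode $0$ the eigenvalue $\gamma_0>0$ with eigenfunction $Z_0$ forces a one-dimensional instability: projecting onto $Z_0$ gives a scalar ODE $\dot a-\gamma_0 a=\langle h^0,Z_0\rangle/\|Z_0\|^2$ whose unique bounded solution is $a(\tau)=-\int_\tau^\infty e^{\gamma_0(\tau-s)}\langle h^0(\cdot,s),Z_0\rangle\|Z_0\|^{-2}\,ds$, and the value $a(\tau_0)$ dictates the linear formula for $e_0$ with the claimed bound. The remaining orthogonality $h^0\perp Z_{n+1}$ deals with the bounded kernel in mode $0$; the slow-decay weight $\min\{\tau^{1/2},\lambda_R^{-1/2}\}\lambda_R^{-1/2}$ appears because for $n=3,4$ the radial Green's function $|y|^{2-n}$ is only borderline integrable against the source, which must be compensated by the outer scale $R$ via the $\lambda_R$ of \eqref{lambdaR}.

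Having $\mathcal{L}_0$ in hand, I define the full solution map by $\Phi(\phi):=\mathcal{L}_0[h+f_1\phi+f_2y\cdot\nabla\phi]$ on the Banach space carved out by the target pointwise bounds. Using $|f_i|+|y||\nabla f_2|\lesssim C_f\tau^{-d}$ to control $f_1\phi+f_2y\cdot\nabla\phi$ in the $\|\cdot\|_{v,2+a}$ norm against the weights coming from Step (ii), the hypothesis $R^n\theta^1_{R\hat a_1}\ll\tau^{\min\{1,d\}}$ makes $\Phi$ a contraction for $\tau_0$ sufficiently large; its fixed point is the sought $\phi$. The explicit $C_f\tau^{-d}$ terms appearing in the stated estimates are exactly what results from a single application of $\mathcal{L}_0$ to $f_1\phi+f_2y\cdot\nabla\phi$ using the leading bound. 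Linearity of $\mathcal{L}_0$ and of the contraction limit (whose fixed point is linear in $h$ because $\Phi$ is affine in $h$ and linear in $\phi$) yields the linear mappings $\TT_{2i}$, $\TT_{2e}$.

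The main obstacle is mode $0$ in dimensions $n=3,4$. The simultaneous presence of the positive eigenvalue $\gamma_0$ (requiring the initial-value projection to cancel an exponentially growing scalar mode), the bounded kernel $Z_{n+1}$ (handled by orthogonality but coupled to $e_0$ through the $Z_0$-projection on the data), and the borderline $|y|^{2-n}$ decay of the radial Green's function all must be reconciled while producing the precise weights $\min\{\tau^{1/2},\lambda_R^{-1/2}\}\lambda_R^{-1/2}$ and $\Theta^0_{R\hat a_0}$. The bookkeeping will require dyadic splitting in space around $|y|\sim\lambda_R^{-1/2}$ and a matching splitting in time at $\tau\sim\lambda_R^{-1}$, and care to keep the perturbative Step (iii) from overshooting the baseline weights.
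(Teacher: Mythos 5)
Your high-level architecture—spherical-harmonic decomposition, a baseline operator for $C_f=0$ built mode by mode, followed by a contraction to absorb $f_1\phi+f_2y\cdot\nabla\phi$—does match the paper, and your treatment of the $Z_0$-instability (solve a scalar ODE forward in time to determine the initial-value projection, which becomes $e_0$) is morally the same as the paper's construction of $e(\tau)$ enforcing $\int_{B_R}\tilde\phi_1 Z_0\,dy=0$.

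However, there is a genuine gap in how you propose to exploit the orthogonality of $h^0$ to $Z_{n+1}$ and of $h^1$ to $Z_1,\dots,Z_n$. You assert that orthogonality "renders the radial operator invertible" and that Green's-function estimates on the orthogonal subspace give the stated weights, but you never identify a mechanism that produces the extra $\langle y\rangle^{-2}$ decay in the output (the $\langle y\rangle^{-n}$ and $\langle y\rangle^{-1-n}$ tails in the conclusion, versus the $\langle y\rangle^{2-n}$ and $\langle y\rangle^{1-n}$ that the non-orthogonal theory delivers). The paper's key device is an elliptic pre- and post-conditioning: one first solves $L H = h$ on $\RR^n$ with $L=\Delta+pU^{p-1}$, using orthogonality to construct $H$ with $\|H\|_{v,\hat a}\lesssim\|h\|_{v,2+a}$ (a gain of two powers of decay is traded for, as the source becomes slower-decaying); one then feeds $H$ into the already-established non-orthogonal parabolic theory (Lemma \ref{mode0-nonorth} / Lemma \ref{M1-nonortho-lem}) to obtain $\Phi$; and finally one sets $\phi:=L\Phi$, which solves $\pp_\tau\phi=L\phi+h$ and regains the two powers of spatial decay from applying $L$. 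Without this conjugation trick, a direct Duhamel-on-the-orthogonal-subspace argument as you sketch would have to be built from scratch, and it is far from clear it yields the same weights.

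Two further points of attribution are off. First, the $\lambda_R$ factor is not a borderline-integrability artifact of the radial Green's function; it is the coercivity constant in Lemma \ref{eigenvalue-problem} (namely $c_0\lambda_R\int_{B_R}|\phi|^2\le Q(\phi,\phi)$ for radial $\phi\perp Z_0$), and the factor $\min\{\tau^{1/2},\lambda_R^{-1/2}\}$ arises from the Gronwall integration of the resulting energy inequality in time, not from any spatial dyadic matching around $|y|\sim\lambda_R^{-1/2}$. Second, you omit the $\chi_M$ splitting ($\phi=\phi_*+\tilde\phi$ with $\phi_*$ solving the equation with $pU^{p-1}(1-\chi_M)$ via a barrier, and $\tilde\phi$ absorbing the compactly supported residual $pU^{p-1}\chi_M\phi_*$ via the energy estimate); this decomposition is what permits a barrier argument at all, since the full potential $pU^{p-1}$ has the wrong sign for a direct comparison principle, and the compactly supported residual is what keeps the energy forcing summable.
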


Before we prove Proposition \ref{R0-linear}, we first use Proposition \ref{linear-theory} to prove Proposition \ref{R0-linear}.
\begin{proof}[Proof of Proposition \ref{R0-linear}]
	Set $\phi(y,\tau) = \eta_{R_0}(y) \phi_{i}(y,\tau) + \phi_{o}(y,\tau) $, where $\eta_{R_0}(y) = \eta(\frac{y}{R_0})$. In order to find a solution $\phi$, it suffices to  consider the following inner--outer gluing system for $(\phi_i,\phi_o)$
	\begin{equation}
		\label{phio-eq}
		\begin{cases}
			\pp_{\tau} \phi_{o}
			=
			\Delta \phi_{o}
			+
			J[\phi_{o},\phi_{i}]
			\mbox{ \ in \ } \DD_{R},
			\\
			\phi_{o} = 0
			\mbox{ \ on \ } \pp \DD_{R},\quad
			\phi_{o} = 0
			\mbox{ \ in \ } B_{R(\tau_0)} ,
		\end{cases}
	\end{equation}
	\begin{equation}
		\label{phii-eq}
		\begin{cases}
			\pp_{\tau} \phi_{i}
			=
			\Delta \phi_{i} + pU^{p-1} \phi_i
			+ f_1 \phi_i
			+
			f_2 y\cdot \nabla  \phi_i
			+
			pU^{p-1} \phi_o
			+ h
			+
			\sum\limits_{i=1}^{n+1}
			c_i(\tau) \eta(y) Z_{i}(y)
			&
			\mbox{ \ in \ } \DD_{2R_0},
			\\
			\phi_{i} = e_0 Z_{0}(y)
			&
			\mbox{ \ in \ } B_{2R(\tau_0)} ,
		\end{cases}
	\end{equation}
	where
	\begin{equation*}
		\begin{aligned}
			J[\phi_{o},\phi_{i}]= \ &
			f_1 \phi_o + f_2 y\cdot \nabla  \phi_o
			+
			pU^{p-1} \phi_o(
			1-\eta_{R_0}
			)
			+
			A[\phi_{i}] + h(1-\eta_{R_0}) ,
			\\
			A[\phi_{i}] = \ &
			\Delta \eta_{R_0} \phi_i + 2\nabla \eta_{R_0} \cdot \nabla \phi_i + f_2 y\cdot \nabla \eta_{R_0} \phi_i -\pp_{\tau} \eta_{R_0} \phi_i .
		\end{aligned}
	\end{equation*}
Here	$c_i(\tau)$ is given by
	\begin{equation*}
		c_{i}(\tau) = c_{i}[\phi_{o}](\tau) =
		C_{i} \int_{B_{2R_0} }
		(pU^{p-1} (z) \phi_o(z,\tau)
		+ h(z,\tau) ) Z_{i}(z) dz,\quad
		C_{i} =  -(\int_{B_{2} } \eta(y) Z_{i}^2(y) dy)^{-1}
	\end{equation*}
	such that the orthogonal conditions
	\begin{equation*}
		\int_{B_{2R_0}} \left( pU^{p-1}(z) \phi_o(z,\tau)
		+ h(z,\tau)
		+
		\sum\limits_{i=1}^{n+1}
		c_i(\tau) \eta(y) Z_{i}(z) \right) Z_{j }(z) dy = 0 \mbox{ \ for \ } j=1,\dots,n+1
	\end{equation*}
	are satisfied.

	We reformulate \eqref{phio-eq} and \eqref{phii-eq} into the following form
	\begin{equation}\label{z6}
		\begin{aligned}
			\phi_{o}(y,\tau) = \ &
			\TT_{o} [
			J[\phi_{o},\phi_{i}]  ] ,
			\quad
			\phi_{i}(y,\tau) =
			\TT_{2i} \left[
			pU^{p-1}(y) \phi_o
			+ h
			+
			\sum\limits_{i=1}^{n+1}
			c_i(\tau) \eta(y) Z_{i}(y) \right] ,
			\\
			e_0 = \ &
			\TT_{2e} \left[
			pU^{p-1}(y) \phi_o
			+ h
			+
			\sum\limits_{i=1}^{n+1}
			c_i(\tau) \eta(y) Z_{i}(y) \right]
			,
		\end{aligned}
	\end{equation}
	where $\TT_{o}$ is a linear mapping given by the standard parabolic theory, and $\TT_{2i}$, $\TT_{2e}$ are given by Proposition \ref{linear-theory}.
	We will solve the system \eqref{z6} by the contraction mapping theorem.
	
	Denote the leading term of the right hand side of \eqref{phii-eq} as $H_{1} := h+ \sum\limits_{i=1}^{n+1} C_{i} \eta(y) Z_{i}(y) \int_{B_{2R_0} }
	h(z,\tau) Z_{i}(z) dz $. It is easy to check $\| H_{1} \|_{v,2+a} \lesssim \|h\|_{v,2+a}$. If $H_1$ satisfies the orthogonal condition in $\mathcal{D}_{2R_0}$, under the assumption $R_0^{n+2} \ll \tau^{ \min{ \{1,d \} - } }$, Proposition \ref{linear-theory} gives following a priori estimates
	\begin{equation*}
		\langle y \rangle
		|\nabla \TT_{2i}[H_1]|
		+	|\TT_{2i}[H_1] | \le
		D_i   w_{i}(y,\tau)
		,
		\quad
		|\TT_{2e}[H_1] | \le D_i
		v(\tau_0) R_0^{2-a} \|h \|_{v,2+a },
	\end{equation*}
	where  $D_{i}\ge 1$ is a constant and
	\begin{equation*}
		w_{i}(y,\tau) =
		v \left( \lambda_{R_0}^{-1} R_0^{2-a} \langle y \rangle^{-n}
		+
		\theta_{R_0 a}^1 R_0^n
		\langle y \rangle^{-1-n}
		+
		\langle y \rangle^{-a}
		+ R_0 \langle y \rangle^{2-n} \right)
		\| h \|_{v,2+a}
	\end{equation*}
where $\theta_{R_0a}^{1} $ is given in \eqref{theta-def}. For this reason, we will solve the inner part in the space
	\begin{equation*}
		\B_{i} =
		\left\{
		g(y,\tau) \ : \
		\langle y \rangle |\nabla_y g(y,\tau)|  + |g(y,\tau)| \le
		2D_i  w_{i}(y,\tau)
		\right\} .
	\end{equation*}
	For any $\tilde{\phi}_{i}\in \B_{i}$, we will find a solution $\phi_{o} = \phi_{o}[\tilde{\phi}_{i}]$ of \eqref{phio-eq} by the contraction mapping theorem. Let us estimate $J[0,\tilde{\phi}_{i}]$ term by term.
	For $n\ge 4$,
	\begin{equation*}
		|A[\tilde{\phi}_{i}] |
		\lesssim
		D_i v
		(R_0^{-2} + \tau^{-d} ) (R_0^{-a} \ln R_0 + R_0^{-1})
		\1_{ \{ R_0\le |y| \le 2R_0 \} }  \| h \|_{v,2+a}
		\lesssim
		D_i v
		R_{0}^{-\epsilon_0}
		\langle y \rangle^{-2-a_1}
		\| h \|_{v,2+a}
	\end{equation*}
	where constants $0<a_1 <\min\{a,1\}$ and $\epsilon_0 = \frac{\min\{a,1\} -a_1}{2}$. Also we have
	\begin{equation*}
		| h (1-\eta_{R_0})| \lesssim
		\1_{\{ y\ge R_0 \}} v\langle y\rangle^{-2-a } \| h\|_{v,2+a}
		\lesssim
		v
		R_{0}^{-\epsilon_0}
		\langle y \rangle^{-2-a_1}
		\| h \|_{v,2+a}
		.
	\end{equation*}
	Consider \eqref{phio-eq} with the right hand side $J[0,\tilde{\phi}_{i}]$. Using $C v (-\Delta)^{-1}(\langle y\rangle^{-2-a_1}) R_{0}^{-\epsilon_0}
	\| h \|_{v,2+a}$ as the barrier function with a large constant $C$ and then scaling argument, we have
	\begin{equation*}
	\langle y\rangle |\nabla \TT_o[J[0,\tilde{\phi}_{i}]](y,\tau)| +	|\TT_o[J[0,\tilde{\phi}_{i}]] (y,\tau) |
		\le
		w_{o}(y,\tau) =
		D_o	D_i v
		R_{0}^{-\epsilon_0}
		\langle y \rangle^{-a_1}
		\| h \|_{v,2+a}
	\end{equation*}
	with a large constant $D_o\ge 1$.
	This suggests us solve $\phi_o$ in the following space:
	\begin{equation*}
		\B_{o} =
		\left\{
		f(y,\tau) \ : \
		\langle y\rangle |\nabla f(y,\tau)|  + |f(y,\tau)| \le
		2w_{o}(y,\tau)
		\right\} .
	\end{equation*}
	For any $\tilde{\phi}_{o} \in \B_{o}$, due to $|y| \le 2 R(\tau)$, we have
	\begin{equation*}
		|pU^{p-1} \tilde{\phi}_o(
		1-\eta_{R_0}
		)| \lesssim
		R_{0}^{-2} D_o	D_i v
		R_{0}^{-\epsilon_0}
		\langle y \rangle^{-2-a_1}
		\| h \|_{v,2+a} ,
	\end{equation*}
	\begin{equation*}
		| f_1\phi_o + f_2 y\cdot \nabla  \phi_o| \lesssim
		\tau^{-d} R^{2}(\tau)
		D_o	D_i v
		R_{0}^{-\epsilon_0}
		\langle y \rangle^{-2-a_1}
		\| h \|_{v,2+a}.
	\end{equation*}

	Since $\tau^{-d} R^{2}$, $R_{0}^{-2}$ provide smallness, by comparison principle, we have
	\begin{equation*}
		\TT_{o}[
		J[\tilde{\phi}_{o},\tilde{\phi}_{i}] ]  \in \B_{o} .
	\end{equation*}
	The contraction mapping property can be  deduced in the same way.
	
	Now we have found a solution $\phi_{o} = \phi_{o}[\tilde{\phi}_{i}]\in \B_{o}$. It follows that
	\begin{equation*}
		\Big\|
		pU^{p-1}(y) \phi_{o}[\tilde{\phi}_{i}]
		+
		\sum\limits_{i=1}^{n+1}
		C_{i} \int_{B_{2R_0} }
		pU^{p-1}(z) \phi_{o}[\tilde{\phi}_{i}] (z,\tau) Z_{i}(z) dz \eta(y) Z_{i}(y)
		\Big\|_{v,2+a}
		\lesssim
		D_o D_i
		R_0^{-\epsilon_0} \| h\|_{v,2+a}.
	\end{equation*}
	Due to the choice of $c_i(\tau)$, $H_2 := pU^{p-1}(y) \phi_{o}[\tilde{\phi}_{i}]
	+ h
	+
	\sum\limits_{i=1}^{n+1}
	c_i[\phi_{o}[\tilde{\phi}_{i}]](\tau) \eta(y) Z_{i}(y)$ satisfies the orthogonal condition in $\DD_{2R_0}$.
	By Proposition \ref{linear-theory}, since $R_0^{-\epsilon_{0}}  $ provides smallness, we have
	\begin{equation*}
		\TT_{2i}[h_2] \in \B_{i}
	\end{equation*}

	The contraction property can be deduced in the same way. Thus we find a solution
	\begin{equation}\label{phii-upp}
\phi_{i}=\phi_{i}[h] \in \B_{i} .
\end{equation}
Finally we obtain a solution
	$(\phi_{o},\phi_{i})$ for \eqref{phio-eq} and \eqref{phii-eq}.
	
	From the construction above and the topology of $\B_{i}$,   $\phi_{i}[h]=0$ if $h=0$, which deduces that $\phi_{i}[h]$ is a linear mapping of $h$. By the similar argument, $\phi_{o}[h]$ and $c_i[h]$ are also linear mappings of $h$, and so does $\phi$.
	
	We will regard $D_o$, $D_i$  as general constants hereafter. Then by Propostition \ref{linear-theory} and \eqref{z6}, we have
	\begin{equation*}
		|e_0| \lesssim
		v(\tau_0) R_0^{2-a} \|h \|_{v,2+a }  .
	\end{equation*}
	Since $\phi_{o}[h] \in \B_{o}$, one has
	\begin{equation*}
		c_{i}[h](\tau)
		=
		C_{i} \int_{B_{2R_0} }
		h(y,\tau)  Z_{i}(y) dy
		+
		R_0^{-\epsilon_{0}}  O( v) \|h\|_{v,2+a}
		.
	\end{equation*}
	
	Since the above operation is linear about $h$, we are able to decompose $h$ into
	\begin{equation*}
		h(y,\tau) = \sum\limits_{j=0}^{\infty} \Upsilon_j(\frac{y}{|y|}) h_j(|y|,\tau) ,\quad
		h_j(|y|,\tau) = \int_{S^{n-1}} h(|y| \theta,\tau) \Upsilon_j(\theta) d\theta
	\end{equation*}
	and repeat the construction about $\Upsilon_j(\frac{y}{|y|}) h_j(|y|,\tau)$ separately. Then
	\begin{equation*}
		c_{n+1}[h](\tau)
		=
		C_{n+1} \int_{B_{2R_0} }
		h(y,\tau)  Z_{n+1}(y) dy
		+
		R_0^{-\epsilon_{0}}  O( v) \|h_0 \|_{v,2+a},
	\end{equation*}
	\begin{equation*}
		c_{i}[h](\tau)
		=
		C_{i} \int_{B_{2R_0} }
		h(y,\tau)  Z_{i}(y) dy
		+
		R_0^{-\epsilon_{0}}  O( v) \| h_i \|_{v,2+a}
		\mbox{ \ for \ } i=1,\dots,n .
	\end{equation*}
	Reviewing the re-gluing procedure, we have
	\begin{equation*}
		|J[0,\phi_{i}]|  \lesssim
		R_0
		v\langle y\rangle^{- 2-a } \| h\|_{v,2+a} .
	\end{equation*}
	Using comparison principle to \eqref{phio-eq} several times, the upper bound of $\phi_{o}$ can be improved to
	\begin{equation}\label{phio-upp}
		|\phi_{o}| \lesssim
		R_0
		v\langle y\rangle^{-a } \| h\|_{v,2+a} .
	\end{equation}
	Combining \eqref{phii-upp}, \eqref{phio-upp} and then using scaling argument, we conclude
	\begin{equation*}
	\langle y\rangle |\nabla \phi| + |\phi|
		\lesssim  R_0^{n+1} v
		\langle y \rangle^{-a} \|h\|_{v,2+a}
		.
	\end{equation*}
\end{proof}

The rest of this section is devoted to the proof of Proposition \ref{linear-theory}. We first invoke a coercive estimate for the linearized operator
\begin{lemma}\cite[Lemma 7.2]{Green16JEMS} \label{eigenvalue-problem}
	There exists a constant $c_0 >0$ such that for all sufficiently large $R$ and all radially symmetric functions $\phi \in H_0^1(B_{R} )$
	with  $\int_{B_{R}} \phi Z_0 = 0$, we have
	\[
	c_0  \lambda_{R}  \int_{B_{R}} |\phi|^2 \le Q(\phi,\phi),
	\]
	where $\lambda_R$ is given in \eqref{lambdaR} and
$ Q(\phi,\phi):= \int_{B_{R} } ( |\nabla \phi|^2 - pU^{p-1}|\phi|^2 ) $.
	
\end{lemma}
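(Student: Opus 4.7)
The plan is to identify the inequality as a lower bound on the second Dirichlet eigenvalue of $L := -\Delta - pU^{p-1}$ in the radial subspace of $H^1_0(B_R)$. Since $L$ has a discrete radial spectrum $\mu_0(R) < \mu_1(R) \le \cdots$ with first eigenpair $(\mu_0(R), \Phi_0^R) \to (-\gamma_0, Z_0)$ exponentially fast as $R \to \infty$, and the hypothesis $\int\phi Z_0 = 0$ removes (up to an exponentially small error, since $Z_0$ decays exponentially) the $\mu_0$-direction, the inequality reduces to the claim $\mu_1(R) \ge c_0\lambda_R$ for some constant $c_0 > 0$ independent of $R$.

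The source of the small eigenvalue is the radial almost-kernel $Z_{n+1} = y\cdot\nabla U + \frac{n-2}{2}U \in \ker L$, which decays like $|y|^{2-n}$ at infinity. The dichotomy $Z_{n+1} \in L^2(\mathbb{R}^n) \iff n\ge 5$ accounts for the three formulas in \eqref{lambdaR}. To pin down the correct scaling one constructs the truncated test function $\tilde Z_R(y) = \chi(|y|/R)Z_{n+1}(y)$ with $\chi\in C^\infty_c([0,1))$, $\chi\equiv 1$ on $[0,1/2]$. Since $LZ_{n+1}=0$, integration by parts and a direct estimation on the annulus $\{R/2 \le |y| \le R\}$ give
\[
Q(\tilde Z_R, \tilde Z_R) = -\int \chi Z_{n+1}\big(2\nabla\chi\cdot\nabla Z_{n+1} + Z_{n+1}\Delta\chi\big)\,dy \lesssim R^{2-n},
\]
while $\|\tilde Z_R\|_{L^2}^2 \sim \int_1^R r^{3-n}\,dr$ equals $R$, $\ln R$, or $O(1)$ for $n=3,4,\ge 5$ respectively; their ratio is precisely $\lambda_R$, so by min-max $\mu_1(R)\lesssim\lambda_R$.

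The matching lower bound $\mu_1(R)\ge c_0\lambda_R$ I would prove by contradiction: suppose sequences $R_k\to\infty$ and radial $\phi_k\in H^1_0(B_{R_k})$ with $\|\phi_k\|_{L^2}=1$, $\phi_k\perp Z_0$, and $Q(\phi_k,\phi_k)\le\epsilon_k\lambda_{R_k}$ with $\epsilon_k\to 0$. Decompose $\phi_k=\alpha_k\tilde Z_{R_k}/\|\tilde Z_{R_k}\|_{L^2}+\psi_k$ with $\psi_k$ $L^2$-orthogonal to $Z_0$ and $\tilde Z_{R_k}$. The cross terms are negligible because $L\tilde Z_{R_k}$ is supported in an $R_k$-annulus and has small $L^2$ norm, so $Q(\phi_k,\phi_k)\gtrsim \alpha_k^2\lambda_{R_k}+Q(\psi_k,\psi_k)$ modulo lower-order errors. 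The key step is a uniform spectral gap $Q(\psi,\psi)\ge c\|\psi\|_{L^2}^2$ on the complement of $\mathrm{span}\{Z_0,\tilde Z_{R_k}\}$, with $c>0$ independent of $R_k$: a normalized minimizing sequence would weakly converge along a subsequence to a radial bound state of $L$ on $\mathbb{R}^n$ orthogonal to $Z_0$ and (for $n\ge 5$) to $Z_{n+1}$, but the radial point spectrum of $L$ contains no such element. The identity $\alpha_k^2+\|\psi_k\|_{L^2}^2=1$ then forces $Q(\phi_k,\phi_k)\gtrsim \lambda_{R_k}$, contradicting $\epsilon_k\to 0$.

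The main obstacle is this uniform spectral gap in the low-dimensional cases $n=3,4$, where $Z_{n+1}\notin L^2(\mathbb{R}^n)$ and mass of a minimizing sequence $\psi_k$ can escape to spatial infinity, blurring the identification of the weak limit as an actual $L^2$ eigenfunction. One resolution is to work in a weighted $L^2$ space adapted to the slow $|y|^{2-n}$ decay of $Z_{n+1}$, or to bypass compactness altogether by inverting $L$ explicitly on the orthogonal complement using its Dirichlet Green's function on $B_R$ and reading off the coercivity from the norm of this inverse; the logarithmic correction in $\lambda_R$ for $n=4$ reflects precisely the borderline divergence $\|Z_{n+1}\|_{L^2(B_R)}^2\sim\ln R$, which must be tracked carefully throughout.
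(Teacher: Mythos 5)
The paper does not prove this lemma: it cites it from Cort\'azar--del Pino--Musso \cite{Green16JEMS} for $n\ge 5$ and asserts that the cases $n=3,4$ follow ``by slight modifications,'' so there is no written proof here to compare against. Your overall plan --- identify the sharp scale via a truncation $\tilde Z_R=\chi(\cdot/R)Z_{n+1}$ of the slowly decaying radial kernel element, then argue coercivity on the orthogonal complement --- is the right one, and your test-function upper bound is correct; in fact it simplifies further via $Q(\tilde Z_R,\tilde Z_R)=\int Z_{n+1}^2\,|\nabla(\chi(\cdot/R))|^2\sim R^{2-n}$, which with $\|\tilde Z_R\|_{L^2}^2\sim R,\ \ln R,\ 1$ reproduces \eqref{lambdaR} exactly.

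However, the lower-bound step has a genuine gap, and it is not confined to $n=3,4$ as you suggest. The claimed uniform spectral gap $Q(\psi,\psi)\ge c\|\psi\|_{L^2}^2$ on $\{\psi\perp Z_0,\tilde Z_R\}$ with $c>0$ independent of $R$ is \emph{false in every dimension} $n\ge 3$. Indeed, a radial $\psi\in H^1_0(B_R)$ supported in the annulus $\{R/2\le|y|\le R\}$, normalized and oscillating enough to be orthogonal to $Z_0$ and $\tilde Z_R$, satisfies $\int U^{p-1}\psi^2=O(R^{-4}\|\psi\|^2)$ while $\int|\nabla\psi|^2\sim R^{-2}\|\psi\|^2$, so $Q(\psi,\psi)/\|\psi\|_{L^2}^2\sim R^{-2}\to 0$. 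Equivalently, the third radial Dirichlet eigenvalue $\mu_2(R)$ of $-\Delta-pU^{p-1}$ tends to $0$ because the essential spectrum on $\R^n$ starts at $0$; your compactness argument fails for $n\ge 5$ too, since $\psi_k$ normalized in $L^2$ can concentrate in a far annulus and converge weakly to $0$, which is not a contradiction.

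The correct replacement is weaker and different in flavor: what one should extract from the compactness argument is coercivity of $Q$ relative to the Dirichlet energy, $Q(\psi,\psi)\ge c\int_{B_R}|\nabla\psi|^2$ for $\psi\perp Z_0,\tilde Z_R$, with $c>0$ uniform in $R$. This is plausible because $U^{p-1}\sim\langle y\rangle^{-4}$ is localized, so a normalizing sequence $\int|\nabla\psi_k|^2=1$ with $\int U^{p-1}\psi_k^2\to 1/p$ cannot escape to spatial infinity and its weak $\dot H^1$-limit is nontrivial and annihilated by $L$, contradicting nondegeneracy. Combining this gradient coercivity with the Poincar\'e inequality $\int_{B_R}|\nabla\psi|^2\ge c\,R^{-2}\int_{B_R}\psi^2$ and the elementary observation $\lambda_R\le CR^{-2}$ for all $n\ge 3$ (with equality only at $n=3$) gives $Q(\psi,\psi)\gtrsim R^{-2}\|\psi\|^2\gtrsim\lambda_R\|\psi\|^2$, which together with $Q(\tilde Z_R,\tilde Z_R)\gtrsim\lambda_R\|\tilde Z_R\|^2$ and a careful cross-term estimate (using $\|L\tilde Z_R\|_{L^2}\sim R^{-n/2}$ and the support of $L\tilde Z_R$ in a single annulus) closes the argument. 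You should reformulate the spectral gap in this $\dot H^1$-relative form, state it for all $n\ge 3$, and then track the cross terms quantitatively, since the constants are genuinely delicate at $n=3$ where $\lambda_R$ and $R^{-2}$ are of the same order.
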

Note that in \cite[Lemma 7.2]{Green16JEMS}, there is above coercive estimate only for higher dimensions $n\geq 5$. The proof in lower dimensions $n=3,~4$ is in fact similar and by slight modifications.

\begin{lemma}\label{chiM-eq-lem}
	Consider
	\begin{equation*}
		\begin{cases}
			\pp_{\tau} \phi = \Delta \phi + pU^{p-1} (1-\chi_{M} ) \phi +
			f_1 \phi +  f_2 y \cdot \nabla \phi  +h
		 \mbox{ \ in \ } \DD_{R}
			\\
			\phi  = 0  \mbox{ \ on \ }
			\pp \DD_{R},
			\quad
			\phi(\cdot, \tau_0)= 0
			 \mbox{ \ in \ } B_{R(\tau_0)}
		\end{cases}
	\end{equation*}
	where $\chi_{M}(y) = \eta(\frac{y}{M})$, $M>0$ is a large constant, $R^2\ln R\ll \tau^{\min\{1,d\}}$, $\|h\|_{v,a} <\infty$, $a\ge 0$.
	Then when $\tau_0$ sufficiently large,
for $\Theta_{R a}^0(|y|)$ given in \eqref{lambdaR}, the unique solution $\phi_*[h]$ has the following estimate:
	\[
	|\phi_*[h]| \lesssim C(M,a,n) v
	\Theta_{R a}^0(|y|) \|h \|_{v,a} .
	\]
\end{lemma}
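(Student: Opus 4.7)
\textbf{Proof Plan for Lemma \ref{chiM-eq-lem}.} My plan is to establish the estimate by a supersolution/barrier argument, since the cutoff $1-\chi_M$ kills the bump of $pU^{p-1}$ near the origin and turns the linearization into an essentially standard parabolic equation with a small lower-order potential. Existence and uniqueness of $\phi_*[h]$ follow from classical linear parabolic theory on the expanding cylinder $\DD_R$: on $|y|\le M$ the coefficients are bounded and smooth, while on $|y|>M$ the potential $pU^{p-1}(1-\chi_M)$ is bounded by $C M^{2-n}$ times $\langle y\rangle^{-4}$, which is dominated by the Laplacian. Uniqueness can be read off from Lemma \ref{eigenvalue-problem} applied to the radial projection of a homogeneous solution (note that $(1-\chi_M)$ removes the $Z_0$-instability since the potential well is cut off), together with standard energy estimates; no orthogonality constraint on $h$ is needed here.

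For the pointwise bound I would construct an explicit supersolution $\Phi^{+}(y,\tau)= C(M,a,n)\,v(\tau)\,\Theta_{R a}^{0}(|y|)\,\|h\|_{v,a}$, where the spatial profile $\Theta_{R a}^{0}$ already appears in \eqref{lambdaR} and is essentially a solution of $-\Delta \Theta = c\langle y\rangle^{-a}$ in $B_R$ with zero boundary data. Concretely, for $a<2$ take a constant profile of size $R^{2-a}$ minus a correction vanishing on $\pp B_R$; for $a=2$ take an $\ln R$ profile; and for $a>2$ take the Green-type profile $\langle y\rangle^{2-\min\{a,n-\}}$ modified near $\pp B_R$. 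In each case one verifies
\[
-\Delta \Phi^{+}\;\gtrsim\; v(\tau)\,\langle y\rangle^{-a}\,\|h\|_{v,a},
\]
which provides the right-hand side $|h|$.

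The remaining terms in the operator must be absorbed by this main Laplacian contribution. The time derivative gives $\pp_\tau \Phi^{+}=O(\tau^{-1}\Phi^{+})$, which is of order $\tau^{-1}\Theta_{R a}^{0}\|h\|_{v,a}$; the cutoff potential contributes $pU^{p-1}(1-\chi_M)\Phi^{+}=O(\langle y\rangle^{-4}\Phi^{+})$; the transport/zeroth-order perturbation contributes at most $C_f\tau^{-d}(|\Phi^{+}|+|y||\nabla\Phi^{+}|)$. Using the hypotheses $|f_i|\lesssim \tau^{-d}$ and $R^{2}\ln R\ll \tau^{\min\{1,d\}}$, one checks that each of these quantities is bounded by a small fraction of $v\langle y\rangle^{-a}\|h\|_{v,a}$ throughout $\DD_R$ (the factor $R^{2}\ln R$ is exactly the worst scaling one gets when comparing $\Theta_{R a}^{0}$ with $\langle y\rangle^{-a}$, via $R^{2-a}\cdot R^{-2}\lesssim R^{-a}$, etc.). Hence $\Phi^{+}$ is indeed a supersolution for $\tau_0$ large enough. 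A matching subsolution $-\Phi^{+}$ works identically, and the comparison principle, applicable thanks to the Dirichlet data on $\pp\DD_R$ and the zero initial data, yields the desired pointwise estimate.

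The main technical point I anticipate is the case $a>2$, where $\Theta_{R a}^{0}(|y|)$ depends nontrivially on $|y|$ and the correction near $\pp B_R$ (needed to enforce the Dirichlet boundary condition) must be chosen so that neither the time derivative nor the gradient term from $f_2\,y\cdot\nabla\Phi^{+}$ destroys the supersolution property in the intermediate annulus $\{M\le|y|\le R\}$. The borderline $a=n-2$ (and implicitly, the thresholds in the definition of $\hat a_0$) forces a logarithmic adjustment of the Green profile; this is the reason $\Theta_{R a}^{0}$ uses $\min\{a,n-\}$ rather than $a$ itself, and it is also where the assumption $R^{2}\ln R\ll\tau^{\min\{1,d\}}$ (rather than merely $R^{2}\ll\tau^{\min\{1,d\}}$) is needed to close the barrier argument.
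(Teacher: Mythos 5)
Your overall strategy (existence via standard parabolic theory, pointwise bound via a sub/supersolution barrier, comparison principle) is the same one the paper uses, but there is a concrete structural gap in the barrier step. You build $\Phi^{+}=C\,v\,\Theta^{0}_{Ra}(|y|)\,\|h\|_{v,a}$ as a supersolution of essentially $-\Delta$ (so that $-\Delta \Phi^{+}\gtrsim v\langle y\rangle^{-a}\|h\|_{v,a}$) and then try to \emph{absorb} the cutoff potential $pU^{p-1}(1-\chi_M)\Phi^{+}$ as a small perturbation. That term has the wrong sign for you (it sits on the side the Laplacian must dominate), and it is \emph{not} small when $a<2$ (or $a=2$): there $\Theta^0_{Ra}\sim R^{2-a}$ is a large constant, so near the inner edge of the cutoff $|y|\sim M$ one has
\[
pU^{p-1}(1-\chi_M)\Phi^{+}\sim M^{-4}\,v\,R^{2-a}\,\|h\|_{v,a},
\]
while the Laplacian budget there is only $\sim v\,M^{-a}\,\|h\|_{v,a}$. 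The ratio $M^{a-4}R^{2-a}$ diverges as $R\to\infty$ with $M$ fixed, so the absorption step fails precisely in the regime $a<2$ that the lemma covers. The hypotheses you cite ($|f_i|\lesssim\tau^{-d}$ and $R^2\ln R\ll\tau^{\min\{1,d\}}$) give smallness for the $f_i$, $v'/v$, and $R'$ terms, but they give nothing against $pU^{p-1}(1-\chi_M)$, which is $\tau$-independent.

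The fix — and the route the paper actually takes — is not to invert $-\Delta$ but to invert the full elliptic part $L_M=\Delta+pU^{p-1}(1-\chi_M)$: take the radial Green function of $L_M$, built from its positive radial kernel $g_2$ (which exists and satisfies $g_2\sim 1$ on $(0,\infty)$ once $M$ is fixed large, since the cutoff removes the bound state and the potential is uniformly small), and set $g(r,R)=g_2(r)\int_r^R \frac{d\rho}{g_2^2(\rho)\rho^{n-1}}\int_0^\rho g_2(s)s^{n-1}\langle s\rangle^{-\bar a}\,ds$ with $\bar a=\min\{a,n-\}$, so that $L_M g=-\langle r\rangle^{-\bar a}$ exactly and $g\lesssim\Theta^0_{Ra}\lesssim R^2\ln R$. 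Then the cutoff potential never appears as a leftover term, and the remaining contributions — $f_1 g$, $f_2\,r\pp_r g$, $v'g$, and the $R'$-term coming from $g$ depending on $R(\tau)$ — are all $\lesssim R^2\ln R$ relative to $\langle r\rangle^{-\bar a}$ and are killed by the hypothesis $R^2\ln R\ll\tau^{\min\{1,d\}}$. Once the barrier is corrected in this way, the rest of your plan (comparison principle, zero initial and boundary data) goes through. Two smaller remarks: the assumption $a\ge 0$ is what makes $R^{2-a}\le R^2$, so the final $R^2\ln R$ bound uses $a\ge 0$; and for uniqueness you do not need Lemma \ref{eigenvalue-problem} — the cutoff operator $L_M$ has no negative well, so the energy estimate is immediate.
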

\begin{proof}
	Set $\bar{a}=\min\{a,n-\}$,  $r=|y|$, $L_M \phi = \Delta \phi + p U^{p-1}(y) (1 - \chi_M)\phi $.  Set a barrier function as $\bar{\phi}(r,\tau)= C v g(r,R)$, where
	\[
	L_{M} g(r,R) = -\langle r\rangle^{-\bar{a}},\quad
	g(r,R) = g_2(r) \int_r^{R} \frac{d \rho}{g_2^2(\rho) \rho^{n-1}} \int_{0}^{\rho} g_2(s)s^{n-1} \langle s \rangle^{-\bar{a}} d s
	\]
	and $g_2(r)>0$ is the positive kernel of $L_M$ and $g_2(r)\sim 1$ for $r\in (0,\infty)$. By direct calculation, one has
	\begin{equation*}
		\begin{aligned}
			\langle r\rangle^{\bar{a}} g(r,R)  \lesssim \ &
			\langle r\rangle^{\bar{a}}
			\Theta_{R \bar{a}}^0(|y|)
			\lesssim
		R^2 \ln R
			.
		\end{aligned}
	\end{equation*}
By scaling argument, one has $\langle r\rangle^{\bar{a}} |r\pp_{r}g(r,R)| \lesssim R^2 \ln R$. Then
	\[
	\begin{aligned}
	&	P(\bar{\phi}):= L_M(C v g(r,R) ) + h(y,\tau)
		+ C  v  (  f_1 g(r,R)  + f_2 r \pp_r g(r,R) )
		- \pp_{\tau} (C v g(r,R ) ) \\
		=\ & - C v\langle r\rangle^{-\bar{a}} + h(y,\tau)
		+ C  v  ( f_1 g(r,R) + f_2 r \pp_r g(r,R)  )
		- Cv' g(r,R)
		- \frac{C v g_2(r) R' }{g_2^2(R) R^{n-1}}
		\int_0^{R} g_2(s)s^{n-1}
		\langle s\rangle^{-\bar{a}}  d s
		\\
		\le \ &
		C v \langle r\rangle^{-\bar{a}} \Bigg[ -1 +   \langle r\rangle^{\bar{a}} ( f_1g(r,R) + f_2r \pp_r g(r,R) ) - v' v^{-1} g(r,R) \langle r \rangle^{\bar{a}}
		\\
		&
		-
		\frac{\langle r \rangle^{\bar{a}}  g_2(r) R' }{g_2^2(R) R^{n-1}}
		\int_0^{R} g_2(s)s^{n-1} \langle s\rangle^{-\bar{a}} d s
		\Bigg]
		+
		v \langle r\rangle^{-a}  \|h\|_{v,a}
		\le
		-\frac 34  C v \langle r\rangle^{-\bar{a}}
		+
		v \langle r\rangle^{-\bar{a}}  \|h\|_{v,a}
	\end{aligned}
	\]
where we have used
\begin{equation*}
| \langle r\rangle^{\bar{a}} ( f_1g(r,R) + f_2r \pp_r g(r,R) ) | \lesssim (|f_1|+ |f_2|) R^2\ln R  \lesssim C_f \tau^{-d} R^2\ln R  \ll 1,
\end{equation*}
	\begin{equation*}
		|v' v^{-1} g(r,R) \langle r \rangle^{\bar{a}} | \lesssim |v'| v^{-1} R^2\ln R \lesssim  \tau^{-1} R^2\ln R \ll 1,
	\end{equation*}
	\begin{equation*}
		\begin{aligned}
			&
			\frac{ \langle r \rangle^{\bar{a}}  g_2(r) | R'| }{g_2^2(R) R^{n-1}}
			\int_0^{R} g_2(s)s^{n-1} \langle s\rangle^{-\bar{a}} d s
			\sim
			\frac{ \langle r \rangle^{\bar{a}}  | R'| }{ R^{n-1}}
			\int_0^{R} s^{n-1} \langle s\rangle^{-\bar{a}} d s
			\lesssim
			R | R' | \lesssim \tau^{-1} R^2\ll 1 .
		\end{aligned}
	\end{equation*}
Set $C= 2\|h\|_{v,a} $, then $P(\bar{\phi}) \le 0 $.

\end{proof}

\subsection{Mode $0$ without orthogonality}
\begin{lemma}\label{mode0-nonorth}
Consider
\begin{equation}\label{m0-eq-nonorth}
	\begin{cases}
		\pp_{\tau} \phi^0 =  \Delta \phi^0 + p U^{p-1} \phi^0 + f_1 \phi^0 +  f_2 y \cdot \nabla \phi^0
		+ h^0
		&
		\mbox{ \ in \ } \mathcal{D}_{R},
		\\
		\phi(\cdot,\tau_0) =  e_0 Z_0(y)
		&
		\mbox{ \ in \ } B_{R(\tau_0)}
	\end{cases}
\end{equation}
where $\| h^0\|_{v,a}<\infty$, $a\ge 0$. Assume $\lambda_R \tau^{d}\gg 1$, $R^2\ln R\ll \tau^{\min\{1,d\}}$.
Then for $\tau_0$ sufficiently large, there exists a linear mapping  $(\phi^0,e_0)= (\TT_{1i}[h^0],\TT_{1e}[h^0])$ solving \eqref{m0-eq-nonorth} with the following estimates
\begin{equation*}
\langle y\rangle |\nabla \phi^0|	+|\phi^0| \lesssim
	v \left( \min\{ \tau^{\frac 12}, \lambda_R^{- \frac 12 } \}  \lambda_R^{-\frac 12}
 \theta_{Ra}^0  \langle y \rangle^{2-n}    + 	 \Theta_{R a}^0( |y| ) \right)
	\|h^0 \|_{v,a} ,
\end{equation*}
\begin{equation*}
 |e_0|\lesssim v(\tau_0) \theta_{R(\tau_0)a}^0 \| h^0\|_{v,a} .
\end{equation*}

\end{lemma}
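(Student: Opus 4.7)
The plan is to reduce Lemma \ref{mode0-nonorth} to the coercivity estimate of Lemma \ref{eigenvalue-problem} plus the far-field barrier of Lemma \ref{chiM-eq-lem} by a two-step splitting. First, I would apply Lemma \ref{chiM-eq-lem} with the $\chi_M$-cut-off operator to produce $\phi_*=\phi_*[h^0]$ absorbing the source $h^0$ with the pointwise bound $|\phi_*|\lesssim v\,\Theta_{Ra}^0(|y|)\|h^0\|_{v,a}$; this is exactly the second term in the conclusion. Writing $\phi^0=\phi_*+\psi$, the remainder $\psi$ then satisfies
\begin{equation*}
\partial_\tau\psi=\Delta\psi+pU^{p-1}\psi+f_1\psi+f_2\,y\cdot\nabla\psi+\tilde h,\qquad \tilde h:=pU^{p-1}\chi_M\,\phi_*+f_1\phi_*+f_2\,y\cdot\nabla\phi_*,
\end{equation*}
with $\psi(\cdot,\tau_0)=e_0 Z_0$ and $\psi|_{\partial B_R}=-\phi_*|_{\partial B_R}$. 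The new source $\tilde h$ is essentially supported in $B_{2M}$ (up to the small perturbation coming from $f_1,f_2$), and since $\|\phi_*\|_{L^\infty(B_{2M})}\lesssim v\,\theta_{Ra}^0\|h^0\|_{v,a}$, we have $\|\tilde h\|_{L^\infty}\lesssim v\,\theta_{Ra}^0\|h^0\|_{v,a}$.

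Next I would run the usual energy argument for $\psi$ in the $L^2_r$ setting on $B_R$. Decompose $\psi(y,\tau)=\alpha(\tau)Z_0(y)+\psi^\perp(y,\tau)$ with $\int_{B_R}\psi^\perp Z_0\,dy=0$. Testing the equation against $Z_0$, the scalar $\alpha$ obeys an ODE of the form $\alpha'=\gamma_0\alpha+\langle\tilde h,Z_0\rangle+\text{(small perturbation from }f_1,f_2\text{)}$; since $\gamma_0>0$ this is the unstable mode, so I solve it by integration from $+\infty$ backward, which forces a unique choice
\begin{equation*}
e_0=\alpha(\tau_0)=-\int_{\tau_0}^\infty e^{-\gamma_0(s-\tau_0)}\big(\langle\tilde h(\cdot,s),Z_0\rangle+\text{perturbation}\big)\,ds,
\end{equation*}
giving $|e_0|\lesssim v(\tau_0)\theta_{R(\tau_0)a}^0\|h^0\|_{v,a}$ by the exponential factor. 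This defines the linear operator $\TT_{1e}$.

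For $\psi^\perp$, Lemma \ref{eigenvalue-problem} supplies the spectral gap $\lambda_R$, so testing the evolution equation against $\psi^\perp$ yields $\frac12\frac{d}{d\tau}\|\psi^\perp\|_{L^2}^2\le -c_0\lambda_R\|\psi^\perp\|_{L^2}^2+C\|\tilde h\|_{L^2}\|\psi^\perp\|_{L^2}$, provided the lower-order terms $f_1\psi^\perp+f_2 y\cdot\nabla\psi^\perp$ are absorbable; this is where the hypothesis $\lambda_R\tau^d\gg 1$ (together with Cauchy--Schwarz and the bound $|f_i|\lesssim\tau^{-d}$) is crucial. Gr\"onwall then gives $\|\psi^\perp(\cdot,\tau)\|_{L^2}\lesssim \lambda_R^{-1}\sup_{s\in[\tau_0,\tau]}\|\tilde h(\cdot,s)\|_{L^2}$, with the usual $\min\{\tau^{1/2},\lambda_R^{-1/2}\}$ factor arising from whether the transient $e^{-c_0\lambda_R(\tau-\tau_0)}$ has already equilibrated or not. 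A standard parabolic bootstrap on the equation for $\psi^\perp$, using that $\tilde h$ is localized and that outside $\mathrm{supp}(\tilde h)$ the equation becomes homogeneous modulo $O(\tau^{-d})$ perturbations, upgrades this $L^2$ control to the pointwise bound $|\psi|\lesssim v\,\min\{\tau^{1/2},\lambda_R^{-1/2}\}\lambda_R^{-1/2}\theta_{Ra}^0\,\langle y\rangle^{2-n}\|h^0\|_{v,a}$, since $\langle y\rangle^{2-n}$ is precisely the radially symmetric Green's function profile of $\Delta+pU^{p-1}$ away from the origin. The gradient estimate follows by scaling.

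The main obstacle, in my view, is the bootstrap from the weighted $L^2$ control of $\psi^\perp$ to the weighted $L^\infty$ bound with the correct $\langle y\rangle^{2-n}$ decay: one must show that the energy solution, which a priori is only known in $L^2$, actually inherits the harmonic-type decay outside the support of $\tilde h$. This is handled by constructing a barrier of the form $C_1\langle y\rangle^{2-n}+C_2\Theta_{Ra}^0(|y|)$ and applying the maximum principle to $\psi^\perp$ on the annulus $\{|y|\ge 2M\}$, using boundary values controlled by the $L^2$-energy estimate at $|y|=2M$ (via parabolic $L^2$--$L^\infty$ estimates on compact sets) and the already-established behavior at $|y|=R$ coming from $\phi_*$. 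The perturbation terms $f_1\psi+f_2 y\cdot\nabla\psi$ are absorbed into the barrier thanks to $R^2\ln R\ll\tau^{\min\{1,d\}}$, exactly as in the proof of Lemma \ref{chiM-eq-lem}. Linearity of $\TT_{1i},\TT_{1e}$ is immediate from the construction, since every step is itself linear in the source.
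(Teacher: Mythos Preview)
Your proposal is correct and follows essentially the same route as the paper: split off $\phi_*$ via Lemma \ref{chiM-eq-lem}, write the remainder as $e(\tau)Z_0$ plus a piece orthogonal to $Z_0$, solve the unstable scalar ODE backward from $+\infty$ to fix $e_0$, run the energy estimate on the orthogonal part using the coercivity of Lemma \ref{eigenvalue-problem}, and bootstrap $L^2\to L^\infty\to\langle y\rangle^{2-n}$ by parabolic estimates and comparison. The only bookkeeping difference is that the paper already absorbs $f_1\phi_*+f_2\,y\cdot\nabla\phi_*$ into the $\phi_*$ equation (Lemma \ref{chiM-eq-lem} carries those terms on the left), so the residual source is just $pU^{p-1}\chi_M\phi_*$, and the formula for $e(\tau)$ is coupled to the orthogonal part through the boundary flux $\int_{\partial B_R}Z_0\,\partial_n\tilde\phi_1$ and the $f_i$ cross-terms, which the paper closes via an a priori weighted norm $\|\tilde\phi_1\|_w$ rather than treating them as a fixed source.
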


\begin{proof}
First, we decompose $\phi^0$ into two parts
\[
\phi^0 = \phi_*[h^0] + \tilde{\phi},
\]
where $\phi_*[h^0]$ is the solution derived from Lemma \ref{chiM-eq-lem} with the following estimate
\begin{equation}\label{phi*-h0-est}
	|\phi_*[h^0]| \lesssim
	v \Theta_{R a}^0(|y|)
	\|h^0 \|_{v,a}  .
\end{equation}
Then
\[
\begin{aligned}
	\pp_{\tau} \phi_*[h^0] + \pp_{\tau} \tilde{\phi}
	= \ & \Delta \phi_*[h^0] + \Delta \tilde{\phi}
	+ p U^{p-1}(y) \chi_M \phi_*[h^0]
	+ p U^{p-1}(y) (1- \chi_M) \phi_*[h^0]
	\\
	&  +  p U^{p-1}(y) \tilde{\phi}
	+
	f_1(\phi_*[h^0] + \tilde{\phi} )
	+
f_2 y\cdot \nabla (\phi_*[h^0] + \tilde{\phi} )
	+ h^0
	\mbox{ \ in \ } \mathcal{D}_R ,
\end{aligned}
\]
which implies that
\begin{equation*}
	\pp_{\tau} \tilde{\phi}
	= \Delta \tilde{\phi}
	+ p U^{p-1}(y) \tilde{\phi}
	+  f_1 \tilde{\phi}
	+f_2 y\cdot \nabla  \tilde{\phi}
		+ p U^{p-1}(y) \chi_M \phi_*[h^0]
	\mbox{ \ in \ }
	\mathcal{D}_R .
\end{equation*}
We will construct  a linear mapping $\tilde{\phi} = \tilde{\phi}[h^0]$.
Take $\tilde{\phi} =\tilde{\phi}_1 + e(\tau) Z_0(y)$
and
consider the following equation
\begin{equation}\label{tildephi-h0}
	\begin{cases}
		\pp_{\tau} \tilde{\phi}_1  = \Delta \tilde{\phi}_1 + pU^{p-1} \tilde{\phi}_1 + f_1\tilde{\phi}_1  +  f_2y\cdot \nabla \tilde{\phi}_1
		-  \pp_{\tau} e(\tau) Z_0 + \gamma_0 e(\tau) Z_0
		\\
		\qquad \qquad
		+ pU^{p-1} \chi_M \phi_*[h^0]
		+  e(\tau) ( f_1Z_0(y) + f_2 y\cdot \nabla Z_0(y)  )
			\mbox{ \ in \ }  \DD_R,
		\\
		\tilde{\phi}_1 = 0
			\mbox{ \ on \ } \pp \DD_R
		, \quad
		\tilde{\phi}_1(\cdot,\tau_0) = 0
			\mbox{ \ in \ }  B_{R(\tau_0)}, \quad
		\int_{B_{ R(\tau)} }
		\tilde{\phi}_1(y, \tau )  Z_0(y)  d y = 0
		 \quad \forall \tau > \tau_0.
	\end{cases}
\end{equation}
Here
$e(\tau)$ will be chosen  to make $\int_{B_{ R(\tau)} }
\tilde{\phi}_1(y, \tau )  Z_0(y)  d y = 0$ for all $\tau>\tau_0$. Indeed, multiplying  \eqref{tildephi-h0} by $Z_{0}$ and integrating by parts, one has
\begin{equation*}
	\begin{aligned}
		&
	\pp_{\tau} \int_{B_{R(\tau)}} \tilde{\phi}_1 Z_0(y) dy  = 	\int_{B_{R(\tau)}}\pp_{\tau} \tilde{\phi}_1 Z_0(y) dy
		=
		\gamma_0
		\int_{B_{R(\tau)}} \tilde{\phi}_1  Z_0(y) dy +
		\int_{\pp B_{R(\tau)}} Z_{0}(y) \pp_{n} \tilde{\phi}_1 dy
\\
&
		+
	\int_{B_{R(\tau)}}  ( f_1\tilde{\phi}_1  + f_2y\cdot \nabla \tilde{\phi}_1  ) Z_0(y) dy
		- (\pp_{\tau} e(\tau) -\gamma_0 e(\tau))
		\int_{B_{R(\tau)}}   Z_0^2(y) dy
		\\
		&
		+ \int_{B_{R(\tau)}}  pU^{p-1} \chi_M \phi_*[h^0] Z_0(y) dy
		+   e(\tau) \int_{B_{R(\tau)}}  (f_1Z_0(y) + f_2y\cdot \nabla Z_0(y)  )   Z_0(y) dy .
	\end{aligned}
\end{equation*}
By $\tilde{\phi}_1(\cdot,\tau_0) = 0 $, the orthogonality $\int_{B_{ R(\tau)} }
\tilde{\phi}_1(y, \tau )  Z_0(y)  d y = 0$ holds for all $\tau>\tau_0$ if and only if
\begin{equation*}
\begin{aligned}
\pp_{\tau} e(\tau)
-
\tilde{\gamma}_0(\tau) e(\tau)
 =  \ &
 \big(\int_{B_{R(\tau)}}   Z_0^2(y) dy \big)^{-1}
 \Bigg[
\int_{\pp B_{R(\tau)}} Z_{0}(y) \pp_{n} \tilde{\phi}_1 dy
+
\int_{B_{R(\tau)}}  ( f_1\tilde{\phi}_1  + f_2y\cdot \nabla \tilde{\phi}_1  ) Z_0(y) dy
\\
&
 + \int_{B_{R(\tau)}}  pU^{p-1} \chi_M \phi_*[h^0] Z_0(y) dy
 \Bigg] ,
\end{aligned}
\end{equation*}
where $\tilde{\gamma}_0(\tau) =\gamma_0 +
(\int_{B_{R(\tau)}}   Z_0^2(y) dy )^{-1} \int_{B_{R(\tau)}}  (f_1Z_0(y) + f_2y\cdot \nabla Z_0(y)  )   Z_0(y) dy $. By \eqref{f-assump}, $\lim\limits_{\tau\rightarrow \infty}\tilde{\gamma}_0(\tau) = \gamma_0$ as $\tau\rightarrow \infty$.  We take $e(\tau)$ as
\begin{equation*}
	\begin{aligned}
	&	e(\tau)
		=  -
		e^{ \int^\tau \tilde{\gamma}_0( u ) du }
		\int_{\tau}^\infty e^{-\int^s \tilde{\gamma}_0 ( u ) du }
 \big(\int_{B_{R(s)}}   Z_0^2(y) dy \big)^{-1}
\Bigg[
\int_{\pp B_{R(s)}} Z_{0}(y) \pp_{n} \tilde{\phi}_1(y,s) dy
\\
&
+
\int_{B_{R(s)}}  ( f_1(y,s)\tilde{\phi}_1(y,s)  + f_2(y,s) y\cdot \nabla \tilde{\phi}_1(y,s)  ) Z_0(y) dy
+ \int_{B_{R(s)}}  pU^{p-1}(y) \chi_M(y) \phi_*[h^0](y,s) Z_0(y) dy
\Bigg]  ds.
	\end{aligned}
\end{equation*}
Set
\begin{equation*}
	\|\tilde{\phi}_{1} \|_{w} = \sup\limits_{\tau>\tau_0}
	\left( \min\{ \tau^{\frac 12}, \lambda_R^{-\frac 12} \}
	\lambda_R^{-\frac 12}  v \theta_{Ra}^0  \right)^{-1}
	\left( \| \tilde{\phi}_{1}(\cdot,\tau) \|_{L^{\infty} (B_{R(\tau)})} + \| \langle \cdot \rangle \nabla \tilde{\phi}_{1}(\cdot,\tau) \|_{L^{\infty} (B_{R(\tau)})} \right) .
\end{equation*}
By \eqref{phi*-h0-est}, it is straightforward to get
\begin{equation}\label{e-t-est}
	\begin{aligned}
	&	|e(\tau)|
		\lesssim
		e^{ \int^\tau \tilde{\gamma}_0( u ) du }
		\int_{\tau}^\infty e^{-\int^s \tilde{\gamma}_0 ( u ) du }
		\Big(
		e^{-c R(s) }
		\| \nabla \tilde{\phi}_1\|_{L^{\infty} (B_{R(s)}  )}
		+
		\| |f_1 \tilde{\phi}_1| + |f_2 \nabla \tilde{\phi}_1| \|_{L^{\infty} (B_{R(s)}  )}
		+ v(s) \theta_{Ra}^0(s) \|h^0\|_{v,a}
		\Big) ds
		\\
		\lesssim \ &
e^{ \int^\tau \tilde{\gamma}_0( u ) du }
\int_{\tau}^\infty e^{-\int^s \tilde{\gamma}_0 ( u ) du }
\Big[
(s^{-d} + e^{-c R(s) } )
 \min\{ s^{\frac 12}, \lambda_R^{-\frac 12}(s) \}
\lambda_R^{-\frac 12}(s)  v(s) \theta_{Ra}^0(s)
\| \tilde{\phi}_1\|_{w}
+ v(s) \theta_{Ra}^0(s) \|h^0\|_{v,a}
\Big] ds	
\\
\lesssim \ &
(\tau_0^{-d} + e^{-cR(\tau_0)} )\min\{ \tau^{\frac 12}, \lambda_R^{-\frac 12} \}
\lambda_R^{-\frac 12}  v \theta_{Ra}^0
\| \tilde{\phi}_1\|_{w}
+ v \theta_{Ra}^0 \|h^0\|_{v,a}
	\end{aligned}
\end{equation}
for some constant $c>0$, and $\theta_{Ra}^0$ is given in \eqref{theta-def}. It follows that
\begin{equation*}
	|\pp_{\tau} e(\tau)|
	\lesssim
(\tau_0^{-d} + e^{-cR(\tau_0)} )\min\{ \tau^{\frac 12}, \lambda_R^{-\frac 12} \}
\lambda_R^{-\frac 12}  v \theta_{Ra}^0
\| \tilde{\phi}_1\|_{w}
+ v \theta_{Ra}^0 \|h^0\|_{v,a}	.
\end{equation*}

With the above choice of $e(\tau)$, the global existence of \eqref{tildephi-h0} can be deduced  by the local existence.

Multiplying equation \eqref{tildephi-h0} by $\tilde{\phi}_1$ and integrating by parts, one has
\begin{equation*}
	\begin{aligned}
		&
		\frac 12
		\pp_{\tau} \int_{B_{R(\tau)}} (\tilde{\phi}_1 )^2 dy +
		\int_{B_{R(\tau)}} (|\nabla \tilde{\phi}_1|^2 - pU^{p-1} (\tilde{\phi}_1)^2 ) dy
		\\
		= \ &  \int_{B_{R(\tau)}} ( f_1\tilde{\phi}_1 + f_2 y\cdot \nabla \tilde{\phi}_1) \tilde{\phi}_1 dy
		+ \int_{B_{R(\tau)}} pU^{p-1} \chi_M \phi_*[h^0] \tilde{\phi}_1 dy
		+ 	e(\tau) \int_{B_{R(\tau)}}
	 ( f_1Z_0(y) + f_2 y\cdot \nabla Z_0(y)  )  \tilde{\phi}_1 dy.
	\end{aligned}
\end{equation*}
Then by Lemma \ref{eigenvalue-problem} and \eqref{f-assump}, we get
\begin{equation*}
	\begin{aligned}
		& \frac 12
		\pp_{\tau} \int_{B_{R(\tau)}} (\tilde{\phi}_1 )^2 dy +
		c \lambda_R
		\int_{B_{R(\tau)}}  (\tilde{\phi}_1)^2  dy
		\le
		C \tau^{-d}\int_{B_{R(\tau)}}  (\tilde{\phi}_1)^2 dy
		+ \int_{B_{R(\tau)}} \frac{4}{c\lambda_R}(pU^{p-1} \chi_M \phi_*[h^0] )^2  dy
		\\
		&
		+ \int_{B_{R(\tau)}} \frac{c\lambda_R}{4} (\tilde{\phi}_1 )^2 dy
		+  \int_{B_{R(\tau)}}  \frac{4}{c \lambda_R}
		e^2(\tau)( f_1Z_0(y) + f_2 y\cdot \nabla Z_0(y)  ) ^2  dy
		+
		\int_{B_{R(\tau)}} \frac{c \lambda_R}{4} (\tilde{\phi}_1 )^2 dy
	\end{aligned}
\end{equation*}
for some constant $c>0$. By \eqref{f-assump}, \eqref{phi*-h0-est}, \eqref{e-t-est}  and the assumption $\lambda_R  \tau^{d} \gg 1$, we get
\begin{equation*}
\begin{aligned}
\frac 12
		\pp_{\tau} \int_{B_{R(\tau)}} (\tilde{\phi}_1 )^2 dy +
		\frac {c \lambda_R}{4}
		\int_{B_{R(\tau)}}  (\tilde{\phi}_1)^2  dy
		\lesssim \ &
		 \lambda_R^{-1} [ (v \theta_{Ra}^0 \| h^0\|_{v,a} )^2
		+   \tau^{-2d}  e^2(\tau) ]
		\\
\lesssim \ &
\lambda_R^{-1}  (v \theta_{Ra}^0  )^2
 \left[\| h^0\|_{v,a} + (\tau_0^{-d} + e^{-cR(\tau_0)} )\| \tilde{\phi}_1\|_{w} \right]^2 .
 \end{aligned}
\end{equation*}
 Since $\tilde{\phi}_{1}(\cdot,\tau_0)=0$, one has
 \begin{equation*}
 \begin{aligned}
 \int_{B_{R(\tau)}}  (\tilde{\phi}_1)^2  dy
 \lesssim \ &
 e^{-\int^{\tau} \frac{c\lambda_R(u)}{2} du } \int_{\tau_0}^{\tau}
 e^{ \int^{s} \frac{c \lambda_R(u)}{2}  du } \lambda_R^{-1}(s)  (v(s) \theta_{Ra}^0(s)  )^2
 [\| h^0\|_{v,a} + (\tau_0^{-d} + e^{-cR(\tau_0)} )\| \tilde{\phi}_1\|_{w} ]^2
  ds
 \\
 \lesssim \ &
 \min\{ \tau, \lambda_R^{-1} \}
 \lambda_R^{-1}  (v \theta_{Ra}^0  )^2
\left[\| h^0\|_{v,a} + (\tau_0^{-d} + e^{-cR(\tau_0)} )\| \tilde{\phi}_1\|_{w} \right]^2 .
\end{aligned}
 \end{equation*}

Applying parabolic estimate to \eqref{tildephi-h0}, one has
\begin{equation*}
\begin{aligned}
	&
\| \tilde{\phi}_{1}(\cdot,\tau) \|_{L^{\infty} (B_{R(\tau)})} \lesssim
 \min\{ \tau^{\frac 12}, \lambda_R^{-\frac 12} \}
\lambda_R^{-\frac 12}  v \theta_{Ra}^0
[\| h^0\|_{v,a} + (\tau_0^{-d} + e^{-cR(\tau_0)} )\| \tilde{\phi}_1\|_{w} ]
\\
&
+
\tau^{-d} ( \| \tilde{\phi}_{1}(\cdot,\tau) \|_{L^{\infty} (B_{R(\tau)})}  +  \| y\cdot \nabla \tilde{\phi}_1  \|_{L^{\infty} (B_{R(\tau)})}
)
+  |\pp_{\tau} e(\tau) | +
| e(\tau) |
+ v \theta_{R a}^0
\|h^0 \|_{v,a}
\\
\lesssim \ &
\min\{ \tau^{\frac 12}, \lambda_R^{-\frac 12} \}
\lambda_R^{-\frac 12}  v \theta_{Ra}^0
[\| h^0\|_{v,a} + (\tau_0^{-d} + e^{-cR(\tau_0)} )\| \tilde{\phi}_1\|_{w} ] .
\end{aligned}
\end{equation*}

By comparison principle, the spatial decay of $\tilde{\phi}_1$ can be improved and scaling argument will give the spatial decay about $\nabla \tilde{\phi}_1$. Then one has
\begin{equation*}
\langle y \rangle | \nabla \tilde{\phi}_1  |  +	| \tilde{\phi}_1 | \lesssim
\min\{ \tau^{\frac 12}, \lambda_R^{-\frac 12} \}
\lambda_R^{-\frac 12}  v \theta_{Ra}^0
[\| h^0\|_{v,a} + (\tau_0^{-d} + e^{-cR(\tau_0)} )\| \tilde{\phi}_1\|_{w} ]  \langle y \rangle^{2-n} ,
\end{equation*}
which implies
\begin{equation*}
\langle y \rangle 	|\nabla \tilde{\phi}_1   |  +	| \tilde{\phi}_1 |
	\lesssim
	\min\{ \tau^{\frac 12}, \lambda_R^{- \frac 12 } \}  \lambda_R^{-\frac 12} v \theta_{Ra}^0
	  \langle y \rangle^{2-n}  \| h^0\|_{v,a}  .
\end{equation*}
Reviewing the computations in \eqref{e-t-est} and using $\lambda_R  \tau^{d} \gg 1$, one has $|e(\tau)|
		\lesssim  v \theta_{Ra}^0 \|h^0\|_{v,a}$ and
		 then
\begin{equation}\label{tildephi-est}
| \tilde{\phi} | = |\tilde{\phi}_{1} + e(\tau) Z_0(y)| \lesssim
\min\{ \tau^{\frac 12}, \lambda_R^{- \frac 12 } \}  \lambda_R^{-\frac 12}
v \theta_{Ra}^0  \langle y \rangle^{2-n}  \| h^0\|_{v,a}
\end{equation}
 Finally, we take $e_0 = e(\tau_0)$.
Combining \eqref{phi*-h0-est} and \eqref{tildephi-est}, we complete the proof of this Lemma.
\end{proof}

\subsection{Modes $1$ to $n$ without orthogonality}
\begin{lemma}\label{M1-nonortho-lem}
	Consider
	\begin{equation}\label{mode-1-eq-nonorthogonal}
		\begin{cases}
			\pp_\tau \phi^{1} = \Delta \phi^{1} +
			pU^{p-1} \phi^{1}  + f_1 \phi^1 + f_2 y\cdot \nabla \phi^1 + h^1(y,\tau)
			\mbox{ \ in \ }
			\mathcal{D}_{R}
			\\
			\phi^{1} = 0
			\mbox{ \ on \ }
			\pp\mathcal{D}_{R}
	\quad
			\phi^{1}(\cdot,\tau_0) = 0
			\mbox{ \ in \ } B_{R(\tau_0) }
		\end{cases}
	\end{equation}
where
$
	h^1(y,\tau) = \sum\limits_{j=1}^n h_j(|y|,\tau) \Upsilon_j
$.
Assume $
 R^n \theta_{Ra}^1  \ll \tau^{ \min{ \{1,d \} } }$,
where $\theta_{Ra}^1$ is given in \eqref{theta-def}. Then for $\tau_0$ sufficiently large,   there exists a unique linear mapping $\phi^1 = \phi^1[h^1]$ solving \eqref{mode-1-eq-nonorthogonal} of the form
$
		\phi^{1} = \sum\limits_{j=1}^n \phi_j(|y|,\tau) \Upsilon_j
$
with the following estimate
	\begin{equation*}
	\langle y\rangle |\nabla \phi^1| +	|\phi^{1}|
		\lesssim
		v
		\theta_{Ra}^1  R^n
		\langle y \rangle^{1-n}
		\| h^1\|_{v,a}.
	\end{equation*}

\end{lemma}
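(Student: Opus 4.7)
My plan is to reduce the problem to $n$ decoupled scalar radial problems via spherical harmonic expansion, construct a barrier adapted to the mode-$1$ radial linearized operator, and close the argument by parabolic comparison using the smallness hypothesis $R^n\theta_{Ra}^1\ll\tau^{\min\{1,d\}}$ to absorb the $f_1,f_2$ perturbations.

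First I would write $\phi^1=\sum_{j=1}^n\phi_j(|y|,\tau)\Upsilon_j(y/|y|)$ and $h^1=\sum_{j=1}^n h_j(|y|,\tau)\Upsilon_j(y/|y|)$. Because $\iota_j=n-1$ for $j=1,\dots,n$ and because $f_1,f_2$ are radial by assumption \eqref{f-assump}, the mode-$j$ components all satisfy the same scalar problem
\begin{equation*}
\partial_\tau\phi_j=\phi_j''+\tfrac{n-1}{r}\phi_j'-\tfrac{n-1}{r^2}\phi_j+pU^{p-1}\phi_j+f_1\phi_j+f_2\,r\,\phi_j'+h_j,
\end{equation*}
on $(0,R(\tau))\times(\tau_0,\infty)$ with zero boundary and initial data; solving each yields $\phi^1$. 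Denote the radial mode-$1$ elliptic operator by $L^{(1)}\phi:=\phi''+\tfrac{n-1}{r}\phi'-\tfrac{n-1}{r^2}\phi+pU^{p-1}\phi$. Its positive kernel $Z(r)=\partial_rU(r)$ satisfies $Z(r)\sim r$ near $0$ and $Z(r)\sim r^{1-n}$ at infinity.

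Next I would build a radial barrier $\bar\phi(r,R)$ by variation of parameters,
\begin{equation*}
\bar\phi(r,R)=Z(r)\int_r^{R}\frac{d\rho}{Z^2(\rho)\rho^{n-1}}\int_0^\rho Z(s)s^{n-1}\langle s\rangle^{-a}\,ds,
\end{equation*}
which directly satisfies $L^{(1)}\bar\phi=-\langle r\rangle^{-a}$ with $\bar\phi(R,R)=0$. Using $Z(s)s^{n-1}\sim s^n$ for $s\lesssim 1$ and $\sim 1$ for $s\gg 1$, the inner integral is bounded by $C\theta_{\rho a}^1$; the weight $1/(Z^2(\rho)\rho^{n-1})\sim \rho^{n-1}$ at infinity then gives the outer integral bound $CR^n\theta_{Ra}^1$. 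Combined with $Z(r)\lesssim\langle r\rangle^{1-n}$ this yields $\bar\phi(r,R)\lesssim R^n\theta_{Ra}^1\langle r\rangle^{1-n}$, and scaling gives $|\partial_r\bar\phi|\lesssim R^n\theta_{Ra}^1\langle r\rangle^{-n}$.

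With the barrier in hand, I would apply the parabolic operator to $C\,v(\tau)\bar\phi(r,R(\tau))\|h^1\|_{v,a}$. The time-dependence of $v$ and $R$ contributes $O(\tau^{-1}R^n\theta_{Ra}^1)v\langle r\rangle^{-a}$, the $f_1$ term contributes $O(\tau^{-d}R^n\theta_{Ra}^1)v\langle r\rangle^{-a}$, and the $f_2\,r\partial_r$ term contributes $O(\tau^{-d}R^n\theta_{Ra}^1)v\langle r\rangle^{-a}$ (using $r|\partial_r\bar\phi|\lesssim R^n\theta_{Ra}^1\langle r\rangle^{1-n}$). Under the hypothesis $R^n\theta_{Ra}^1\ll\tau^{\min\{1,d\}}$ these are all smaller than the leading $-Cv\langle r\rangle^{-a}$ from $L^{(1)}\bar\phi$, so with $C$ chosen large the function becomes a supersolution dominating $\pm\phi_j$ at the parabolic boundary. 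Standard parabolic comparison (after truncating against $\phi_j\Upsilon_j$ to stay scalar-valued, or by reverting to a coupled comparison on $|\phi^1|$) yields the $L^\infty$ bound, and parabolic rescaling on balls of size $\langle r\rangle/2$ produces the gradient estimate. Uniqueness is immediate by taking $h^1=0$ in the same argument; linearity of the map $h^1\mapsto\phi^1$ follows from uniqueness.

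The main obstacle will be the careful verification that the $R^n$ boundary-matching factor in $\bar\phi$ is really the sharp size — and in particular that $r|\partial_r\bar\phi|$ inherits the same $R^n\theta_{Ra}^1$ prefactor rather than picking up additional powers, since the $f_2\,r\partial_r\phi_j$ term is what forces the smallness threshold $R^n\theta_{Ra}^1\ll\tau^d$ (as opposed to merely $R^{n-1}\theta_{Ra}^1$ if only $f_1$ were present). A secondary technicality is that the mode-$j$ equations are coupled through a vector comparison, which I would handle either by passing to $\sqrt{\sum_j\phi_j^2}$ or by working component-wise using that $\bar\phi$ is independent of $j$.
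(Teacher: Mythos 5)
Your overall strategy --- spherical-harmonic reduction to $n$ decoupled scalar radial equations, a variation-of-parameters barrier built on the positive kernel $Z(r)=-U_r$, and a parabolic comparison argument using $R^n\theta_{Ra}^1\ll\tau^{\min\{1,d\}}$ to absorb the $f_1$, $f_2$, $v'$ and $R'$ contributions --- is exactly what the paper does, and the barrier formula and size estimates you derive match. However, one step would fail as written: you take $\langle r\rangle^{-a}$ directly as the source in the elliptic barrier equation. When $a>n-1$ (which the lemma permits, since the only hypothesis on $a$ is through $\|h^1\|_{v,a}<\infty$), the supersolution verification requires $\langle r\rangle^{a}\bigl(f_1\bar\phi+f_2\,r\partial_r\bar\phi\bigr)$ and $\langle r\rangle^{a}\partial_R\bar\phi\,R'$ to be $O(1)$; but $\bar\phi,\;r|\partial_r\bar\phi|\lesssim R^n\theta_{Ra}^1\,r\langle r\rangle^{-n}$, so $\langle r\rangle^{a}\cdot r\langle r\rangle^{-n}\sim r^{a+1-n}$ grows in $r$ when $a>n-1$, and the perturbations cannot be dominated by the leading $-Cv\langle r\rangle^{-a}$ near $r=R$.

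The paper closes this by solving $\mathcal{L}_1\bar\phi=-\langle r\rangle^{-\bar a}$ with the truncated exponent $\bar a:=\min\{a,n-1\}$. Since $\langle r\rangle^{-\bar a}\ge\langle r\rangle^{-a}$ the barrier still dominates $|h_j|$, and $\theta_{R\bar a}^1=\theta_{Ra}^1$ (both equal $1$ once the exponent exceeds $1$), so the final size bound is unchanged; now $\langle r\rangle^{\bar a+1-n}\le 1$ and the comparison closes for all $a$. For $a\le n-1$ --- which covers the paper's actual application, where $0<a<2\le n-1$ --- your version goes through verbatim. Your secondary concern about needing a vector comparison (or passing to $\sqrt{\sum_j\phi_j^2}$) is a non-issue: because $f_1,f_2$ are radial the mode-$j$ equations genuinely decouple, so scalar comparison applied component-by-component with the same barrier suffices, exactly as you note in passing.
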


\begin{proof}
Set $r=|y|$. Notice $y\cdot \nabla (\phi_j(r,\tau) \Upsilon_j) = r \pp_{r}\phi_j(r,\tau) \Upsilon_j $. It is equivalent to considering
	\begin{equation}\label{m1-eq}
		\begin{cases}
			\pp_{\tau} \phi_j  = \LLL_{1}[\phi_j] + f_1 \phi_j(r,\tau)   + f_2 r \pp_{r}\phi_j(r,\tau)   +  h_j(r,\tau)
			\mbox{ \ for \ } r\in (0,R(\tau)), \ \tau \in  (\tau_0,\infty)
			\\
			\pp_{r}\phi_{j}(0,\tau) = 0  = \phi_j(R(\tau),\tau)
			\mbox{ \ for \ }
			\tau\in (\tau_0,\infty),\quad
			\phi_j(r,\tau_0) = 0
			\mbox{ \ for \ }
			r\in (0,R(\tau_0))
		\end{cases}
	\end{equation}
	where
$\LLL_{1}[\phi_j]:=
		\pp_{rr} \phi_j + \frac{n-1}{r} \pp_{r}\phi_j
		- \frac{n-1}{r^2} \phi_j + pU (r)^{p-1} \phi_j $, $|h_j|\le  v \langle y\rangle ^{-a} \|h_j\|_{v,a}$, $\|h_j\|_{v,a} \lesssim \|h^{1}\|_{v,a}$.
		
One positive kernel of $\LLL_1$ is given by
	$Z(r) := -U_r = (n(n-2))^{\frac{n-2}{4} }  (n-2) r (1+r^2)^{-\frac n2} $.  Set a barrier function of \eqref{m1-eq} as $\phi_{s}= Cv \bar{\phi}(r,R)$, where
	\begin{equation*}
		\LLL_{1} [\bar{\phi}] = - \langle r\rangle ^{-\bar{a} }, \quad \bar{a} = \min\{a,n-1 \}
	\end{equation*}
	with
	$\bar{\phi}$ given by the variation of parameter formula
	\[
	\bar{\phi}(r,R) = Z(r) \int_r^{R} \frac {1}{\rho^{n-1} Z^2(\rho)}
	\int_0^{\rho} \langle s\rangle ^{-\bar{a}} Z(s) s^{n-1} d s d \rho.
	\]
Then
	\[
		\bar{\phi}
		\lesssim  R^n \theta_{R\bar{a}}^1 r \langle r\rangle^{-n} ,\quad
		|\pp_{R}\bar{\phi} |= \left| \frac { Z(r)  }{R^{n-1} Z^2(R)}
		\int_0^{R} \langle s\rangle ^{-\bar{a}} Z(s) s^{n-1} d s  \right|
		\lesssim
		R^{n-1} \theta_{R\bar{a}}^1 r \langle r\rangle^{-n}
	\]
for all $r>0$.	This estimate holds for all $n>2$, and $a\le 0$ is also allowed here.
	Next, we compute
	\begin{equation*}
		\begin{aligned}
		&	P(\phi_s):=
			\LLL_1 \phi_{s}  +f_1 \phi_s + f_2 r\pp_{r} \phi_s - \pp_{\tau } \phi_s + h_j
			=
			-C v \langle r \rangle^{-\bar{a}} + C v ( f_1\bar{\phi} + f_2 r\pp_{r} \bar{\phi}) - Cv' \bar{\phi}
			-Cv \pp_{R} \bar{\phi}R'
			+ h_j
			\\
			\le \ &
C v \langle r \rangle^{-\bar{a}}\left[
- 1 +  \langle r \rangle^{\bar{a}} ( f_1\bar{\phi} + f_2r\pp_{r} \bar{\phi} )  - v^{-1} v' \langle r \rangle^{\bar{a}} \bar{\phi}
-
\langle r \rangle^{\bar{a}}
\pp_{R} \bar{\phi}R'
+  C^{-1} \langle r \rangle^{\bar{a} - a }  \|h_j\|_{v,a} \right]
\\
\le \ &
C v \langle r \rangle^{-\bar a}(
- \frac 34
+  C^{-1} \|h_j\|_{v,a} )
		\end{aligned}
	\end{equation*}
where we have used
\begin{equation*}
| \langle r \rangle^{\bar{a}} ( f_1\bar{\phi} + f_2r\pp_{r} \bar{\phi} ) |
\lesssim \tau^{-d}
R^n \theta_{R\bar{a}}^1
\langle r\rangle^{\bar{a}+1-n} \lesssim
\tau^{-d}
    R^n \theta_{Ra}^1
\ll 1,
\end{equation*}
\begin{equation*}
|v^{-1} v' \langle r \rangle^{ \bar{a}} \bar{\phi}  |
\lesssim
\tau^{-1}  R^n \theta_{R\bar{a}}^1  \langle r \rangle^{\bar{a}+1-n}
\lesssim
\tau^{-1}  R^n \theta_{Ra}^1 \ll 1,
\end{equation*}
\begin{equation*}
|\langle r \rangle^{\bar{a}}
\pp_{R} \bar{\phi}R' | \lesssim
\langle r \rangle^{\bar{a}}
R^{n-1} \theta_{R\bar{a}}^1 r \langle r\rangle^{-n} |R'|
\lesssim
\tau^{-1}  R^n \theta_{Ra}^1 \ll 1
\end{equation*}
by \eqref{f-assump}, $\theta_{R\bar{a}}^1=\theta_{Ra}^1$ and the assumption $
 R^n \theta_{Ra}^1  \ll \tau^{ \min{ \{1,d \} } }$. Taking $C= 4 \|h_j\|_{v,a} $, one has $P(\phi_s) <0$.

\end{proof}

\subsection{Higher modes}

\begin{lemma}\label{phi-perp-lemma}
	Consider
	\begin{equation*}
		\begin{cases}
			\pp_\tau \phi^{ \perp }
			=
			\Delta \phi^{ \perp} + p U^{p-1} \phi^{ \perp }
			+
f_1\phi^{\perp} +	f_2y\cdot \nabla \phi^{\perp}
			+ h^{\perp}
			\mbox{ \ in \ }
			\mathcal{D}_{R}
			\\
			\phi^{ \perp } =  0
			\mbox{ \ on \ }
			\pp \mathcal{D}_{R},
	\quad
			\phi^{ \perp }(\cdot,\tau_0) = 0
			\mbox{ \ in \ } B_{R(\tau_0) }
		\end{cases}
	\end{equation*}
	where $\|h^\perp \|_{v,a} <\infty$, $a\ge 0$. Assume $R^2\ln R \ll \tau^{\min\{1,d\}}$.
 Then there exists a unique linear mapping $\phi^{ \perp } = \phi^{\perp }[h^{\perp}]$ of the form
	\begin{equation}\label{phi-s-perp-form}
		\phi^{\perp} = \sum\limits_{j=n+1}^{\infty}  \phi_j^{ \perp}(|y|,\tau) \Upsilon_j
	\end{equation}
with the following estimate
	\begin{equation*}
\langle y\rangle |\nabla \phi^{\perp}| 	+	|\phi^{\perp}|
		\lesssim
		v
		\left(
		\Theta_{R a}^0(|y|)
		+
		\theta_{R a}^0  R 	\langle  y \rangle^{2-n}
		\right)
		\|h^\perp \|_{v,a}
		.
	\end{equation*}

\end{lemma}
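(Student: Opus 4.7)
My plan is to mirror the strategy of Lemma \ref{mode0-nonorth}, with the key simplification that the higher-mode subspace contains no kernels of the linearized operator: $Z_0,\ldots,Z_n$ all sit in modes $0$ and $1$. First, I would split $\phi^\perp = \phi_*[h^\perp] + \tilde{\phi}$, where $\phi_*[h^\perp]$ is produced by Lemma \ref{chiM-eq-lem}; this already contributes the first term $v\,\Theta_{Ra}^0(|y|)\|h^\perp\|_{v,a}$ in the bound. Since $f_1,f_2$ and $\chi_M$ are radial, the remainder $\tilde{\phi}$, which solves the same equation with zero initial/boundary data and forcing $pU^{p-1}\chi_M\phi_*[h^\perp]$, a term supported in $B_{2M}$ and of size $\lesssim v\theta_{Ra}^0\|h^\perp\|_{v,a}$, still preserves the mode expansion \eqref{phi-s-perp-form}.

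Second, I would handle each mode $l\geq 2$ by the barrier method of Lemma \ref{M1-nonortho-lem}. The radial operator on mode $l$,
\begin{equation*}
\mathcal{L}_l := \partial_{rr} + \frac{n-1}{r}\partial_r - \frac{l(n-2+l)}{r^2} + pU^{p-1},
\end{equation*}
admits a positive kernel $Z^{(l)}(r) \sim r^l$ at the origin and a second linearly independent kernel $\tilde{Z}^{(l)}(r) \sim r^{-(n-2+l)}$ at infinity. Solving $\mathcal{L}_l \bar{\phi}_l = -\langle r\rangle^{-\bar{a}}$ by variation of parameters, with $\bar{a} = \min\{a,n-\}$, furnishes a positive super-solution whose inner behavior reproduces $\Theta_{Ra}^0(r)$ and whose contribution near the moving boundary $r=R$ scales like $\theta_{Ra}^0 R\langle r\rangle^{2-n}$, uniformly in $l\geq 2$ (the worst case being $l=2$). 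The same verification as at the end of the proof of Lemma \ref{M1-nonortho-lem} then absorbs the perturbations $f_1\tilde{\phi} + f_2 r\partial_r\tilde{\phi}$, the time derivative, and the boundary-motion term $\partial_R\bar{\phi}_l\cdot R'$, using $|f_i|\lesssim\tau^{-d}$, $|v'|\lesssim\tau^{-1}v$, $|R'|\lesssim\tau^{-1}R$, and the hypothesis $R^2\ln R \ll \tau^{\min\{1,d\}}$. Comparison principle then yields $|\tilde{\phi}| \lesssim v\theta_{Ra}^0 R\langle y\rangle^{2-n}\|h^\perp\|_{v,a}$, which completes the pointwise bound; the gradient estimate follows by interior Schauder scaling on balls of radius $\sim \langle y\rangle/2$, and uniqueness by the maximum principle applied to the full equation.

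The main technical obstacle will be the mode-by-mode bookkeeping of the variation-of-parameters integrals, specifically verifying that the boundary layer near $r=R$ truly contributes only a factor $R$ (not something larger) and that all bounds are uniform in $l$ so that the sum over modes $l\geq 2$ is controlled by the worst case $l=2$. Once these ODE computations are in hand, the remaining steps parallel those of Lemma \ref{M1-nonortho-lem} and the construction goes through by the same super-solution-plus-comparison argument.
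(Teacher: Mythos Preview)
Your splitting $\phi^\perp = \phi_*[h^\perp] + \tilde{\phi}$ via Lemma \ref{chiM-eq-lem} matches the paper exactly, and your recognition that the higher modes avoid the kernels $Z_0,\ldots,Z_n$ is the right starting point. The treatment of $\tilde{\phi}$, however, diverges from the paper and has a gap. The paper does \emph{not} use a mode-by-mode barrier; it exploits the coercivity estimate \eqref{perp-coercive}, $Q(\tilde{\phi},\tilde{\phi}) \geq (n+1)\int_{B_R}|\tilde{\phi}|^2/|y|^2$, valid precisely because $\tilde{\phi}$ has no mode-$0$ or mode-$1$ component. Multiplying \eqref{phi-tilde-perpeq} by $\tilde{\phi}$ and integrating gives $\partial_\tau\|\tilde{\phi}\|_{L^2}^2 + R^{-2}\|\tilde{\phi}\|_{L^2}^2 \lesssim (v\theta_{Ra}^0)^2\|h^\perp\|_{v,a}^2$, which integrates to $\|\tilde{\phi}\|_{L^2(B_R)} \lesssim v\theta_{Ra}^0 R\|h^\perp\|_{v,a}$; parabolic regularity then upgrades this to $L^\infty$, and a final comparison with a radial supersolution produces the decay $\langle y\rangle^{2-n}$. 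This is the origin of the factor $R$ (rather than the $R^n$ of Lemma \ref{M1-nonortho-lem}).

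Your barrier route has two problems. First, the claimed boundary contribution $\theta_{Ra}^0 R\langle r\rangle^{2-n}$ is not what variation of parameters actually produces: for $l\geq 2$ the positive kernel $Z^{(l)}$ of $\mathcal{L}_l$ \emph{grows} like $r^l$ at infinity (it cannot decay, else it would be an $L^2$ null vector contradicting coercivity), so the outer integral $\int_r^R \rho^{1-n}(Z^{(l)}(\rho))^{-2}(\cdots)\,d\rho$ is dominated by its lower limit and carries no $R$-power --- qualitatively opposite to mode $1$, where $Z=-U_r\sim r^{1-n}$ decays and the integral contributes $R^n$. Second, and more fundamentally, even if each radial coefficient $\tilde{\phi}_j$ obeys a uniform pointwise bound, summing $\sum_{j\geq n+1}\tilde{\phi}_j(r,\tau)\Upsilon_j$ in $L^\infty$ is not automatic: the $\Upsilon_j$ are orthonormal only in $L^2(S^{n-1})$, there are infinitely many of them, and the hypothesis $\|h^\perp\|_{v,a}<\infty$ gives no decay of the coefficients $h_j$ in $j$. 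The paper's energy argument sidesteps this entirely by working in $L^2$, where the modes are orthogonal, and invoking parabolic regularity once on the full function $\tilde{\phi}$ rather than mode by mode.
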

 First we give the following technical lemma.
\begin{lemma}
	For $f \in C^2(B_R)\cap C_0(B_R)$, by the expansion of spherical harmonic functions,
$
		f = \sum\limits_{j=0 }^{\infty}  f_j(r) \Upsilon_j  ,
$
	where $r=|y|$, $f_j(r) = \int_{ S^{n-1} } f(r\theta ) \Upsilon_j(\theta)d \theta \in C^2[0,R]$. Then
	\begin{equation*}
		Q(f, f) 	=
		\int_{B_{R}}(|\nabla f |^2
		- pU^{p-1} f ^2) d y  = |S^{n-1} | \sum\limits_{j=0 }^{\infty} Q_j(f_j,f_j),
	\end{equation*}
	where $|S^{n-1}|$ is the volume of the unit $(n-1)$-sphere and
	\[
	Q_j(f_j, f_j) = \int_0^{R } \left(  f_j'^2 + \frac{\iota_j}{r^2} f_j^2
	- pU^{p-1} f_j^2  \right) r^{n-1} d r .
	\]
	Specially, if $f_j = 0$ for $j=0,1,\dots ,n$, it holds that
	\begin{equation}\label{perp-coercive}
		\begin{aligned}
			Q(f, f )
			\ge    (n+1) \int_{B_{R  } } \frac{|f|^2}{|y|^2} d y .
		\end{aligned}
	\end{equation}
\end{lemma}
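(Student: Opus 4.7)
The plan for the decomposition $Q(f,f) = |S^{n-1}|\sum_{j}Q_j(f_j,f_j)$ (up to the choice of normalization of the basis $\{\Upsilon_j\}$) is routine. First I would write the Dirichlet integrand in spherical coordinates as $|\nabla f|^2 = (\partial_r f)^2 + r^{-2}|\nabla_{S^{n-1}}f|^2$. Substituting $f(r\theta) = \sum_j f_j(r)\Upsilon_j(\theta)$ and integrating by parts on the sphere with $\Delta_{S^{n-1}}\Upsilon_j = -\iota_j \Upsilon_j$ yields
$$\int_{S^{n-1}}|\nabla f|^2\,d\theta = \sum_{j}\left((f_j')^2 + \frac{\iota_j}{r^2}f_j^2\right).$$
Because $U$ is radial, orthonormality of $\{\Upsilon_j\}$ in $L^2(S^{n-1})$ gives $\int_{S^{n-1}} pU^{p-1}f^2\,d\theta = pU^{p-1}\sum_j f_j^2$. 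Multiplying by $r^{n-1}$ and integrating in $r$ produces the stated formula.

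For the coercive estimate when $f_0=f_1=\cdots=f_n=0$, I would use the algebraic identity
$$Q_j(v,v) = Q_1(v,v) + (\iota_j-\iota_1)\int_0^R \frac{v^2}{r^2}r^{n-1}\,dr,$$
valid for every $j$, since the two quadratic forms differ only in the potential $(\iota_j-\iota_1)/r^2$. From $\iota_l = l(n-2+l)$ one reads off $\iota_1=n-1$ and $\iota_{n+1}=2n$, so the spectral gap gives $\iota_j-\iota_1 \ge n+1$ for every $j\ge n+1$. Together with Parseval $\int_{B_R}|f|^2/|y|^2\,dy = \sum_j \int_0^R (f_j/r)^2 r^{n-1}\,dr$, summing over $j\ge n+1$ reduces the coercive bound to proving the non-negativity $Q_1(v,v)\ge 0$ for every admissible radial $v$.

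The main obstacle is exactly this non-negativity, which I plan to establish by the classical positive-kernel factorization. The function $\phi_1(r):=-U'(r)$ is strictly positive on $(0,\infty)$ and, because $Z_i(y)=\partial_{y_i}U = U'(r) y_i/r$ lies in the kernel of $\Delta + pU^{p-1}$, satisfies
$$\phi_1'' + \frac{n-1}{r}\phi_1' - \frac{n-1}{r^2}\phi_1 + pU^{p-1}\phi_1 = 0.$$
For $v\in C_c^\infty(0,R)$, the substitution $v=\phi_1 w$ together with the kernel equation cancels the potential terms, and a direct integration by parts (the boundary contributions vanish by compact support away from $0$ and from $R$) yields
$$Q_1(v,v) = \int_0^R \phi_1^2 (w')^2 r^{n-1}\,dr \ge 0.$$
Approximating each radial component $f_j$ (originating from a function in $C^2(B_R)\cap C_0(B_R)$) by $C_c^\infty(0,R)$ pieces extends the bound, and summing over $j\ge n+1$ delivers the claimed estimate
$$Q(f,f) \ge (n+1)\int_{B_R} \frac{|f|^2}{|y|^2}\,dy.$$
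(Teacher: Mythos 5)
Your proof is correct and follows the same two-step architecture as the paper: establish the mode decomposition, then reduce the coercivity to the spectral gap $\iota_j - \iota_1 \ge n+1$ for $j\ge n+1$ plus the non-negativity of $Q_1$. The one genuine difference is that the paper simply \emph{cites} an external reference for the inequality $Q_1(v,v)\ge 0$, whereas you supply a short self-contained proof via the positive-kernel (Jacobi/Picone) factorization: writing $v=\phi_1 w$ with $\phi_1=-U'>0$ in the kernel of $\frac{d^2}{dr^2}+\frac{n-1}{r}\frac{d}{dr}-\frac{n-1}{r^2}+pU^{p-1}$ and integrating by parts to obtain $Q_1(v,v)=\int_0^R \phi_1^2 (w')^2 r^{n-1}\,dr \ge 0$. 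This is a nice improvement in self-containment. Your derivation of the decomposition via $|\nabla f|^2=(\partial_r f)^2 + r^{-2}|\nabla_{S^{n-1}}f|^2$ and angular integration by parts is equivalent to the paper's route through $-\int f\Delta f$.

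Two small points worth tightening. First, the approximation step deserves one more sentence: for $j\ge n+1$ the mode $f_j$ of a $C^2$ function behaves like $O(r^2)$ near the origin while $\phi_1\sim r$, so $w=f_j/\phi_1=O(r)$ and the boundary contribution $r^{n-1}\phi_1\phi_1'w^2 = O(r^{n+2})$ vanishes at $r=0$; together with $f_j(R)=0$ this lets you do the integration by parts directly for $v=f_j$ without any density argument, which is cleaner than appealing to approximation. Second, note that with the stated $L^2(S^{n-1})$-orthonormal basis one actually has $Q(f,f)=\sum_j Q_j(f_j,f_j)$ \emph{without} the factor $|S^{n-1}|$ (that prefactor in the paper is a benign inconsistency of normalization); this does not affect the coercivity conclusion, and your derivation of $\int_{B_R}|f|^2/|y|^2\,dy=\sum_j \int_0^R (f_j/r)^2 r^{n-1}dr$ is consistent with the normalization-free identity, so your chain of inequalities closes correctly.
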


\begin{proof}
Since $\Delta_{S^{n-1} } \Upsilon_i = -\iota_i \Upsilon_i$, $\iota_i = i(n-2 + i)$ for a nonnegative integer $i$, we have
	\[
	\Delta(f_i \Upsilon_i)
	= ( f_i'' + \frac{n-1}{r} f_i' - \frac{\iota_i}{r^2} f_i ) \Upsilon_i .
	\]
 $f |_{\pp B_R} = 0$ implies
$ f_j(R)  = 0$, $ j = 0, 1,  \dots $. Then
	\[
	\begin{aligned}
		& Q(f, f)
		=
		\int_{B_{R}}|\nabla f |^2
		- pU^{p-1} f ^2 d y
		=
		- \int_{B_{R }} (  f \Delta f
		+ pU^{p-1}  f^2  ) d y
		\\
= \ &
- |S^{n-1}| \int_0^{R }
\left[  \sum\limits_{i=0}^\infty f_i
( f_i'' + \frac{n-1}{r} f_i' - \frac{\iota_i}{r^2} f_i  )
+  pU^{p-1} \sum\limits_{i=0}^\infty f_i^2\right] r^{n-1} d r
=
|S^{n-1}| \sum\limits_{i=0}^{\infty}  Q_i (f_i,f_i) .
	\end{aligned}
	\]
For $i \ge n+1$,  $\iota_i \ge 2n $, we have
	\[
	Q_i (f_i,f_i) \ge Q_1(f_i, f_i)
	+  (n+1)\int_0^{R  } \frac{f_i^2}{r^2} r^{n-1} d r \ge
	(n+1)\int_0^{R  } \frac{f_i^2}{r^2} r^{n-1} d r
	\]
since
	$\Delta u - \frac{n-1}{r^2} u + pU^{p-1}(y) u=0$ has a positive kernel $- U_r$, and by \cite[Lemma 4.2]{sun2021bubble}, one has $Q_1 (f_i, f_i) \ge 0$.
	Specially, if $f_j = 0$ for $j=0,1,\dots ,n$, we have \eqref{perp-coercive}.
\end{proof}

\begin{proof}[Proof of Lemma \ref{phi-perp-lemma}]
The existence and uniqueness of the linear mapping $ \phi^{\perp } = \phi^{ \perp } [h^{\perp}]$ are guaranteed by the classical parabolic theory. The form \eqref{phi-s-perp-form} is derived from the existence of every component $\phi_j$ with
	\begin{equation*}
		\begin{cases}
			\pp_{\tau} \phi_{j } =  \pp_{rr} \phi_j + \frac{n-1}{r}\pp_r \phi_j
			- \frac{\iota_j}{r^2} \phi_j + pU^{p-1} \phi_j
			 + f_1 \phi_j +
			 f_2 r\pp_{r} \phi_j
			+ h_j
			 \mbox{ \ for \ }
		r \in (0,R(\tau)) ,
		\tau \in (\tau_0,\infty )
		\\
			\pp_r \phi_j(0,\tau) =  \phi_j(R(\tau),\tau) = 0 \mbox{ \ for \ } \tau \in (\tau_0,\infty ),
	 \quad	\phi_j(r,\tau_0) =  0
			\mbox{ \ for \ } r\in (0, R(\tau_0) ) .
		\end{cases}
	\end{equation*}
	
By similar operation in mode $0$, we set $\phi^{\perp} = \phi_*[h^{\perp}]+ \tilde{\phi}[h^\perp]$, where $\phi_*[h^{\perp}]$ satisfies
	\begin{equation*}
		\begin{cases}
			\pp_\tau \phi_{*}
			=
			\Delta \phi_* + p U^{p-1} (1-\chi_M)\phi_*  + f_1 \phi_* +
			f_2 y\cdot \nabla \phi_*
			 + h^{\perp} \mbox{ \ in \ }
			\mathcal{D}_{R},
			\\
			\phi_* =
			0 \mbox{ \ on \ } \pp \mathcal{D}_{R},
			\quad
			\phi_{*  }(\cdot,\tau_0) = 0
			\mbox{ \ in \ } B_{R(\tau_0) },
		\end{cases}
	\end{equation*}
	and $\tilde{\phi}[h^\perp]$ satisfies
	\begin{equation}\label{phi-tilde-perpeq}
		\begin{cases}
			\pp_{\tau}
			\tilde{\phi} =
			\Delta \tilde{\phi} + p U^{p-1} \tilde{\phi}
		 + f_1 \tilde{\phi} 	+
			f_2 y\cdot \nabla \tilde{\phi}
			 + p U^{p-1} \chi_M \phi_*[h^{\perp}] \mbox{ \ in \ }
			\mathcal{D}_{R}
			, \\
			\tilde{\phi} =
			0 \mbox{ \ on \ } \pp \mathcal{D}_{R},
			\quad	\tilde{\phi}(\cdot,\tau_0) = 0
			\mbox{ \ in \ } B_{R(\tau_0) }.
		\end{cases}
	\end{equation}
	
Under the assumption $R^2\ln R \ll \tau^{\min\{1,d\}}$, by Lemma \ref{chiM-eq-lem}, we have
	\begin{equation}\label{phi*-hperp-estimate}
		|\phi_*[h^\perp]| \lesssim
		v \Theta_{R a}^0(|y| )
		\|h^\perp \|_{v,a}
		.
	\end{equation}
	$\phi_*[h^{\perp}]$ has the form
$
	\phi_*[h^{\perp}] =  \sum\limits_{j=n+1}^{\infty} \phi_{*j}(r,\tau)\Upsilon_j $
	by the same reason as \eqref{phi-s-perp-form}. By the same argument, there exists a linear mapping $\tilde{\phi} = \tilde{\phi}[h^\perp]$ and $\tilde{\phi} $ has the same form as \eqref{phi-s-perp-form}. Thus we are able to apply \eqref{perp-coercive} to $\tilde{\phi}$.
	
	Multiplying \eqref{phi-tilde-perpeq} by $\tilde{\phi}$ and integrating both sides, we have
	\[
	\frac 12 \pp_{\tau} \int_{B_{R  }} \tilde{\phi}^2 d y
	+ Q( \tilde{\phi},  \tilde{\phi})
	=
	\int_{B_{R  }}
 ( f_1 \tilde{\phi}  + f_2 y\cdot \nabla \tilde{\phi} ) \tilde{\phi} dy
	+
	\int_{B_{R  }} p U^{p-1}(y) \chi_M \phi_*[h^{\perp}] \tilde{\phi} d y .
	\]
	By \eqref{perp-coercive}, \eqref{f-assump} and H\"{o}lder inequality, one has
	\[
	\frac 12 \pp_{\tau} \int_{B_{R  }} \tilde{\phi}^2 d y
	+ (n+ \frac 12) \int_{B_{R  } } \frac{ \tilde{\phi}^2}{|y|^2} d y
	\le
	C \tau^{-d} \int_{B_R }  \tilde{\phi}^2 dy +
	\frac 12 \int_{B_{R  }} \left( p U^{p-1}(y) \chi_M \phi_*[h^{\perp}] |y|\right)^2 d y  .
	\]
	Then, by \eqref{phi*-hperp-estimate} and the assumption  $R^2\ln R \ll \tau^{\min\{1,d\}}$, we have
	\begin{equation*}
		\pp_{\tau} \int_{B_{R }} \tilde{\phi}^2 d y
		+  R^{-2}
		\int_{B_{R  } } |\tilde{\phi}|^2 d y
		\lesssim
		( v  \theta_{R a}^0  )^2
		\|h^\perp \|_{v,a}^2
		.
	\end{equation*}
	Since $\tilde{\phi}(\cdot,\tau_0) = 0 $ and the assumption $R^2\ln R \ll \tau^{\min\{1,d\}}$, we have
	\begin{equation*}
	\int_{B_{R }} \tilde{\phi}^2 d y
	\lesssim
	e^{-\int^{\tau} R^{-2}(u) du}
	\int_{\tau_0}^{\tau}
	e^{ \int^{s} R^{-2}(u) du}
	(v(s) \theta_{Ra}^{0}(s))^2 ds \|h^\perp \|_{v,a}^2
	\lesssim
	(	v\theta_{R a}^0   R
	\|h^\perp \|_{v,a})^2 .
	\end{equation*}
Using the same argument in Lemma \ref{mode0-nonorth}, one has
	\begin{equation}\label{tildephi-high}
		|\tilde{\phi}(y,\tau)|
		\lesssim
		v \theta_{R a}^0   R
		\langle y \rangle^{2-n}
		\|h^\perp \|_{v,a}
		.
	\end{equation}
Combining \eqref{phi*-hperp-estimate}, \eqref{tildephi-high} and scaling argument, we get
	\begin{equation*}
\langle y\rangle |\nabla \phi^{\perp}| 	+	|\phi^{\perp}|
		\lesssim
		v
		\left(
		\Theta_{R a}^0(|y| )
		+
		\theta_{R a}^0   R  \langle y \rangle^{2-n} \right)
		\|h^\perp \|_{v,a}
		.
	\end{equation*}
\end{proof}

\begin{proof}[Proof of Proposition \ref
{linear-theory}]
The case for higher modes  has been given in Lemma \ref{phi-perp-lemma}. Since the fast spatial decay of the right hand side $h$ cannot be recovered in non-orthogonal case in lower modes $i$, $0\le i \le n$, we transform the fast decay right hand side into slower decay function by solving the corresponding elliptic equation.

	\subsection{Mode 0 with orthogonality}
	Consider
	\[
	\Delta H^0 + p U^{p-1} H^0 = \tilde{h}^0(r,\tau) \mbox{\ \ in \ } \RR^n
	\]
	where $\tilde{h}^0$ is the extension of $h^0$ as zero outside $\DD_{R}$.
	The orthogonal condition is reformulated as
	\begin{equation*}
		\int_0^{R} h^0(r,\tau) Z_{n+1}(r) r^{n-1} d r = 0 \mbox{ \ for all \ } \tau>\tau_0.
	\end{equation*}
	Take $H^0(r,\tau)$ as in the following form
	\[
	H^0(r,\tau) =  \tilde{Z}_{n+1}(r) \int_0^{r} \tilde{h}^0(s,\tau)Z_{n+1}(s)s^{n-1} d s
	- Z_{n+1}(r) \int_0^{r} \tilde{h}^0(s,\tau) \tilde{Z}_{n+1}(s)s^{n-1} d s,
	\mbox{ \ if \ } a\le n-2,
	\]
\begin{equation*}
H^0(r,\tau) =  \tilde{Z}_{n+1}(r) \int_0^{r} \tilde{h}^0(s,\tau)Z_{n+1}(s)s^{n-1} d s
	+ Z_{n+1}(r) \int_{r}^{\infty} \tilde{h}^0(s,\tau) \tilde{Z}_{n+1}(s)s^{n-1} d s,
	\mbox{ \ if \ } a> n-2,
\end{equation*}
where  $\tilde{Z}_{n+1}(r)$ is the other linearly independent kernel of the homogeneous equation, which satisfies that the Wronskian $W[Z_{n+1},\tilde{Z}_{n+1}]=r^{1-n}$, $\tilde{Z}_{n+1}(r) \sim r^{2-n}$ if $r\rightarrow 0$ and $\tilde{Z}_{n+1}(r) \sim  1$ if $r\rightarrow \infty$.
	It is straightforward to check
	\begin{equation*}
		\|H^0 \|_{v,\hat{a}_{0}} \lesssim
		\|h^0\|_{v,2+a},
	\end{equation*}
where $\hat{a}_{0}$ is given in \eqref{ahat-def} and $a>0$ is used to ensure that the spatial decay of $ \tilde{h}^0(s,\tau) Z_{n+1}(s)s^{n-1}$ is faster than $s^{-1}$ for $s\ge 1$. Next,
consider
	\begin{equation}\label{Phi0-H-eq}
		\begin{cases}
			\pp_{ \tau} \Phi^0 = \Delta \Phi^0 + p U^{p-1} \Phi^0 +H^0
			&
			\mbox{ \ in \ }
			\mathcal{D}_{2R},
			\\
			\Phi^0(\cdot,\tau_0) = \bar{e}_0 Z_0
			&
			\mbox{ \ in \ } B_{2 R(\tau_0)},
		\end{cases}
	\end{equation}
where $(\Phi^0, \bar{e}_0)$	is given by Lemma \ref{mode0-nonorth} under the condition $f_1=f_2=0$. By scaling argument, one has
	\begin{equation*}
		\langle y \rangle^2 | \nabla^2 \Phi^0 |
		+
		\langle y \rangle |\nabla \Phi^0| + |\Phi^0|
\lesssim
 v \left( \min\{ \tau^{\frac 12}, \lambda_R^{- \frac 12 } \}  \lambda_R^{-\frac 12}
  \theta_{R \hat{a}_{0}}^0  \langle y \rangle^{2-n}    + 	\Theta_{R \hat{a}_{0}}^0( |y| ) \right)
	\|h^0\|_{v,2+a} ,
	\end{equation*}
\begin{equation*}
	|\bar{e}_0|
	\lesssim v(\tau_0) \theta_{R(\tau_0) \hat{a}_{0}}^0 \| h^0\|_{v,2+a} .
\end{equation*}	
	
Acting the operator $L:=\Delta + pU^{p-1}$ on both sides of \eqref{Phi0-H-eq} and denoting $\phi_1^0 = L \Phi^0$, we obtain
	\begin{equation*}
\begin{cases}
		\pp_{\tau} \phi_1^{0}
		=
		\Delta \phi_1^0 + pU^{p-1} \phi_1^0 + h^0  & \mbox{ \ in \ }
		\mathcal{D}_R
\\
\phi_1^{0}(\cdot,\tau_0) = \gamma_0 \bar{e}_{0} Z_0
&
\mbox{ \ in \ }
B_R
\end{cases}
	\end{equation*}
with the following estimate
	\begin{equation*}
	\langle y\rangle|\nabla \phi_1^0 |
	+ |\phi_1^0 |
		\lesssim
		v
 \left( \min\{ \tau^{\frac 12}, \lambda_R^{- \frac 12 } \}  \lambda_R^{-\frac 12}  \theta_{R\hat{a}_{0}}^0   \langle y\rangle^{-n}
		+  \Theta_{R \hat{a}_{0} }^0(|y| )  \langle y\rangle^{-2}
	\right)
		\|h^0 \|_{v,2+a } .
	\end{equation*}
	
Taking into account $f_1 \phi^0 + f_2 y\cdot \nabla \phi^0$, we consider
\begin{equation*}
\begin{cases}
	\pp_{\tau} \phi_2^0 = \Delta \phi_2^0 + pU^{p-1}\phi_2^0 + f_1 \phi_2^0  + f_2 y\cdot \nabla \phi_2^0  + f_1 \phi_1^0  +
	f_2 y\cdot \nabla \phi_1^0
&	\mbox{ \ in \ } \DD_R
	\\
\phi_2^0(\cdot,\tau_0) = e_{02} Z_0
&	\mbox{ \ in \ } B_R
\end{cases}
\end{equation*}
where
\begin{equation*}
\left|f_1 \phi_1^0  +
f_2 y\cdot \nabla \phi_1^0  \right|
\lesssim
C_{f}  \tau^{-d}
 \min\{ \tau^{\frac 12}, \lambda_R^{- \frac 12 } \}  \lambda_R^{-\frac 12}
 v \theta_{R\hat{a}_{0}}^0  \langle y\rangle^{-2} \|h^0\|_{v,2+a} .
\end{equation*}
Using Lemma \ref{mode0-nonorth} again, one can find a solution $(\phi_2^0,e_{02})$ with the following estimates
\begin{equation*}
	|\phi_2^0|  \lesssim
C_{f}	\tau^{-d}
	\min\{ \tau^{\frac 12}, \lambda_R^{- \frac 12 } \}  \lambda_R^{-\frac 12} v \theta_{R\hat{a}_{0}}^0
	\left( \min\{ \tau^{\frac 12}, \lambda_R^{- \frac 12 } \}  \lambda_R^{-\frac 12}
 \ln R \langle y \rangle^{2-n}    +  \ln R \right)
	\|h^0 \|_{v,2+a} ,
\end{equation*}
\begin{equation*}
	|e_{02}|\lesssim  C_{f}
	\tau_0^{-d}
	\min\{ \tau_0^{\frac 12}, \lambda_{R(\tau_0)}^{- \frac 12 } \}  \lambda_{R(\tau_0)}^{-\frac 12} v(\tau_0) \theta_{R(\tau_0)\hat{a}_{0}}^0 \ln R(\tau_0) \| h^0\|_{v,2+a} .
\end{equation*}
Finally, we take $(\phi^0,e_0)= (\phi_1^0 + \phi_2^0  ,\gamma_0 \bar{e}_{0} + e_{02})$ and conclude the result for mode $0$:
\begin{equation*}
\begin{aligned}
	|\phi^0|  \lesssim \ &
	v \min\{ \tau^{\frac 12}, \lambda_R^{- \frac 12 } \}  \lambda_R^{-\frac 12}  \theta_{R\hat{a}_{0}}^0
	\left(  \langle y\rangle^{-n} +
	C_{f}	\tau^{-d}  \min\{ \tau^{\frac 12}, \lambda_R^{- \frac 12 } \}  \lambda_R^{-\frac 12}
	\ln R \langle y \rangle^{2-n}  \right)
	\|h^0 \|_{v,2+a}
\\
& +
v \left( \Theta_{R \hat{a}_{0} }^0(|y| )  \langle y\rangle^{-2}  +
C_{f} \tau^{-d}
\min\{ \tau^{\frac 12}, \lambda_R^{- \frac 12 } \}  \lambda_R^{-\frac 12} \theta_{R\hat{a}_{0}}^0
  \ln R \right)
\|h^0 \|_{v,2+a} ,
\end{aligned}
\end{equation*}
\begin{equation*}
	|e_0|\lesssim
		v(\tau_0) \theta_{R(\tau_0) \hat{a}_{0}}^0  \left(
1 +
C_{f}	\tau_0^{-d}
	\min\{ \tau_0^{\frac 12}, \lambda_{R(\tau_0)}^{- \frac 12 } \}  \lambda_{R(\tau_0)}^{-\frac 12}  \ln R(\tau_0) \right) \| h^0\|_{v,2+a} .
\end{equation*}

	\subsection{Modes $1$ to $n$ with orthogonality}
Set $r=|y|$. Consider
$
		h^1(y,\tau) = \sum\limits_{j=1}^n
		h_j(r,\tau) \Upsilon_j
$ satisfying
$\int_{B_{2R} } h^1 Z_j = 0 $
 for all  $j=1,\dots,n$,
 $\tau \in (\tau_0,\infty)$. Then
	\begin{equation}\label{hj-orthogonal}
		\int_0^{2R} h_j(r,\tau) U_r(r) r^{n-1} d r = 0
		\mbox{ \ for all \ }
		\tau \in (\tau_0,\infty)
	\end{equation}
	where $U_r(r) = (n(n-2))^{\frac{n-2}{4}} (2-n)r(1+r^2)^{-\frac n2}$.
	Let $H = H_j(r,\tau) \Upsilon_j$ satisfying $\mathcal{L}_{1} H_{j} +  \tilde{h}_j  = 0 $ in $\RR^n$,
	where $\tilde{h}_j$ is the extension of $h_j$ as zero outside $\DD_{R}$. $H_j$ is given by
	\begin{equation*}
	\begin{aligned}
		H_j(r,\tau) = \ &  U_r(r)
		\int_0^r
		\frac{1}{\rho^{n-1} U_r(\rho)^2 }
		\int_\rho^\infty
		\tilde{h}_j(s,\tau) U_r(s) s^{n-1} d s
		d \rho  \mbox{ \ for \ } -1 < a\le n-1,
		\\
	H_j(r,\tau) = \ &  -U_r(r)
	\int_r^{\infty}
	\frac{1}{\rho^{n-1} U_r(\rho)^2 }
	\int_\rho^\infty
	\tilde{h}_j(s,\tau) U_r(s) s^{n-1} d s
	d \rho
	\mbox{ \ for \ } a > n-1
	\end{aligned}
\end{equation*}
where $a>-1$ is used to guarantee that the spatial decay of $\tilde{h}_j(s,\tau) U_r(s) s^{n-1}$ is faster than $s^{-1}$. Using \eqref{hj-orthogonal}, one has
 the following estimate
	\begin{equation*}
		\|H_j \|_{v, \hat{a}_{1} } \lesssim \| h_j \|_{v,2+a} ,
	\end{equation*}
where $\hat{a}_{1}$ is given in \eqref{ahat-def}.
	Next, consider
	\begin{equation*}
		\begin{cases}
			\pp_\tau \Phi = \Delta \Phi + pU^{p-1} \Phi + H_j(r,\tau) \Upsilon_j
			\mbox{ \ in \ } D_{2R},
			\\
			\Phi = 0
			\mbox{ \ on \ } \pp D_{2R}
	\quad	\Phi(\cdot,\tau_0) = 0
			\mbox{ \ in \ }  B_{2R(\tau_0)} .
		\end{cases}
	\end{equation*}
	By Lemma \ref{M1-nonortho-lem}, we find a solution $\Phi_j$ with the estimate
	\begin{equation*}
		|\Phi_j| \lesssim
		v
		\theta_{R \hat{a}_{1} }^1  R^n  \langle y\rangle^{1-n}
		\| h^1 \|_{v,2+a} .
	\end{equation*}
It follows that
\begin{equation*}
\phi_{j1} = L \Phi_j ~~\mbox{ with }~~ |\phi_{j1}| \lesssim v
\theta_{R \hat{a}_{1} }^1  R^n  \langle y\rangle^{-1-n}
\| h^1 \|_{v,2+a} .
\end{equation*}
	Consider
\begin{equation*}
	\begin{cases}
		\pp_\tau \phi_{j2} =
		\mathcal{L}_{1} \phi_{j2}   + f_1 \phi_{j2} + f_2 r \pp_{r}\phi_{j2} + f_1 \phi_{j1} + f_2 r \pp_{r}\phi_{j1}
	\mbox{ \ for \ } r\in (0,R(\tau)), \ \tau \in  (\tau_0,\infty)
		\\
\pp_{r}\phi_{j2}(0,\tau) = 0  = \phi_{j2}(R(\tau),\tau)
\mbox{ \ for \ }
\tau\in (\tau_0,\infty),
\quad
	\phi_{j2}(r,\tau_0) = 0
\mbox{ \ for \ }
r\in (0,R(\tau_0))
	\end{cases}
\end{equation*}
where
\begin{equation*}
|f_1 \phi_{j1} + f_2 r \pp_{r}\phi_{j1} |
\lesssim C_{f} \tau^{-d} v
\theta_{R \hat{a}_{1} }^1  R^n  \langle y\rangle^{-1-n}
\| h^1 \|_{v,2+a} .
\end{equation*}
Using Lemma \ref{M1-nonortho-lem} again, we get $\phi_{j2}$ with the following estimate	\begin{equation*}
	|\phi_{j2} |
	\lesssim
C_{f} \tau^{-d} v
\theta_{R \hat{a}_{1} }^1  R^{2n}
	\langle y \rangle^{1-n}
	\| h^1\|_{v,2+a},
\end{equation*}

Set $\phi_{j}[h_j] = \phi_{j1}+\phi_{j2}$. Then
$ \phi^1[h^1] = \sum\limits_{j=1}^n
	\phi_j[h_j]\Upsilon_j $
with the following estimate
	\begin{equation*}
	\langle y\rangle |\nabla \phi^1| + |\phi^1|
		\lesssim
	v \theta_{R \hat{a}_{1} }^1  R^n  \big(	
		\langle y\rangle^{-1-n}
		+
		C_{f} \tau^{-d} R^{n}
		\langle y \rangle^{1-n}
		\big)
		\| h^1 \|_{v,2+a}
	\end{equation*}
as desired.

\end{proof}

\medskip

\appendix
	
\section{Estimates for heat equations}

Recalling $\TT_n^{out}$ defined in \eqref{T-out-def}, we only require $t_0\ge 0$ in Lemma \ref{annular-forward} and Lemma \ref{far-forward}.

\subsection{Heat equation with right hand side $v(t) |x|^{-b}  \1_{\{ l_1(t) \le |x| \le l_2(t) \}}$}
\begin{lemma}\label{annular-forward}
	Assume $n>2$, $v(t)\ge 0$, $b\in \RR$, $0\le l_1(t) \le l_2(t) \le C_{*} t^{\frac{1}{2}}$, $C_l^{-1} l_i(t) \le l_i(s) \le C_l l_i(t)  $, $i=1,2$, for all $\frac{t}{2} \le s\le t$, $t\ge t_0\ge 0$, where $C_*>0,C_l\ge1$.
	 Then
	\begin{equation*}
		\begin{aligned}
			&
			\TT_n^{out}\left[ v(t) |x|^{-b} \1_{\{ l_1(t) \le |x| \le l_2(t) \}}\right]
			\lesssim
			t^{-\frac n2}
			e^
			{-\frac{|x|^2}{16 t } }
			\int_{\frac{t_0}{2}}^{\frac{t}{2} }   v(s)
			\begin{cases}
				l_2^{n-b}(s)
				&
				\mbox{ \ if \ } b<n
				\\
				\ln (\frac{l_2(s)}{l_1(s)} )
				&
				\mbox{ \ if \ } b=n
				\\
				l_1^{n-b}(s)
				&
				\mbox{ \ if \ } b>n
			\end{cases}
			d s
			\\
			&
			+
			\sup\limits_{ t_1 \in [t/2,t] } v(t_1)
			\begin{cases}
				\begin{cases}
					l_2^{2-b}(t)
					& \mbox{ \ if \ }
					b<2
					\\
				\langle	\ln (\frac{l_2(t)}{l_1(t)}) \rangle
					& \mbox{ \ if \ }
					b=2
					\\
					l_1^{2-b}(t)
					& \mbox{ \ if \ }
					b>2
				\end{cases}
				&
				\mbox{ \ for \ }  |x| \le l_1(t)
				\\
				\begin{cases}
					l_2^{2-b}(t)
					&
					\mbox{ \ if \ } b<2
					\\
				\langle\ln(\frac{l_2(t)}{|x|} )  \rangle
					&
					\mbox{ \ if \ } b=2
					\\
					|x|^{2-b}
					&
					\mbox{ \ if \ } 2<b<n
					\\
					|x|^{2-n} \langle	\ln (\frac{|x|}{l_1(t)}) \rangle
					&
					\mbox{ \ if \ } b=n
					\\
					|x|^{2-n}	l_1^{n-b}(t)
					&
					\mbox{ \ if \ } b>n
				\end{cases}
				&
				\mbox{ \ for \ }
				l_1(t) < |x| \le l_2(t)
				\\
				|x|^{2-n}
				e^{-\frac{|x|^2}{16 t}}
				\begin{cases}
					l_2^{n-b}(t)
					&
					\mbox{ \ if \ } b<n
					\\
					\langle \ln (\frac{l_2(t) }{l_1(t)} ) \rangle
					&
					\mbox{ \ if \ } b=n
					\\
					l_1^{n-b}(t)
					&
					\mbox{ \ if \ } b>n
				\end{cases}
				&
				\mbox{ \ for \ }
				|x| > l_2(t)
			\end{cases}
			.
		\end{aligned}
	\end{equation*}

\end{lemma}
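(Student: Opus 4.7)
\textbf{Proof plan for Lemma~\ref{annular-forward}.}
My approach is to split the time integral at the midpoint and treat the ``early'' part $s\in[t_0,t/2]$ and the ``late'' part $s\in[t/2,t]$ separately, since they contribute the two qualitatively different terms in the stated upper bound. First I would write
\[
\TT_n^{out}[f](x,t) = \Big(\int_{t_0}^{t/2} + \int_{t/2}^{t}\Big) \int_{\RR^n} (4\pi(t-s))^{-\frac n2} e^{-\frac{|x-z|^2}{4(t-s)}} v(s) |z|^{-b}\1_{\{l_1(s)\le|z|\le l_2(s)\}} \, dz\,ds =: I_{\mathrm{early}} + I_{\mathrm{late}}.
\]

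For $I_{\mathrm{early}}$, the key observation is that $t-s\sim t$, so $(t-s)^{-n/2}\lesssim t^{-n/2}$, and since $|z|\le l_2(s)\le C_* s^{1/2}\le C_* t^{1/2}$, we may split $|x-z|^2 \ge \tfrac12 |x|^2 - |z|^2$ to extract a factor $e^{-|x|^2/(16 t)}$ at the cost of a harmless constant $e^{|z|^2/(2t)}\le e^{C_*^2/2}$. The spatial integral then reduces to the plain annulus integral $\int_{l_1(s)\le|z|\le l_2(s)} |z|^{-b}\,dz = \omega_{n-1}\int_{l_1(s)}^{l_2(s)} r^{n-1-b}\,dr$, which is elementary and produces exactly the three cases $b<n$, $b=n$, $b>n$ listed in the first line of the bound.

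For $I_{\mathrm{late}}$, I would first pull out $\sup_{t_1\in[t/2,t]} v(t_1)$ and use the hypothesis $C_l^{-1}l_i(t)\le l_i(s)\le C_l\, l_i(t)$ to replace the moving annulus by the fixed annulus $A(t)=\{l_1(t)\le|z|\le l_2(t)\}$. After the change of variable $\tau=t-s\in(0,t/2)$, one has the pointwise bound
\[
I_{\mathrm{late}} \lesssim \sup_{t_1\in[t/2,t]} v(t_1)\int_{0}^{\infty}\int_{\RR^n} (4\pi\tau)^{-\frac n2} e^{-\frac{|x-z|^2}{4\tau}} |z|^{-b}\1_{A(t)}(z) \, dz\,d\tau,
\]
which is (up to a constant) the Newton potential $(-\Delta)^{-1}(|z|^{-b}\1_{A(t)})(x)$. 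Because the data is radial, this can be computed by the classical Poisson representation for radial functions,
\[
(-\Delta)^{-1}g(r) = \tfrac{1}{n-2}\Big[r^{2-n}\!\int_0^r s^{n-1} g(s)\,ds + \int_r^{\infty} s\, g(s)\,ds\Big],
\]
and evaluating these elementary integrals in the three regions $r\le l_1(t)$, $l_1(t)<r\le l_2(t)$, $r>l_2(t)$ produces precisely the case analysis in the statement, including the logarithmic endpoints at $b=2$ and $b=n$. In the outer region $|x|>l_2(t)$ I would finally recover the extra Gaussian factor $e^{-|x|^2/(16t)}$ by returning to the original heat-kernel expression on $[t/2,t]$ and using that, for $|z|\le l_2(t)\le |x|/(\text{const})$, one has $|x-z|\ge c|x|$ and $t-s\le t/2$, so $e^{-|x-z|^2/(4(t-s))}\le e^{-|x|^2/(16 t)}$.

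The main technical obstacle is bookkeeping the endpoint cases $b=2$ and $b=n$ (where the stated bounds acquire logarithmic corrections) and verifying that the two regimes $I_{\mathrm{early}}$ and $I_{\mathrm{late}}$ glue together consistently at $|x|\sim l_2(t)\sim t^{1/2}$; for instance when $b<n$ one must check that $|x|^{2-n} l_2^{n-b}(t)$ matches $l_2^{2-b}(t)$ and that both match the early-time Gaussian piece $t^{-n/2} l_2^{n-b}(s) \cdot \text{(volume)}$. Once these matchings are in hand, the collection of elementary integrals above assembles the announced piecewise estimate.
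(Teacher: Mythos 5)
Your plan is sound and produces the claimed estimate, but it takes a genuinely different route from the paper for the ``late'' piece $s\in[t/2,t]$. Where you enlarge the time integral to $(0,\infty)$ and invoke the Newton potential together with the radial representation formula
\[
(-\Delta)^{-1}g(r)=\tfrac1{n-2}\Big[r^{2-n}\!\int_0^r s^{n-1}g(s)\,ds+\int_r^\infty s\,g(s)\,ds\Big],
\]
the paper keeps the time integral finite and evaluates $\int_{t/2}^{t}(t-s)^{-n/2}e^{-c\,|x|^2/(t-s)}\,ds$ directly by decomposing the integration domain into sub-ranges determined by $l_1(t)$, $|x|$ and $l_2(t)$. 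The advantage of your route is that it is more structural and exposes the elliptic nature of the estimate; the advantage of the paper's route is that it simultaneously produces both the algebraic profile $|x|^{2-n}$ \emph{and} the Gaussian factor $e^{-|x|^2/(16t)}$ in the outer region from a single substitution $w=|x|^2/(t-s)$, whereas your approach has to reconstruct the Gaussian factor afterwards.

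This is where the one real gap sits. In the outer region $|x|>2C_l\,l_2(t)$ you say you will recover the Gaussian by ``returning to the original heat-kernel expression'' and using $e^{-|x-z|^2/(4(t-s))}\le e^{-|x|^2/(16t)}$. If you use this bound \emph{once}, you have killed all the Gaussian concentration in $z$, and the remaining time integral $\int_{t/2}^{t}(t-s)^{-n/2}\,ds$ diverges at $s=t$ for $n>2$. The correct move is to split the exponent once, e.g.
\[
e^{-\frac{|x-z|^2}{4(t-s)}}=e^{-\frac{|x-z|^2}{8(t-s)}}\,e^{-\frac{|x-z|^2}{8(t-s)}}\le e^{-\frac{|x-z|^2}{8(t-s)}}\,e^{-\frac{|x|^2}{32\,t}},
\]
pull out the second factor as $e^{-|x|^2/(32t)}$, and then apply your Newton-potential argument (with the modified constant in the exponent) to the first factor to obtain $|x|^{2-n}$ times the annulus mass. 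After a further adjustment of the constant $32\to16$ by weakening the estimate, this gives the claimed $|x|^{2-n}\,e^{-|x|^2/(16t)}$. With this repair your proposal matches the lemma. I would also note that your endpoint case-bookkeeping at $b=2$ and $b=n$, and the gluing check at $|x|\sim l_2(t)$, are correct as sketched: the Newton-potential formula reproduces precisely the logarithmic corrections and the piecewise powers in the lemma.
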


\begin{proof}
	\begin{equation*}
		\begin{aligned}
			&
			\TT_n^{out}\left[ v(t) |x|^{-b} \1_{\{ l_1(t) \le |x| \le l_2(t) \}}\right]
			\lesssim
			t^{-\frac n2}
			\int_{\frac{t_{0}}{2} }^{\frac t2}
			\int_{\RR^n}
			e^{-\frac{|x- y|^2}{4 t} }
			v(s) |y|^{-b} \1_{\{ l_1(s) \le |y| \le l_2(s) \}} d y d s
			\\
			& +
			\sup\limits_{ t_1 \in [t/2,t] } v(t_1)
			\int_{\frac t2}^{t}
			\int_{\RR^n}
			(t-s)^{-\frac n2}
			e^{-\frac{|x- y|^2}{4(t-s )} }
			|y|^{-b} \1_{\{ C_l^{-1} l_1(t) \le |y| \le C_l l_2(t) \}} d y d s
			:=
			u_{1} + \sup\limits_{ t_1 \in [t/2,t] } v(t_1) u_{2} .
		\end{aligned}
	\end{equation*}
	
	For $u_{1}$, notice $|y|\le C_* t^{\frac 12}$. For $|x|\le 2C_* t^{\frac 12}$, we have
	\begin{equation*}
		u_{1}
		\lesssim
		t^{-\frac n2}
		\int_{\frac{t_{0}}{2} }^{\frac t2}
		\int_{\RR^n}
		v(s) |y|^{-b} \1_{\{ l_1(s)\le |y| \le l_2(s) \}} d y d s
		\lesssim
		t^{-\frac n2}
		\int_{\frac {t_{0}}2} ^{\frac t2}
		v(s)
		\begin{cases}
			l_2^{n-b}(s)
			&
			\mbox{ \ if \ }
			b<n
			\\
			\ln (\frac{l_2(s)}{ l_1(s) } )
			&
			\mbox{ \ if \ }
			b=n
			\\
			l_1^{n-b}(s)
			&
			\mbox{ \ if \ }
			b>n
		\end{cases}
		d s
		.
	\end{equation*}
For $|x| > 2C_* t^{\frac 12}$, one has  $|x-y|\ge \frac{|x|}{2}$. Then
	\begin{equation*}
		u_{1}
		\lesssim
		t^{-\frac n2} e^{-\frac{|x|^2}{16 t} }
		\int_{\frac {t_{0}}2} ^{\frac t2}
		v(s)
		\begin{cases}
			l_2^{n-b}(s)
			&
			\mbox{ \ if \ }
			b<n
			\\
			\ln (\frac{l_2(s)}{ l_1(s)} )
			&
			\mbox{ \ if \ }
			b=n
			\\
			l_1^{n-b}(s)
			&
			\mbox{ \ if \ }
			b>n
		\end{cases}
		d s
		.
	\end{equation*}
	
	Let us estimate $u_2$ in  different regions.
	
For $|x| \le (2 C_l)^{-1} l_1(t)$,
since $\frac{|y|}{2} \le |x-y|\le 2|y|$, then
	\begin{equation*}
		\begin{aligned}
			u_{2}
			\le \ &
			\int_{\frac t2}^{t}
			\int_{\RR^n}
			(t-s)^{-\frac n2}
			e^{-\frac{|y|^2}{16 (t-s )} }
			|y|^{-b} \1_{\{ C_l^{-1} l_1(t) \le |y| \le C_l l_2(t) \}} d y d s
			\sim
			\int_{\frac t2}^{t}
			(t-s)^{-\frac b2}
			\int_{\frac{l_1^2(t)}{16 C_l^2 (t-s)} }^{
				\frac{C_l^2 l_2^2(t)}{16(t-s)}}
			e^{-z} z^{\frac{n-b}{2} -1}  d z d s
			\\
			= \ &
			\left(
			\int_{\frac t2}^{t-l_2^2(t) }
			+
			\int_{t-l_2^2(t)}^{t-l_1^2(t)}
			+
			\int_{t- l_1^2(t) }^{t}
			\cdots
			\right)
			:=  u_{21} + u_{22} + u_{23}
			\lesssim
			\begin{cases}
				l_2^{2-b}(t)
				& \mbox{ \ if \ }
				b<2
				\\
			\langle	\ln (\frac{l_2(t)}{l_1(t)}) \rangle
				& \mbox{ \ if \ }
				b=2
				\\
			l_1^{2-b}(t)
				& \mbox{ \ if \ }
				b>2 .
			\end{cases}
		\end{aligned}
	\end{equation*}
	In order to get the last inequality above, we need the following estimates.
	For $u_{21} $, since $n>2$, we have
	\begin{equation*}
			u_{21} \sim
			\int_{\frac t2}^{t- l_2^2(t) }
			(t-s)^{-\frac b2}
			\int_{\frac{l_1^2(t) }{ 16 C_l^2 (t-s)}}^{\frac{ C_l^2 l_2^2(t) }{16(t-s)}}
			z^{\frac{n-b}{2} - 1}
			d z d s
			\lesssim
			\begin{cases}
				l_2^{2-b}(t)
				&
				\mbox{ \ if \ } b<n
				\\
				\langle \ln (\frac{l_2(t)}{l_1(t)}) \rangle
				l_2^{2-n}(t)
				&
				\mbox{ \ if \ } b=n
				\\
				l_1^{n-b}(t)
				l_2^{2-n}(t)
				&
				\mbox{ \ if \ } b>n .
			\end{cases}
	\end{equation*}
	For $u_{22}$, since $l_1^2(t) \lesssim t-s \lesssim l_2^2(t)$, then
	\begin{equation*}
		\begin{aligned}
			u_{22}	\lesssim \ &
			\int_{t-l_2^2(t)}^{t-l_1^2(t)}
			(t-s)^{-\frac b2}
			\begin{cases}
				1
				& \mbox{ \ if \ }
				b<n
				\\
			\langle \ln ( \frac{ t-s}{l_1^2(t)})  \rangle
				& \mbox{ \ if \ }
				b=n
				\\
				( \frac{l_1^2(t)}{t-s})^{\frac{n-b}{2} }
				& \mbox{ \ if \ }
				b>n
			\end{cases}
			d s
			\lesssim
			\begin{cases}
				l_2^{2-b}(t)
				& \mbox{ \ if \ }
				b<2
				\\
				\langle \ln (\frac{l_2(t)}{l_1(t)})  \rangle
				& \mbox{ \ if \ }
				b=2
				\\
				l_1^{2-b}(t)
				& \mbox{ \ if \ }
				b>2 .
			\end{cases}
		\end{aligned}
	\end{equation*}	
	For $u_{23}$, we have
	\begin{equation*}
			u_{23}
			\lesssim
			\int_{t-l_1^2(t)}^{t}
			(t-s)^{-\frac b2}
			\int_{\frac{l_1^2(t) }{ 16 C_l^2 (t-s)}}^{\frac{C_l^2 l_2^2(t) }{16(t-s) }}
			e^{-\frac z2}
			d z d s
			\lesssim
			\int_{t-l_1^2(t)}^{t}
			(t-s)^{-\frac b2}
			e^{- \frac{l_1^2(t)   }{ 32C_l^2 (t-s)} } d s
			\sim
			l_1^{2-b}(t) .
	\end{equation*}
	
	For $ (2C_l)^{-1} l_1(t) \le |x| \le 2C_l l_2(t)$,  then
	\begin{equation*}
		\begin{aligned}
			u_{2}
			\le \ &
			\int_{\frac t2}^{t}
			\int_{\RR^n}
			(t-s)^{-\frac n2}
			e^{-\frac{|x- y|^2}{4(t-s )} }
			|y|^{-b}
			\left(
			\1_{\{ (4C_l)^{-1} l_1(t) \le |y| \le \frac{|x|}{2} \}}
			+
			\1_{\{\frac{|x|}{2} \le |y| \le 2|x| \}}
			+
			\1_{\{ 2|x| \le |y| \le 4C_l l_2(t) \}}
			\right)
			d y d s
			\\
			:= \ &
			u_{21} + u_{22} + u_{23}
			\lesssim
			\begin{cases}
				l_2^{2-b}(t)
				&
				\mbox{ \ if \ } b<2
				\\
				\langle 	\ln(\frac{l_2(t)}{|x|} ) \rangle
				&
				\mbox{ \ if \ } b=2
				\\
				|x|^{2-b}
				&
				\mbox{ \ if \ } 2<b<n
				\\
				|x|^{2-n} \langle	\ln (\frac{|x|}{l_1(t)}) \rangle
				&
				\mbox{ \ if \ } b=n
				\\
				|x|^{2-n}	l_1^{n-b}(t)
				&
				\mbox{ \ if \ } b>n .
			\end{cases}
		\end{aligned}
	\end{equation*}
	For the last inequality above, we need to estimate $u_{21}$, $u_{22}$ and $u_{23}$. For $u_{21}$, since $n>2$, one has
	\begin{equation*}
			u_{21}
			\le
			\int_{\frac t2}^{t}
			\int_{\RR^n}
			(t-s)^{-\frac n2}
			e^{-\frac{|x|^2}{16(t-s )} }
			|y|^{-b}
			\1_{ \{ (4C_l)^{-1} l_1(t) \le |y| \le \frac{|x|}{2} \}}
			d y d s
			\lesssim
			\begin{cases}
				|x|^{2-b}
				&
				\mbox{ \ if \ } b<n
				\\
				|x|^{2-n}  \langle	\ln (\frac{|x|}{l_1(t)} )  \rangle
				&
				\mbox{ \ if \ } b=n
				\\
				|x|^{2-n} l_1^{n-b}(t)
				&
				\mbox{ \ if \ } b>n .
			\end{cases}
	\end{equation*}
	For $u_{22}$, we have
	\begin{equation*}
			u_{22}
		\lesssim
			|x|^{-b}
			\int_{\frac t2}^{t}
			\int_{\RR^n}
			(t-s)^{-\frac n2}
			e^{-\frac{|x- y|^2}{4(t-s )} }
			\1_{\{ |x-y| \le 3|x| \}}
			d y d s
			\sim
			|x|^{-b}
			\int_{\frac t2}^{t}
			\int_{0}^{\frac{9|x|^2}{4(t-s)}} e^{-z} z^{\frac n2 -1} d z d s
			\sim
			|x|^{2-b} .
	\end{equation*}
	For $u_{23}$, we have
	\begin{equation*}
		\begin{aligned}
			u_{23}
			\le \ &
			\int_{\frac t2}^{t}
			\int_{\RR^n}
			(t-s)^{-\frac n2}
			e^{-\frac{|y|^2}{16 (t-s )} }
			|y|^{-b}
			\1_{\{ 2|x| \le |y| \le 4C_l l_2(t) \}}
			d y d s
			\sim
			\int_{\frac t2}^{t}
			\int_{\frac{|x|^2}{4(t-s)} }^{ \frac{C_l^2 l_2^2(t)}{t-s} }
			(t-s)^{-\frac b2}
			e^{-z} z^{\frac{n-b}{2} -1 }
			d z d s
			\\
			=  \ &
			\left(
			\int_{\frac t2}^{t-\frac{l_2^2(t)}{2C_*^2}}
			+
			\int_{ t-\frac{l_2^2(t)}{2C_*^2} }^{t- \frac{|x|^2}{ 8C_l^2 C_*^2 }}
			+
			\int_{t- \frac{|x|^2}{8C_l^2 C_*^2 }}^{t}
			\cdots
			\right)
			:=
			u_{231} + u_{232} + u_{233}
			\lesssim
			\begin{cases}
				l_2^{2-b}(t)
				&
				\mbox{ \ if \ } b<2
				\\
				\langle \ln(\frac{l_2(t)}{|x|} ) \rangle
				&
				\mbox{ \ if \ } b=2
				\\
				|x|^{2-b}
				&
				\mbox{ \ if \ } b>2 .
			\end{cases}
		\end{aligned}
	\end{equation*}
	
	In order to get the last inequality above, we need the following estimates. For $u_{231}$, we have
	\begin{equation*}
			u_{231}
			\sim
			\int_{\frac t2}^{t-\frac{l_2^2(t)}{2C_*^2}}
			\int_{\frac{|x|^2}{4(t-s)} }^{ \frac{C_l^2 l_2^2(t)}{ t-s } }
			(t-s)^{-\frac b2}
			z^{\frac{n-b}{2} -1 }
			d z d s
			\lesssim
			\begin{cases}
				l_2^{2-b}(t)
				&
				\mbox{ \ if \ } b<n
				\\
				l_2^{2-n}(t)
			\langle	\ln(\frac{l_2(t)}{|x|}) \rangle
				&
				\mbox{ \ if \ } b=n
				\\
				|x|^{n-b} l_2^{2-n}(t)
				&
				\mbox{ \ if \ } b>n .
			\end{cases}
	\end{equation*}
	For $u_{232}$, since $n>2$, we estimate
	\begin{equation*}
			u_{232}
			\lesssim
	\int_{ t-\frac{l_2^2(t)}{2C_*^2} }^{t- \frac{|x|^2}{ 8C_l^2 C_*^2 }}
			(t-s)^{-\frac b2}
			\begin{cases}
				1
				&
				\mbox{ \ if \ } b<n
				\\
			\langle \ln(\frac{|x|^2}{t-s}) \rangle
				&
				\mbox{ \ if \ } b=n
				\\
				(\frac{|x|^2}{t-s})^{\frac{n-b}{2}}
				&
				\mbox{ \ if \ } b>n
			\end{cases}
			d s
			\lesssim
			\begin{cases}
				l_2^{2-b}(t)
				&
				\mbox{ \ if \ } b<2
				\\
				\langle	\ln(\frac{l_2(t)}{|x|} ) \rangle
				&
				\mbox{ \ if \ } b=2
				\\
				|x|^{2-b}
				&
				\mbox{ \ if \ } b>2 .
			\end{cases}
	\end{equation*}
	For $u_{233}$, one has
	\begin{equation*}
			u_{233}
			\lesssim
			\int_{t- \frac{|x|^2}{8C_l^2 C_*^2 }}^{t}
			(t-s)^{-\frac b2}
			e^{-\frac{|x|^2}{8(t-s)}} d s
			\sim
			|x|^{2-b}
			\int_{ C_l^2 C_{*}^2}^{\infty}
			e^{-z} z^{\frac b2 -2}
			d z
			\sim
			|x|^{2-b}.
	\end{equation*}
	
	\medskip
	
	For $|x|\ge 2C_l l_2(t)$, since $\frac{|x|}{2} \le |x-y| \le 2|x|$, then for  $n>2$, it follows that
	\begin{equation*}
			u_{2}
			\lesssim
			\int_{\frac{t}{2} }^{t}  (t-s)^{-\frac n2}
			e^
			{-\frac{|x|^2}{16(t-s )} }
			d s
			\begin{cases}
				l_2^{n-b}(t)
				&
				\mbox{ \ if \ } b<n
				\\
			\langle \ln (\frac{l_2(t)}{l_1(t)} ) \rangle
				&
				\mbox{ \ if \ } b=n
				\\
			l_1^{n-b}(t)
				&
				\mbox{ \ if \ } b>n
			\end{cases}
			\lesssim
			|x|^{2-n}
			e^{-\frac{|x|^2}{16 t}}
			\begin{cases}
				l_2^{n-b}(t)
				&
				\mbox{ \ if \ } b<n
				\\
				\langle \ln (\frac{l_2(t)}{l_1(t)} ) \rangle
				&
				\mbox{ \ if \ } b=n
				\\
				l_1^{n-b}(t)
				&
				\mbox{ \ if \ } b>n .
			\end{cases}
	\end{equation*}

\end{proof}

\subsection{Heat equation with right hand side $ v(t)|x|^{-b}\1_{\{ |x|\ge t^{1/2} \}}$}
\begin{lemma}\label{far-forward}
	Assume $n>0$, $v(t)\ge 0$, $b\in \RR$, $t_0\ge 0$, then
	\begin{equation*}
		\begin{aligned}
			&
			\TT_{n}^{out}\left[v(t) |x|^{-b} \1_{\{ |x|\ge t^{\frac 12}\}}\right]
			\\
			\lesssim  \ &
			\begin{cases}
				t^{-\frac n2}
				\int_{t_{0}/2}^{t/2}
				v(s)
				\begin{cases}
					t^{\frac{n-b}{2}}
					&
					\mbox{ \ if \ } b<n
					\\
				\langle	\ln ( t s^{-1}) \rangle
					&
					\mbox{ \ if \ } b=n
					\\
					s^{\frac{n-b}{2}}
					&
					\mbox{ \ if \ } b>n
				\end{cases}
				d s
				+ t^{1-\frac{b}{2}} \sup\limits_{ t_1 \in [t/2,t] } v(t_1)
				&
				\mbox{ \ if \ } |x|\le t^{\frac 12}
				\\
				|x|^{-b}
				\left( t \sup\limits_{ t_1 \in [t/2,t] } v(t_1) +	\int_{t_{0}/2}^{t/2}
				v(s) d s \right) +
				t^{-\frac n2} e^{-\frac{|x|^2}{16 t} }
				\int_{t_{0}/2}^{t/2}
				v(s)
				\begin{cases}
					0
					&
					\mbox{ \ if \ }
					b<n
					\\
				\langle	\ln (|x| s^{-\frac 12}) \rangle
					&
					\mbox{ \ if \ }
					b=n
					\\
					s^{\frac{n-b}{2}}
					&
					\mbox{ \ if \ }
					b>n
				\end{cases}
				d s
				&
				\mbox{ \ if \ } |x| > t^{\frac 12}
			\end{cases}
			.
		\end{aligned}
	\end{equation*}

\end{lemma}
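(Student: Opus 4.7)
\medskip

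\noindent\textbf{Proof proposal for Lemma \ref{far-forward}.}
The plan is to mirror the argument for Lemma \ref{annular-forward}, splitting the time integral at $s=t/2$ and then carrying out the spatial analysis case by case depending on whether $|x|\le t^{1/2}$ or $|x|>t^{1/2}$. Concretely, I write
\begin{equation*}
\TT_n^{out}\big[v(t)|x|^{-b}\1_{\{|x|\ge t^{1/2}\}}\big](x,t)
= J_1(x,t) + J_2(x,t),
\end{equation*}
where $J_1$ integrates over $s\in[t_0,t/2]$ and $J_2$ over $s\in[t/2,t]$. In $J_1$ one has $t-s\sim t$, so $(4\pi(t-s))^{-n/2}e^{-|x-z|^2/(4(t-s))}\lesssim t^{-n/2}e^{-|x-z|^2/(Ct)}$; while in $J_2$ we pull $v$ out as $\sup_{t_1\in[t/2,t]}v(t_1)$ and are left with a pure heat-kernel convolution against $|z|^{-b}\1_{\{|z|\ge s^{1/2}\}}$.

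For $J_1$, the spatial integral $\int_{|z|\ge s^{1/2}}|z|^{-b}e^{-|x-z|^2/(Ct)}dz$ is handled by distinguishing $|z|\le 2t^{1/2}$ from $|z|>2t^{1/2}$. In the first range the Gaussian is $O(1)$ when $|x|\le t^{1/2}$ (and decays like $e^{-|x|^2/(Ct)}$ when $|x|>t^{1/2}$), reducing matters to the radial integral $\int_{s^{1/2}}^{2t^{1/2}}r^{n-1-b}dr$, which gives the three cases $b<n$, $b=n$, $b>n$ appearing in the statement. In the second range one uses $|x-z|\ge |z|/2$ to absorb the tail into $e^{-|z|^2/(Ct)}$, producing the factor $e^{-|x|^2/(16t)}$ advertised in the $|x|>t^{1/2}$ line. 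Tracking these contributions yields exactly the $\int_{t_0/2}^{t/2}v(s)\,(\,\cdots)\,ds$ terms on the right-hand side.

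For $J_2$ when $|x|\le t^{1/2}$, since $|z|\ge s^{1/2}\sim t^{1/2}\gtrsim |x|$, we have $|x-z|\sim |z|$ up to absolute constants, so changing variables $w=(x-z)/\sqrt{t-s}$ gives $J_2\lesssim \sup_{[t/2,t]}v\cdot\int_{t/2}^{t}(t-s)^{-b/2}ds\sim t^{1-b/2}\sup_{[t/2,t]}v$, exactly the $t^{1-b/2}\sup v$ contribution. For $J_2$ when $|x|>t^{1/2}$ the main contribution comes from the diagonal region $|z|\sim|x|$: split the spatial integral into $\{|z|\le|x|/2\}$, $\{|x|/2<|z|\le 2|x|\}$ and $\{|z|>2|x|\}$. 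On the middle zone, $|z|^{-b}\sim|x|^{-b}$ is pulled out and the kernel integrates to $1$, producing the $|x|^{-b}\,t\sup v$ piece (after integration in $s$ over $[t/2,t]$). On the inner zone $|z|\le|x|/2$, one has $|x-z|\ge|x|/2$, whence a Gaussian smallness $e^{-|x|^2/(16(t-s))}$ dominates and, combined with the fact that $|z|$ still exceeds $s^{1/2}$, produces the $|x|^{-b}\int_{t_0/2}^{t/2}v(s)\,ds$ piece after merging with the analogous contribution from $J_1$. The outer zone $|z|>2|x|$ is the delicate one: there $|x-z|\ge|z|/2$ and the kernel becomes $t^{-n/2}e^{-|z|^2/(C(t-s))}$, so the remaining radial integral is $\int_{2|x|}^{\infty}r^{n-1-b}e^{-r^2/(C(t-s))}dr$, which after scaling gives the three logarithmic/power cases in $b$ versus $n$ appearing in the $|x|>t^{1/2}$ line.

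The only genuine obstacle is the careful bookkeeping in the $|x|>t^{1/2}$, $|z|>2|x|$ regime, where the annulus has no outer cutoff: the borderline case $b=n$ produces the $\ln(|x|s^{-1/2})$ factor via $\int_{2|x|}^\infty r^{-1}e^{-r^2/(C(t-s))}dr\sim \ln((t-s)/|x|^2)$, and combining this with the time integration requires the same kind of three-zone splitting in $t-s$ that was used for $u_{22}$ and $u_{23}$ in the proof of Lemma \ref{annular-forward}. Apart from this accounting, all estimates reduce to elementary Gaussian integrals and scaling. Assembling $J_1+J_2$ over the two regimes $|x|\le t^{1/2}$ and $|x|>t^{1/2}$ yields the stated bound.
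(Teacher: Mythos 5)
Your overall strategy---split the time integral at $s=t/2$, bound $J_1$ by pulling out $t^{-n/2}$ and $J_2$ by pulling out $\sup_{[t/2,t]}v$, then analyze by spatial zones---is exactly the paper's. However two of your intermediate formulas are wrong in ways that matter.

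First, in your treatment of $J_2$ with $|x|\le t^{1/2}$, you write $J_2\lesssim \sup v\cdot\int_{t/2}^t (t-s)^{-b/2}ds\sim t^{1-b/2}\sup v$. But $\int_{t/2}^t(t-s)^{-b/2}ds$ is infinite for every $b\ge 2$, and $b>n\ge 2$ (indeed $b=6$ for $n=4$) is precisely a case the lemma is used for. The point you state but do not exploit is that $|z|\ge s^{1/2}\gtrsim t^{1/2}$ forces the Gaussian to contribute $e^{-ct/(t-s)}$; one must keep that factor so that $\int_{t/2}^t(t-s)^{-b/2}e^{-ct/(t-s)}ds\sim t^{1-b/2}$ after the substitution $u=t/(t-s)$, uniformly in $b$. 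This is exactly how the paper handles $u_2$ in the regime $|x|\le 2^{-3/2}t^{1/2}$. As written, your chain of inequalities does not prove the bound for $b\ge 2$.

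Second, you attribute the $\langle\ln(|x|s^{-1/2})\rangle$ factor in the $|x|>t^{1/2}$, $b=n$ line to the outer zone $\{|z|>2|x|\}$ of $J_2$, citing $\int_{2|x|}^\infty r^{-1}e^{-r^2/(C(t-s))}dr\sim\ln((t-s)/|x|^2)$. But in the relevant regime $|x|^2>t>2(t-s)$ that integral is $\sim e^{-c|x|^2/(t-s)}(t-s)/|x|^2$, not logarithmic (and your proposed asymptotic $\ln((t-s)/|x|^2)$ would even be negative). The actual source of the $\ln(|x|s^{-1/2})$ term is the inner zone $\{s^{1/2}\le|z|\le|x|/2\}$ of $J_1$, where the $e^{-|x|^2/(16t)}$ factor is pulled off first and the leftover radial integral is the harmless $\int_{s^{1/2}}^{|x|/2}r^{-1}dr\sim\ln(|x|s^{-1/2})$ with no Gaussian to complicate matters. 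If you instead try to extract it from the $J_2$ outer zone, the $(t-s)$-dependent Gaussian cutoff genuinely changes the computation and you will not recover the stated form $t^{-n/2}e^{-|x|^2/(16t)}\int_{t_0/2}^{t/2}v(s)\langle\ln(|x|s^{-1/2})\rangle\,ds$ (note the time integral runs only over $[t_0/2,t/2]$, i.e.\ the $J_1$ range). Both slips are fixable, but as written the proposal does not yet establish the lemma.
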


\begin{proof}
By definition, we write
	\begin{equation*}
		\begin{aligned}
			&
			\TT_{n}^{out}\left[v(t) |x|^{-b} \1_{\{ |x|\ge t^{\frac 12}\}}\right]
			\lesssim
			t^{-\frac n2}
			\int_{t_{0}/2}^{t/2}
			\int_{\RR^n}
			e^{-\frac{|x- y|^2}{4 t} }
			v(s)|y|^{-b} \1_{\{ |y|\ge s^{\frac 12}\}} d y d s
			\\
			&
			+
			\sup\limits_{ t_1 \in [t/2,t]} v(t_1)
			\int_{t/2}^t
			\int_{\RR^n}
			(t-s)^{-\frac n2}
			e^{-\frac{|x- y|^2}{4(t-s )} }
			|y|^{-b} \1_{\{ |y|\ge 2^{-\frac 12} t^{\frac 12}\}} d y d s
			:=
		t^{-\frac n2} u_{1} + \sup\limits_{ t_1 \in [t/2,t] } v(t_1) u_{2}.
		\end{aligned}
	\end{equation*}
	For $u_{1}$, when $|x|\le 2t^{\frac 12}$, we have
	\begin{equation*}
		\begin{aligned}
		&	u_{1}
			\lesssim
			\int_{t_{0}/2}^{t/2}
			\int_{\RR^n}
			v(s)|y|^{-b}
			\1_{\{ s^{\frac 12} \le |y|\le 4 t^{\frac 12}  \}}
			d y d s
			+
			\int_{t_{0}/2}^{t/2}
			\int_{\RR^n}
			e^{-\frac{|y|^2}{16 t} }
			v(s)|y|^{-b}
			\1_{\{ 4 t^{\frac 12} \le  |y| \}}
			d y d s
			\\
\lesssim \ &
\int_{t_{0}/2}^{t/2}
v(s)
\begin{cases}
	t^{\frac{n-b}{2}}
	&
	\mbox{ \ if \ } b<n
	\\
	\langle \ln ( t s^{-1}) \rangle
	&
	\mbox{ \ if \ } b=n
	\\
	s^{\frac{n-b}{2}}
	&
	\mbox{ \ if \ } b>n
\end{cases}
d s
+
t^{\frac {n-b}2}
\int_{t_{0}/2}^{t/2} v(s) d s
\sim
\int_{t_{0}/2}^{t/2}
v(s)
\begin{cases}
	t^{\frac{n-b}{2}}
	&
	\mbox{ \ if \ } b<n
	\\
	\langle \ln ( t s^{-1}) \rangle
	&
	\mbox{ \ if \ } b=n
	\\
	s^{\frac{n-b}{2}}
	&
	\mbox{ \ if \ } b>n
\end{cases}
d s  .
		\end{aligned}
	\end{equation*}
	
	For $u_{1}$, when $|x| > 2 t^{\frac 12}$, we have
		\begin{align*}
			u_{1} = \ &
			\int_{t_{0}/2}^{t/2}
			\int_{\RR^n}
			e^{-\frac{|x- y|^2}{4 t} }
			v(s)|y|^{-b}
			\left(
			\1_{\{ s^{\frac 12} \le |y|\le \frac{|x|}{2} \}}
			+
			\1_{\{ \frac{|x|}{2} \le |y|\le 2|x|\}}
			+
			\1_{\{ 2|x| \le |y| \}}
			\right)
			d y d s
			\\
\lesssim \ &
	\int_{t_{0}/2}^{t/2}
	v(s)
\int_{\RR^n}
\left(
e^{-\frac{|x|^2}{16 t} }
|y|^{-b}
\1_{\{ s^{\frac 12} \le |y|\le \frac{|x|}{2} \}}
+
|x|^{-b}
e^{-\frac{|x- y|^2}{4 t} }
\1_{\{ |x-y|\le 3|x|\}}
+
e^{-\frac{ |y|^2}{16 t} }
|y|^{-b}
\1_{\{ 2|x| \le |y| \}}
\right)
d y d s
\\
\lesssim \ &
 e^{-\frac{|x|^2}{16 t} }
			\int_{t_{0}/2}^{t/2}
			v(s)
			\begin{cases}
				|x|^{n-b}
				&
				\mbox{ \ if \ }
				b<n
				\\
			\langle	\ln (|x| s^{-\frac 12}) \rangle
				&
				\mbox{ \ if \ }
				b=n
				\\
				s^{\frac{n-b}{2}}
				&
				\mbox{ \ if \ }
				b>n
			\end{cases}
			d s
			+ t^{\frac{n}{2}} |x|^{-b}
			\int_{t_{0}/2}^{t/2}
			v(s) d s
			+
			t^{\frac {n-b}2} e^{-\frac{|x|^2}{8t}}
			\int_{t_{0}/2}^{t/2}
			v(s) d s
			\\
			\lesssim \ &
	 e^{-\frac{|x|^2}{16 t} }
			\int_{t_{0}/2}^{t/2}
			v(s)
			\begin{cases}
				0
				&
				\mbox{ \ if \ }
				b<n
				\\
			\langle	\ln ( |x| s^{-\frac 12}) \rangle
				&
				\mbox{ \ if \ }
				b=n
				\\
				s^{\frac{n-b}{2}}
				&
				\mbox{ \ if \ }
				b>n
			\end{cases}
			d s
			+ 	t^{ \frac n2} |x|^{-b}
			\int_{t_{0}/2}^{t/2}
			v(s) d s .
		\end{align*}

	For $u_{2}$, when $|x|\le 2^{-\frac 32} t^{\frac 12}$, we have $|y|\ge 2|x|$. Then
	\begin{equation*}
			u_{2} \le
			\int_{t/2}^t
			\int_{\RR^n}
			(t-s)^{-\frac n2}
			e^{-\frac{|y|^2}{16(t-s )} }
			|y|^{-b} \1_{\{ |y|\ge 2^{-\frac{1}{2}} t^{\frac 12}\}} d y d s
			\lesssim
			\int_{t/2}^t
			(t-s)^{-\frac b2}
			e^{- \frac{t}{64(t-s) }}
			d s
			\sim
			t^{1-\frac b2} .
	\end{equation*}

	For $u_{2}$, when $|x|\ge 2^{-\frac 32} t^{\frac 12}$, one has
	\begin{equation*}
		\begin{aligned}
			& u_{2} \le
			\int_{t/2}^t
			\int_{\RR^n}
			(t-s)^{-\frac n2}
			e^{-\frac{|x- y|^2}{4(t-s )} }
			|y|^{-b}
			\left(
			\1_{\{ 9^{-1} t^{\frac 12} \le |y|\le  \frac{|x|}{2} \}}
			+
			\1_{\{ \frac{|x|}{2} \le |y|\le  4|x| \}}
			+
			\1_{\{ 4|x|\le |y| \}}
			\right) d y d s
			\\
\lesssim \ &
\int_{t/2}^t
\int_{\RR^n}
(t-s)^{-\frac n2}
\left(
e^{-\frac{|x|^2}{16(t-s )} }
|y|^{-b}
\1_{\{ 9^{-1} t^{\frac 12} \le |y|\le  \frac{|x|}{2} \}}
+
|x|^{-b} e^{-\frac{|x- y|^2}{4(t-s )} }
\1_{\{ |x-y|\le  5|x| \}}
+
	e^{-\frac{|y|^2}{16(t-s )} }
|y|^{-b}
\1_{\{ 4|x|\le |y| \}}
\right) d y d s
\\
			\lesssim \ &
			|x|^{2-n}
			e^{-\frac{|x|^2}{16 t}}
			\begin{cases}
				|x|^{n-b}
				&
				\mbox{ \ if \ } b<n
				\\
			\langle\ln( |x|t^{-\frac 12}) \rangle
				&
				\mbox{ \ if \ } b=n
				\\
				t^{\frac{n-b}{2}}
				&
				\mbox{ \ if \ } b>n
			\end{cases}
			+
			t  |x|^{-b}
			+
			|x|^{2-b}
			e^{-\frac{|x|^2}{2t}}
			\sim
			t |x|^{-b} .
		\end{aligned}
	\end{equation*}
	
\end{proof}

\medskip

\subsection{Cauchy problem with initial value $\langle y \rangle^{-b}$}
\begin{lemma}\label{cauchy-est-lem}
	For $n\ge 1$, $b\in \RR$ and $t>0$, it holds that
	\begin{equation*}
\begin{aligned}
	&
			(4\pi t)^{-\frac n2} \int_{\RR^n}
			e^{-\frac{|x-y|^2}{4t} }
			\langle y \rangle^{-b} d y
			\\
			\lesssim \ &
\begin{cases}
	\langle t \rangle^{-\frac{b}{2}}	
	\1_{\{ |x|\le \langle t \rangle^{\frac 12} \}}
	+
| x|^{-b}
	\1_{\{ |x| > \langle t \rangle^{\frac 12} \}}
	&
	\mbox{ \ if \ }
	b < n
	\\
	\langle t \rangle^{-\frac n2}	\ln ( t +2 ) \1_{\{ |x|\le \langle t \rangle^{\frac 12} \}}
	+
	\left(| x|^{-n}
	+
	t^{-\frac n2}	e^{-\frac{|x|^2}{16t} } \ln (|x| +2 )\right)
	\1_{\{ |x| > \langle t \rangle^{\frac 12} \}}
	&
	\mbox{ \ if \ }
	b = n
	\\
	\langle t \rangle^{-\frac n2}
	\1_{\{ |x|\le \langle t \rangle^{\frac 12} \}}
	+
	\left( | x|^{-b}
	+  t^{-\frac n2}
	e^{-\frac{|x|^2}{16t} }  \right)
	\1_{\{ |x| > \langle t \rangle^{\frac 12} \}}
	&
	\mbox{ \ if \ }
	b>n
\end{cases}
.
\end{aligned}
	\end{equation*}
	
\end{lemma}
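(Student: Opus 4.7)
\textbf{Proof proposal for Lemma \ref{cauchy-est-lem}.} The estimate is a standard Gaussian convolution bound for a power-type radial datum, and the plan is to decompose $\mathbb{R}^n_y$ into regions based on the relative sizes of $|y|$, $|x|$, and $\langle t\rangle^{1/2}$, applying on each piece either the trivial bound $e^{-|x-y|^2/(4t)}\le 1$ or a Gaussian decay factor extracted from a favorable lower bound on $|x-y|$.

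First I would dispose of the small-time regime $t\le 1$: there $\langle t\rangle^{1/2}\sim 1$, the right-hand sides all reduce to $O(\langle x\rangle^{-b})$ (with the appropriate logarithm when $b=n$), and this follows from the $L^\infty$-contractivity of the heat semigroup applied to $\langle y\rangle^{-b}$ together with the trivial observation that $(4\pi t)^{-n/2}\int e^{-|x-y|^2/(4t)}\,dy=1$. Hence I may assume $t\ge 1$, where $\langle t\rangle\sim t$ throughout.

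Next, split into the two regimes indicated by the statement. In the \emph{inner} regime $|x|\le\langle t\rangle^{1/2}$, write the integral over $\{|y|\le 2\langle t\rangle^{1/2}\}\cup\{|y|>2\langle t\rangle^{1/2}\}$. On the inner set, drop the Gaussian factor and estimate
\[
\int_{|y|\le 2\langle t\rangle^{1/2}}\langle y\rangle^{-b}\,dy\ \lesssim\ \begin{cases}\langle t\rangle^{(n-b)/2}&b<n,\\ \ln(t+2)&b=n,\\ 1&b>n,\end{cases}
\]
and multiply by $(4\pi t)^{-n/2}$ to obtain exactly the inner bounds. On the outer set, $|y|\ge 2|x|$ forces $|x-y|\ge |y|/2$, so one has the cleaner Gaussian $e^{-|y|^2/(16t)}$; the substitution $z=|y|^2/(16t)$ (with lower endpoint $\ge 1/4$) then absorbs this piece into the inner bound.

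In the \emph{outer} regime $|x|>\langle t\rangle^{1/2}$, partition the $y$-space into the three sets $\{|y|\le |x|/2\}$, $\{|x|/2<|y|<2|x|\}$, and $\{|y|\ge 2|x|\}$. On the first, $|x-y|\ge |x|/2$ pulls out a factor $e^{-|x|^2/(16t)}$, and the remaining integral $\int_{|y|\le|x|/2}\langle y\rangle^{-b}\,dy$ produces the $|x|^{n-b}$, $\ln(|x|+2)$, or $1$ factors according to $b\lessgtr n$; multiplying by $t^{-n/2}$ and using $z^{n/2}e^{-z/16}\lesssim 1$ with $z=|x|^2/t$ absorbs the $b<n$ and $b>n$ contributions into $|x|^{-b}$ and leaves the logarithmic $t^{-n/2}e^{-|x|^2/(16t)}\ln(|x|+2)$ term when $b=n$. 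On the middle set, $\langle y\rangle\sim |x|$, so factoring out $|x|^{-b}$ leaves a full Gaussian integral of size $O(t^{n/2})$ and gives $|x|^{-b}$ directly. On the outermost set, the bound $|x-y|\ge |y|/2$ and the substitution $z=|y|^2/(16t)$ (now with lower endpoint $\ge |x|^2/(4t)\gtrsim 1$) contributes at most $t^{-b/2}(|x|^2/t)^{b/2}e^{-|x|^2/(8t)}\lesssim |x|^{-b}$.

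The main technical obstacle is simply bookkeeping: handling the borderline $b=n$ cleanly so that the logarithm appears with the right argument, and making sure that the constants from the change of variables are consistent with the exponent $1/16$ appearing in the statement rather than a sharper $1/4$. Both are resolved by consistently using $|x-y|^2/(4t)\ge|y|^2/(16t)$ (respectively $|x|^2/(16t)$) in the far regions, which leaves enough room in the exponent for the polynomial prefactors $(|x|^2/t)^{n/2}$, $(|x|^2/t)^{b/2}$ to be swallowed. Collecting the contributions from each region yields the stated piecewise bound.
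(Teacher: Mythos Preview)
Your approach is essentially the paper's: decompose the $y$--integral according to the relative size of $|y|$ and $|x|$ (the paper uses the three regions $\{|y|\le|x|/2\}$, $\{|x|/2\le|y|\le2|x|\}$, $\{|y|\ge2|x|\}$ uniformly in $t$ rather than first splitting on $t\lessgtr1$ and $|x|\lessgtr t^{1/2}$), bound the Gaussian trivially when $|x-y|$ can be small, and extract $e^{-|x|^2/(16t)}$ or $e^{-|y|^2/(16t)}$ when it cannot. Two small bookkeeping slips to fix: (i) for $t\le1$ and $|x|>1$, $L^\infty$--contractivity alone gives only $u\le1$, not $\langle x\rangle^{-b}$ --- you still need the three-region split there (which you already carry out for $t\ge1$); (ii) in the outer regime with $b>n$, the piece $\{|y|\le|x|/2\}$ contributes $t^{-n/2}e^{-|x|^2/(16t)}$, which is precisely the extra Gaussian term in the stated bound and is \emph{not} absorbed into $|x|^{-b}$ (take $|x|\sim t^{1/2}$, $b$ large).
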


\begin{proof}
	Set
	\begin{equation*}
		u(x,t) =
		(4\pi t)^{-\frac n2} \int_{\RR^n}
		e^{-\frac{|x-y|^2}{4t} }
		\langle y \rangle^{-b} d y
\sim
t^{-\frac n2}
\left(
\int_{|y|\le \frac{|x|}{2}}
+
\int_{\frac{|x|}{2} \le|y| \le 2|x|}
+
\int_{2|x| \le |y|}
\right)
e^{-\frac{|x-y|^2}{4t} }
\langle y \rangle^{-b} d y
		.
	\end{equation*}
We estimate term by term:
	\begin{equation*}
		\int_{|y|\le \frac{|x|}{2}}
		e^{-\frac{|x-y|^2}{4t} }
		\langle y\rangle^{-b} d y
		\lesssim
		e^{-\frac{|x|^2}{16t} }
		\int_{|y|\le \frac{|x|}{2}}
		\langle y\rangle^{-b}  d y
		\sim
\begin{cases}
e^{-\frac{|x|^2}{16t} } |x|^{n}
&
\mbox{ \ if \ } |x| \le  1
\\
		\begin{cases}
			e^{-\frac{|x|^2}{16t} } |x|^{n-b}
			&
			\mbox{ \ if \ }
			b<n
			\\
			e^{-\frac{|x|^2}{16t} } \ln (|x| +2 )
			&
			\mbox{ \ if \ }
			b = n
			\\
			e^{-\frac{|x|^2}{16t} }
			&
			\mbox{ \ if \ }
			b > n
		\end{cases}
&
\mbox{ \ if \ } |x|> 1
\end{cases},
	\end{equation*}
	\begin{equation*}
		\begin{aligned}
			&
			\int_{\frac{|x|}{2} \le|y| \le 2|x|}
			e^{-\frac{|x-y|^2}{4t} }
			\langle y\rangle^{-b}  d y
			\lesssim
			\langle x\rangle^{-b}
			\int_{ |x-y| \le 3|x|}
			e^{-\frac{|x-y|^2}{4t} }
			d y
			\sim
			\begin{cases}
			 \langle x\rangle^{-b} 	|x|^{n}
				&
				\mbox{ \ if \ } |x|\le t^{\frac{1}{2}}
				\\
			\langle x\rangle^{-b}
			t^{\frac{n}{2}}
				&
				\mbox{ \ if \ } |x| > t^{\frac{1}{2}}
			\end{cases}
			,
		\end{aligned}
	\end{equation*}
and
\begin{equation*}
\int_{2|x| \le |y|}
e^{-\frac{|x-y|^2}{4t} }
\langle y\rangle^{-b} d y
\le
\int_{2|x| \le |y|}
e^{-\frac{|y|^2}{16t} }
\langle y\rangle^{-b} d y .
\end{equation*}
	For $|x|\ge 1$, we have
	\begin{equation*}
			\int_{2|x| \le |y|}
			e^{-\frac{|y|^2}{16t} }
			\langle y\rangle^{-b} d y
			\sim
			t^{\frac{n-b}{2}}
			\int_{\frac{|x|^2}{4t}}^\infty
			e^{-z} z^{\frac{n-b}{2}-1} d z
			\lesssim
			\begin{cases}
				t^{\frac{n-b}{2}}	
				&
				\mbox{ \ if \ }
				|x|\le t^{\frac 12},
				b < n
				\\
				\ln(\frac{t}{|x|^2} ) +1	
				&
				\mbox{ \ if \ }
				|x|\le t^{ \frac 12 },
				b = n
				\\
				|x|^{n-b}
				&
				\mbox{ \ if \ }
				|x|\le t^{ \frac 12 },
				b>n
				\\
				t^{\frac{n-b}{2}}
				e^{-\frac{|x|^2}{8t}}
				&
				\mbox{ \ if \ }
				|x| > t^{ \frac 12 }
			\end{cases} .
	\end{equation*}
For $|x|<1$, we have
\begin{equation*}
	\begin{aligned}
		&
		\int_{2|x| \le |y|}
		e^{-\frac{|y|^2}{16t} }
		\langle y\rangle^{-b} d y
		\sim
\int_{2|x|}^{2}
	e^{-\frac{r^2}{16t} } r^{n-1} dr
+
		\int_{2}^\infty
		e^{-\frac{ r^2}{16t} }
		r^{n-1-b} d r
\\
\lesssim \ &
\begin{cases}
	t^{\frac n2} e^{ -\frac{|x|^2}{8t} }
& \mbox{ \ if \ } t\le |x|^2
\\
	t^{\frac n2}
& \mbox{ \ if \ } |x|^2<t\le 1
\\
1 & \mbox{ \ if \ } t >1
\end{cases}
+
\begin{cases}
t^{\frac{n-b}{2}}
e^{-\frac{ 1 }{8t}}
&
\mbox{ \ if \ }
t<1
\\
	t^{\frac{n-b}{2}}	
	&
	\mbox{ \ if \ }
 t\ge 1,
	b < n
	\\
 \ln (t+2)
	&
	\mbox{ \ if \ }
t\ge 1,
	b = n
	\\
1
	&
	\mbox{ \ if \ }
 t\ge 1,
	b>n
\end{cases}
\lesssim
\begin{cases}
t^{\frac n2} e^{ -\frac{|x|^2}{16 t} }
& \mbox{ \ if \ } t\le |x|^2
\\
	t^{\frac n2}
& \mbox{ \ if \ } |x|^2<t\le 1
	\\
	t^{\frac{n-b}{2}}	
	&
	\mbox{ \ if \ }
	t\ge 1,
	b < n
	\\
	\ln (t+2)
	&
	\mbox{ \ if \ }
	t\ge 1,
	b = n
	\\
	1
	&
	\mbox{ \ if \ }
	t\ge 1,
	b>n
\end{cases} .
	\end{aligned}
\end{equation*}

Combining above estimates, one has
\begin{equation*}
	\begin{aligned}
		u(x,t)
		\lesssim \ &
		\begin{cases}
			\begin{cases}
				1
				& \mbox{ \ if \ } t\le 1
				\\
				t^{ - \frac{b}{2}}	
				&
				\mbox{ \ if \ }
				t\ge 1,
				b < n
				\\
				t^{-\frac n2}\ln (t+2)
				&
				\mbox{ \ if \ }
				t\ge 1,
				b = n
				\\
				t^{-\frac n2}
				&
				\mbox{ \ if \ }
				t\ge 1,
				b>n
			\end{cases}
			&
			\mbox{ \ if \ } |x|< 1
			\\
			\begin{cases}
				t^{-\frac{b}{2}}	
				&
				\mbox{ \ if \ }
				|x|\le t^{\frac 12},
				b < n
				\\
				t^{-\frac n2}	\ln ( t +2 )
				&
				\mbox{ \ if \ }
				|x|\le t^{ \frac 12 },
				b = n
				\\
				t^{-\frac n2}
				&
				\mbox{ \ if \ }
				|x|\le t^{ \frac 12 },
				b>n
				\\
				\langle x\rangle^{-b}
				
				&
				\mbox{ \ if \ }
				|x| > t^{ \frac 12 }, b<n
				\\
				\langle x\rangle^{-n}
				+
				t^{-\frac n2}	e^{-\frac{|x|^2}{16t} } \ln (|x| +2 )
				&
				\mbox{ \ if \ }
				|x| > t^{ \frac 12 }, b=n
				\\
				\langle x\rangle^{-b}
				+  t^{-\frac n2}
				e^{-\frac{|x|^2}{16t} }
				&
				\mbox{ \ if \ }
				|x| > t^{ \frac 12 }, b>n
			\end{cases}
			&
			\mbox{ \ if \ } |x|\ge 1
		\end{cases}
		\\
		\lesssim \ &
		\begin{cases}
			\langle t \rangle^{-\frac{b}{2}}	
			&
			\mbox{ \ if \ }
			|x|\le \max\{1,t^{\frac 12} \},
			b < n
			\\
			\langle t \rangle^{-\frac n2}	\ln ( t +2 )
			&
			\mbox{ \ if \ }
			|x|\le  \max\{1,t^{\frac 12} \} ,
			b = n
			\\
			\langle t \rangle^{-\frac n2}
			&
			\mbox{ \ if \ }
			|x|\le  \max\{1,t^{\frac 12} \} ,
			b>n
			\\
			\langle x\rangle^{-b}
			
			&
			\mbox{ \ if \ }
			|x| > \max\{1,t^{\frac 12} \}, b<n
			\\
			\langle x\rangle^{-n}
			+
			t^{-\frac n2}	e^{-\frac{|x|^2}{16t} } \ln (|x| +2 )
			&
			\mbox{ \ if \ }
			|x| > \max\{1,t^{\frac 12} \}, b=n
			\\
			\langle x\rangle^{-b}
			+  t^{-\frac n2}
			e^{-\frac{|x|^2}{16t} }
			&
			\mbox{ \ if \ }
			|x| > \max\{1,t^{\frac 12} \}, b>n .
		\end{cases}
	\end{aligned}
\end{equation*}
This completes the proof of Lemma \ref{cauchy-est-lem}.
\end{proof}

\medskip

\section{Proof of Proposition \ref{psi-prop}: solving the outer problem }\label{sec-outer-proof}

\begin{proof}
	It suffices to find a fixed point for $\psi = \mathcal{T}^{out}_{4}[\mathcal{G} [\psi,\phi,\mu_1,\xi] ]$. Set
	\begin{equation*}
		w_{o}(x,t)=
		\ln t (t (\ln t)^{2})^{5\delta-\kappa }
		R^{- a}  \left(\1_{\{|x|\le t^{\frac 12} \}} + t |x|^{-2} \1_{\{|x| > t^{\frac 12} \}}\right) ,
	\end{equation*}
\begin{equation*}
\| g\|_{o} = \sup\limits_{(x,t)\in \RR^4\times (t_0,\infty) } w^{-1}_{o}(x,t) |g(x,t)|, \quad
	B_{o} = \{ g(x,t) : \ \| g\|_{o} \le D_o  \},
\end{equation*}
	where $D_o\ge 1$ will be determined later.
	For any $\psi_{1} \in B_{o}$,
	let us estimate $\mathcal{G} [\psi_1,\phi,\mu_1,\xi]$ term by term.
	In this proof, we will apply Lemma \ref{annular-forward} and Lemma \ref{far-forward} multiple times to estimate convolution $\TT_{4}^{out}$ and will not state them repetitively.
	
		By the definitions of the norms \eqref{norm-phi}, \eqref{norm-mu}, \eqref{norm-xi},  one has
	\begin{equation*}
		|\phi(y, t)| + \langle y\rangle |\nabla \phi(y, t)|
		\lesssim
		(t (\ln t)^{2})^{5\delta-\kappa }
		\langle y\rangle^{-a} \| \phi\|_{i,\kappa-5\delta,a}
		,
	\end{equation*}
\begin{equation*}
|\mu_{1}| + t|\mu_{1t}| \lesssim   t \ln t	(t (\ln t)^{2})^{5\delta-\kappa }
R^{- a} \| \mu_{1} \|_{*1} ,\quad
|\xi | + t|\xi_{t}| \lesssim
t (\ln t)^2 (t (\ln t)^{2})^{5\delta-\kappa }
R^{- a} \| \xi \|_{*2} .
\end{equation*}
Then
	\begin{equation*}
		\begin{aligned}
			\left| \Delta_x \eta_R \mu^{-1}\phi(\frac {x-\xi}{\mu},t) \right|
			\lesssim \ &
			(\mu_{0} R)^{-2} \1_{\{ \mu_{0}R \le |x-\xi| \le 2 \mu_{0}R \}}
			\mu^{-1}
			(t (\ln t)^{2})^{5\delta-\kappa }
			\langle y\rangle^{-a} \| \phi\|_{i,\kappa-5\delta,a}
			\\
			\sim \ &
			\Lambda_1
			(\mu_{0} R)^{-2}
			\ln t
			(t (\ln t)^{2})^{5\delta-\kappa }
			R^{-a}   \1_{\{ \mu_{0}R \le |x-\xi| \le 2 \mu_{0}R \}}
			,
		\end{aligned}
	\end{equation*}
	\begin{equation*}
		\begin{aligned}
			\left|2 \nabla_x \eta_R \cdot \mu^{-2} \nabla_y \phi(\frac {x-\xi}{\mu},t) \right|
			\lesssim \ &
			(\mu_{0} R)^{-1} \1_{\{ \mu_{0}R \le |x-\xi| \le 2 \mu_{0}R \}}
			\mu^{-2}
			(t (\ln t)^{2})^{5\delta-\kappa }
			\langle y\rangle^{-1-a} \| \phi\|_{i,\kappa-5\delta,a}
			\\
			\lesssim \ &
			\Lambda_1
			(\mu_{0} R)^{-2}
			\ln t
			(t (\ln t)^{2})^{5\delta-\kappa }
			R^{-a}  \1_{\{ \mu_{0}R \le |x-\xi| \le 2 \mu_{0}R \}}
			,
		\end{aligned}
	\end{equation*}
	\begin{equation*}
		\begin{aligned}
			\left| \pp_t \eta_R \mu^{-1}\phi(\frac {x-\xi}{\mu},t) \right|
			= \ &
			\left|
			\nabla \eta(\frac{x-\xi}{\mu_0 R}) \cdot
			(\frac{-\xi_t}{\mu_0 R} - \frac{x-\xi}{\mu_0 R} \frac{(\mu_0 R)_{t}}{\mu_0 R} ) \mu^{-1}\phi(\frac {x-\xi}{\mu},t) \right|
			\\
			\lesssim \ & \Lambda_1^2
			(\mu_{0} R)^{-2}
			\ln t
			(t (\ln t)^{2})^{5\delta-\kappa }
			R^{-a}  \1_{\{ \mu_{0}R \le |x-\xi| \le 2 \mu_{0}R \}},
		\end{aligned}
	\end{equation*}
where we have used $\gamma<\frac 12$ and $5\delta-\kappa<-1$ in the last inequality.	Then one has
	\begin{equation*}
		\begin{aligned}
			&
			\mathcal{T}^{out}_{4}\left[  (
			\mu_{0} R)^{-2}
			\ln t (t (\ln t)^{2})^{5\delta-\kappa }
			R^{-a}   \1_{\{ \mu_{0}R \le |x-\xi| \le 2 \mu_{0}R \}}
			\right]
			\lesssim
			\mathcal{T}^{out}_{4}\left[  (\mu_{0} R)^{-2}
			\ln t (t (\ln t)^{2})^{5\delta-\kappa }
			R^{-a}   \1_{\{ \mu_{0}R/2 \le |x| \le 4 \mu_{0}R \}}
			\right]
			\\
			\lesssim \ &
	 t^{-2} e^{-\frac{|x|^2}{16 t }} \int_{\frac{t_0}{2}}^{\frac t2}
			(\ln s)^{-1} (s (\ln s)^2)^{5\delta-\kappa} R^{2-a}(s) ds
			+
			\begin{cases}
				\ln t (t (\ln t)^{2})^{5\delta-\kappa } R^{-a}
				&
				\mbox{ \ if \ }
				|x| \le \mu_{0} R
				\\
				\ln t (t (\ln t)^{2})^{5\delta-\kappa } R^{-a}
				(\mu_{0}R)^2
				|x|^{-2} e^{-\frac{|x|^2}{16 t} }
				&
				\mbox{ \ if \ }
				|x| > \mu_{0} R
			\end{cases}
			\\
			\lesssim \ &
		 w_{o}
		\end{aligned}
	\end{equation*}
provided $5\delta -\kappa -a\gamma >-2$. Also,
\begin{equation*}
	\begin{aligned}
		\left|\eta_R \mu^{-2} \xi_{t} \cdot \nabla_y \phi(\frac{x-\xi}{\mu},t) \right|
		\lesssim \ & \Lambda_1^2
		\1_{\{ |x|\le 3\mu_0R\}}
		(\ln t)^2
		(\ln t)^2 (t (\ln t)^{2})^{5\delta-\kappa }
		R^{- a}
		(t (\ln t)^{2})^{5\delta-\kappa }
		\langle y\rangle^{-1-a}
		\\
		\sim \ & \Lambda_1^2
		(\ln t)^4
		(t (\ln t)^{2})^{10\delta-2\kappa }
		R^{- a}
		\left(\1_{\{ |x|\le \mu_0 \}} + (\ln t)^{-1-a} |x|^{-1-a} \1_{\{ \mu_0 < |x|\le 3\mu_0R\}} \right),
	\end{aligned}
\end{equation*}
and
\begin{equation*}
	\begin{aligned}
		&
		\TT_{4}^{out} \left[	(\ln t)^4
		(t (\ln t)^{2})^{10\delta-2\kappa }
		R^{- a}
		(\ln t)^{-1-a} |x|^{-1-a} \1_{\{ \mu_0 < |x|\le 3\mu_0R\}} \right]
		\\
		\le \ &
		\TT_{4}^{out} \left[	(\ln t)^3
		(t (\ln t)^{2})^{10\delta-2\kappa }
		R^{- a}
		|x|^{-1} \1_{\{ \mu_0 < |x|\le 3\mu_0R\}} \right]
		\\
		\lesssim \ &
		t^{-2} e^{-\frac{|x|^2}{16  t}}\int_{\frac{t_0}{2}}^{\frac{t}{2}}
		(\ln s)^3
		(s (\ln s)^{2})^{10\delta-2\kappa }
		R^{- a}(s)
		(\mu_0R)^{3}(s) ds
		\\
		&
		+
		\begin{cases}
			\mu_0 R (\ln t)^3
			(t (\ln t)^{2})^{10\delta-2\kappa }
			R^{- a}
			&
			\mbox{ \ if \ } |x|\le \mu_0R
			\\
			(\ln t)^3
			(t (\ln t)^{2})^{10\delta-2\kappa }
			R^{- a}
			|x|^{-2} e^{-\frac{|x|^2}{16 t }} (\mu_0R)^3
			&
			\mbox{ \ if \ } |x| > \mu_0R
		\end{cases}
		\lesssim  t_{0}^{-\epsilon} w_{o} ,
	\end{aligned}
\end{equation*}
\begin{equation*}
	\TT_{4}^{out} [ (\ln t)^4
	(t (\ln t)^{2})^{10\delta-2\kappa }
	R^{- a}
	\1_{\{ |x|\le \mu_0 \}}  ]
	\lesssim  t_{0}^{-\epsilon} w_{o}
\end{equation*}
provided $5\delta -\kappa-a\gamma >-2$  and $\epsilon>0$ is sufficiently small.
	
Using \eqref{S2-est}, one has
\begin{equation*}
	\begin{aligned}
		&
		\left|(1-\eta_R) S\Big[u_1 + \varphi[\mu] +  \bar{\mu}_0^{-1} \Phi_0(\frac{x-\xi}{\bar{\mu}_0} , t)
		\eta(\frac{4(x-\xi) }{\sqrt t}) \Big]  \right|
		\\
		\lesssim \ &
\Bigg[ 	t^{-2}(\ln t)^{-1}   |x|^{-2}  +  t^{-1}(\ln t)^{-2} |\mu_1| |x|^{-4}
\\
&  +    (\ln t)^{-2}    \left(|\tilde{g}[\bar{\mu}_0,\mu_1] | +
|\bar{\mu}_{0t}| \ln t
\sup\limits_{t_1\in[t/2,t]} (\frac{|\mu_1(t_1)|}{\bar{\mu}_{0}(t) } + \frac{|\mu_{1t}(t_1)|}{|\bar{\mu}_{0t}(t)|}  )
 \right) |x|^{-4}
+
|\xi_t|
(\ln t)^{-1} |x|^{-3}
\Bigg]
\1_{\{  \frac{\mu_0R}{2} \le |x|\le 9t^{\frac 12} \}}
\\
&  +
|\xi_t| t^{\frac 32} (\ln t)^{-1} |x|^{-6}
\1_{\{ |x| > 2t^{\frac 12}\}}  + ( t^{2} (\ln t)^{-1}
|x|^{-6}
)^3 \1_{\{ |x| > 2t^{\frac 12}  \}}
\\
\lesssim \ &
\left( 	t^{-2}(\ln t)^{-1}   |x|^{-2}
+
\Lambda_1^2 \ln t (t (\ln t)^{2})^{5\delta-\kappa }
R^{- a}
|x|^{-3}
\right)
\1_{\{  \frac{\mu_0R}{2} \le |x|\le 9t^{\frac 12} \}}
\\
&  +
\Lambda_1 t^{\frac 32} \ln t (t (\ln t)^{2})^{5\delta-\kappa }
R^{- a}  |x|^{-6}
\1_{\{ |x| > 2t^{\frac 12}\}}
+ ( t^{2} (\ln t)^{-1}
|x|^{-6}
)^3 \1_{\{ |x| > 2t^{\frac 12}  \}}
	\end{aligned}
\end{equation*}
since
\begin{equation*}
|\bar{\mu}_{0t}| \ln t
\sup\limits_{t_1\in[t/2,t]} \left(\frac{|\mu_1(t_1)|}{\bar{\mu}_{0}(t) } + \frac{|\mu_{1t}(t_1)|}{|\bar{\mu}_{0t}(t)|}  \right)
\lesssim
\Lambda_{1}
(\ln t)^2 (t(\ln t)^2)^{5\delta-\kappa} R^{-a} ,
\end{equation*}
\begin{equation}\label{tilde-g-mu1-est}
	\begin{aligned}
		\tilde{g}[\bar{\mu}_0, \mu_{1}]
		\lesssim \ &
		\Lambda_{1}^2
		t^{-2}
		\int_{t_0/2}^{t }
		(
		\ln s	(s (\ln s)^{2})^{5\delta-\kappa }
		R^{- a}(s) (\ln s)^{-2}   +
		s \ln s (s(\ln s)^2)^{5\delta-\kappa} R^{-a}(s)
		)
		d s
		\\
		&
		+
		\Lambda_{1}^2 (t\ln t)^{-1} [t (\ln t)^3 (t(\ln t)^2)^{5\delta-\kappa} R^{-a}]^2
		\lesssim
		\Lambda_{1}^2
		\ln t (t(\ln t)^2)^{5\delta-\kappa} R^{-a}
	\end{aligned}
\end{equation}
when $5\delta-\kappa -a\gamma>-2$.

Then we estimate
\begin{equation*}
\begin{aligned}
&
\TT_4^{out}\left[t^{-2} (\ln t)^{-1} |x|^{-2}
\right] \1_{\{ \frac{\mu_0 R }{2} \le |x|\le 9t^{\frac 12} \}}
\\
\lesssim \ &
t^{-2} \ln \ln t e^{-\frac{|x|^2}{16 t}}
+
\begin{cases}
t^{-2}
&
\mbox{ \ if \ } |x|\le \mu_0 R
\\
t^{-2} (\ln t)^{-1} (\ln (|x|^{-1}t^{\frac 12}) + 1 )
&
\mbox{ \ if \ }  \mu_0 R < |x|\le t^{\frac 12}
\\
(t \ln t)^{-1} |x|^{-2} e^{-\frac{|x|^2}{16 t}}
&
\mbox{ \ if \ }   |x| > t^{\frac 12}
\end{cases}
\lesssim
t_0^{-\epsilon_0 }w_{o}
\end{aligned}
\end{equation*}
since $5\delta -\kappa -a\gamma>-2$.
\begin{equation*}
	\begin{aligned}
		&
		\TT_{4}^{out} \left[ \ln t (t (\ln t)^{2})^{5\delta-\kappa }
		R^{- a}
		|x|^{-3}
		\1_{\{  \frac{\mu_0R}{2} \le |x|\le 9t^{\frac 12} \}}\right]
		\\
		\lesssim \ &
		t^{-2} \int_{\frac{t_0}{2}}^{\frac t2}
		\ln s (s(\ln s)^2)^{5\delta-\kappa} R^{-a}(s) s^{\frac 12} ds
		+
		\begin{cases}
			\ln t (t(\ln t)^2)^{5\delta-\kappa} R^{-a} (\mu_0R)^{-1}
			&
			\mbox{ \ if \ } |x|\le \mu_0R
			\\
			\ln t (t(\ln t)^2)^{5\delta-\kappa} R^{-a} |x|^{-1}
			&
			\mbox{ \ if \ }  \mu_0R < |x|\le t^{\frac 12}
			\\
			\ln t (t(\ln t)^2)^{5\delta-\kappa} R^{-a}t^{\frac 12} |x|^{-2} e^{-\frac{|x|^2}{16  t}}
			&
			\mbox{ \ if \ }   |x| > t^{\frac 12}
		\end{cases}
		\\
		\lesssim \ &
		t_0^{-\epsilon} w_{o}
	\end{aligned}
\end{equation*}
since $5\delta -\kappa -a\gamma>-2$.
\begin{equation*}
	\begin{aligned}
		&
		\TT_{4}^{out}\left[t^{\frac 32} \ln t (t (\ln t)^{2})^{5\delta-\kappa }
		R^{- a}  |x|^{-6}
		\1_{\{ |x| > 2t^{\frac 12}\}} \right]
		\\
		\lesssim \ &
		\begin{cases}
			t^{-2} \int_{\frac{t_0}{2}}^{t}
			s^{\frac 12} \ln s (s(\ln s)^{2})^{5\delta-\kappa }
			R^{- a}(s) ds
			&
			\mbox{ \ if \ } |x|\le t^{\frac 12}
			\\
			|x|^{-6} \int_{\frac{t_0}{2}}^{t}
			s^{\frac 32} \ln s (s (\ln s)^{2})^{5\delta-\kappa }
			R^{- a}(s) ds +
			t^{-2} e^{-\frac{|x|^2}{16  t }} \int_{\frac{t_0}{2}}^{\frac t2}
			s^{\frac 12} \ln s (s(\ln s)^{2})^{5\delta-\kappa }
			R^{- a}(s) ds
			&
			\mbox{ \ if \ } |x| > t^{\frac 12}
		\end{cases}
		\\
		\lesssim \ &
		t_0^{-\epsilon} w_{o} .
	\end{aligned}
\end{equation*}
\begin{equation*}
\TT_{4}^{out} \left[( t^{2} (\ln t)^{-1}
|x|^{-6}
)^3 \1_{\{ |x| > 2t^{\frac 12}  \}}\right]
\lesssim
t^{-2} \1_{\{ |x|\le t^{\frac 12} \}}
+
\left(t^{7} (\ln t)^{-3} |x|^{-18}+ t^{-2} e^{-\frac{|x|^2}{16 t }}\right)
\1_{\{ |x| > t^{\frac 12} \}}
\lesssim
t_0^{-\epsilon} w_o
\end{equation*}
when $5\delta -\kappa -a\gamma>-2$.

	\begin{align*}
&
\left(u_1 + \varphi[\mu] +  \bar{\mu}_0^{-1} \Phi_0(\frac{x-\xi}{\bar{\mu}_0} , t) \eta(\frac{4(x-\xi) }{\sqrt t}) + \psi_{1} + \eta_R  \mu^{-1} \phi(\frac{x-\xi}{\mu},t) \right)^3
\\
&
-
\left(u_1 + \varphi[\mu] +  \bar{\mu}_0^{-1} \Phi_0(\frac{x-\xi}{\bar{\mu}_0} , t) \eta(\frac{4(x-\xi) }{\sqrt t}) \right)^3
- 3 \left(\mu^{-1} w(\frac{x-\xi}{\mu})\right)^2 \left( \psi_{1} + \eta_R  \mu^{-1} \phi(\frac{x-\xi}{\mu},t)  \right)
\\
& -
\Bigg[ 3 \left(u_1 + \varphi[\mu] -\mu^{-1} w(\frac{x-\xi}{\mu}) \right)
\left(u_1 + \varphi[\mu] +\mu^{-1} w(\frac{x-\xi}{\mu}) \right)
\\
& +
6 (u_1 + \varphi[\mu] )   \bar{\mu}_0^{-1} \Phi_0(\frac{x-\xi}{\bar{\mu}_0} , t) \eta(\frac{4(x-\xi)}{\sqrt t})
\Bigg] \eta_R  \mu^{-1} \phi(\frac{x-\xi}{\mu},t)
		\\
= \ &
3 \left(u_1 + \varphi[\mu] -\mu^{-1} w(\frac{x-\xi}{\mu}) \right)
\left(u_1 + \varphi[\mu] +\mu^{-1} w(\frac{x-\xi}{\mu}) \right) \psi_{1}
\\
& +
6 (u_1 + \varphi[\mu] )   \bar{\mu}_0^{-1} \Phi_0(\frac{x-\xi}{\bar{\mu}_0} , t) \eta(\frac{4(x-\xi)}{\sqrt t})   \psi_{1}
\\
& +
3  \left(  \bar{\mu}_0^{-1} \Phi_0(\frac{x-\xi}{\bar{\mu}_0} , t) \eta(\frac{4(x-\xi)}{\sqrt t})  \right)^2 \left(\psi_{1} + \eta_R  \mu^{-1} \phi(\frac{x-\xi}{\mu},t)\right)
\\
&
+
3 \left(u_1 + \varphi[\mu] +  \bar{\mu}_0^{-1} \Phi_0(\frac{x-\xi}{\bar{\mu}_0} , t) \eta(\frac{4(x-\xi)}{\sqrt t})  \right)\left(\psi_{1} + \eta_R  \mu^{-1} \phi(\frac{x-\xi}{\mu},t)\right)^2
+
\left(\psi_{1} + \eta_R  \mu^{-1} \phi(\frac{x-\xi}{\mu},t)\right)^3
\\
\lesssim  \ &
 \Big( (t\ln t)^{-1} \1_{\{ |\bar{x}| \le 2t^{\frac 12} \}}
 +
  t^{2} (\ln t)^{-1}
 |\bar{x}|^{-6}
  \1_{\{ |\bar{x}| > 2t^{\frac 12}  \}}  + \ln t \langle y\rangle^{-2}  \1_{\{ |\bar{x}|\ge t^{\frac 12} \}}\Big)
 \\
 &\times
\Big( (t\ln t)^{-1} \1_{\{ |\bar{x}| \le 2t^{\frac 12} \}}
+
 t^{2} (\ln t)^{-1}
|\bar{x}|^{-6}
 \1_{\{ |\bar{x}| > 2t^{\frac 12}  \}} +  \ln t \langle y\rangle^{-2}  \Big) |\psi_{1}|
\\
& +
( \ln t \langle y\rangle^{-2} \1_{\{ |\bar{x}|\le 2t^{\frac 12} \}} +  (t\ln t)^{-1} \1_{\{ |\bar{x}| \le 2t^{\frac 12} \}}
+
 t^{2} (\ln t)^{-1}
|\bar{x}|^{-6}
 \1_{\{ |\bar{x}| > 2t^{\frac 12}  \}}  )
    (t \ln t)^{-1} \langle \bar{y} \rangle^{-2}
\ln (2+ |\bar{y}|)
\1_{\{ |\bar{x}|\le 8t^{\frac 12} \}}  |\psi_{1}|
\\
& +
(t \ln t)^{-2} \langle \bar{y} \rangle^{-4}
\ln^2 (2+ |\bar{y}|)
\1_{\{ |\bar{x}|\le 8t^{\frac 12} \}} \Big(|\psi_{1}| + |\eta_R  \mu^{-1} \phi(\frac{x-\xi}{\mu},t) |\Big)
\\
&
+
 \Big( \ln t \langle y\rangle^{-2} \1_{\{ |\bar{x}| \le 2t^{\frac 12} \}} +
 (t\ln t)^{-1} \1_{\{ |\bar{x}| \le 2t^{\frac 12} \}}
 +
 O( t^{2} (\ln t)^{-1}
 |\bar{x}|^{-6}
 ) \1_{\{ |\bar{x}| > 2t^{\frac 12}  \}}
 \\
&
  + (t \ln t)^{-1} \langle \bar{y} \rangle^{-2}
 \ln (2+ |\bar{y}|) \1_{\{ |\bar{x}|\le 8t^{\frac 12} \}}
    \Big)\Big|\psi_{1} + \eta_R  \mu^{-1} \phi(\frac{x-\xi}{\mu},t)\Big|^2
+
\Big|\psi_{1} + \eta_R  \mu^{-1} \phi(\frac{x-\xi}{\mu},t)\Big|^3
\\
\lesssim  \ &
\left( t^{-1} \langle y\rangle^{-2}   \1_{\{ |x| \le t^{\frac 12} \}}
  +  (\ln t)^{-2} |x|^{-4}    \1_{\{ |x| > t^{\frac 12} \}}\right)
   |\psi_{1}|
\\
& +
(t \ln t)^{-2} \langle \bar{y} \rangle^{-4}
\ln^2 (2+ |\bar{y}|)
\1_{\{ |\bar{x}|\le 8t^{\frac 12} \}}  \Big|\eta_R  \mu^{-1} \phi(\frac{x-\xi}{\mu},t) \Big|
\\
&
+
\left( \ln t \langle y\rangle^{-2} \1_{\{ |x| \le t^{\frac 12} \}}
+
 t^{2} (\ln t)^{-1}
|x|^{-6}
 \1_{\{ |x| > t^{\frac 12}  \}}
\right)\Big|\psi_{1} + \eta_R  \mu^{-1} \phi(\frac{x-\xi}{\mu},t)\Big|^2
+
\Big|\psi_{1} + \eta_R  \mu^{-1} \phi(\frac{x-\xi}{\mu},t)\Big|^3,
	\end{align*}
where we have used Corollary \ref{varphi-coro} and \eqref{Phi0-est}.

Consider the terms involving $\phi$:
	\begin{equation*}
	\begin{aligned}
		&
		\TT_4^{out}\left[(t \ln t)^{-2} \langle \bar{y} \rangle^{-4}
		\ln^2 (2+ |\bar{y}|)
		\1_{\{ |\bar{x}|\le 8t^{\frac 12} \}}  \Big|\eta_R  \mu^{-1} \phi(\frac{x-\xi}{\mu},t) \Big| \right]
		\\
		\lesssim \ &
		\Lambda_1
		\TT_4^{out}[t^{-2} \langle \bar{y} \rangle^{-4}
		\1_{\{ |\bar{x}|\le 2\mu_0 R \}}  \ln t (t (\ln t)^{2})^{5\delta-\kappa }
		\langle y\rangle^{-a}   ]
		\\
		\sim \ &
		\Lambda_1
		\TT_4^{out}[t^{-2} \ln t (t (\ln t)^{2})^{5\delta-\kappa }
		\langle y\rangle^{-4-a}  \1_{\{ |\bar{x}|\le 2\mu_0 R \}} ]
		\lesssim	\Lambda_1
		t^{-2} e^{-\frac{|x|^2}{16 t }}
	\end{aligned}
\end{equation*}
since
\begin{equation*}
	\begin{aligned}
		&
		\TT_4^{out}[t^{-2} \ln t (t (\ln t)^{2})^{5\delta-\kappa }
		\langle y\rangle^{-4-a}  \1_{\{\mu_0 \le  |\bar{x}|\le 2\mu_0 R \}} ]
		\\
		\lesssim \ &
		\TT_4^{out}[t^{-2} (\ln t)^{-3-a} (t (\ln t)^{2})^{5\delta-\kappa }
		|x|^{-4-a}  \1_{\{\frac{\mu_0}{2} \le  |x|\le 4\mu_0 R \}} ]
		\\
		\lesssim \ &
		t^{-2} e^{-\frac{|x|^2}{16 t}} +
		\begin{cases}
			t^{-2} (\ln t)^{-1} (t (\ln t)^{2})^{5\delta-\kappa }
			&
			\mbox{ \ if \ } |x|\le \mu_0
			\\
			t^{-2} (\ln t)^{-3} (t (\ln t)^{2})^{5\delta-\kappa }
			|x|^{-2} e^{-\frac{|x|^2}{16 t }}
			&
			\mbox{ \ if \ }   |x| >\mu_0
		\end{cases}
		\lesssim
		t^{-2} e^{-\frac{|x|^2}{16 t}}  \lesssim t_0^{-\epsilon} w_{o} ,
	\end{aligned}
\end{equation*}
\begin{equation*}
	\begin{aligned}
		&
		\TT_4^{out}[t^{-2} \ln t (t (\ln t)^{2})^{5\delta-\kappa }
		\langle y\rangle^{-4-a}  \1_{\{ |\bar{x}| < \mu_0  \}} ]
		\lesssim
		\TT_4^{out}[ t^{-2} \ln t (t (\ln t)^{2})^{5\delta-\kappa }
		\1_{\{ |x| <  2 \mu_0  \}} ]
		\\
		\lesssim \ &
		t^{-2} e^{-\frac{|x|^2}{16 t }}
		+
		\begin{cases}
			t^{-2} (\ln t)^{-1} (t (\ln t)^{2})^{5\delta-\kappa }
			&
			\mbox{ \ if \ } |x|\le \mu_0
			\\
			t^{-2} (\ln t)^{-3} (t (\ln t)^{2})^{5\delta-\kappa }
			|x|^{-2} e^{-\frac{|x|^2}{16 t }}
			&
			\mbox{ \ if \ } |x| > \mu_0
		\end{cases}
		\lesssim
		t^{-2} e^{-\frac{|x|^2}{16 t }}
		.
	\end{aligned}
\end{equation*}

Next, we have
\begin{equation*}
	\begin{aligned}
		&
		\TT_4^{out} \left[\left( \ln t \langle y\rangle^{-2} \1_{\{ |x| \le t^{\frac 12} \}}
		+
		t^{2} (\ln t)^{-1}
		|x|^{-6}
		\1_{\{ |x| > t^{\frac 12}  \}}
		\right)\Big| \eta_R  \mu^{-1} \phi(\frac{x-\xi}{\mu},t)\Big|^2 \right]
		\\
		\lesssim \ & \Lambda_1^2
		\TT_4^{out} [( \ln t \1_{\{ |x| \le \mu_0 \}}
		+
		(\ln t)^{-1} |x|^{-2} \1_{\{ \mu_0 <|x| \le 4\mu_0 R \}}
		)
		(\ln t)^2 (t (\ln t)^{2})^{10\delta-2\kappa }
		\langle y\rangle^{-2a}
		]
		\\
		\sim \ & \Lambda_1^2
		\TT_4^{out} \left[ (\ln t)^3 (t (\ln t)^{2})^{10\delta-2\kappa }  \1_{\{ |x| \le \mu_0 \}}
		+
		(t (\ln t)^{2})^{10\delta-2\kappa }
		(\ln t)^{1-2a} |x|^{-2-2a} \1_{\{ \mu_0 <|x| \le 4\mu_0 R \}}
		\right] ,
	\end{aligned}
\end{equation*}
and
\begin{equation*}
	\begin{aligned}
		&
		\TT_4^{out} \left[
		(t (\ln t)^{2})^{10\delta-2\kappa }
		(\ln t)^{1-2a} |x|^{-2-2a} \1_{\{ \mu_0 <|x| \le 4\mu_0 R \}}
		\right]
\lesssim
\TT_4^{out} \left[
(t (\ln t)^{2})^{10\delta-2\kappa }
\ln t |x|^{-2} \1_{\{ \mu_0 <|x| \le 4\mu_0 R \}}
\right]
\\
		\lesssim \ &
t^{-2} e^{-\frac{|x|^2}{16 t }}\int_{\frac{t_0}{2}}^{\frac t2}
(s (\ln s)^{2})^{10\delta-2\kappa }
\ln s (\mu_0R)^2(s) ds
\\
&
+
\begin{cases}
(t (\ln t)^{2})^{10\delta-2\kappa }
(\ln t)^2
 &
\mbox{ \ if \ } |x|\le \mu_0
\\
(t (\ln t)^{2})^{10\delta-2\kappa }
\ln t (\ln(\frac{\mu_0R}{|x|}) + 1)
 &
\mbox{ \ if \ }  \mu_0 < |x|\le \mu_0R
\\
(t (\ln t)^{2})^{10\delta-2\kappa }
\ln t (\mu_0R)^2 |x|^{-2} e^{-\frac{|x|^2}{16 t }}
 &
\mbox{ \ if \ } |x| > \mu_0 R
\end{cases}
\lesssim
t_0^{-\epsilon} w_{o}
	\end{aligned}
\end{equation*}
when $5\delta-\kappa < -1$ and $5\delta -\kappa-a \gamma >-2$. Also,
\begin{equation*}
	\TT_4^{out} \left[ (\ln t)^3 (t (\ln t)^{2})^{10\delta-2\kappa }  \1_{\{ |x| \le \mu_0 \}}
	\right]  \lesssim
	t_0^{-\epsilon} w_{o} .
\end{equation*}

When $5\delta-\kappa+(2-a)\gamma<0$, one has
\begin{equation*}	\left| \eta_R  \mu^{-1} \phi(\frac{x-\xi}{\mu},t)\right|^3
	\lesssim \Lambda_1 \ln t (t (\ln t)^{2})^{5\delta-\kappa }
	\langle y\rangle^{-a} \left| \eta_R  \mu^{-1} \phi(\frac{x-\xi}{\mu},t)\right|^2
\lesssim
 \ln t \langle y\rangle^{-2}  \left| \eta_R  \mu^{-1} \phi(\frac{x-\xi}{\mu},t)\right|^2 .
\end{equation*}

Let us now estimate terms involving $\psi_1$.
	\begin{align*}
		&
		\left| \mu^{-2} w^2(\frac{x-\xi}{\mu})  \psi_{1} (1-\eta_R)
		\right|
		\lesssim
		(\ln t)^{-2} |x|^{-4} |\psi_{1} | \1_{\{ |x|\ge \frac{\mu_0R}{2} \}}
		\\
		\lesssim \ & D_{o}
		(\ln t)^{-2} |x|^{-4}
		\ln t (t (\ln t)^{2})^{5\delta-\kappa }
		R^{- a}  \left(\1_{\{ \frac{\mu_0R}{2} \le |x|\le t^{\frac 12} \}} + t |x|^{-2} \1_{\{|x| > t^{\frac 12} \}}\right)
		\\
		= \ &
		D_{o}  (\ln t)^{-1} (t (\ln t)^{2})^{5\delta-\kappa }
		R^{- a} |x|^{-4} \1_{\{ \frac{\mu_0R}{2} \le |x|\le t^{\frac 12} \}}
		+
		D_{o} t (\ln t)^{-1} (t (\ln t)^{2})^{5\delta-\kappa }
		R^{- a}   |x|^{-6}
		\1_{\{|x| > t^{\frac 12} \}}  .
	\end{align*}
For the first term, we have
\begin{equation*}
	\begin{aligned}
		& \TT_{4}^{out} \left[
		(\ln t)^{-1} (t (\ln t)^{2})^{5\delta-\kappa }
		R^{- a}
		|x|^{-4} \1_{\{ \frac{\mu_0R}{2} \le |x|\le t^{\frac 12} \}}
		\right]
		\\
		\lesssim  \ &
		t^{-2} e^{-\frac{|x|^2}{16 t }}
		+
		\begin{cases}
			(\ln t)^{-1} (t (\ln t)^{2})^{5\delta-\kappa }
			R^{- a} (\mu_0 R)^{-2}
			&
			\mbox{ \ if \ } |x|\le \mu_0 R
			\\
			(\ln t)^{-1} (t (\ln t)^{2})^{5\delta-\kappa }
			R^{- a}  |x|^{-2}
			(\ln (\frac{|x|}{\mu_0 R}) +1)
			&
			\mbox{ \ if \ }  \mu_0 R < |x|\le t^{\frac 12}
			\\
			(t (\ln t)^{2})^{5\delta-\kappa }
			R^{- a}
			|x|^{-2} e^{-\frac{|x|^2}{16 t }}
			&
			\mbox{ \ if \ }   |x| > t^{\frac 12}
		\end{cases}
		\lesssim
		t_{0}^{-\epsilon} w_{o}
	\end{aligned}
\end{equation*}
when $5\delta -\kappa-a \gamma >-2$. For the second term, one has
\begin{equation*}
	\begin{aligned}
		\TT_{4}^{out} \left[ t (\ln t)^{-1} (t (\ln t)^{2})^{5\delta-\kappa }
		R^{- a}   |x|^{-6}
		\1_{\{|x| > t^{\frac 12} \}}\right]
		\lesssim
		t^{-2} \1_{\{ |x|\le t^{\frac 12} \}}
		+
		\left(|x|^{-6} + t^{-2 } e^{-\frac{|x|^2}{16 t }}\right)
		\1_{\{ |x| > t^{\frac 12} \}}
		\lesssim
		t_{0}^{-\epsilon} w_{o} .
	\end{aligned}
\end{equation*}
Thus
\begin{equation*}
	\left|\TT_{4}^{out}\left[  \mu^{-2} w^2(\frac{x-\xi}{\mu})  \psi_{1} (1-\eta_R)
	\right] \right| \lesssim
	t_{0}^{-\epsilon} w_{o} .
\end{equation*}

Notice
\begin{equation*}
	\left( \ln t \langle y\rangle^{-2} \1_{\{ |x| \le t^{\frac 12} \}}
	+
	t^{2} (\ln t)^{-1}
	|x|^{-6}
	\1_{\{ |x| > t^{\frac 12}  \}}
	\right)|\psi_{1}|^2  + |\psi_{1}|^3
	\lesssim
	D_{o}^2
	\left( t^{-1} \langle y\rangle^{-2}   \1_{\{ |x| \le t^{\frac 12} \}}
	+  (\ln t)^{-2} |x|^{-4}    \1_{\{ |x| > t^{\frac 12} \}}\right)
	|\psi_{1}|
\end{equation*}
when $5\delta-\kappa-a\gamma <-1$.
And
\begin{equation*}
	(\ln t)^{-2} |x|^{-4}    \1_{\{ |x| > t^{\frac 12} \}}
	|\psi_{1}| \lesssim
	(\ln t)^{-2} |x|^{-4} |\psi_{1} | \1_{\{ |x|\ge \frac{\mu_0R}{2} \}} ,
\end{equation*}
where the last term has been estimated above. So we only need to estimate the following term
\begin{equation*}
	\begin{aligned}
		&
		t^{-1} \langle y\rangle^{-2}   \1_{\{ |x| \le t^{\frac 12} \}}
		|\psi_{1}|
		\lesssim  D_{o}
		\ln t (t (\ln t)^{2})^{5\delta-\kappa }
		R^{- a}  t^{-1} \langle y\rangle^{-2}   \1_{\{ |x| \le t^{\frac 12} \}}
		\\
		\lesssim \ & D_{o}
		\ln t (t (\ln t)^{2})^{5\delta-\kappa }
		R^{- a}  t^{-1}
		\left(\1_{\{ |x|\le \mu_0 \}} + (\ln t)^{-2} |x|^{-2}  \1_{\{  \mu_0  < |x| \le t^{\frac 12} \}}
		\right) ,
	\end{aligned}
\end{equation*}
and
\begin{equation*}
	\begin{aligned}
		&
		\TT_{4}^{out} \left[ (\ln t)^{-1} (t (\ln t)^{2})^{5\delta-\kappa }
		R^{- a}  t^{-1}
		|x|^{-2}  \1_{\{  \mu_0  < |x| \le t^{\frac 12} \}}
		\right]
		\\
		\lesssim \ &
		t^{-2} e^{-\frac{|x|^2}{16  t}}
		+
		\begin{cases}
			(t (\ln t)^{2})^{5\delta-\kappa }
			R^{- a}  t^{-1}
			&
			\mbox{ \ if \ } |x|\le \mu_0
			\\
			(\ln t)^{-1} (t (\ln t)^{2})^{5\delta-\kappa }
			R^{- a}  t^{-1}
			(\ln (|x|^{-1} t^{\frac 12}) + 1)
			&
			\mbox{ \ if \ }  \mu_0 < |x|\le t^{\frac 12}
			\\
			(\ln t)^{-1} (t (\ln t)^{2})^{5\delta-\kappa }
			R^{- a}
			|x|^{-2} e^{-\frac{|x|^2}{16  t }}
			&
			\mbox{ \ if \ }  |x| > t^{\frac 12}
		\end{cases}
		\lesssim
		t_0^{-\epsilon} w_{o} ,
	\end{aligned}
\end{equation*}
\begin{equation*}
	\TT_{4}^{out} [\ln t (t (\ln t)^{2})^{5\delta-\kappa }
	R^{- a}  t^{-1}
	\1_{\{ |x|\le \mu_0 \}} ]
	\lesssim
	t_0^{-\epsilon} w_{o} .
\end{equation*}
These imply
\begin{equation*}
	\TT_{4}^{out} \left[  t^{-1} \langle y\rangle^{-2}   \1_{\{ |x| \le t^{\frac 12} \}}
	|\psi_{1}|  \right] \lesssim
	t_0^{-\epsilon} w_{o} .
\end{equation*}

	Taking $D_{o}= D_o(\Lambda_1)$ large depending on $\Lambda_1$ and then choosing $t_0$ large enough, we have
	\begin{equation*}
		\mathcal{T}^{out}_{4}[\mathcal{G} [\psi_{1},\phi,\mu_{1},\xi] ] \in B_{o} .
	\end{equation*}

The contraction property is given by the similar method which is used in dealing with terms including $\psi_1$.
Then the unique solution $\psi$ is found in $B_{o}$ for \eqref{o-eq} by the contraction mapping theorem.
	
	From now on, we also regard $D_o(\Lambda_1)$ as a constant depending on $\Lambda_1$. Reviewing the estimates above and utilizing $\1_{\{ |x|\ge \frac{\mu_0R}{2}\}}$ to transform the spatial decay to time decay, one has
\begin{equation*}
	|\mathcal{G}[\psi,\phi,\mu_1,\xi]|
	\lesssim
	C(\Lambda_1)
	[   (
	\mu_{0} R)^{-2}
	\ln t (t (\ln t)^{2})^{5\delta-\kappa }
	R^{-a}
	+
	(\ln t)^3 (t (\ln t)^{2})^{10\delta-2\kappa }
	]
\end{equation*}
where $C(\Lambda_1)$ is a constant depending on $\Lambda_1$ which changes from line to line.

	By gradient estimate, we have
	\begin{equation*}
		|\nabla \psi | \lesssim
		C(\Lambda_1) \ln t (t (\ln t)^{2})^{5\delta-\kappa }
		R^{- a}.
	\end{equation*}
	
	Next, we will use scaling argument to deduce the H\"older estimate of $\psi(x,t)$ in time variable $t$. 	
	For $x_1\in \RR^4$, $t_1>4t_0$, set
	\begin{equation*}
		\tilde{\psi}(z,s) = \psi(x_1+ \lambda(t_1) z , t_1 + \lambda^2 (t_1) s)
	\end{equation*}
	where $0<\lambda(t_1) \le t_{1}^{\frac 12}$.
	Then
	\begin{equation*}
		\pp_{s} \tilde{\psi} = \Delta_z \tilde{\psi} + \tilde{\mathcal{G} }(z,s)
	\end{equation*}
	where $\tilde{ \mathcal{G} }(z,s)= \lambda^2(t_1) \mathcal{G}[\psi,\phi,\mu_{1},\xi] (x_1+ \lambda(t_1) z , t_1 + \lambda^2(t_1) s)$, and standard parabolic regularity theory implies
	\begin{equation*}
		\| \tilde{\psi } \|_{C^{2\alpha, \alpha } (B(0,\frac 1{16}) \times (-\frac 14,0))  }
		\le C(\alpha) \left(
		\| \tilde{\psi } \|_{L^{\infty} (B(0,\frac 1{4}) \times (-\frac 12,0))  }
		+
		\| \tilde{\mathcal
			G } \|_{L^{\infty} (B(0,\frac 1{4}) \times (-\frac 12,0))  } \right)
	\end{equation*}
	where $\alpha$ can be chosen as any constant in $(0,1)$ and $C(\alpha)$ is a constant depending on $\alpha$.
Moreover, one has
	\begin{equation*}
		\begin{aligned}
			\| \tilde{\psi } \|_{L^{\infty} (B(0,\frac 1{4}) \times (-\frac 12,0))  } \lesssim \ & C(\Lambda_1)
	 \ln t_1 (t_1 (\ln t_1)^{2})^{5\delta-\kappa }
	R^{- a}(t_1)
			,
			\\
			\| \tilde{\mathcal
				G } \|_{L^{\infty} (B(0,\frac 1{4}) \times (-\frac 12,0))  }
			\lesssim \ & C(\Lambda_1) \lambda^2 (t_1)
\left[   (
\mu_{0} R)^{-2}(t_1)
\ln t_1 (t_1 (\ln t_1)^{2})^{5\delta-\kappa }
R^{-a}(t_1)
+
(\ln t_1)^3 (t_1 (\ln t_1)^{2})^{10\delta-2\kappa }
\right]  ,
		\end{aligned}
	\end{equation*}
	and
	\begin{equation*}
		\begin{aligned}
			&
			\| \tilde{\psi } \|_{C^{2\alpha, \alpha } (B(0,\frac 1{16}) \times (-\frac 14,0))  }
			\ge
			\sup\limits_{s_1, s_2\in (-1/4,0)}
			\frac{|\psi(x_1 , t_1 + \lambda(t_1)^2 s_1) -\psi(x_1 , t_1 + \lambda(t_1)^2 s_2) |}{|s_1 -s_2|^{\alpha}}
			\\
			= \ &
			\lambda^{2\alpha}(t_1)
			\sup\limits_{s_1, s_2\in (-1/4,0)}
			\frac{|\psi(x_1 , t_1 + \lambda(t_1)^2 s_1) -\psi(x_1 , t_1 + \lambda(t_1)^2 s_2) |}{| (t_1 + \lambda(t_1)^2 s_1) - (t_1 + \lambda(t_1)^2 s_2)|^{\alpha}} \\
			= \ &
			\lambda^{2\alpha}(t_1)
			\sup\limits_{s_1, s_2\in ( t_1 -\frac{ \lambda(t_1)^2}{4} ,t_1)}
			\frac{|\psi(x_1 , s_1) -\psi(x_1 ,  s_2) |}{| s_1 -  s_2|^{\alpha}}
			.
		\end{aligned}
	\end{equation*}
	Thus
	\begin{equation*}
		\begin{aligned}
			&
			\sup\limits_{s_1, s_2\in ( t_1 -\frac{ \lambda(t_1)^2}{4} ,t_1)}
			\frac{|\psi(x_1 , s_1) -\psi(x_1 ,  s_2) |}{| s_1 -  s_2|^{\alpha}}
			\lesssim
			C(\Lambda_1,\alpha)
			\{
			\lambda^{-2\alpha}(t_1)
	 \ln t_1 (t_1 (\ln t_1)^{2})^{5\delta-\kappa }
	R^{- a}(t_1)
\\
&
			+
			\lambda^{2-2\alpha}(t_1)
[   (
\mu_{0} R)^{-2}(t_1)
\ln t_1 (t_1 (\ln t_1)^{2})^{5\delta-\kappa }
R^{-a}(t_1)
+
(\ln t_1)^3 (t_1 (\ln t_1)^{2})^{10\delta-2\kappa }
]  \} .
		\end{aligned}
	\end{equation*}
	
\end{proof}

\medskip

\section{Estimates for $\nabla_{\bar{x}} \varphi[\bar{\mu}_{0} ] $ and $\pp_{t} \varphi[\bar{\mu}_{0} ] $}
In this section, we will revisit the calculations in Section \ref{sec-cut-slowdecay} and derive the following estimates
\begin{equation}\label{varphi-bar-mu0}
		|\pp_{t} \varphi[\bar{\mu}_{0} ]  | \lesssim t^{-2} (\ln t)^{-1} ,\quad
	| \nabla_{\bar{x}}\varphi[\bar{\mu}_0 ]  |
\lesssim
\begin{cases}
	(t\ln t)^{-1}
	&
	\mbox{ \ if \ } |\bar{x}|\le \mu_0
	\\
	t^{-1} (\ln t)^{-2} |\bar{x}|^{-1}
	+
	t^{-\frac 32} (\ln t)^{-1}
	&
	\mbox{ \ if \ } \mu_0 <  |\bar{x}|\le t^{\frac 12}
	\\
	t^{-\frac 32} (\ln t)^{-1}
	e^{- \frac{| \bar{x} |^2}{16 t}}
	+ t(\ln t)^{-2} |\bar{x}|^{-5}
	&
	\mbox{ \ if \ }  |\bar{x}| > t^{\frac 12}
\end{cases}
	.
\end{equation}
\begin{proof}
Notice $\bar{\mu}_0\sim (\ln t)^{-1}$ and $|\bar{\mu}_{0t}| \sim t^{-1}(\ln t)^{-2}$.
By \eqref{z1}, we have
\begin{equation*}
	|\nabla_{\bar{x}} \tilde{\varphi}_1 [\bar{\mu}_{0}] |
	\lesssim
	|\bar{x} |  t^{-2} (\ln t)^{-1}  \1_{\{ |\bar{x}| \le 2t^{\frac 12} \}}
	+
	| \bar{x} |^{-1}    (t \ln t)^{-1}
	e^{-\frac{| \bar{x} |^2}{4t}}
	\1_{\{ |\bar{x}| > 2t^{\frac 12} \}}
	.
\end{equation*}

For $\nabla_{\bar{x}} \tilde{\varphi}_{1b}[\bar{\mu}_{0} ] $, we abbreviate $\nabla_{\bar{x}} \tilde{\varphi}_{1b}[\bar{\mu}_{0} ]$ as $\nabla_{\bar{x}} \tilde{\varphi}_{1b}$. 	
By \eqref{til-varphi1b}, then
	\begin{equation*}
	\nabla_{\bar{x}} \tilde{\varphi}_{1b} (\bar x,t) = \TT_4^{out}[ \nabla_{\bar{x}}(-\bar{\mu}_{0t} \hat{\varphi}_{1} + (E - \tilde{E})[\bar{\mu}_0] ) ] (\bar{x},t) .
\end{equation*}

Notice by \eqref{z1}, we have
\begin{equation*}
	|\bar{\mu}_{0t} \nabla_{\bar{x}} \hat{\varphi}_1(\bar{x},t)  |
	\lesssim
	|\bar{x} |  t^{-3} (\ln t)^{-2} \1_{\{ |\bar{x}| \le t^{\frac 12} \}}
	+
	| \bar{x} |^{-1} t^{-2}  (\ln t)^{-2}
	e^{-\frac{| \bar{x} |^2}{4t}}
	\1_{\{ |\bar{x}| > t^{\frac 12} \}}
	,
\end{equation*}
\begin{equation*}
	|\nabla_{\bar{x}}(E - \tilde{E} )[\bar{\mu}_0]| \lesssim
 t^{-\frac 72} ( \ln t)^{-3}
	\1_{\{ t^{\frac 12} \le |\bar{x}| \le 2  t^{\frac 12}  \} },
\end{equation*}
and
\begin{equation*}
	\TT_{4}^{out}\left[ t^{-\frac 72} ( \ln t)^{-3}
	\1_{\{ t^{\frac 12} \le |\bar{x}| \le 2  t^{\frac 12}  \} } + |\bar{x} |  t^{-3} (\ln t)^{-2} \1_{\{ |\bar{x}| \le t^{\frac 12} \}}\right]
	\lesssim
\TT_{4}^{out}\left[ t^{-\frac 52 }  ( \ln t)^{-2} \1_{\{ |\bar{x}| \le 2 t^{\frac 12} \}}\right]
\lesssim
	t^{-\frac 32} (\ln t)^{-2} e^{-\frac{|\bar{x}|^2}{16 t}}
	,
\end{equation*}
\begin{equation*}
	\TT_{4}^{out}\left[| \bar{x} |^{-1} t^{-2}  (\ln t)^{-2}
	e^{-\frac{| \bar{x} |^2}{4t}}
	\1_{\{ |\bar{x}| > t^{\frac 12} \}}\right]
\lesssim
\TT_{4}^{out}\left[ (\ln t)^{-2} | \bar{x} |^{-5}
\1_{\{ |\bar{x}| > t^{\frac 12} \}}\right]
\lesssim
\begin{cases}
t^{-\frac 32} (\ln t)^{-2}
&
\mbox{ \ if \ } |\bar{x}|\le t^{\frac 12}
\\
t(\ln t)^{-2} |\bar{x}|^{-5}
&
\mbox{ \ if \ } |\bar{x}| > t^{\frac 12}
\end{cases}
.
\end{equation*}
Thus
\begin{equation*}
	|\nabla_{\bar{x}} \tilde{\varphi}_{1b}|
	\lesssim
	t^{-\frac 32} (\ln t)^{-2}
	\1_{\{ |\bar{x}|\le t^{\frac 12} \} }
	+
t(\ln t)^{-2} |\bar{x}|^{-5}
\1_{\{ |\bar{x}| > t^{\frac 12} \} }.
\end{equation*}

Next, let us consider $\nabla_{\bar{x}} \varphi_2[\bar{\mu}_0]$. Recall the definition of $\varphi_2$ in Lemma \ref{varphi2-lem}, then
\begin{equation*}
\nabla_{\bar{x}} \varphi_{2}
=
\TT^{out}_4
\left[ \bar{\mu}_{0}^{-2} \bar{\mu}_{0t}
\nabla_{ \bar{x} } \left( Z_5(\frac{\bar{x}}{ \bar{\mu}_{0} }) \eta(\frac{\bar{x} }{\sqrt t}) \right) \right] .
\end{equation*}
Notice
\begin{equation*}
	\begin{aligned}
		&
		\left|
		\bar{\mu}_{0}^{-2} \bar{\mu}_{0t}
		\nabla_{ \bar{x} } ( Z_5(\frac{\bar{x}}{ \bar{\mu}_{0} }) \eta(\frac{\bar{x} }{\sqrt t}) )  \right|
		=
		\left| \bar{\mu}_{0}^{-2} \bar{\mu}_{0t}
		(
		\bar{\mu}_{0}^{-1}
		\nabla Z_5(\frac{\bar{x}}{ \bar{\mu}_{0} }) \eta(\frac{\bar{x} }{\sqrt t}) + t^{-\frac 12}  Z_5(\frac{\bar{x}}{ \bar{\mu}_{0} })
		\nabla \eta(\frac{\bar{x} }{\sqrt t} )
		) \right|
		\\
		\lesssim \ &
		t^{-1}
		\left[\ln t (1+|\frac{\bar{x}}{ \bar{\mu}_{0} }|)^{-3} \1_{ \{ |\bar{x}|\le 2t^{\frac 12} \} } + t^{-\frac 12} (1+|\frac{\bar{x}}{ \bar{\mu}_{0} }|)^{-2}
		\1_{ \{  t^{\frac 12} \le |\bar{x}|\le 2t^{\frac 12} \} }
		\right]
		\sim
		 t^{-1}
		\ln t (1+|\frac{\bar{x}}{ \bar{\mu}_{0} }|)^{-3} \1_{ \{ |\bar{x}|\le 2t^{\frac 12} \} }
		\\
		\sim \ &
		t^{-1}
		\ln t  \1_{ \{ |\bar{x}|\le \mu_0 \} }
		+
		t^{-1}
		(\ln t)^{-2} |\bar{x}|^{-3} \1_{ \{ \mu_0 < |\bar{x}|\le 2t^{\frac 12} \} } .
	\end{aligned}
\end{equation*}
Therefore, we obtain
\begin{equation*}
| \nabla_{\bar{x}}\varphi_{2}  | \lesssim
	\TT_{4}^{out}\left[ t^{-1}
	\ln t  \1_{ \{ |\bar{x}|\le \mu_0 \} }  + t^{-1}
	(\ln t)^{-2} |\bar{x}|^{-3} \1_{ \{ \mu_0 < |\bar{x}|\le 2t^{\frac 12} \} }\right]
	\lesssim
	\begin{cases}
		(t\ln t)^{-1}
		&
		\mbox{ \ if \ } |\bar{x}|\le \mu_0
		\\
		t^{-1} (\ln t)^{-2} |\bar{x}|^{-1}
		&
		\mbox{ \ if \ } \mu_0 <  |\bar{x}|\le t^{\frac 12}
		\\
		t^{-\frac 32} (\ln t)^{-2} e^{-\frac{|\bar{x}|^2}{16 t}}
		&
		\mbox{ \ if \ }  |\bar{x}| > t^{\frac 12}
	\end{cases}
	.
\end{equation*}
Since $\varphi=\tilde{\varphi}_1 + \tilde{\varphi}_{1b}+ \varphi_2$, one concludes the upper bound of $| \nabla_{\bar{x}}\varphi[\bar{\mu}_0 ]  |$ in \eqref{varphi-bar-mu0}.

The left part is devoted to estimating $\pp_{t} \varphi[\bar{\mu}_{0} ] $. For $\pp_{t} \tilde{\varphi}_1[\bar{\mu}_{0} ] $, by \eqref{tilde-varphi1},
\begin{equation*}
		|\pp_{t}\tilde{\varphi}_1[\bar{\mu}_0] |
		= \left| 2^{\frac 32 } \bar{\mu}_{0t} | \bar  x|^{-2}
		\left(
		e^{-\frac{| \bar  x|^2}{4t}} -\eta(\frac{|\bar  x|}{\sqrt t})
		\right)
		+
		2^{\frac 32 } \bar{\mu}_0 | \bar  x|^{-2}
		\left(
		\frac{| \bar  x|^2}{4t^2}  e^{-\frac{| \bar  x|^2}{4t} } +\frac{|\bar{x}|}{2t^{\frac 32} } \eta'(\frac{|\bar  x|}{\sqrt t})
		\right)  \right|
		\lesssim  t^{-2} (\ln t)^{-1} e^{-\frac{|\bar{x}|^2}{4t}}.
\end{equation*}

Next, we estimate $\pp_{t}  \tilde{\varphi}_{1b}[\bar{\mu}_{0}]$.
For any integer $n\ge1$ and $f(x,t)\in C^1(\RR^n\times (t_0,\infty))$,
\begin{equation}\label{pptu-eq}
	\begin{aligned}
		&
		\pp_{t} \left( 	\int_{t_0}^{t} \int_{\RR^n} \left[4\pi(t-s) \right]^{-\frac{n}{2}} e^{-\frac{|x-y|^2}{4(t-s)}} f(y,s) dyds  \right)
		\\
		= \ &
		\pp_{t} \left( 	\int_{t_0}^{\frac{t}{2}} \int_{\RR^n} \left[4\pi(t-s) \right]^{-\frac{n}{2}} e^{-\frac{|x-y|^2}{4(t-s)}} f(y,s) dyds
		+
		\int_{0}^{\frac{t}{2}}
		\int_{\RR^n}
		\left(4\pi a \right)^{-\frac{n}{2}}
		e^{-\frac{|x-y|^2}{4a}}
		f(y,t-a) dy da
		\right)
		\\
		= \ &
		\frac12 \int_{\RR^n} ( 2\pi t )^{-\frac{n}{2}} e^{-\frac{|x-y|^2}{2t}} f(y,\frac{t}{2}) dy
		+
		\int_{t_0}^{\frac{t}{2}}
		\int_{\RR^n} \pp_{t} \left\{ \left[4\pi(t-s) \right]^{-\frac{n}{2}} e^{-\frac{|x-y|^2}{4(t-s)}} \right\} f(y,s) dyds
		\\
		& +
		\int_{
			\frac{t}{2}}^{t} \int_{\RR^n} \left[4\pi(t-s) \right]^{-\frac{n}{2}} e^{-\frac{|x-y|^2}{4(t-s)}} (\pp_{t}f)(y,s) dyds .
	\end{aligned}
\end{equation}
As a consequence of \eqref{til-varphi1b} and \eqref{pptu-eq}, we have
\begin{equation*}
	\begin{aligned}
		&
		\pp_{t} \tilde{\varphi}_{1b}
		=
		\frac12\int_{\RR^4} ( 2\pi t )^{-2} e^{-\frac{|x-y|^2}{2t}} \left(-\bar{\mu}_{0t} \hat{\varphi}_{1} + (E - \tilde{E})[\bar{\mu}_0] \right) (y,\frac{t}{2}) dy
		\\
		&
		+
		\int_{t_0}^{\frac{t}{2}}
		\int_{\RR^4} \pp_{t} \left\{ \left[4\pi(t-s) \right]^{-2} e^{-\frac{|x-y|^2}{4(t-s)}} \right\} \left(-\bar{\mu}_{0t} \hat{\varphi}_{1} + (E - \tilde{E})[\bar{\mu}_0] \right)(y,s) dyds
		\\
		& +
		\int_{
			\frac{t}{2}}^{t} \int_{\RR^4} \left[4\pi(t-s) \right]^{-2} e^{-\frac{|x-y|^2}{4(t-s)}} \left[ \pp_{t} \left(-\bar{\mu}_{0t} \hat{\varphi}_{1} + (E - \tilde{E})[\bar{\mu}_0] \right) \right](y,s) dyds .
	\end{aligned}
\end{equation*}
where by \eqref{ZE-1} and \eqref{ZE-2}, it follows that
\begin{equation*}
	\begin{aligned}
		&
\left|
\left(
-\bar{\mu}_{0t} \hat{\varphi}_{1} + (E - \tilde{E})[\bar{\mu}_0 ] \right)(\bar{x},\frac{t}{2})
\right| +
\left|
		\left(
		-\bar{\mu}_{0t} \hat{\varphi}_{1} + (E - \tilde{E})[\bar{\mu}_0 ] \right)(\bar{x},t)
		\right|
		\\
		\lesssim \ &
		(t \ln t)^{-2} \1_{\{ |\bar x|\le t^{\frac 12} \} } +
		t^{-1} (\ln t)^{-2} |\bar x|^{-2}
		e^{-\frac{|\bar x|^2}{4t }}
		\1_{ \{ |\bar x| >  t^{\frac 12} \} }
		+
		(t \ln t)^{-3}
		\1_{\{ t^{\frac{1}{2}} \le |\bar{x}| \le 2 t^{\frac{1}{2}} \} }
		\lesssim
		(t \ln t)^{-2}
		e^{-\frac{|\bar x|^2}{4t }} ,
	\end{aligned}
\end{equation*}
and
\begin{equation*}
	\begin{aligned}
		&
		|\pp_{t} (\bar{\mu}_{0t} \hat{\varphi}_{1} ) |
		=
		|\bar{\mu}_{0tt} \hat{\varphi}_{1}
		+
		\bar{\mu}_{0t} \pp_{t} \hat{\varphi}_{1}|
		\\
		\lesssim \ &
		\left| (t\ln t)^{-2}
		\left( t^{-1} \1_{\{ |\bar{x}|  \le 2 t^{\frac 12} \}}
		+
		|\bar{x}|^{-2} e^{-\frac{| \bar  x|^2}{4t}}
		\1_{\{ |\bar{x}|  > 2 t^{\frac 12} \}}  \right)
		+
		t^{-3} (\ln t)^{-2}   e^{-\frac{|\bar{x}|^2}{4t}}
		\right|
		\sim
		t^{-3} (\ln t)^{-2}   e^{-\frac{|\bar{x}|^2}{4t}} ,
	\end{aligned}
\end{equation*}
and
\begin{equation*}
	| \pp_{t} (E - \tilde{E})[\bar{\mu}_0]
	|
	\lesssim
	t^{-4} (\ln t)^{-3} \1_{\{ \sqrt t \le |\bar{x}| \le 2\sqrt t \} } .
\end{equation*}
Thus by Lemma \ref{cauchy-est-lem} and same calculation for deducing \eqref{til1b-upp}, we have
\begin{equation*}
		|\pp_{t} \tilde{\varphi}_{1b} |
		\lesssim
		(t \ln t)^{-2}
		\int_{\RR^4} t^{-2} e^{-\frac{|x-y|^2}{2t}}
		  dy
		+t^{-1}
		\int_{t_0}^{ t }
		\int_{\RR^4}  (t-s)^{-2} e^{-\frac{|x-y|^2}{8(t-s)}}   (s \ln s)^{-2}
		e^{-\frac{|y|^2}{4s }}   dyds
	\lesssim
	(t \ln t)^{-2}  .
\end{equation*}

Finally, we consider $	\pp_{t} \varphi_{2}[\bar{\mu}_0] $.
By \eqref{pptu-eq} and  the definition of $\varphi_2$ in Lemma \ref{varphi2-lem}, we have
\begin{equation*}
	\begin{aligned}
		&
		\pp_{t}	\varphi_{2}
		= \frac12\int_{\RR^4} ( 2\pi t )^{-2} e^{-\frac{|x-y|^2}{2t}} \bar{\mu}_{0}^{-2}(\frac{t}{2}) \bar{\mu}_{0t}( \frac{t}{2} )
		Z_5(\frac{y}{ \bar{\mu}_{0}( \frac{t}{2} ) }) \eta(\frac{ \sqrt{2} y }{\sqrt{t } })  dy
		\\
&
+ \int_{t_0}^{\frac{t}{2}} \int_{\RR^4}
\pp_{t} \left\{
\left[ 4\pi(t-s)\right]^{-2}
e^{-\frac{|x-y|^2}{4(t-s)}}
\right\}
\bar{\mu}_{0}^{-2}(s) \bar{\mu}_{0t}(s)
Z_5(\frac{y}{ \bar{\mu}_{0}(s) }) \eta(\frac{y}{\sqrt s})      dy ds
\\
		&
		+
	\int_{\frac{t}{2}}^{t} \int_{\RR^4}
		\left[ 4\pi(t-s)\right]^{-2}
		e^{-\frac{|x-y|^2}{4(t-s)}}
		\left[ \pp_{s}  \left(\bar{\mu}_{0}^{-2}(s) \bar{\mu}_{0t}(s)
		Z_5(\frac{y}{ \bar{\mu}_{0}(s) }) \eta(\frac{y}{\sqrt s})  \right)  \right] dy ds ,
	\end{aligned}
\end{equation*}
and by \eqref{ZA-3-2},
\begin{equation*}
\begin{aligned}
&
\left|
\bar{\mu}_{0}^{-2}(\frac{t}{2}) \bar{\mu}_{0t}(\frac{t}{2})
Z_5(\frac{\bar{x}}{ \bar{\mu}_{0}(\frac{t}{2}) }) \eta(\frac{ \sqrt{2}\bar{x} }{\sqrt{t}})
\right|
+
\left|
\bar{\mu}_{0}^{-2}(t) \bar{\mu}_{0t}(t)
Z_5(\frac{\bar{x}}{ \bar{\mu}_{0}(t) }) \eta(\frac{\bar{x}}{\sqrt t})
\right|
\\
	\lesssim \ &
	 t^{-1} \1_{\{ |\bar{x}| \le (\ln t)^{-1} \}} +   t^{-1} (\ln t)^{-2} |\bar x|^{-2} \1_{\{ (\ln t)^{-1}  < |\bar{x}| \le 2 t^{\frac 12} \}} ,
\end{aligned}
\end{equation*}
and
\begin{equation*}
	\begin{aligned}
		&
		\left|\pp_{t}  (
		\bar{\mu}_{0}^{-2} \bar{\mu}_{0t}
		Z_5(\frac{\bar{x}}{ \bar{\mu}_{0} }) \eta(\frac{\bar{x} }{\sqrt t})  )
		\right|
		\\
		= \ &
		\left|
		\pp_{t}  (\bar{\mu}_{0}^{-2} \bar{\mu}_{0t} )
		Z_5(\frac{\bar{x}}{ \bar{\mu}_{0} }) \eta(\frac{\bar{x} }{\sqrt t})
		-
		\bar{\mu}_{0}^{-2} \bar{\mu}_{0t}
		\frac{\bar{x}}{\bar{\mu}_0}
		\cdot
		\nabla Z_5(\frac{\bar{x}}{ \bar{\mu}_{0} })
		\frac{\bar{\mu}_{0t}}{\bar{\mu}_0} \eta(\frac{\bar{x} }{\sqrt t})
		-
		\bar{\mu}_{0}^{-2} \bar{\mu}_{0t}
		Z_5(\frac{\bar{x}}{ \bar{\mu}_{0} }) \frac{\bar{x}}{2t^{\frac 32}} \cdot \nabla \eta(\frac{\bar{x} }{\sqrt t})
		\right|
		\\
		\lesssim \ &
		t^{-2} \langle \frac{\bar{x}}{\bar{\mu}_0} \rangle^{-2} \1_{\{ |\bar{x}|\le 2t^{\frac 12} \}} .
	\end{aligned}
\end{equation*}
Thus, by similar calculation for Lemma \ref{cauchy-est-lem} and the upper bound of $\varphi_2$ in Lemma \ref{varphi2-lem}, we have
\begin{equation*}
	\begin{aligned}
		&
		|\pp_{t}	\varphi_{2} |
\lesssim
\int_{\RR^4}  t^{-2} e^{-\frac{|x-y|^2}{2t}}
\left(
t^{-1} \1_{\{ |y| \le (\ln t)^{-1} \}} +   t^{-1} (\ln t)^{-2} |y|^{-2} \1_{\{ (\ln t)^{-1}  < |y| \le 2 t^{\frac 12} \}}
\right)  dy
		\\
		&
		+ t^{-1} \int_{t_0}^{t} \int_{\RR^4}
	(t-s)^{-2}
		e^{-\frac{|x-y|^2}{8(t-s)}}
\left(
 s^{-1} \1_{\{ |y| \le (\ln s)^{-1} \}} +   s^{-1} (\ln s)^{-2} |y|^{-2} \1_{\{ (\ln s)^{-1}  < |y| \le 2 s^{\frac 12} \}}
\right)     dy ds
\lesssim t^{-2} (\ln t)^{-1} .
	\end{aligned}
\end{equation*}
Collecting above estimates, we obtain $
	|\pp_{t} \varphi[\bar{\mu}_{0} ]  | \lesssim t^{-2} (\ln t)^{-1} $.
	
\end{proof}

\medskip

\section*{Acknowledgements}

J. Wei is partially supported by
NSERC of Canada. We thank Professors Manuel del Pino and Monica Musso for their interests and suggestions.
		
		\bigskip


\begin{thebibliography}{10}

\bibitem{BC88CPAM}
A.~Bahri and J.-M. Coron.
\newblock On a nonlinear elliptic equation involving the critical {S}obolev
  exponent: the effect of the topology of the domain.
\newblock {\em Comm. Pure Appl. Math.}, 41(3):253--294, 1988.

\bibitem{Bahri95CVPDE}
Abbas Bahri, Yanyan Li, and Olivier Rey.
\newblock On a variational problem with lack of compactness: the topological
  effect of the critical points at infinity.
\newblock {\em Calc. Var. Partial Differential Equations}, 3(1):67--93, 1995.

\bibitem{Collot2017}
Charles Collot.
\newblock Nonradial type {II} blow up for the energy-supercritical semilinear
  heat equation.
\newblock {\em Anal. PDE}, 10(1):127--252, 2017.

\bibitem{Collot17CMP}
Charles Collot, Frank Merle, and Pierre Rapha\"el.
\newblock Dynamics near the ground state for the energy critical nonlinear heat
  equation in large dimensions.
\newblock {\em Comm. Math. Phys.}, 352(1):215--285, 2017.

\bibitem{Green16JEMS}
Carmen Cort\'{a}zar, Manuel del Pino, and Monica Musso.
\newblock Green's function and infinite-time bubbling in the critical nonlinear
  heat equation.
\newblock {\em J. Eur. Math. Soc. (JEMS)}, 22(1):283--344, 2020.

\bibitem{gluingKS}
Juan Davila, Manuel del Pino, Jean Dolbeault, Monica Musso, and Juncheng Wei.
\newblock Infinite time blow-up in the {P}atlak-{K}eller-{S}egel system:
  existence and stability.
\newblock {\em arXiv preprint arXiv:1911.12417}, 2019.

\bibitem{18Euler}
Juan Davila, Manuel Del~Pino, Monica Musso, and Juncheng Wei.
\newblock Gluing {M}ethods for {V}ortex {D}ynamics in {E}uler {F}lows.
\newblock {\em Arch. Ration. Mech. Anal.}, 235(3):1467--1530, 2020.

\bibitem{17HMF}
Juan D\'{a}vila, Manuel del Pino, and Juncheng Wei.
\newblock Singularity formation for the two-dimensional harmonic map flow into
  {$S^2$}.
\newblock {\em Invent. Math.}, 219(2):345--466, 2020.

\bibitem{type25D}
Manuel del Pino, Monica Musso, and Jun~Cheng Wei.
\newblock Type {II} {B}low-up in the 5-dimensional {E}nergy {C}ritical {H}eat
  {E}quation.
\newblock {\em Acta Math. Sin. (Engl. Ser.)}, 35(6):1027--1042, 2019.

\bibitem{173D}
Manuel del Pino, Monica Musso, and Juncheng Wei.
\newblock Infinite-time blow-up for the 3-dimensional energy-critical heat
  equation.
\newblock {\em Anal. PDE}, 13(1):215--274, 2020.

\bibitem{tower7D}
Manuel del Pino, Monica Musso, and Juncheng Wei.
\newblock Existence and stability of infinite time bubble towers in the energy
  critical heat equation.
\newblock {\em Anal. PDE}, 14(5):1557--1598, 2021.

\bibitem{type23D}
Manuel del Pino, Monica Musso, Juncheng Wei, Qidi Zhang, and Yifu Zhou.
\newblock Type {II} finite time blow-up for the three dimensional energy
  critical heat equation.
\newblock {\em arXiv preprint arXiv:2002.05765}, 2020.

\bibitem{del2018sign}
Manuel del Pino, Monica Musso, Juncheng Wei, and Youquan Zheng.
\newblock Sign-changing blowing-up solutions for the critical nonlinear heat
  equation.
\newblock {\em Ann. Sc. Norm. Super. Pisa Cl. Sci. (5)}, 21:569--641, 2020.

\bibitem{ni4D}
Manuel del Pino, Monica Musso, Juncheng Wei, and Yifu Zhou.
\newblock Type {II} finite time blow-up for the energy critical heat equation
  in {$\Bbb R^4$}.
\newblock {\em Discrete Contin. Dyn. Syst.}, 40(6):3327--3355, 2020.

\bibitem{filaking12}
Marek Fila and John~R. King.
\newblock Grow up and slow decay in the critical {S}obolev case.
\newblock {\em Netw. Heterog. Media}, 7(4):661--671, 2012.

\bibitem{Fila06JDE}
Marek Fila, John~R. King, Michael Winkler, and Eiji Yanagida.
\newblock Optimal lower bound of the grow-up rate for a supercritical parabolic
  equation.
\newblock {\em J. Differential Equations}, 228(1):339--356, 2006.

\bibitem{Fila07ADE}
Marek Fila, John~R. King, Michael Winkler, and Eiji Yanagida.
\newblock Grow-up rate of solutions of a semilinear parabolic equation with a
  critical exponent.
\newblock {\em Adv. Differential Equations}, 12(1):1--26, 2007.

\bibitem{FHV00}
Stathis Filippas, Miguel~A. Herrero, and Juan J.~L. Vel\'azquez.
\newblock Fast blow-up mechanisms for sign-changing solutions of a semilinear
  parabolic equation with critical nonlinearity.
\newblock {\em R. Soc. Lond. Proc. Ser. A Math. Phys. Eng. Sci.},
  456(2004):2957--2982, 2000.

\bibitem{Fujita66}
Hiroshi Fujita.
\newblock On the blowing up of solutions of the {C}auchy problem for
  {$u_{t}=\Delta u+u^{1+\alpha }$}.
\newblock {\em J. Fac. Sci. Univ. Tokyo Sect. I}, 13:109--124 (1966), 1966.

\bibitem{King03JDE}
Victor~A. Galaktionov and John~R. King.
\newblock Composite structure of global unbounded solutions of nonlinear heat
  equations with critical {S}obolev exponents.
\newblock {\em J. Differential Equations}, 189(1):199--233, 2003.

\bibitem{GS81CPAM}
B.~Gidas and J.~Spruck.
\newblock Global and local behavior of positive solutions of nonlinear elliptic
  equations.
\newblock {\em Comm. Pure Appl. Math.}, 34(4):525--598, 1981.

\bibitem{Giga04Indiana}
Yoshikazu Giga, Shin'ya Matsui, and Satoshi Sasayama.
\newblock Blow up rate for semilinear heat equations with subcritical
  nonlinearity.
\newblock {\em Indiana Univ. Math. J.}, 53(2):483--514, 2004.

\bibitem{Giga04-MathMethod}
Yoshikazu Giga, Shin'ya Matsui, and Satoshi Sasayama.
\newblock On blow-up rate for sign-changing solutions in a convex domain.
\newblock {\em Math. Methods Appl. Sci.}, 27(15):1771--1782, 2004.

\bibitem{harada2019higher}
Junichi Harada.
\newblock A higher speed type {II} blowup for the five dimensional energy
  critical heat equation.
\newblock {\em Ann. Inst. H. Poincar\'{e} Anal. Non Lin\'{e}aire},
  37(2):309--341, 2020.

\bibitem{harada6D}
Junichi Harada.
\newblock A type {II} blowup for the six dimensional energy critical heat
  equation.
\newblock {\em Ann. PDE}, 6(2):Paper No. 13, 63, 2020.

\bibitem{HV1993}
M.A. Herrero and J.L. Vel\'azquez.
\newblock A blow up result for semilinear heat equations in the supercritical
  case.
\newblock {\em Unpublished paper}, 1993.

\bibitem{li2022slow}
Tongtong Li, Liming Sun, and Shumao Wang.
\newblock A slow blow up solution for the four dimensional energy critical semi
  linear heat equation.
\newblock {\em arXiv preprint arXiv:2204.11201}, 2022.

\bibitem{MatanoMerle04}
Hiroshi Matano and Frank Merle.
\newblock On nonexistence of type {II} blowup for a supercritical nonlinear
  heat equation.
\newblock {\em Comm. Pure Appl. Math.}, 57(11):1494--1541, 2004.

\bibitem{Merle04CPAM}
Hiroshi Matano and Frank Merle.
\newblock On nonexistence of type {II} blowup for a supercritical nonlinear
  heat equation.
\newblock {\em Comm. Pure Appl. Math.}, 57(11):1494--1541, 2004.

\bibitem{Merle09JFA}
Hiroshi Matano and Frank Merle.
\newblock Classification of type {I} and type {II} behaviors for a
  supercritical nonlinear heat equation.
\newblock {\em J. Funct. Anal.}, 256(4):992--1064, 2009.

\bibitem{Merle11JFA}
Hiroshi Matano and Frank Merle.
\newblock Threshold and generic type {I} behaviors for a supercritical
  nonlinear heat equation.
\newblock {\em J. Funct. Anal.}, 261(3):716--748, 2011.

\bibitem{Merle97Duke}
Frank Merle and Hatem Zaag.
\newblock Stability of the blow-up profile for equations of the type
  {$u_t=\Delta u+|u|^{p-1}u$}.
\newblock {\em Duke Math. J.}, 86(1):143--195, 1997.

\bibitem{PQ21DCDS}
Peter Pol\'{a}\v{c}ik and Pavol Quittner.
\newblock Entire and ancient solutions of a supercritical semilinear heat
  equation.
\newblock {\em Discrete Contin. Dyn. Syst.}, 41(1):413--438, 2021.

\bibitem{PQS07Indiana}
Peter Pol\'{a}\v{c}ik, Pavol Quittner, and Philippe Souplet.
\newblock Singularity and decay estimates in superlinear problems via
  {L}iouville-type theorems. {II}. {P}arabolic equations.
\newblock {\em Indiana Univ. Math. J.}, 56(2):879--908, 2007.

\bibitem{Polacik03MathAnn}
Peter Pol\'{a}\v{c}ik and Eiji Yanagida.
\newblock On bounded and unbounded global solutions of a supercritical
  semilinear heat equation.
\newblock {\em Math. Ann.}, 327(4):745--771, 2003.

\bibitem{Polacik14MathAnn}
Peter Pol\'{a}\v{c}ik and Eiji Yanagida.
\newblock Global unbounded solutions of the {F}ujita equation in the
  intermediate range.
\newblock {\em Math. Ann.}, 360(1-2):255--266, 2014.

\bibitem{Quittner21Duke}
Pavol Quittner.
\newblock Optimal {L}iouville theorems for superlinear parabolic problems.
\newblock {\em Duke Math. J.}, 170(6):1113--1136, 2021.

\bibitem{Souplet07book}
Pavol Quittner and Philippe Souplet.
\newblock {\em Superlinear parabolic problems}.
\newblock Birkh\"{a}user Advanced Texts: Basler Lehrb\"{u}cher. [Birkh\"{a}user
  Advanced Texts: Basel Textbooks]. Birkh\"{a}user/Springer, Cham, 2019.
\newblock Blow-up, global existence and steady states, Second edition of [
  MR2346798].

\bibitem{Schweyer12JFA}
R\'emi Schweyer.
\newblock Type {II} blow-up for the four dimensional energy critical semi
  linear heat equation.
\newblock {\em J. Funct. Anal.}, 263(12):3922--3983, 2012.

\bibitem{17halfHMF}
Yannick Sire, Juncheng Wei, and Youquan Zheng.
\newblock Infinite time blow-up for half-harmonic map flow from {$\Bbb R$} into
  {$\Bbb S^1$}.
\newblock {\em Amer. J. Math.}, 143(4):1261--1335, 2021.

\bibitem{sun2021bubble}
Liming Sun, Juncheng Wei, and Qidi Zhang.
\newblock Bubble towers in the ancient solution of energy-critical heat
  equation.
\newblock {\em arXiv preprint arXiv:2109.02857. Calc. Var. Partial Differential
  Equations}, to appear.

\bibitem{Suzuki08Indiana}
Takashi Suzuki.
\newblock Semilinear parabolic equation on bounded domain with critical
  {S}obolev exponent.
\newblock {\em Indiana Univ. Math. J.}, 57(7):3365--3396, 2008.

\bibitem{wang2021refined}
Kelei Wang and Juncheng Wei.
\newblock Refined blowup analysis and nonexistence of type {II} blowups for an
  energy critical nonlinear heat equation.
\newblock {\em arXiv preprint arXiv:2101.07186}, 2021.

\end{thebibliography}


\end{document}